\documentclass[11pt]{amsart}
\usepackage{graphicx,amsfonts,amsmath,amsthm,amssymb,amscd,appendix,tikz,pgfplots,xcolor}
\usepackage[margin=1in]{geometry}
\usetikzlibrary{matrix,arrows,external}
\tikzexternalize

\makeatletter
\def\@tocline#1#2#3#4#5#6#7{\relax
  \ifnum #1>\c@tocdepth 
  \else
    \par \addpenalty\@secpenalty\addvspace{#2}%
    \begingroup \hyphenpenalty\@M
    \@ifempty{#4}{%
      \@tempdima\csname r@tocindent\number#1\endcsname\relax
    }{%
      \@tempdima#4\relax
    }%
    \parindent\z@ \leftskip#3\relax \advance\leftskip\@tempdima\relax
    \rightskip\@pnumwidth plus4em \parfillskip-\@pnumwidth
    #5\leavevmode\hskip-\@tempdima
      \ifcase #1
       \or\or \hskip 1em \or \hskip 2em \else \hskip 3em \fi%
      #6\nobreak\relax
    \dotfill\hbox to\@pnumwidth{\@tocpagenum{#7}}\par
    \nobreak
    \endgroup
  \fi}
\makeatother

\makeatletter
\newtheoremstyle{indented}
  {10pt}
  {10pt}
  {\setlength{\leftskip}{2.5em}\setlength{\rightskip}{2.5em}\addtolength{\@totalleftmargin}{2.5em}}
  {-2.5em}
  {\scshape}
  {.}
  {\newline}
  {}
\makeatother
\theoremstyle{indented}
\newtheorem{theorem}{Theorem}[subsection]
\newtheorem{lemma}[theorem]{Lemma}
\newtheorem{proposition}[theorem]{Proposition}
\newtheorem{corollary}[theorem]{Corollary}

\newtheorem{definition}[theorem]{Definition}

\newenvironment{example}[1][Example]{\begin{trivlist}
\item[\hskip \labelsep {\bfseries #1}]}{\end{trivlist}}
\newenvironment{remark}[1][Remark]{\begin{trivlist}
\item[\hskip \labelsep {\bfseries #1}]}{\end{trivlist}}
\newenvironment{observation}[1][Observation]{\begin{trivlist}
\item[\hskip \labelsep {\bfseries #1}]}{\end{trivlist}}
\newenvironment{note}[1][Note]{\begin{trivlist}
\item[\hskip \labelsep {\bfseries #1}]}{\end{trivlist}}

\newenvironment{warning}[1][Warning]{\begin{trivlist}
\item[\hskip \labelsep {\bfseries #1}]}{\end{trivlist}}
\newenvironment{twocents}[1][Two Cents' Worth]{\begin{trivlist}
\item[\hskip \labelsep {\bfseries #1}]}{\end{trivlist}}

\newenvironment{notation}[1][Notation]{\begin{trivlist}
\item[\hskip \labelsep {\bfseries #1}]}{\end{trivlist}}
\newenvironment{idea}[1][Idea]{\begin{trivlist}
\item[\hskip \labelsep {\bfseries #1}]}{\end{trivlist}}

\begin{document} 
\begin{titlepage}
\title {The K-theory of filtered deformations of graded polynomial algebras.}
\author {David Wayne - david.wayne@colorado.edu}

\begin{abstract}
Recent discoveries make it possible to compute the K-theory of certain rings from their cyclic homology and certain versions of their cdh-cohomology.  We extend the work of G. Corti\~nas et al. who calculated the K-theory of, in addition to many other varieties, cones over smooth varieties, or equivalently the K-theory of homogeneous polynomial rings.  We focus on specific examples of polynomial rings, which happen to be filtered deformations of homogeneous polynomial rings.  Along the way, as a secondary result, we will develop a method for computing the periodic cyclic homology of a singular variety as well as the negative cyclic homology when the cyclic homology of that variety is known.  Finally, we will apply these methods to extend the results of Michler who computed the cyclic homology of hypersurfaces with isolated singularities.  
\end{abstract}

\ \vspace{2 in}

\maketitle

\end{titlepage}

\newpage
\tableofcontents 
\setcounter{section}{-1}

\newpage
\section{Introduction}

Historically speaking, computing algebraic K-theory groups of a ring has ranged from very difficult to ``hopefully it's not impossible.''  Recent methods developed by various rock stars in the field (G. Corti\~nas, C. Haesemeyer, M. Schlichting, C. A. Weibel among others) have allowed us to compute some K-groups by using the cdh-topology and cdh-cohomology.  Our goal is to continue this line of research by computing, or partially computing the K-theory of certain rings in terms of their cyclic homology.  
   
Unfortunately for K-theory, unlike cyclic homology and it's variants, in the current state of the art we are still unable to compute K-theory for smooth affine schemes (regular Noetherian rings).  With this in mind, following previous authors, this paper will attempt to compute the fiber of the higher K-groups, $K_n(R)\rightarrow KH_n(R)$, called $\tilde{K}_n(R)$.  In some ways the Homotopy K-groups, $KH_n(R)$, are all the nicest parts of the original K-groups, in that they have all of the properties we would want (like excision, homotopy invariance, etc.). Furthermore, $K_n(R)=KH_n(R)$ whenever $R$ is a regular Noetherian ring (smooth affine schemes).  By removing $KH_n(R)$ from $K_n(R)$ we are left with groups that in a particular way measure singularities in a scheme. 

On smooth schemes, using these methods from Corti\~nas et al. will only yield trivial results.  So, at least when these techniques are involved we obtain the most information of the K-theory by looking at singular schemes.  The motivation for this is, in a sense, for singular schemes the worst parts of the K-theory, $\tilde{K}_n(R)$, are the same as the worst parts of the cyclic homology.  With this idea more formally presented later, we will arrive at the following exact sequence.  
\[...\rightarrow\tilde{K}_n(X)\rightarrow HC_n(X/\mathbb{Q})\rightarrow\mathbb{H}^{-n}_{cdh}(X,\textbf{HC}(-/\mathbb{Q}))\rightarrow\tilde{K}_{n-1}(X)\rightarrow...\] 

So, computing the groups $\tilde{K}_n(X)$ is as simple as computing the cyclic homology groups and the cdh-cohomology groups. It turns out that computing these other groups is not that simple. In general in order to compute the cyclic homology, $HC_n(X/\mathbb{Q})$, and the cdh-fibrant version of cyclic homology, $\mathbb{H}^{-n}_{cdh}(X,\textbf{HC}(-/\mathbb{Q}))$, we have to rely on complicated techniques and/or powerful tools.  

For cyclic homology, we will have to rely on other authors who have computed $HC_n(X/\mathbb{Q})$ for specific singular schemes $X$. The problem is, the cyclic homology is not known for every scheme.  It is known for smooth affine schemes, and certain smooth projective schemes, but as we mentioned before, this cannot really be used in our framework.  In fact, in the case where we are not dealing with nice smooth affine or projective schemes, even if the scheme is smooth, the general theory can still be very delicate and the cyclic homology can be difficult to compute.

For cdh-fibrant cyclic homology, we will need to compute the resolution of singularities of $X$ and then we can write $\mathbb{H}^{-n}_{cdh}(X,\textbf{HC}(-/\mathbb{Q}))$ in terms of the cyclic homology of some smooth schemes.  The problem is, while a strong resolution theorem for singular varieties (or excellent/quasi-excellent schemes) over characteristic zero has been proved by H. Hironaka (and refined by others), this is not a constructive theorem and does not yield explicit resolutions.  By this we mean, given a singular variety $X$, an explicit resolution of $X$ would yield a (sequence of) center(s) $Z$ it's resolution $Y$, and the exceptional fiber $E$.  There are computer algorithms that given such a scheme would yield an explicit resolution, but we are looking for classes of rings, not specific examples.   Making the problem worse is that these computer resolution programs, while they always find some resolution, have a hard time finding a ``nice" resolution  for anything but the most basic singular varieties.  

Compounding the difficulties, when you eventually find a resolution, the groups \\ $\mathbb{H}^{-n}_{cdh}(X,\textbf{HC}(-/\mathbb{Q}))$ are still written in terms of the cyclic homology of some smooth schemes, which as we stated can still be difficult to compute (unless they are affine or some specific type of a projective scheme).   

So, in order to get a complete calculation using these methods we need the previous groups to be computable.  Based on our discussion above, the following Venn diagram summarizes our needs.  
\begin{center}
\begin{tikzpicture}
\draw (-7,-4) rectangle (7,4) node[below left]{\textbf{Singular Schemes Over Characteristic Zero}};
\fill[lightgray] (-2,0) circle (2.5cm);
\fill[lightgray] (2,0) circle (2.5cm);
\begin{scope} 
\clip (-2,0) circle (2.5cm);
\fill[gray] (2,0) circle (2.5cm);
\end{scope}
\draw (-2,0) circle (2.5 cm);
\draw (-2.5,.3) node {Explicit Resolution};
\draw (-2.5,-.2) node {is Known};
\draw (2,0) circle (2.5 cm);
\draw (2.5,.3) node {Cyclic Homology};
\draw (2.5,-.2) node {is Computable};
\node[anchor=east] at (-1.5,-3) (description) {The K-theory is Computable};
\node[anchor=north] at (0,0) (text){};
\draw (description) edge[out=0,in=240,->] (text){};
\end{tikzpicture}
\end{center}

There have already been several papers that have found classes of varieties in the intersection of these two requirements.  Some of them include, curves, toric varieties, and cones.  Each of these papers is able to give a complete or partially complete calculation because each of the required computations is known.  

In our case, we will present some new results and give partial calculations.  While some of our results are deficient in one or the other of the two requirements, they are still results in that they can be applied by others in the future.  In particular, our main result lacks the computability of cyclic homology in positive degrees.  Despite the fact that we are missing this piece, in some sense, it still feels like a complete result.  For the other case, if the cyclic homology is known, but an explicit resolution is not known, then the result is ill formed.  The reason for this is that, while we know that a resolution will exist in some cases, the resolution is amorphous and the complete calculation sort of exists in the aether.  Knowing the resolution allows us to write our results in terms of more concrete well formed (but maybe not easily computable) objects.  

Specifically, we are looking at filtered deformations of certain graded rings.  G. Corti\~nas et al. computed the K-theory of cones over smooth projective varieties or equivalently the K-theory of homogeneous polynomial rings.  By this we mean, for a graded ring $R$ where Proj$(R)$ is smooth, $\tilde{K}_n(R)\cong \tilde{K}_n($Spec$(R))$ is calculated by G. Corti\~nas et al. in \cite{cortinas2009k}.  As we will see later, an explicit resolution is easy to formulate in the case of filtered deformations of these rings that are also local rings. We will be able to extend this in certain cases to rings that are not local, but will still be filtered deformations of these homogeneous polynomial rings.  In particular, the objects that we are looking at are stated in the assumptions of the main theorem which is given by the following.   

\begin{theorem}[Main theorem]
Let $A=k[x_1,...,x_n]/J$ be a graded ring where $J$ is a proper homogeneous ideal of $k[x_1,...,x_n]$ such that $E=$ Proj$(A)$ is a smooth projective scheme over some field $k$ of characteristic zero. Let $R=k[x_1,...,x_n]/I$ such that $I_{min}\cong J$ and Spec$(R)$ has only one isolated singularity at the origin.  Then $R$ is a filtered deformation of $A$ along the ideal filtration given by the ideal $\mathfrak{m}=\langle x_1,...,x_n\rangle$, and for each $n<0$ and $i\geq0$, there is a long exact sequence of Hodge components of K-theory and cyclic homology given by
\[...\rightarrow HC_{n+1}^{(i)}(E/\mathbb{Q})\rightarrow\tilde{K}^{(i+1)}_{n}(R)\rightarrow HC_{n}^{(i)}(\text{Proj}(R[\mathfrak{m}t])/\mathbb{Q}) \rightarrow HC_{n}^{(i)}(E/\mathbb{Q})\rightarrow\tilde{K}^{(i+1)}_{n-1}(R)\rightarrow... .\] 
\end{theorem}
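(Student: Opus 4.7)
The plan is to combine three ingredients: the fundamental fiber sequence $\tilde K \to HC \to \mathbb{H}_{cdh}(-,\textbf{HC}(-/\mathbb{Q}))$ recalled in the introduction, cdh-descent for cyclic homology applied to the blowup square at the origin, and the observation that $\tilde X := \text{Proj}(R[\mathfrak{m}t])$ provides an \emph{explicit} resolution of singularities of $\text{Spec}(R)$ whose exceptional divisor is precisely $E$.

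First I would verify that $R$ is a filtered deformation of $A$ along the $\mathfrak{m}$-adic filtration: by construction, the associated graded is $k[x_1,\dots,x_n]/I_{min}$, which is isomorphic to $A$ by the hypothesis $I_{min}\cong J$, so $\text{gr}_{\mathfrak{m}}(R)\cong A$. Next I would identify $\tilde X = \text{Proj}(R[\mathfrak{m}t])$, the blowup of $\text{Spec}(R)$ at $V(\mathfrak{m})=\{0\}$, as the desired resolution. The exceptional fiber over the origin is $\text{Proj}(\text{gr}_{\mathfrak{m}}(R)) = \text{Proj}(A) = E$, smooth by hypothesis, and since the only singularity of $\text{Spec}(R)$ lies at $\{0\}$, $\tilde X$ is smooth off $E$. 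Smoothness along $E$ is checked locally on each affine chart $D_{+}(x_i)$ of the blowup: the relations of $I$, rewritten in the coordinates $(x_i, x_j/x_i)$, have leading forms that cut out $E$ transversally, and smoothness of $E$ together with the isolated-singularity assumption forces smoothness of $\tilde X$ in a neighborhood of its exceptional divisor. I expect this local smoothness analysis to be the main technical obstacle in the proof, as it is the one place where the filtered-deformation structure is used in an essential, non-formal way.

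Once $\tilde X$ is a smooth resolution, the square formed by $E\hookrightarrow \tilde X$ over $\{0\}\hookrightarrow \text{Spec}(R)$ is an abstract blowup square. By cdh-descent for the cyclic homology spectrum (Corti\~nas--Haesemeyer--Schlichting--Weibel), this produces a homotopy fiber sequence of cdh-fibrant spectra, and since $\{0\}$, $\tilde X$, and $E$ are all smooth, their cdh-fibrant $\textbf{HC}$ reduces to ordinary $HC$. Taking homotopy groups, decomposing by Hodge weight $(i)$, and using that $HC^{(i)}_n(k/\mathbb{Q})=0$ for $n<0$, we obtain a Mayer--Vietoris-type long exact sequence
\[\cdots\to HC^{(i)}_{n+1}(E/\mathbb{Q})\to \mathbb{H}^{-n}_{cdh}(X,\textbf{HC}^{(i)})\to HC^{(i)}_n(\tilde X/\mathbb{Q})\to HC^{(i)}_n(E/\mathbb{Q})\to \mathbb{H}^{-n+1}_{cdh}(X,\textbf{HC}^{(i)})\to\cdots\]
expressing $\mathbb{H}^{*}_{cdh}(X,\textbf{HC}^{(i)})$ in terms of the cyclic homology of the smooth schemes $E$ and $\tilde X$.

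Finally, I would feed this into the fundamental fiber sequence in Hodge weight $(i)$, which reads $\tilde K^{(i+1)}_n(R)\to HC^{(i)}_n(R/\mathbb{Q})\to \mathbb{H}^{-n}_{cdh}(X,\textbf{HC}^{(i)})\to \tilde K^{(i+1)}_{n-1}(R)$, with the weight shift from $(i+1)$ to $(i)$ being the classical Chern-character shift. Since cyclic homology of a commutative $\mathbb{Q}$-algebra is concentrated in non-negative degrees, $HC^{(i)}_n(R/\mathbb{Q})=0$ for all $n<0$, and the LES collapses to identify $\mathbb{H}^{-n}_{cdh}(X,\textbf{HC}^{(i)})$ with the appropriate $\tilde K^{(i+1)}$-group in negative degree. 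Substituting these identifications into the Mayer--Vietoris LES above and re-indexing produces the long exact sequence asserted in the theorem; the remainder is then a formal diagram chase, and the only genuine input beyond descent is the smoothness of $\tilde X$ along $E$ addressed in the previous paragraph.
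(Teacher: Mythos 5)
Your overall architecture coincides with the paper's: identify $\mathrm{gr}_{\mathfrak{m}}(R)\cong A$ via Lemma \ref{associatedgraded}, form the abstract blowup square at the origin with exceptional fiber $E=\mathrm{Proj}(A)$, run the Mayer--Vietoris sequence for cdh-fibrant cyclic homology on Hodge components, and then use the vanishing $HC_n^{(i)}=0$ for $n<0$ on affine schemes (Corollaries \ref{cycliczero} and \ref{kHodgenegative}) to replace $\mathbb{H}^{-n}_{cdh}(\mathrm{Spec}(R),\textbf{HC}^{(i)})$ by $\tilde{K}^{(i+1)}_n(R)$ and kill the $\{0\}$ term. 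That entire formal part is exactly the paper's argument.

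The genuine gap is at the step you yourself flag as the crux: smoothness of $\mathrm{Proj}(R[\mathfrak{m}t])$. Your proposal is to check this on the charts $D_+(x_i)$ by arguing that ``the relations of $I$, rewritten in the coordinates $(x_i,x_j/x_i)$, have leading forms that cut out $E$ transversally.'' This is not carried out, and as stated it is not a reliable route: the exceptional divisor in a chart is cut out by the ideal $x_i$, not by leading forms of relations, and --- as the paper warns in Section 1 --- $I_{min}$ need not be generated by the leading forms $(f_j)_{min}$ of a generating set of $I$, so a generator-by-generator chart computation can miss relations. The paper replaces this with two abstract lemmas. Lemma \ref{globalblowup}: blowups commute with localization, so since $\mathrm{Spec}(R)$ is singular only at the origin, regularity of $\mathrm{Proj}(R[\mathfrak{m}t])$ need only be checked at points over $\mathfrak{m}$, reducing to the blowup of the local ring $R_{\mathfrak{m}}$. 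Lemma \ref{localblowup}: for a local base ring, at each relevant point $P$ of the Rees algebra some generator $f_i$ of the blown-up ideal satisfies $f_it\notin P$, whence $IA[It]_P=f_iA[It]_P$ is principal on a nonzerodivisor; regularity of the exceptional fiber's local ring $A[It]_P/f_iA[It]_P$ then forces finite global dimension, hence regularity, of $A[It]_P$. You would need to supply an argument of this kind (or an honest Jacobian computation handling the $I_{min}$ subtlety) for the proof to be complete; everything else in your proposal goes through as written.
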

(See \ref{fmin} and \ref{imin} for the definition of $I_{min}$)

We can get some intuition on why such objects from the main theorem are computable using this framework that was, as stated above, used to compute things like the K-theory of cones over smooth varieties.  Geometrically speaking, we already have some nice results relating to the resolution of cones and cone like singularities.  Furthermore, the resolution and the fiber of the resolution are related to each other in a nice enough way that doing computations, like K-theory computations, while never completely straight forward, is at least aesthetically pleasing.  

In our case, what we are doing is taking these nice cones and deforming them in such a way so as to keep certain properties and lose some others.  Essentially, we are not deforming these spaces too much to the point that we introduce new singularities, or to the point where we change the one isolated singularity's type (which is cone like).  We are, however, deforming these spaces just enough so that computations become much more difficult, and the end result of a partial calculation, is much less aesthetically pleasing.  

After we prove the main theorem, we will state some corollaries that will hopefully make it easier to compute the various components of the cyclic homology.  In particular, using the resulting long exact sequence, we will be able to compute the negative K-theory groups of these filtered deformations, or rather show that they vanish, for low enough degrees.   

As we present these ideas, we will give some secondary results using cdh techniques.  The first is a way to compute the periodic cyclic homology of singular varieties in terms of the de Rham cohomology of the resolution of that variety.  While this is most likely very well known, we will extend this to computations of negative cyclic homology.  This can only be achieved in full when the cyclic homology is known. We will apply this result in some known cases such as the example of hypersurfaces with isolated singularities.  In a paper by R. Michler, she computed the cyclic homology of these spaces.  We will extend these ideas to both the periodic cyclic homology using cdh techniques, and negative cyclic homology using all of our data and the SBI sequence. 

As stated before, not knowing the resolution yields a nebulous calculation in terms of the shapeless resolution.  However, despite these deficiencies, after everything is proved, we are still more equipped to do concrete computations with these secondary results than we are with our main results.

The structure of this paper is as follows.  Section 1 serves as a brief introduction to our objects of study, namely filtered rings and filtered deformations of graded rings.  Section 2 immediately ups the level of abstraction by introducing Grothendieck topologies.  In particular, by using the framework of resolution of singularities, we present our main tools: the cdh-topology and cdh-cohomology.  Section 3 presents K-theory and it's variants as well as cyclic homology and it's variants.  With the cdh-topology in mind we present various descent properties of these constructions.  Also in this section we present the general framework for making K-theory computations by using resolution of singularities and cyclic homology computations.  In section 4 we present our first new results which are the secondary results of this paper extending the cyclic homology computations to the other theories.  Section 5 presents all of our main results, and in particular, we will prove the main theorem.  
In the appendices, we will restate some of our secondary results along with their associated big diagrams that are hybrids of two long exact sequences.  

\newpage 

\begin{notation}
So as to reduce the number of times that we have to restate certain assumptions, we will adopt the following notation.  

The category Sch/$k$ refers to the category of schemes essentially of finite type over a field $k$.  For most of the results, we will need for the field $k$ to be of characteristic zero, but in most cases, we will try to restate this assumption.   The category Sm/$k$ refers to the subcategory of smooth schemes essentially of finite type over a field $k$.  In general, other than for the important main theorems, we will not restate the assumption of essentially of finite type, and it should be assumed as an assumption for each result.  Varieties, smooth or otherwise, are essentially of finite type over a field $k$ and therefore are contained in Sch/$k$.  

For a variety, or scheme, $X\in$ Sch/$k$ where $k$ is a field characteristic zero, we use the following conventions for the resolution of $X$.  For any result stated in terms of the letters $Y$, $Z$, and $E$, unless otherwise stated these will refer to the objects in an abstract blowup square.  By this we mean, the abstract blowup square,

\[\begin{CD}
E @>>> Y\\
@VVV @VVV\\
Z @>>> X 
\end{CD}\] 

\vspace{5mm}

\noindent describes the resolution of $X$, where $Y$ is the resolution, $Z$ is the center of the resolution, and $E$ is the exceptional fiber of the resolution.  The only other likely scenario where we use all of these letters together is to describe a blowup which may not resolve all of the singularities of $X$.  

Other notation conventions will be introduced as needed.   
\end{notation}

\newpage
\section{Filtered rings and filtered deformations}
In this section we will discuss our basic objects of study.  While this will only be a very brief introduction, we will still state and prove some results that we will need later for our main results.  
\subsection{Filtered rings}
Filtered rings are a generalization of graded rings in the obvious sense, instead of a grading, we have a filtration.  Before we can prove any results, we need to adopt certain definitions and conventions about filtered rings.  
\begin{definition}
A (descending) filtration on a ring $R$ is a sequence of subgroups of $R$ under addition, $F=\{F_i\}_{i\in\mathbb{N}}$, such that $R=F_0\supset F_1\supset F_2\supset ...\supset \cap^{\infty}F_n=\{0\}$.  
\end{definition}
With this definition, we can now define what it means to be a filtered ring.  
\begin{definition}
A filtered ring is a ring $R$ with a filtration $F=\{F_i\}_{i\in\mathbb{N}}$ that is compatible with the multiplication on $R$ in the following sense, for all $i,j\in\mathbb{N}$, $F_i\cdot F_j\subset F_{i+j}$.  
\end{definition}
The most natural type of filtration, and the only type that will be beneficial for us, is a filtration by an ideal.  
\begin{definition}
A ring $R$ is said to have a filtration by an ideal $I$, or is filtered by $I$, if the filtration is given by powers of $I$, $F=\{F_n\}_{n\in\mathbb{N}}=\{I^n\}_{n\in\mathbb{N}}$.
\end{definition}  
There are other filtrations that you can impose on ring, but they are usually quite close to filtrations by ideals, they mostly just shift index or insert zeros so that it is not exactly like an ideal filtration.  

\begin{observation}
Let $R$ be a graded ring, $R=\bigoplus_{i=0}^\infty R_i$, then there is a natural filtration on $R$ given by $F_n=\bigoplus_{i=n}^\infty R_i$.  This, in general, is not the only filtration we can impose on a graded ring.  
\end{observation}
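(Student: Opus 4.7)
The plan is to verify directly that the proposed sequence $F_n = \bigoplus_{i \geq n} R_i$ satisfies the four conditions laid out in the preceding definitions, namely: each $F_n$ is an additive subgroup, $R = F_0 \supset F_1 \supset \cdots$, $\bigcap_n F_n = \{0\}$, and $F_i \cdot F_j \subset F_{i+j}$. None of these is deep; the only point requiring a small argument is that the intersection is trivial.

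First I would note that each $F_n$, being a sum of the additive subgroups $R_i$ for $i \geq n$, is itself an additive subgroup; the inclusion $F_{n+1} \subset F_n$ and the equality $F_0 = R$ are immediate from the graded decomposition. For the intersection I would use that $R = \bigoplus R_i$ is a direct sum, so every $x \in R$ admits a unique, finite-support decomposition $x = \sum_i x_i$ with $x_i \in R_i$. If $x \in \bigcap_n F_n$ and there were a largest index $k$ with $x_k \neq 0$, then membership $x \in F_{k+1}$ would force $x_k = 0$ by uniqueness of the decomposition, a contradiction; hence $x = 0$.

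For the multiplicative compatibility, I would pick $a \in F_i$ and $b \in F_j$, write $a = \sum_{p \geq i} a_p$ and $b = \sum_{q \geq j} b_q$, and expand $ab = \sum_{p,q} a_p b_q$. The graded-ring axiom $R_p \cdot R_q \subset R_{p+q}$ places each term in $R_{p+q}$ with $p + q \geq i + j$, so $ab \in F_{i+j}$ as required.

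Finally, to substantiate the accompanying remark that this is not the only filtration one can impose, I would exhibit a concrete alternative: for instance, filter by the powers of a proper homogeneous ideal $\mathfrak{a}$ strictly contained in the irrelevant ideal $\bigoplus_{i \geq 1} R_i$, or simply reindex to insert zeros. The hardest part, if one can call it that, is bookkeeping the indices in the intersection step; everything else is unpacking definitions.
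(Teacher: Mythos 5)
Your verification is correct, and it is exactly the direct check of the definitions that the paper leaves implicit (the observation is stated without any proof); the key step, that $\bigcap_n F_n=\{0\}$ follows from uniqueness and finite support of the graded decomposition, is handled properly. One small caveat on the non-uniqueness remark: powers of an arbitrary proper homogeneous ideal $\mathfrak{a}$ need not satisfy $\bigcap_n \mathfrak{a}^n=\{0\}$ in a general graded ring, so the safer alternative you mention---shifting indices or inserting repeats into the natural filtration---is the one to rely on.
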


While filtered rings are more general objects than graded rings, there is a canonical way to recover a graded ring from a filtered ring.  This is by constructing the associated graded ring to a filtered ring $R$ with respect to the filtration $F$, written $gr_F(R)$.  

\begin{definition}
Let $R$ be a filtered ring with filtration $F=\{F_n\}_{n\in\mathbb{N}}$.  We define the associated graded ring of $R$ with respect to $F$ as \[gr_F(R)=R/F_1\oplus F_1/F_2\oplus F_2/F_3\oplus...=\bigoplus_{i=0}^{\infty} F_i/F_{i+1},\] where addition is defined componentwise and multiplication is defined as follows.  If $a=[a]\in F_m/F_{m+1}$ and $b=[b]\in F_n/F_{n+1}$ (where $[x]$ denotes equivalence class) then $a\cdot b=[ab]\in F_{m+n}/F_{m+n+1}$.  
\end{definition}

This clearly defines a graded ring, moreover, this is the right way to define such a graded ring from a filtered ring.  The reason is, if you start out with a graded ring and take the natural filtration given by the grading, and then compute $gr_F$ of that filtered ring, then you get your original ring back.   

\begin{proposition}
Let $R=\bigoplus_{i=0}^\infty R_i$ be a graded ring and impose the natural filtration $F=\{F_n\}_{n\in\mathbb{N}}$ given by $F_n=\bigoplus_{i=n}^\infty R_i$ on $R$.  Then $gr_F(R)=R$.  
\end{proposition}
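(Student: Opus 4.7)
The plan is to produce an explicit isomorphism of graded rings $\varphi\colon gr_F(R)\to R$ by identifying each quotient $F_n/F_{n+1}$ with the homogeneous piece $R_n$, and then checking that the pieces assemble into a ring homomorphism whose inverse is the obvious inclusion.

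First I would unpack the filtration. Since $F_n = \bigoplus_{i\geq n} R_i$ and $F_{n+1} = \bigoplus_{i\geq n+1} R_i$, the inclusion $R_n \hookrightarrow F_n$ composed with the quotient map $F_n \twoheadrightarrow F_n/F_{n+1}$ is a map of abelian groups whose kernel is $R_n \cap F_{n+1} = 0$ and whose image is all of $F_n/F_{n+1}$ (every element of $F_n$ differs from its degree-$n$ part by something in $F_{n+1}$). This gives canonical group isomorphisms $\psi_n\colon R_n \xrightarrow{\sim} F_n/F_{n+1}$ for every $n\geq 0$, and taking direct sums yields a group isomorphism $\psi = \bigoplus_n \psi_n\colon R \xrightarrow{\sim} gr_F(R)$.

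Next I would verify that $\psi$ is multiplicative. Pick homogeneous elements $a\in R_m$ and $b\in R_n$. Then $\psi_m(a) = [a] \in F_m/F_{m+1}$ and $\psi_n(b) = [b] \in F_n/F_{n+1}$, and by the definition of multiplication on $gr_F(R)$ we have $\psi_m(a)\cdot \psi_n(b) = [ab] \in F_{m+n}/F_{m+n+1}$. Since $ab\in R_{m+n}$ (as $R$ is graded), this is exactly $\psi_{m+n}(ab)$. By $\mathbb{Z}$-bilinear extension, $\psi(ab)=\psi(a)\psi(b)$ for arbitrary (not necessarily homogeneous) $a,b\in R$. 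Compatibility with the unit and with addition is immediate. Finally, $\psi$ sends the grading piece $R_n$ to $F_n/F_{n+1}$, so it is a homomorphism of graded rings.

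I do not anticipate any real obstacle here; the only subtlety worth flagging is the choice of direction of the identification (whether to go $R\to gr_F(R)$ via the canonical lift of homogeneous components, or $gr_F(R)\to R$ by the recipe ``choose a representative and take its degree-$n$ part''). Both give the same isomorphism, but stating the map $R\to gr_F(R)$ is cleaner because it avoids any appearance of a choice. Once $\psi$ is written down, the claim follows with no further calculation.
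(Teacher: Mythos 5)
Your proof is correct and follows essentially the same route as the paper: identify each quotient $F_n/F_{n+1}$ with $R_n$ and sum over $n$. You are in fact more careful than the paper, which only records the additive identification and does not explicitly check that the multiplication on $gr_F(R)$ matches that of $R$; your verification on homogeneous elements fills that small gap.
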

\begin{proof}
For each $n$, \[F_n/F_{n+1}=\bigoplus_{i=n}^\infty R_i\Bigg /\bigoplus_{i=n+1}^\infty R_i=R_n.\] Therefore $gr_F(R)=\bigoplus_{i=0}^{\infty} F_i/F_{i+1}=\bigoplus_{i=0}^\infty R_i=R.$ 
\end{proof}

Turning our focus to ideal filtrations, we have a slight change in terminology.  For a ring $R$ filtered by an ideal $I$, we denote the associated graded ring of $R$ with respect to the ideal $I$ by $gr_I(R)$.   

\begin{definition}
Let $R$ be a ring and $I$ an ideal of $R$.  We define the associated graded ring of $R$ with respect to $I$ as \[gr_I(R)=R/I\oplus I/I^2\oplus I^2/I^3\oplus...=\bigoplus_{i=0}^{\infty} I^i/I^{i+1},\] where the operations are the same as before with associated graded rings with respect to filtrations.  
\end{definition}
\begin{notation}
For the most part, we will use the notation, $gr_I(R)=\bigoplus_{i=0}^{\infty} I^i/I^{i+1}$ for the associated graded ring.  Later, however, when we speak about the blowup algebras, including the Rees algebra associated to an ideal, we may use the alternative notation $R[It]/IR[It]$ where $t$ is an indeterminate keeping track of the grading.  A simple argument shows that this is identical to the construction $\bigoplus_{i=0}^{\infty} I^i/I^{i+1}$, but in some cases it is more useful in that it is a quotient of the Rees algebra.  
\end{notation}

After we present the following definitions, we will present some examples of filtered rings and their associated graded rings. 

\begin{definition}\label{fmin}For a polynomial in the standard grading, we define $f(x_1,...,x_n)_{min}$ to be the minimal degree pieces of $f$ (e.g. if $f(x_1,x_2)=x_1^2-x_2^2+x_2^3$ then $f(x_1,x_2)_{min}=x_1^2-x_2^2$), where we sometimes write $f(x_1,...,x_n)_{min}$ as $(f)_{min}$.    
\end{definition}

\begin{definition}\label{imin}
Let $I$ be an ideal in a polynomial ring with the standard grading $k[x_1,...,x_n]$.  We define $I_{min}$ to be the ideal composed of the functions $(f)_{min}$ for all $f\in I$. 
\end{definition}

Following the discussion in chapter 3 of \cite{beltrametti2009lectures}, we make the following remarks.  

\begin{warning}
Suppose $I$ is an ideal in $k[x_1,...,x_n]$ with the standard grading generated by $m$ many polynomials, $I=\langle f_1,...,f_m\rangle$.  It is not necessarily the case that $I_{min}$ is generated by the $(f_i)_{min}$'s.  It is clear that $I_{min}\supset \langle(f_1)_{min},...,(f_m)_{min}\rangle$, however there may be more polynomials in $I_{min}$.    
\end{warning}
To illustrate this consider the following example.  
\begin{example}
Let $I=\langle x+y^2,x+z^3\rangle$ be an ideal in $k[x,y,z]$.  It is clear that for the two generating functions of $I$ that $(x+y^2)_{min}=x=(x+z^3)_{min}$, but it is not the case that $I_{min}=\langle x\rangle$.  This is because the difference between these two polynomials, $(x+y^2)-(x+z^3)=y^2-z^3$ is an element of $I$.  Hence $y^2\in I_{min}$, and in this case we have $I_{min}=\langle x,y^2\rangle$
\end{example}
For principle ideals, it is however the case that if $I=\langle f\rangle$, then $I_{min}=\langle (f)_{min}\rangle$.  

Now that we have all of the necessary definitions out of the way, we can proceed by presenting the following results.  
\begin{example}
Let $R=k[[x_1,...,x_n]]/I$ and $\mathfrak{m}$ be it's unique maximal ideal.  Then the associated graded ring to the ideal $\mathfrak{m}$, $gr_{\mathfrak{m}}(R)=\bigoplus_{i=0}^{\infty}\mathfrak{m}^i/\mathfrak{m}^{i+1}$ is isomorphic to $k[x_1,...,x_n]/I_{min}$.  
\end{example}
For this example we will provide a proof.  
\begin{proof}
In order to show that by applying $gr_{\mathfrak{m}}(R)$ that we obtain a normal polynomial ring, we apply $gr_{\mathfrak{m}}$ in the case where $R=k[[x_1,...,x_n]]$.  The 0th graded piece of $gr_{\mathfrak{m}}(R)$ is defined to be $R/\mathfrak{m}=k$.  The 1st graded piece of $gr_{\mathfrak{m}}(R)$ is $\mathfrak{m}/\mathfrak{m}^2$.  Since $\mathfrak{m}$ is generated by monomials of the form $x_i$ and $\mathfrak{m}^2$ is generated by monomials of the form $x_ix_j$ we have that the quotient $\mathfrak{m}/\mathfrak{m}^2$ contains only linear functions, or the standard homogeneous degree 1 polynomials.  In the general case, we have that the $d$th graded piece of $gr_{\mathfrak{m}}(R)$ is $\mathfrak{m}^d/\mathfrak{m}^{d+1}$.  Since $\mathfrak{m}^d$ is generated by monomials which are products of $d$ many of the variables $x_i$ and $\mathfrak{m}^{d+1}$ is generated by monomials which are products of $d+1$ many of the variables we have that the quotient $\mathfrak{m}^d/\mathfrak{m}^{d+1}$ contains only functions that are linear sums of monomials which are products $d$ many of the variables, or alternatively, the standard homogeneous degree $d$ polynomials.  We conclude that $gr_{\mathfrak{m}}(R)$ is the polynomial ring in $n$ variables in the standard grading.  

We now need to determine what happens in the scenario where $R=k[[x_1,...,x_n]]/I$.  Let $g(x_1,...,x_n)$ be an arbitrary power series in $I$.  Take $g(x_1,...,x_n)_{min}$ to be minimal degree parts of $g$.  By construction $(g)_{min}$ is homogeneous of degree $d=$min$\{$degrees of monomials in monomial decomposition of $g\}$.   We have that $(g)_{min}\in\mathfrak{m}^d$, and $-g+(g)_{min}\in\mathfrak{m}^{d+1}$, which means that $(g)_{min}=0$ in the quotient $\mathfrak{m}^d/\mathfrak{m}^{d+1}$ (since $(g)_{min}=-g+(g)_{min}$ in $R$).  We conclude that $(g)_{min}=0$ in $gr_{\mathfrak{m}}(R)$.  Finally, since $g$ was taken as arbitrary, we have this same correspondence for each $g\in I$, i.e. that $(g)_{min}=0$ in $gr_{\mathfrak{m}}(R)$.  This shows one way containment, i.e that $gr_{\mathfrak{m}}(R)=k[x_1,...,x_n]/J$, for some $J$ such that $J\supset I_{min}$.  

Now, suppose $h$ is a homogeneous polynomial such that $h=0\in gr_{\mathfrak{m}}(R)$. This means that for some $c$, some incarnation of $h$ is in both $\mathfrak{m}^c$ and $\mathfrak{m}^{c+1}$.  This can only happen if we can replace $h$ with all higher degree monomials.  This can only happen if $h=(g)_{min}$ for some $g\in I$.  Hence every polynomial $h$ such that $h=0\in gr_{\mathfrak{m}}(R)$ is actually some $(g)_{min}$ for some power series $g\in I$.  Therefore, we have that $J=I_{min}$, and $gr_{\mathfrak{m}}(R)=k[x_1,...,x_n]/I_{min}$.
\end{proof}

The following result could be stated as an example, but we will use this result later so, we call it a lemma instead.  
\begin{lemma}\label{associatedgraded}
Let $R=k[x_1,...,x_n]/I$ for $I=\langle f_1,...,f_r\rangle$, and let $\mathfrak{m}$ be the maximal ideal $\langle x_1,...,x_n\rangle$.  If each $f_i$ has a zero constant term, then the associated graded ring to the ideal $\mathfrak{m}$, $gr_{\mathfrak{m}}(R)=\bigoplus_{i=0}^{\infty}\mathfrak{m}^i/\mathfrak{m}^{i+1}$ is isomorphic to $k[x_1,...,x_n]/I_{min}$ with the standard grading.  
\end{lemma}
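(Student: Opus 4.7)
The plan is to follow the template of the preceding example almost verbatim, since the only essential feature that was used there was that $\mathfrak{m}^d$ consists of elements whose monomial expansion has all terms of degree $\geq d$, and this feature holds equally well in $k[x_1,\dots,x_n]$ as in $k[[x_1,\dots,x_n]]$. The hypothesis that every $f_i$ has zero constant term is what guarantees $I\subset\mathfrak{m}$, so that the image of $\mathfrak{m}$ in $R$ remains a (maximal) ideal and $gr_{\mathfrak{m}}(R)$ is defined in the usual way.

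First I would handle the ambient ring: for $S=k[x_1,\dots,x_n]$, show that $\mathfrak{m}^d$ is spanned as a $k$-vector space by monomials of total degree $\geq d$, and $\mathfrak{m}^{d+1}$ by those of total degree $\geq d+1$. Taking the quotient $\mathfrak{m}^d/\mathfrak{m}^{d+1}$ then identifies with the space of homogeneous polynomials of degree exactly $d$, and assembling over all $d$ recovers $S$ with its standard grading. This gives $gr_{\mathfrak{m}}(S)\cong k[x_1,\dots,x_n]$ as graded rings.

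Next I would pass to the quotient. The natural surjection $S\twoheadrightarrow R$ induces a surjection $gr_{\mathfrak{m}}(S)\twoheadrightarrow gr_{\mathfrak{m}}(R)$, and I need to compute its kernel. For any $g\in I$, let $d$ be the minimum degree appearing in $g$, so $(g)_{min}\in\mathfrak{m}^d$ and $g-(g)_{min}\in\mathfrak{m}^{d+1}$. Since $g=0$ in $R$, the class of $(g)_{min}$ equals the class of $-(g-(g)_{min})$ in $\mathfrak{m}^d/\mathfrak{m}^{d+1}$, and the latter is zero. This shows $I_{min}$ lies in the kernel, producing a surjection $k[x_1,\dots,x_n]/I_{min}\twoheadrightarrow gr_{\mathfrak{m}}(R)$.

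For the reverse containment, suppose $h$ is a homogeneous polynomial of degree $d$ whose class in $\mathfrak{m}^d/\mathfrak{m}^{d+1}$ becomes zero in $gr_{\mathfrak{m}}(R)$. Unwinding the definitions, this means $h\in\mathfrak{m}^d$ can be written in $S$ as $h=p+q$ with $p\in I\cdot S+\mathfrak{m}^{d+1}$ chosen so that $p\in I$ modulo $\mathfrak{m}^{d+1}$; separating out the degree-$d$ part of a representative gives $h=(g)_{min}$ for some $g\in I$, so $h\in I_{min}$. The only subtlety here, and the closest thing to a real obstacle, is being careful about which ideal $\mathfrak{m}^d$ is computed in (in $S$ versus in $R$) when chasing representatives; once that bookkeeping is in order, the argument is just the polynomial analogue of the previous example with no infinite-sum issues to worry about. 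Combining the two containments yields the desired isomorphism as graded rings.
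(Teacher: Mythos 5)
Your proposal is correct and follows essentially the same route as the paper, whose proof of this lemma simply defers to the argument given for the power-series example: establish $gr_{\mathfrak{m}}(k[x_1,\dots,x_n])\cong k[x_1,\dots,x_n]$, show each $(g)_{min}$ dies in the quotient, and then argue conversely that any homogeneous $h$ vanishing in $gr_{\mathfrak{m}}(R)$ must be $(g)_{min}$ for some $g\in I$. Your version is in fact slightly cleaner in making explicit the induced surjection $gr_{\mathfrak{m}}(S)\twoheadrightarrow gr_{\mathfrak{m}}(R)$ and the bookkeeping of where $\mathfrak{m}^d$ is computed, but the substance is identical.
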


\begin{proof}
The proof of this lemma is identical to the argument given in the proof of the previous example.  
\end{proof}
For more algebraic details on the associated graded ring, please see \cite{atiyah1969introduction}.

This idea of the associated graded ring has a geometric interpretation.  In particular, it describes the behavior of the tangents to a scheme at a particular point.  There is a well known space, called the tangent space, that describes the tangent behavior at various points.  The problem is, the tangent space can in some cases contain too much information, and it therefore does not accurately describe the tangent behavior at certain points, namely singular points.  For these scenarios, we adopt the following construction.  

\begin{definition}
For a point $P$ on an affine scheme Spec$(R)$ (where $P$ is a prime ideal in $R$), the tangent cone at $P$ is defined to be Spec$(gr_{P_P}(R_P))$.     
\end{definition}

\begin{note}
This definition can of course be extended to non-affine schemes in the following way.  For a scheme $X$, and a point $x\in X$, we have that the stalk of the structure sheaf $\mathcal{O}_{X,x}$ is a local ring with one maximal ideal $\mathfrak{m}$.  The tangent cone at $x$ is given by \[gr_{\mathfrak{m}}(\mathcal{O}_{X,x})=\bigoplus_{i=0}^{\infty}\mathfrak{m}^i/\mathfrak{m}^{i+1}.\]
\end{note}

The best way to gain an intuition about the geometric behavior of this tangent cone construction is to look at an example.  The standard example to look at is the nodal cubic.  Let $R=k[x,y]/\langle y^2-x^2-x^3\rangle$ be the coordinate ring defining the nodal cubic.  We have that $X=$ Spec$(R)$ is given by the following curve (to visualize $k$ is something like $\mathbb{R}$).  
\begin{center}
\begin{tikzpicture}
    \begin{axis}[axis lines=none,]
            \addplot [domain=-1.25:1.25,samples=100]({x^2-1},{x^3-x}) 
             node[pos=.65,pin=273:{$y^2-x^2-x^3$}] {};
    \end{axis}
\end{tikzpicture}
\end{center}
In this scenario, we will first discuss the inaccuracies of the tangent space.  Since $X$ is locally defined by equations (namely the equation $f=y^2-x^2-x^3$), we can compute the tangent space at a point by using the partial derivatives.  By this we mean for each function $f_j$ defining $X$ (locally), and for a point $p=(p_1,...,p_n)$ we take the collection $g_j=\sum_{i=1}^{n}\frac{\partial f_j}{\partial x_i}(p)(x_i-p_i)$. The zero set of each of these $g_j$ is the tangent space at $p$, $T_p(X)$.  

For this example of the nodal cubic, let's compute the tangent space at two points, (-1,0), and  the origin (0,0).  For the point $p=(-1,0)$, we have that $T_{(-1,0)}(X)=V(x+1)$ (where $V(x+1)$ just means the collection of all points in $\mathbb{A}^2$ where $x+1=0$).   Visualizing this on our picture of the nodal cubic, we have that the tangent space behaves as we want.  
\begin{center}
\begin{tikzpicture}
    \begin{axis}[axis lines=none,]
            \addplot [color=gray,domain=-1.25:1.25,samples=100]({x^2-1},{x^3-x}) 
             node[pos=.65,pin=273:{$y^2-x^2-x^3$}] {};
            \addplot [domain=-.7:.7,samples=100]({-1},{x})
            node[pos=.85,pin=60:{$x+1$}] {};
    \end{axis}
\end{tikzpicture}
\end{center}
Based on our intuition and the picture, the tangent space does a pretty good job describing the tangent behavior at $(-1,0)$.  

Now, let's look at the tangent space at the origin. Going through a similar computation using partial derivatives, we have that the tangent space at the origin is \[T_{(0,0)}(X)=V(0)=\mathbb{A}^2.\]  This is the entire affine plane, and really does not accurately describe the tangent behavior of anything other than a surface, which is definitely not equal to our cubic curve.   

The solution to this problem is to look at the tangent cone as defined above.  Call $\langle x,y\rangle=\mathfrak{m}$, the ideal defining the point at the origin (0,0).  Using our definition from above, the tangent cone of $X$ at the point (0,0), $TC_{(0,0)}(X)$, is equal to Spec$(gr_{\mathfrak{m}_{\mathfrak{m}}}(R_{\mathfrak{m}}))$.  Using lemma \ref{grlocalization}, and computations carried out in the proof of theorem \ref{app1}, we know that in this case, \[gr_{\mathfrak{m}_{\mathfrak{m}}}(R_{\mathfrak{m}})=gr_{\mathfrak{m}}(R)=k[x,y]/\langle y^2-x^2-x^3\rangle_{min}=k[x,y]/\langle y^2-x^2\rangle=k[x,y]/\langle (y-x)(y+x)\rangle.\]  So, our computation shows that the tangent cone $TC_{(0,0)}(R)=$ Spec$(k[x,y]/\langle (y-x)(y+x)\rangle)$.  Visualizing this on our graph of the cubic shows that the tangent behavior is indeed modeled more accurately with the tangent cone.  

\begin{center}
\begin{tikzpicture}
    \begin{axis}[axis lines=none,]
            \addplot [color=gray,domain=-1.25:1.25,samples=100]({x^2-1},{x^3-x}) 
             node[pos=.55,pin=70:{$y^2-x^2-x^3$}] {};
            \addplot [domain=-.7:.7,samples=100]({x},{x})
            node[pos=.8,pin=290:{$y-x$}] {};
            \addplot [domain=-.7:.7,samples=100]({x},{-x})
            node[pos=.8,pin=60:{$y+x$}] {};
    \end{axis}
\end{tikzpicture}
\end{center}
As we can see from the picture, the tangent cone gives one tangent line for each of two pieces of the nodal cubic crossing at the origin.  For more (or less) complicated examples, the tangent cone will behave in the same way.  Essentially looking at the tangent line leading up to the singular point and then at the singular point it will give one tangent line for each of the sequences leading up to that point.  

It is more work, but one can show that the tangent cone at the point (-1,0) agrees with our computation of the tangent space.  Although doing this directly is possible (and the way we computed the tangent cone), the easiest way to do this would probably be to do a coordinate change and you would end up with a computation similar to that from above. 

For more description and geometric properties of the tangent cone, see section 5.4 of \cite{eisenbud1995commutative}, and section III.2.4 of \cite{eisenbud2000geometry}.

\subsection{Deformations}
For anyone interested in deformation theory, this paper is not a deformation theory paper.  When studying the paper on cones over smooth projective varieties, \cite{cortinas2009k}, it was thought that we could generalize those results to different singular varieties.  When attempting to do so, we discovered a class of rings (or equivalently affine varieties) in which the K-theory could be computed or partially computed.  It was only after the fact that we determined that what we were looking at were filtered deformations of the rings defining these cones over smooth varieties. 

With that being said, we will still try to give a brief description, in very basic terms, of what we mean by a deformation, and why we call our objects of study deformations of certain algebras with nice properties.  

We begin by looking at a general description of an algebra deformation, for which we will provide specific examples.  This idea of deforming algebras was first studied by M. Gerstenhaber in his series of papers beginning with \cite{gerstenhaber1964deformation}.  While this is the best reference to look at for these types of deformations, for a novice in the field, a more readable introduction (and the reference that we use for this presentation) is \cite{fox1993introduction}.  

\begin{idea}
A deformation of an object $X$ essentially refers to a family of $X_t$, where the $X_t$ are obtained by deforming certain aspects of some structure on $X$, and the structure on $X_t$ varies in a smooth way according to the parameter $t$.  For an algebra $A$, the structure that we usually consider deforming is the multiplicative structure on $A$ resulting in some family $A_t$.  This can be achieved in various ways, but in the case of the coordinate ring defining a variety, the most intuitive way is to alter in a specific way the equations defining that variety.  In particular, we will consider adding higher degree pieces with varying coefficients.  
\end{idea}

\begin{notation}
According to the previous description of deformation, it would seem that the entire family of objects $X_t$ refers to the deformation of $X$.  While this may be true in some cases, we will mostly refer to a specific object $X_{t_0}$ in the family $X_t$ as a deformation of $X$, not the whole family.  
\end{notation}

We will attempt a more formal description and definition, and we will follow these with some motivating examples.  
\begin{definition}
A one parameter formal deformation of a $k$-algebra $A$ is a formal power series $F=\sum_{n=0}^{\infty} f_n t^n$ where each of the $f_n$ is a morphism $f_n:A\otimes A\rightarrow A$, and such that $f_0:A\otimes A\rightarrow A$ is the standard multiplication in $A$.  
\end{definition}
We will try to parse through this language a little bit.  For the algebra $A$, we have a multiplicative structure on $A$ given by the map $f_0:A\otimes A\rightarrow A$.  We can change that multiplicative structure by using a collection of different morphisms $f_n: A\otimes A\rightarrow A$.  In order to vary this multiplicative structure in a smooth way according to the parameter $t$, we introduce this formal sum.  This gives the smooth variation on the multiplicative structure, so as $t$ varies from 0 to say a, the standard  multiplicative structure transforms smoothly into the structure $\sum_{n=0}^{\infty} f_n a^n: A\otimes A\rightarrow A$.  

This still seems a little mysterious, so let's take a look at some examples.   
\begin{example}
Let $A=k[x]/\langle x^2\rangle$ be the truncated polynomial ring in one variable.  This is a very basic $k$-algebra with the multiplication rule given by $f_0(x,x)=0$, (and then coefficients in $k$ behave as they should in conjunction with this rule).  Now, let $f_1:A\otimes A\rightarrow A$ be another multiplication rule given by $f_1(x,x)=1$.  From these two rules, we can define a power series $F=\sum_{n=0}^{\infty} f_n t^n=f_+f_1t$, and hence a one parameter formal deformation defining the multiplication rule on $A_t$.  Under this multiplication rule, we have that $F(x,x)=f_0(x,x)+f_1(x,x)t=0+1t=t$.  So in $A_t$ we have that $x^2=t$. It is not hard to see then that the objects in the family of deformations are of the form $A_t=k[x,t]/\langle x^2-t\rangle$
\end{example}
While this is a very basic example of a deformation, it shows that we can obtain deformations through the alteration of the equations defining a variety, or rather the coordinate ring of a variety.   
\begin{example}
Let $A=k[x,y]/\langle y^2-x^2\rangle$.  Let's go the other way now, take $y^2-x^2-tx^3$ to be the original equation with the addition of higher order terms.  This defines a deformation $A_t=k[x,y,t]/\langle y^2-x^2-tx^3\rangle$ in the following way.  The multiplication $f_0$ is inherited from $A$ so that things like $f_0(y,y)=x^2$.  Then to this we add only one other multiplication rule, $f_1$ (times the variable $t$), where for $f_1$, we have that $f_1(y,y)=x^3$ (and other multiples are defined accordingly).  It is not hard to see then for the complete multiplication rule on $A_t$, we have $F(y,y)=f_0(y,y)+f_1(y,y)t=x^2+tx^3$, as expected.  For more details see example 1.1 and the discussion after 1.2 in \cite{fox1993introduction}.  
\end{example}

The previous two examples were chosen because they are both specific examples of deformations where we alter the equation defining a variety, and one just happens to be the sum of our defining equation with higher order terms.  In the second example, we showed that we could deform the slanted cross, defined by $y^2-x^2$, into the nodal cubic, defined by $y^2-x^2-x^3$.  We can get an idea of what is going on with this type of deformation by looking at the following picture.  Taking the equation $y^2-x^2-tx^3$, and varying the parameter $t$, we observe the following.

\begin{center}
\begin{tikzpicture}
    \begin{axis}[axis lines=none, scale=2.25]
            \addplot [line width=2.25pt,domain=-7:7,samples=100]({x},{x})
            node[pos=.8,pin=290:{$t=0$}] {};
            \addplot [line width=2pt,domain=-7:7,samples=100]({x},{-x});
            \addplot [line width=1.5pt,domain=-1.255:1.255,samples=100]({10*x^2-10},{10*x^3-10*x}) 
             node[pos=.6,pin=260:{$t=\frac{1}{10}$}] {};
            \addplot [line width=1.0pt,domain=-1.3:1.3,samples=100]({8*x^2-8},{8*x^3-8*x}) 
             node[pos=.49,pin=100:{$t=\frac{1}{8}$}] {};
            \addplot [color=darkgray,line width=.7pt,domain=-1.37:1.37,samples=100]({6*x^2-6},{6*x^3-6*x}) 
             node[pos=.57,pin=160:{$t=\frac{1}{6}$}] {};
            \addplot [color=darkgray,line width=.5pt,domain=-1.49:1.49,samples=100]({4*x^2-4},{4*x^3-4*x}) 
             node[pos=.45,pin=210:{$t=\frac{1}{4}$}] {};
            \addplot [color=gray,line width=.3pt,domain=-1.75:1.75,samples=100]({2*x^2-2},{2*x^3-2*x})
             node[pos=.48,pin=250:{$t=\frac{1}{2}$}] {};
            \addplot [color=gray,line width=.1pt,domain=-2.1:2.1,samples=100]({x^2-1},{x^3-x}) 
             node[pos=.54,pin=273:{$t=1$}] {};
    \end{axis}
\end{tikzpicture}
\end{center}
Essentially, at $t=0$ we begin with a slanted cross, and then as the parameter $t$ increases, that cross folds in on itself (on the left hand side) into the standard nodal cubic. While this is a very basic example of a deformation, it is examples like this that will give us intuition on the geometric nature of these deformations.  

As we have already seen, the coordinate ring of the nodal cubic,  $k[x,y]/\langle y^2-x^2-x^3\rangle$, is a filtered ring with filtration given by the ideal $\mathfrak{m}=\langle x,y\rangle$.  Moreover, the associated graded ring to this filtration is the slanted cross again, $k[x,y]/\langle y^2-x^2\rangle$.  This seems to suggest that deformations are related to these associated graded ring constructions in a particular way.  
 
This idea leads to the following discussion of filtered deformations.  While in the previous more specific example we already have that $k[x,y]/\langle y^2-x^2-x^3\rangle$ is a deformation of $k[x,y]/\langle y^2-x^2\rangle$, but we are now more concerned with the general case of filtered rings and their associated graded rings.  
\begin{definition}
Let $A$ be a graded ring.  We say that a filtered ring $R$ with filtration $F$ is a filtered deformation of $A$ if $gr_F(R)=A$.  
\end{definition}
This definition yields the following corollary of an earlier result.    
\begin{corollary}\label{associatedgradedcorollary}
Let $R=k[x_1,...,x_n]/I$, where for $I=\langle f_1,...,f_r\rangle$ each $f_i$ has a zero constant term.  Then, $R$ is a filtered deformation of $A=k[x_1,...,x_n]/I_{min}$.  
\end{corollary}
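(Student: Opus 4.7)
The plan is to show that this claim is essentially a repackaging of Lemma \ref{associatedgraded} into the language of filtered deformations introduced just before, so no substantially new argument is needed.

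First I would equip $R$ with the ideal filtration $F = \{\mathfrak{m}^n\}_{n\in\mathbb{N}}$ coming from the maximal ideal $\mathfrak{m} = \langle x_1,\ldots,x_n\rangle$. The verification that this is a filtration in the sense of the paper is a quick check: the chain $R = \mathfrak{m}^0 \supset \mathfrak{m} \supset \mathfrak{m}^2 \supset \cdots$ is a descending sequence of additive subgroups by definition of powers of an ideal, and the multiplicative compatibility $\mathfrak{m}^i \cdot \mathfrak{m}^j \subset \mathfrak{m}^{i+j}$ is immediate. The intersection condition $\bigcap_n \mathfrak{m}^n = 0$ follows from Krull's intersection theorem, since $R$ is a finitely generated $k$-algebra (hence Noetherian) and $\mathfrak{m}$ is contained in the Jacobson radical of $R$ in the cases relevant to the rest of the paper; in any event this is the standard setting for the ideal filtration to produce an honest descending filtration.

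Second, I would invoke Lemma \ref{associatedgraded} directly. The hypothesis of the corollary, that each generator $f_i$ of $I$ has zero constant term, is precisely the hypothesis of that lemma, so the lemma provides the isomorphism $gr_{\mathfrak{m}}(R) \cong k[x_1,\ldots,x_n]/I_{min} = A$ as graded rings in the standard grading.

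Finally, I would appeal to the definition of a filtered deformation: a filtered ring $R$ with filtration $F$ is a filtered deformation of the graded ring $A$ whenever $gr_F(R) \cong A$. Combining the previous two steps with $F = \{\mathfrak{m}^n\}$ yields the conclusion. There is no real obstacle here, since the heavy lifting, namely identifying the associated graded of $R$ with $k[x_1,\ldots,x_n]/I_{min}$, was already carried out in the proof of Lemma \ref{associatedgraded}; the only thing the corollary adds is a change of vocabulary.
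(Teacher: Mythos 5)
Your proposal is correct and follows essentially the same route as the paper: the paper's proof is a one-line appeal to Lemma \ref{associatedgraded} combined with the definition of filtered deformation along the ideal filtration by $\mathfrak{m}$. Your additional verification of the filtration axioms (including the intersection condition) is a reasonable extra care that the paper omits, but it does not change the argument.
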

\begin{proof}
By lemma \ref{associatedgraded}, we know that $gr_{\mathfrak{m}}(k[x_1,...,x_n]/I)=A=k[x_1,...,x_n]/I_{min}$, proving that $R$ is a filtered deformation of $A$ along the ideal filtration given by $\mathfrak{m}$
\end{proof}
This is a nice result, and it is really the only definition we need.  For someone interested in the more general case, the following results show that filtered deformations can be viewed as more traditional deformations of graded algebras.  
\begin{theorem}\label{filtdefaredef}
A filtered ring is a deformation of it's associated graded ring, and hence a filtered deformation is a deformation.  
\end{theorem}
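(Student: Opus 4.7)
The plan is to produce a one-parameter formal deformation of $A = gr_F(R)$ whose multiplication, read off at the formal parameter, recovers the multiplication of $R$. The bridge between the filtered ring $R$ and the graded ring $A$ is a $k$-linear splitting of the filtration: choose subspaces $V_i \subset F_i$ such that $F_i = V_i \oplus F_{i+1}$, so that the composition $V_i \hookrightarrow F_i \twoheadrightarrow F_i/F_{i+1}$ is a $k$-linear isomorphism. Such a splitting exists whenever $k$ is a field, which is the standing assumption throughout the paper. Summing these isomorphisms gives a $k$-linear bijection $\Phi \colon R \xrightarrow{\sim} \bigoplus_i V_i \xrightarrow{\sim} A$, which is typically not a ring homomorphism, and which is the raw material the deformation is built from.

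Next, for homogeneous $a \in A_m$ and $b \in A_n$, I would lift them to $\tilde{a} \in V_m$ and $\tilde{b} \in V_n$ via $\Phi^{-1}$, form the product $\tilde{a}\tilde{b} \in F_{m+n}$ inside $R$, and decompose it along the splitting as $\tilde{a}\tilde{b} = c_0 + c_1 + c_2 + \cdots$ with $c_k \in V_{m+n+k}$. Transporting back along $\Phi$, define $f_k(a,b) \in A_{m+n+k}$ to be the image of $c_k$, and extend each $f_k$ bilinearly over $k$ to a morphism $f_k \colon A \otimes A \to A$. The candidate deformation is then the formal series $F = \sum_{k=0}^{\infty} f_k t^k$.

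I would then verify two things. First, $f_0$ is the standard graded multiplication on $A$: since $\tilde{a}\tilde{b} \equiv c_0 \pmod{F_{m+n+1}}$, the identification $V_{m+n} \cong F_{m+n}/F_{m+n+1}$ sends $c_0$ to exactly the graded product of $a$ and $b$, which is the content of the definition of $gr_F(R)$. Second, $F$ defines an associative multiplication on $A[[t]]$: associativity of the product in $R$ applied to $\tilde{a}(\tilde{b}\tilde{c}) = (\tilde{a}\tilde{b})\tilde{c}$, expanded using the splitting and compared component-by-component in each $V_{m+n+p+k}$, gives precisely the Gerstenhaber relations $\sum_{i+j=k}\bigl(f_i(f_j(a,b),c) - f_i(a,f_j(b,c))\bigr) = 0$ needed for $F$ to be an associative deformation. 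The ``hence'' clause is then immediate: by definition a filtered deformation $R$ of $A$ satisfies $gr_F(R) = A$, so the construction above exhibits $R$ as a deformation of $A$ in the sense of Gerstenhaber.

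The main obstacle is bookkeeping: one must keep three perspectives clear and consistent, namely the intrinsic multiplication on $R$, the graded multiplication on $gr_F(R)$, and the induced formal multiplication on $A[[t]]$, and one must be careful that each $f_k$ is truly $k$-bilinear on $A$ and not merely defined on lifts. A minor subtlety worth flagging but not dwelling on is that the resulting deformation depends on the splitting, but only up to the natural equivalence relation on deformations; since the theorem asserts only the existence of a deformation, this ambiguity is harmless. No step involves serious computation beyond the associativity expansion, which is the standard translation between filtered-associativity and formal-deformation associativity.
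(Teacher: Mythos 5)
The paper does not actually prove this theorem; it defers to Gerstenhaber's second paper and explicitly flags that his argument is ``a proof of a slightly different statement for completed rings.'' Your proposal is the standard splitting argument behind that result, and most of it is sound: the maps $f_k$ are well defined and $k$-bilinear, $f_0$ is the graded product, and associativity in $R$ does yield the Gerstenhaber relations $\sum_{i+j=k}\bigl(f_i(f_j(a,b),c)-f_i(a,f_j(b,c))\bigr)=0$ by comparing components modulo $F_{m+n+p+N}$ for large $N$. So you do produce a genuine one-parameter formal deformation of $A=gr_F(R)$.

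The gap is at the very first step: the claimed $k$-linear bijection $\Phi\colon R\xrightarrow{\sim}A$ does not exist for a general filtered ring. Choosing complements $F_i=V_i\oplus F_{i+1}$ is always possible over a field, and the induced map $\bigoplus_i V_i\to R$ is injective (an element of $V_0\cap F_1$ is zero, and so on inductively), but it need not be surjective: surjectivity requires every element of $R$ to be a \emph{finite} sum of elements of the $V_i$. For $R=k[[x]]$ with the $x$-adic filtration and $V_i=kx^i$ one gets $\bigoplus_i V_i=k[x]\subsetneq k[[x]]$, and the same failure occurs already for $R=k[x]_{(x)}$ with its $\mathfrak m$-adic filtration (the element $1/(1-x)$ is not a finite sum $v_0+\cdots+v_n$). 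Without this bijection your deformation of $A$ cannot be identified with $R$: the product $\tilde a\tilde b$ only satisfies $\tilde a\tilde b-(c_0+\cdots+c_{N-1})\in F_{m+n+N}$ rather than equaling the (possibly non-terminating) sum $\sum_k c_k$, so the specialization ``$t=1$'' of your formal series $\sum_k f_k(a,b)$ need not converge in $A$, and when one completes to force convergence one recovers $\widehat R$ rather than $R$. This is precisely the completeness caveat the paper attributes to Gerstenhaber. The fix is either to add the hypothesis that the filtration is split in the strong sense $R=\bigoplus_i V_i$ (which holds for the finitely generated quotients $k[x_1,\dots,x_n]/I$ with the $\mathfrak m$-adic filtration that the paper actually uses, since polynomials are finite sums of homogeneous components), or to work with complete filtered rings and formal deformations throughout, or to abandon the splitting entirely and exhibit the deformation geometrically via the Rees algebra $\bigoplus_n F_n t^n$, whose fiber at $t=0$ is $gr_F(R)$ and whose fiber at any $t=c\neq 0$ is $R$ with no completeness hypothesis needed.
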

\begin{proof}
This proof is very technical, so we will not give the argument here.  This result was originally proved by M. Gerstenhaber as the main result in the second of his series of papers \cite{gerstenhaber1966deformation2}.  His proof is actually a proof of a slightly different statement for completed rings, however other authors have used and extended his work to the non-completed case as well.  
\end{proof}

While we do not really need any of these general deformation theory results, we now have some idea on the basics.  With the previous result in mind, for future study, it may be possible to extend our main results to more general deformations other than the filtered type.

\newpage
\section{A brief introduction to cdh-cohomology}
For the time being, we are going to depart from our discussion about filtered rings and filtered deformations in order to introduce a fairly powerful computational tool.  The cdh-topology on the category of schemes and cdh-cohomology for sheaves on this site have been investigated by previous authors, and this will be our main tool of computation. The idea is to construct a topology with enough covers so as to reduce computations on singular schemes to that of smooth schemes.  While it does not seem to be such an easy feat, other authors, namely Heisuke Hironaka, have given us the way to do this.  As we will soon see, this topology was constructed explicitly so that it works well with the methods of resolution of singularities.  

\subsection{Resolution of singularities}
As stated above, the whole purpose of cdh-cohomology is so that it behaves well with the methods of resolution of singularities.  Before we can dive into the cdh-topology, we should state precisely what is meant by the methods of resolution of singularities.  The main resource for this is a series of classes given by the Clay Math Institute in Obergurgl Austria during the summer of 2012.  Lecture notes for this can be found at \cite{hauserblowupsresolution}.  In addition to this, we look at other standard references: \cite{cutkosky2004resolution}, and \cite{kollar2009lectures}.  The general idea of resolution is as follows.  

Suppose $X$ is a variety over a field $k$.  
\begin{definition}
A resolution of singularities of $X$ is a proper birational morphism $\varphi:Y\rightarrow X$ such that $Y$ is a nonsingular variety.  
\end{definition}
\begin{note}This definition can of course be reformulated in terms of schemes.  We will avoid discussing resolution of singular schemes in full generality, because any such discussion will involve the very technical definitions of excellent or quasi-excellent schemes.  
\end{note}
The question of whether or not every variety admits a resolution was, and is still, investigated by many mathematicians.  Eventually, Heisuke Hironaka obtained the following result in \cite{hironaka1964resolution} (a more accessible reference, that also includes some technical details, is \cite{hauser2003hironaka}), which solved the problem in characteristic zero.
\begin{theorem}[Hironaka]
Let $X$ be a variety over a field $k$ of characteristic zero.  Then there is a resolution $\varphi:Y\rightarrow X$ such that \\
$\bullet$ $\varphi$ is an isomorphism over the regular points of $X$\\
$\bullet$ $\varphi^{-1}($Sing$(X))$, called the exceptional fiber, is a union of smooth hypersurfaces of $Y$ having only normal crossings.  \\
Furthermore, this resolution is a composition $Y\rightarrow X_n\rightarrow...\rightarrow X_1\rightarrow X_0=X$ of finitely many blowups along smooth centers $Z_i(\subset X_i)$.  
\end{theorem}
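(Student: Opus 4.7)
The plan is to follow Hironaka's original strategy, which is inductive in structure: attach to each singular point of $X$ a numerical invariant measuring the local complexity of the singularity, exhibit a smooth center $Z\subset X$ along which a single blowup strictly decreases this invariant, and iterate. Since the invariant will take values in a well-ordered set, any strictly decreasing sequence terminates, producing the required finite tower $Y\to X_n\to\cdots\to X_0=X$ of blowups along smooth centers.

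First I would localize the problem. At each closed singular point $x\in X$, pass to the completed local ring $\widehat{\mathcal{O}}_{X,x}$, and via Cohen's structure theorem embed $X$ locally as a closed subscheme of a smooth ambient $W$ cut out by an ideal sheaf $\mathcal{I}$. The fundamental invariant to track is the order $\nu_x(\mathcal{I})$ of $\mathcal{I}$ at $x$; finer information is bundled either into the Hilbert--Samuel function of $\mathcal{O}_{X,x}$ or into a lexicographically ordered tuple recording orders of successive coefficient ideals (the Bierstone--Milman invariant). Upper semicontinuity of this invariant then guarantees that its maximum locus is closed, and one must arrange that this maximum locus is in fact smooth, so that it is a legal center.

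The heart of the argument, and the step that uses characteristic zero in an essential way, is the existence of \emph{hypersurfaces of maximal contact}. Given an ideal of order $d$ at $x$, invertibility of $d!$ allows a Tschirnhaus transformation producing a smooth hypersurface $H\subset W$ that locally contains the whole equi-multiplicity locus of $\mathcal{I}$, and whose strict transform remains of maximal contact after any permissible blowup. This yields a \emph{descent in dimension}: the resolution problem for $\mathcal{I}$ on $W$ is equivalent to a resolution problem for an associated coefficient ideal on $H$, whose ambient dimension is one less. Inducting on $\dim W$ then produces a sequence of blowups on $H$ that lift to blowups on $W$ strictly decreasing the order invariant on $X$.

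The main obstacle, and where virtually all the technical labor of the proof resides, is the simultaneous bookkeeping of the exceptional divisor: one must (i) verify that the chosen invariant has a smooth maximum locus so the blowup is permissible, (ii) maintain simple normal crossings between all accumulated exceptional components and the strict transform at every stage, and (iii) carefully distinguish ``old'' exceptional components from ``new'' ones in the construction of the coefficient ideal so that the invariant genuinely drops rather than stalling. The characteristic-zero hypothesis is indispensable precisely because hypersurfaces of maximal contact can fail to exist in positive characteristic, which is why the analogous theorem remains open there in full generality. Because the complete proof is extraordinarily intricate, in practice I would not reproduce it but would invoke Hironaka's original \cite{hironaka1964resolution}, or one of the modern constructive reformulations surveyed in \cite{hauser2003hironaka}.
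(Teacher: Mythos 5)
The paper does not actually prove this theorem: it explicitly states that the proof is too technical to discuss and simply cites Hironaka's original paper and Hauser's survey. Your proposal ultimately defers to the very same references, while additionally giving an accurate high-level sketch of the standard inductive strategy (upper semicontinuous order/Hilbert--Samuel invariants, hypersurfaces of maximal contact via the characteristic-zero hypothesis, descent in ambient dimension, and the exceptional-divisor bookkeeping), so it is consistent with --- and somewhat more informative than --- what the paper provides.
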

The proof of this theorem is extremely technical, so we will avoid saying anything non-trivial about it.  The construction required for this resolution theorem, the blowup, is a little more tractable and will be an important part of our construction of the cdh-topology.  There are many ways to define a blowup (\cite{hauserblowupsresolution} lists 7 definitions), but for our purposes we will use a more scheme theoretic definition.  

Let $X=$ Spec$(R)$ be an affine scheme, let $I$ be an ideal of $R$, and call $Z=$ Spec$(R/I)$ the closed subset of $X$ defined by $I$.    
\begin{definition}
The blowup algebras of $R$ along $I$ are given by $R[It]=\bigoplus_{n\geq 0}I^n$ and $gr_I(R)=\bigoplus_{n\geq 0}I^n/I^{n+1}(=R[It]/IR[It])$.
\end{definition}
Using this we can define the blowup of the affine scheme $X$.  
\begin{definition}
The blowup of $X$ along $Z$ is given by $Y=$ Proj$(R[It])$ along with the map $\pi:Y\rightarrow X$.  The exceptional fiber of this map is given by $E=\pi^{-1}(Z)=$ Proj$(gr_I(R))$.  
\end{definition}
\begin{twocents}
At this point our discussion of filtered deformations should seem very relevant to resolution of singularities.  Namely, one of the blowup algebras (the one corresponding to the exceptional fiber) contains the $gr_I$ construction.  This seems to suggest that $R$ is a filtered deformation of this blowup algebra.  
\end{twocents}

The data of the blowup can be summed up in the following, rather important, diagram.  
\[\begin{CD}
\text{Proj}(gr_I(R))  @>>> \text{Proj}(R[It])\\
@VVV @VV\pi V\\
\text{Spec}(R/I)@>\iota >> \text{Spec}(R)
\end{CD}\]
The importance of this diagram will become more clear shortly.

\subsection{Topologies on categories}
We will begin with a brief review of Grothendieck topologies.  
\begin{idea}The general idea with Grothendieck topologies is to generalize the notion of a sheaf.  The most basic version of a sheaf is a contravariant functor with the domain as the category of open sets of a topological space, where the objects are open sets and arrows correspond to inclusion (along with some other conditions).  Instead, if we wanted a sheaf on a more general category we would still have a contravariant functor, but our notion of open sets and inclusion would be replaced with something else, namely sieves and covering sieves
\end{idea}

We begin with the traditional construction of a sheaf on a topological space.  For simplicity we will construct a sheaf into the category of Sets, where this construction can be easily generalized to other categories such as Groups, Rings, etc.  
\begin{definition}
Let $X$ be a topological space. We define $O(X)$ to be the category whose objects consist of open subsets of $X$, and whose morphisms consists of inclusion maps, i.e. if $U\subset V$ are two open subsets in $X$ then there is a morphism $f\in$ Morph$(O(X))$ corresponding to the inclusion $f:U\rightarrow V$.  
\end{definition}
\begin{definition}
Let $X$ be a topological space.  A presheaf of sets on $X$ is a contravariant functor $F$ from $O(X)$ to the category of Sets.  
\end{definition}
\begin{definition}
A presheaf $F$ of sets on a topological space $X$ is a sheaf if it satisfies the following conditions:\\
$\bullet$ (Locality) For a collection of open subsets $(U_i)$ that is an open covering of an open set $U$, if $s,t\in F(U)$ such that on the restriction $s|_{U_i}=t|_{U_i}$ for all $U_i$ in the covering $(U_i)$, then $s=t$.  \\
$\bullet$ (Gluing) For an open cover $(U_i)$ of $U$, if for each $i$ there is a section $s_i\in F(U_i)$ such that $s_i|_{U_i\cap U_j}=s_j|_{U_i\cap U_j}$ for each pair $U_i$ and $U_j$, then there is a section $s\in F(U)$ such that  $s|_{U_i}=s_i$ for each $i$. 
\end{definition}
As we will see, the whole goal is to generalize sheaves so that our domain category is arbitrary.  It turns out it is fairly easy to generalize the notion of a presheaf, what is a little more difficult is developing a concrete way to satisfy the locality and gluing axioms for a sheaf.  

We note that the actual structure of the topological space $X$ from above is largely ignored.  The information that we use is related to subsets of open sets, and from there we reformulate everything in terms of the inclusion morphism.  This is how we will generalize the notion of covering, in a sense, a Grothendieck topology determines when a morphism is an ``inclusion."  

Let $C$ be a category.  
\begin{definition}
Let $c\in$ Obj$(C)$.  A sieve on $c$ is a family of morphisms in Morph$(C)$ all with codomain $c$ such that $f\in S$ implies that $f\circ g\in S$ whenever this composition makes sense.   
\end{definition}
\begin{definition}
If $S$ is a sieve on $c$ and $h:d\rightarrow c$ is any arrow to $c$ in Morph$(C)$, then \[h^*(S)=\{g|\text{cod}(g)=d, h\circ g\in S\}\] is a sieve on $d$ called the pullback sieve of $S$ along $h$ and denoted $h^*S$.  
\end{definition}
We are now ready for the definition of a Grothendieck topology on a category.  
\begin{definition}A Grothendieck topology on a category $C$ is an assignment which associates to each object $c$ of $C$ a collection of ``covering sieves"  denoted by $J(c)$ such that the following axioms hold.  \\
$\bullet$ The maximal sieve $t_c=\{f|\text{cod}(f)=c\}$ is in $J(c)$.  \\
$\bullet$ (Stability) If $S\in J(c)$ then the pullback $h^*S\in J(d)$ for any $h:d\rightarrow c$ in Morph$(C)$.\\  
$\bullet$ (Transitivity) If $S\in J(c)$ and $R$ is any sieve on $c$ such that the pullbacks $h^*R\in J(d)$ for all $h\in S$, then $R\in J(c)$.  
\end{definition}
In the standard way we can define smaller constructions, such as a ``basis",  that will generate a Grothendieck topology.  For our purposes a very precise definition of these construction won't be too necessary.  We will simply say that the Grothendieck topology generated by certain families of morphisms is the smallest Grothendieck topology such that these families are part of these distinguished covering sieves. 

We will call a category $C$ equipped with a Grothendieck topology $J$ a site, denoted by $(C,J)$  
The whole point of this was to generalize sheaves, so we will do this construction now.  
\begin{definition}
Let $C$ be a category.  A presheaf of sets on $C$ is a contravariant functor $F$ from $C$ to the category of Sets.  
\end{definition}
\begin{definition}
For a presheaf $F$ and a covering sieve $S$ of an object $c$, a matching family for $S$ of elements of $F$ is a function which assigns to each element $f:d\rightarrow c$ of $S$ an element $x_f\in F(d)$ such that \[x_{fg}=F(g)(x_f)\ \ \ \text{for all}\ g:e\rightarrow d\ \text{in}\ C\]
\end{definition}
Note that because $S$ is a sieve $fg$ is again an element of $S$.  
\begin{definition}
An amalgamation of such a matching family is a single element $x\in F(c)$ with \[x_{f}=F(f)(x)\ \ \ \text{for all}\ f\in S\]
\end{definition}
\begin{definition}
A presheaf $F$ of sets on a category $C$, is a sheaf with respect to the topology $J$ if every matching family has a unique amalgamation.  (Equivalently we could say that $F$ is a sheaf on the site $(C,J)$)
\end{definition}
It is fairly easy to see that the existence of an amalgamation is analogous to the gluing condition in the original definition of sheaf, and uniqueness of this amalgamation corresponds to locality.  Therefore this, at least intuitively, is the correct way to generalize a sheaf.  

For a more in depth presentation of Grothendieck topologies on categories and sheaves on a site, see \cite{mclane1994sheaves}.

\subsection{The cdh-topology}
Next we will give the construction of the completely decomposed h-topology, or the cdh-topology, on the category of schemes essentially of finite type over a field $k$, written Sch/$k$.  We will also define a topology on the subcategory consisting of smooth schemes, Sm/$k$, called the scdh-topology.  For these constructions, we mostly follow the presentation given in \cite{cortinas2005cyclic}, however we also use \cite{haesemeyer2004descent}, \cite{voevodsky2000homotopy}, and \cite{voevodsky2008unstable} as additional references as well as lectures given by Dr. Corti\~nas at the University of Colorado in the spring of 2013.  

Our goal will be to build a topology on the category of schemes with nice properties (which we will discuss later). To achieve this, we will build the topology with a class of covers given by distinguished squares. By this we mean that given a specific square,
\[\begin{CD}
A  @>>> B\\
@VVV @VV\psi V\\
C @>\varphi >> D
\end{CD}\]
if certain conditions on the morphisms are met, we include the cover $\{\psi,\varphi\}$ as part of the generating covers for the topology.  Such a collection of squares will be given the following name.
\begin{definition}
For a small category $C$, a $cd$-structure on $C$ is a class of commutative squares in $C$ that is closed under isomorphism.  
\end{definition}

The process of resolving singularities, namely the blowup, yields a framework to produce these distinguished squares in the category of schemes.  Namely, 
\[\begin{CD}
E=\text{Exceptional Fiber} @>>> Y=\text{Blowup of }X\text{ along Z}\\
@VVV @VVV\\
Z=\text{Center of the blowup} @>>> X
\end{CD}
\]
At this point, if we have any hope of relating singular and nonsingular schemes, our topology must include covers from these blowup squares.  However, as we have mentioned before, one blowup is not always sufficient to resolve singularities.  While we could stop here and leave computations to iterated blowup squares, it will be easier in these computations to include more covers related to the resolution.   

The following type of distinguished square is meant to contain all of the relevant data of resolution of singularities, and it is called an abstract blowup square. 
\begin{definition}
An abstract blowup square of schemes in Sch/$k$ is a Cartesian square of schemes over $k$
\[\begin{CD}
E @>>> Y\\
@VVV @VVV\\
Z @>>> X
\end{CD}
\]
with the following properties: $Y\rightarrow X$ is a proper morphism, and $Z\rightarrow X$ is a closed embedding such that the induced morphism $(Y\setminus E)^{red}\rightarrow (X\setminus Z)^{red}$ is an isomorphism. 
\end{definition}
Geometrically speaking, this square contains all of the information of a resolution of singularities. We interpret this in the following way, let $X$ be some scheme with closed subset $Z$, we have  
\[\begin{CD}
E=\text{Exceptional Fiber} @>>> Y=\text{Resolution of }X\\
@VVV @VVV\\
Z=\text{Singular Locus} @>>> X
\end{CD}
\]
where the conditions on the morphisms ensure that this is indeed a resolution.  

While abstract blowup squares contain all of the data of resolution of singularities, they include many other types of squares as well.  An obvious example of this is the following.
\begin{example}
Let $X$ be the disjoint union of two identical closed subsets $Z$, $X=Z\sqcup Z$.  We have that 
\[\begin{CD}
Z @>>> X\\
@VVV @VVV\\
Z @>>> X
\end{CD}
\]
is an abstract blowup square.  
\end{example}

In addition to the covers given by abstract blowup squares, the cdh-topology also contains covers from a coarser topology, called the Nisnevich topology. 
\begin{definition}
The Nisnevich topology Sch/$k$ is the topology generated by the following covers, $\{U\rightarrow X, V\rightarrow X\}$, where $U\rightarrow X$ is an open embedding and $V\rightarrow X$ is an Etale morphism that is an isomorphism over $X\setminus U$
\end{definition}
In order to visualize these covers, we usually write out what is called a Nisnevich square.  In the same scenario as before, we have that 
\[\begin{CD}
U\times_X V @>>> V\\
@VVV @VVV\\
U @>>> X
\end{CD}\]
is a (elementary distinguished) Nisnevich square.  

\begin{remark}The squares given above are examples of cd-structures in a category.  We call the combined cd-structure on Sch/$k$ the collection of all Nisnevich squares and the collection of all abstract blowup squares.  For the combined cd-structure on the smooth subcategory of Sch/$k$, called Sm/$k$, we use the Nisnevich squares and the squares of smooth schemes that correspond to smooth blowups along smooth centers. \end{remark}

These cd-structures have some nice properties including the following which we will present without definition and without proof. 
\begin{lemma}Each of these combined cd-structures, the one on Sch/$k$ and the one on Sm/$k$, are complete, bounded, and regular.  
\end{lemma}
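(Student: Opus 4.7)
The plan is to unpack Voevodsky's axioms from \cite{voevodsky2000homotopy} and \cite{voevodsky2008unstable} and verify each one for the two cd-structures that make up the combined one, then check the compatibility needed to glue. Recall that a cd-structure $P$ on a category with an initial object is \emph{complete} if every pullback of a distinguished square along a morphism can be refined to a distinguished square (so that $P$-covers are stable under base change up to refinement), \emph{regular} if each distinguished square becomes a pushout in the category of $P$-sheaves and certain monomorphism conditions hold on the comparison maps, and \emph{bounded} if there exists a density structure $(D_n)$ on the base category with respect to which distinguished squares are reducing in the sense of Voevodsky. The benefit of setting things up this way is that once these three properties are in hand, the standard machinery gives descent spectral sequences and good cohomological dimension bounds for the associated topology.

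First I would handle the two cd-structures in isolation. For the Nisnevich cd-structure on Sch$/k$, completeness follows because a pullback of an elementary distinguished square is again an elementary distinguished square (open embeddings and \'etale maps are stable under base change, and the isomorphism condition over the complement is preserved); regularity follows from the fact that these squares are honest pullbacks of schemes in which the horizontal maps are monomorphisms and the square is a Zariski-local pushout of representables; boundedness uses the dimension-by-codimension density structure assigning to $X$ the subcategories of open immersions whose complement has codimension at least $n$. For the abstract blowup (proper cdh) cd-structure, completeness is immediate since the pullback of a proper map is proper and closed embeddings pull back to closed embeddings while the isomorphism $(Y \setminus E)^{red} \cong (X \setminus Z)^{red}$ base-changes; regularity uses the fact that an abstract blowup square is a pushout on the underlying topological spaces and that the vertical maps are epimorphisms of sheaves; boundedness uses the density structure given by open subschemes whose complement has dimension at most $\dim X - n$. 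All of these verifications for the individual pieces are carried out explicitly in sections of \cite{voevodsky2000homotopy} and \cite{voevodsky2008unstable} and I would mostly cite them rather than rewrite them.

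The main obstacle is combining the two cd-structures. Voevodsky proves a general gluing lemma: if $P_1$ and $P_2$ are both complete, bounded, and regular, and if they are compatible in the sense that for every $P_1$-square with upper right corner $Y$ and every $P_2$-square with lower right corner $Y$, the iterated pullback admits a simultaneous refinement by a $P_i$-square with $i \in \{1,2\}$, then $P_1 \cup P_2$ is also complete, bounded, and regular. So the real content of the proof is checking this compatibility between Nisnevich squares and abstract blowup squares. Here one uses that the pullback of an abstract blowup square along an \'etale map is again an abstract blowup square (properness and the isomorphism-over-the-complement condition are \'etale-local), and that an open immersion pulled back along the proper map $Y \to X$ in an abstract blowup square gives a new abstract blowup square over the open piece. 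A common density structure on Sch$/k$ compatible with both is the one by Krull dimension of complements, and I would verify boundedness of the union with respect to this choice.

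Finally, the Sm$/k$ case is essentially parallel but slightly cleaner: the abstract blowup squares are replaced by blowup squares with smooth center, so the pieces $Y, Z, E$ are all smooth, and the compatibility with Nisnevich squares is then automatic because smooth blowups base-change correctly under \'etale maps. The only subtlety in this case is to check that pulling back a smooth blowup square along a Nisnevich cover of the base stays within the smooth subcategory, which follows because \'etale maps preserve smoothness and the blowup of a smooth variety along a smooth center is smooth. So the Sm$/k$ statement is a straightforward corollary of the Sch$/k$ argument, with the density structure restricted to smooth schemes.
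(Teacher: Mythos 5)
The paper's own ``proof'' of this lemma is a bare pointer to \cite{voevodsky2008unstable}, Section 2, and your proposal ultimately lands in the same place: every substantive verification (completeness, regularity and boundedness of the Nisnevich piece, of the abstract-blowup piece, and of the smooth-blowup piece) is delegated to Voevodsky. To that extent the two routes coincide, and your recollection of the three axioms and of the base-change stability of Nisnevich and abstract blowup squares is essentially right (including the point that the isomorphism $(Y\setminus E)^{red}\cong(X\setminus Z)^{red}$ survives base change after passing to reductions).

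The one piece of architecture you supply yourself --- the ``general gluing lemma'' requiring a compatibility/simultaneous-refinement condition between the two cd-structures, which you call ``the real content of the proof'' --- is where the proposal goes wrong. No such lemma is needed, and you never actually verify the compatibility condition you state. Completeness is checked via the criterion that pullbacks of distinguished squares are distinguished (plus the triviality of morphisms into the empty scheme); since pullbacks of Nisnevich squares are Nisnevich and pullbacks of abstract blowup squares are abstract blowups, the union satisfies the same criterion with no interaction between the two classes. Regularity is a condition imposed square by square (Cartesian, lower horizontal map a monomorphism, and the derived-square condition relative to the generated topology); passing to the union only refines the topology, which makes the derived-square condition easier, not harder, so the union of regular cd-structures is regular formally. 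The only verification that is genuinely joint is boundedness, where one must exhibit a \emph{single} density structure reducing for both kinds of squares --- the standard Krull-dimension one, which you do mention but treat as an afterthought. So your proposal inverts the actual difficulty: the combination step is the easy, formal part, and the square-wise verifications (especially the derived-square condition for abstract blowups, which your appeal to ``pushout on underlying spaces'' does not address --- your stated definition of regularity is the pushout-of-sheaves \emph{consequence}, not Voevodsky's actual condition) are where the content lives. As written, the step ``by the gluing lemma the union is complete, bounded, and regular'' rests on a lemma that does not exist in the cited form and on an unverified hypothesis, so the argument has a gap, albeit one that is repaired by replacing it with the simpler union arguments above.
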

\begin{proof}See \cite{voevodsky2008unstable} section 2.\end{proof}

So, we arrive at our definition of the cdh-topology.  
\begin{definition}
The cdh-topology on Sch/$k$ is the topology generated by the combined cd-structure on Sch/$k$, i.e. the Nisnevich topology and covers of the form $\{Z\rightarrow X, Y\rightarrow X\}$ for abstract blow-ups $Y\rightarrow X$ with center $Z$.
\end{definition}

The cdh-topology is a topology on Sch/$k$, we will also use a similar topology on the category of smooth schemes over a field $k$ called the scdh-topology.  We can go through a similar construction, and we end up with the following definition.  
\begin{definition}
The scdh-topology on Sm/$k$ is the topology generated by the combined cd-structure on Sm/$k$.
\end{definition}
As luck would have it, since Sm/$k$ is a proper subcategory of Sch/$k$, and the definitions happen to agree on this subcategory, we can simply define this new topology on Sm/$k$ to simply be the restriction of the topology on Sch/$k$.  
\begin{proposition}
The scdh-topology on the category Sm/$k$ is the restriction of the cdh-topology on Sch/$k$ to the subcategory Sm/$k$.  
\end{proposition}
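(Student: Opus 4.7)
The plan is to prove mutual refinement between the two topologies on the subcategory $\mathrm{Sm}/k$. Both are generated by the combined cd-structures described above, and by the preceding lemma these cd-structures are complete, bounded, and regular; this reduces the problem to comparing generating covers and exhibiting refinements inside $\mathrm{Sm}/k$.

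First I would verify the easy inclusion: every scdh-generating cover is already a cdh-cover when its source and target are viewed as objects of $\mathrm{Sch}/k$. Smooth Nisnevich squares are by definition Nisnevich squares, and smooth blowup squares along smooth centers are particular instances of abstract blowup squares in which every vertex already lies in $\mathrm{Sm}/k$. Hence any scdh-covering sieve on $X \in \mathrm{Sm}/k$ is a cdh-covering sieve on $X$, and so belongs to the restriction of the cdh-topology to $\mathrm{Sm}/k$.

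The harder direction is showing that every restricted cdh-covering sieve on a smooth scheme $X$ is generated by scdh-covers. By regularity and boundedness it suffices to refine each generating cdh-square with target $X$. Restricted Nisnevich squares of smooth schemes are already smooth Nisnevich squares, so the only real obstacle is an abstract blowup square
\[
\begin{CD}
E @>>> Y \\
@VVV @VVV \\
Z @>>> X
\end{CD}
\]
in which $X$ is smooth but $Y$, $Z$, or $E$ may be singular. I would handle this by invoking Hironaka's resolution of singularities in characteristic zero stated in Section 2.1: both $Y$ and $Z$ admit resolutions realized as finite compositions of blowups along smooth centers. Pulling these resolutions back through the original square and iterating produces a refinement of the abstract blowup square by smooth blowup squares along smooth centers, together with the smooth open piece coming from the isomorphism $(Y \setminus E)^{\mathrm{red}} \cong (X \setminus Z)^{\mathrm{red}}$. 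Every vertex appearing in this refined sieve lies in $\mathrm{Sm}/k$, and the sieve is generated by the scdh cd-structure, so the original cdh-cover is refined by an scdh-cover.

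The main technical point, and the only real obstacle, is organizing the resolution-of-singularities data into an honest cd-structure refinement: one must verify that the successive smooth blowups required to resolve $Y$ and $Z$ assemble into a sieve that is built from smooth abstract blowup squares and smooth Nisnevich squares, rather than producing new non-smooth intermediate objects. Once Hironaka's theorem is invoked together with the regularity of the combined cd-structure, this book-keeping is essentially formal, and the equality of topologies on $\mathrm{Sm}/k$ follows.
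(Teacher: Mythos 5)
The paper does not actually prove this proposition: it asserts it and defers to \cite{voevodsky2008unstable}, so there is no ``paper's proof'' to compare against. Your first direction is fine: a smooth Nisnevich square is a Nisnevich square, and a blowup of a smooth scheme along a smooth center is an abstract blowup square all of whose vertices are smooth, so every scdh-generating cover of a smooth $X$ is a cdh-cover.

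The converse direction, however, has a genuine gap at exactly the point you set aside as ``essentially formal.'' Given an abstract blowup square over a smooth $X$ with $Y$, $Z$, $E$ possibly singular, resolving $Y$ via Hironaka produces a tower $Y'=Y_m\rightarrow\cdots\rightarrow Y_0=Y$ of blowups along smooth centers, but the intermediate schemes $Y_i$ are in general still singular; none of those squares lies in Sm/$k$, so they contribute nothing to an scdh-covering sieve. What you are left with at the end is a smooth scheme $Y'$ with a proper map $Y'\rightarrow X$ that is birational onto its image over $X\setminus Z$, and such a morphism is \emph{not} automatically an scdh-cover: the smooth cd-structure only provides covers of the form $\{W\rightarrow V,\ \mathrm{Bl}_W V\rightarrow V\}$ with $V$ and $W$ smooth. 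To conclude, one needs the stronger consequence of resolution of singularities --- that any proper morphism $Y\rightarrow X$ onto a smooth $X$ which is an isomorphism away from a closed $Z$ can be \emph{dominated} by a finite composition of blowups of $X$ itself along smooth centers lying over $Z$ --- combined with an induction on dimension to cover those smooth centers in turn. This domination statement (elimination of indeterminacy / principalization), not the resolution of $Y$ and $Z$ as abstract varieties, is the engine of Voevodsky's proof; your proposal never invokes it, and without it the refinement you describe is not generated by the scdh cd-structure.
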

For more information on these two topologies, their properties, and how they are related to one another, please see \cite{voevodsky2008unstable}.  

\subsection{Sheaves on a site}
Now that we have defined all of the necessary topologies on our categories of concern, we can take a closer look at sheaves on Sch/$k$.  The following constructions will work for an arbitrary topology $t$ on the category Sch/$k$.  When we require the use of the cdh-topology, we will note it's requirement.  

We will mostly be concerned with sheaves into the category of spectra.  We will briefly recall the notion of a topological spectrum (see \cite{aguilar2002algebraic}).     
\begin{definition}
A family of pointed spaces $\{P_n\}$ together with pointed maps $\sigma_n:\Sigma P_n\to P_{n+1}$ (where $\Sigma X$ denotes the suspension of $X$) is called a prespectrum.  
\end{definition}
As these definitions tend to proceed, now that we have the notion of a prespectrum, we can continue and define a spectrum.  
\begin{definition}
A spectrum is a prespectrum, $\{P_n\}_{n\in\mathbb{Z}}$ together with $\sigma_n:\Sigma P_n\to P_{n+1}$, such that the adjoint maps $\hat{\sigma}_n:P_n\rightarrow \Omega P_{n+1}$ are homeomorphisms (all we really need however is that these maps are weak equivalences).  
\end{definition}
The purpose of spectra is to define various generalized cohomology theories from a purely homotopical viewpoint.  By this we mean that we can construct spectra so that their homotopy groups agree with whatever generalized cohomology theory that we are working with.  
\begin{definition}
Given a (pre)spectrum $P=\{P_n\}$ we define its homotopy groups by \[\pi_n(P)=\lim_{\overset{\longrightarrow}{k}}\pi_{n+k}(P_k).\]  
\end{definition}
\begin{remark}
Note, that homotopy groups of spectra are defined to be direct limits of homotopy groups of topological spaces.  Since the initial groups in a direct limit computation are (mostly) irrelevant, this definition allows for negative homotopy groups of spectra.  This is because in the limit stage all of the computations are for positive homotopy groups of topological spaces.   
\end{remark}

Returning to our discussion of sheaves we introduce the following notation.  
\begin{definition}
Let Shv$_t$(Sch/$k$,Spectra) denote the category of sheaves on Sch/$k$ with topology $t$ into the category of spectra.  
\end{definition}
Notice that we are beginning by looking at sheaves into the category of spectra.  From this, we can infer that our eventual goal will be to define homotopy groups of the spectra, or equivalently, to use homotopy groups on Shv$_t$(Sch/$k$,Spectra) to define a functor from Sch/$k$ into groups.  

In order to do this, we need to impose a model structure on the category Shv$_t$(Sch/$k$,Spectra).  Following \cite{cortinas2005cyclic} we note that there are several relevant model structures that we can impose on this category.  Namely, they are the global projective model structure, the local injective model structure, and the local projective (or Brown-Gersten) model structure.  While we will not explicitly use the definition of any of these in proof, as in \cite{cortinas2005cyclic}, we will primarily look at the local injective model structure, and it is defined as follows.  

\noindent \textbf{Local Injective Model Structure}
\begin{itemize}
\item Weak equivalences are defined at the level of stable homotopy groups.  By this we mean, a morphism $f: \mathcal{E}\rightarrow \mathcal{E}'$ is a (local) weak equivalence if $f$ induces an isomorphism on sheaves of stable homotopy groups. 
\item Cofibrations are defined objectwise, meaning a morphism $f: \mathcal{E}\rightarrow \mathcal{E}'$ is a cofibration provided $f(U):\mathcal{E}(U)\rightarrow \mathcal{E}'(U)$ is a cofibration of spectra for each $U\in$ Sch/$k$.
\item Fibrations are not defined objectwise, but rather are defined using the right lifting property.  
\end{itemize}
In addition to these classes of morphisms we will define a stronger version of weak equivalence.
\begin{definition}  
A morphism $f: \mathcal{E}\rightarrow \mathcal{E}'$ is a global weak equivalence provided $f(U):\mathcal{E}(U)\rightarrow \mathcal{E}'(U)$ is a weak equivalence (of spectra) for each $U\in$ Sch/$k$.
\end{definition}

Now that we have imposed our model structure on this category, we can focus on objects with certain nice properties.  
\begin{definition}
In any category with a suitable model category structure, a fibrant object is an object whose unique map to the terminal object is a fibration.
\end{definition}
\begin{definition}
A fibrant replacement of $\mathcal{E}$ in a suitable model category is a trivial cofibration $\mathcal{E}\rightarrow \mathcal{E}'$ where  $\mathcal{E}'$ is a fibrant object.
\end{definition}
One way in which the fibrant objects of a closed model category can be described is by having a right lifting property with respect to any trivial cofibration in the category. The right lifting property makes fibrant objects the best objects on which to define homotopy groups. Choosing fibrant objects (or their fibrant replacements) and imposing the correct topology (cdh-topology in our case) is the way that we ensure that our homotopy groups inherit nice properties

The process of obtaining a fibrant replacement can be done functorially using the small object argument.  Since we can find a fibrant replacement functorially, we will do so (for the local injective model structure).  
\begin{definition}
We define $\mathbb{H}_t(-,\mathcal{E})$ to be the fibrant replacement of the sheaf $\mathcal{E}$ (in the local injective model structure).
\end{definition}
As stated above, this can be done using a functor, call it $\mathbb{H}_t(-,-)$, where  \[\mathbb{H}_t(-,-):\text{Shv}_t\text{(Sch/}k\text{,Spectra)}\rightarrow\text{Shv}_t\text{(Sch/}k\text{,Spectra)}\] such that \[\mathcal{E}(-)\mapsto\mathbb{H}_t(-,\mathcal{E}),\] where $t$ is a topology on Sch/$k$.  (When the topology is clear from the context $t$ may be dropped from the notation.)

\begin{remark}The fibrant objects in the category of presheaves on a site are just the sheaves themselves.  You may think of the process of obtaining a fibrant replacement of a presheaf as just performing the t-sheafification of that presheaf.    
A familiar example would be the presheaves on the Zariski site.  The Zariski-fibrant replacement of some presheaf $\mathcal{E}$ is then just the traditional sheafification of $\mathcal{E}$. 
\end{remark}

While replacing these objects with certain fibrant objects will yield very nice properties when we eventually pass to the homotopy groups, in general the fibrant replacement does not yield the same spectrum as the original sheaf.  There are certain objects in this category that are replaced without any such difficulties.  
\begin{definition}
A presheaf of spectra $\mathcal{E}$ on a site $(C,t)$ is called quasifibrant if the map to it's fibrant replacement is a global weak equivalence, i.e., the map $\mathcal{E}(U)\rightarrow\mathbb{H}_t(U,\mathcal{E})$ is a weak equivalence for all $U$ in $C$.  
\end{definition}

For the topologies that we will work with, quasifibrant presheaves are given the following terminology.  
\begin{definition}
Let $\mathcal{E}$ be a quasifibrant presheaf of spectra on a site $(C,t)$ such that the topology $t$ is generated by a complete regular bounded $cd$-structure.  We say then that $\mathcal{E}$ satisfies $t$-descent, or descent for the $t$-topology.  
\end{definition}
We will spend a fair amount of time in the later sections talking about sheaves that satisfy various types of descent.  Before we do so, we will continue with more preliminaries.  

We should note, that there is an analog of this construction for presheaves taking values in a different category, namely cochain complexes.  In the same way as before, we can define a category of sheaves now into cochain complexes.  
\begin{definition}
Let Shv$_t$(Sch/$k$,Cochain Complexes) denote the category of sheaves on Sch/$k$ with topology $t$ into the category of cochain complexes.  
\end{definition}
What is slightly different here is what is used for a fibrant replacement of such a sheaf $A^{\bullet}$ of cochain complexes.  
\begin{definition}
A presheaf of cochain complexes $A^{\bullet}$ on a site $(C,t)$ is called quasifibrant if the natural map $A^{\bullet}(U)\rightarrow R\Gamma(U,A^{\bullet})$ is a quasi-isomorphism for each object $U$ of $C$.  
\end{definition}
Again, we will adopt the descent notation for quasifibrant sheaves of cochain complexes.   
\begin{definition}
Let $A^{\bullet}$ be a quasifibrant presheaf of cochain complexes on a site $(C,t)$ such that the topology $t$ is generated by a complete regular bounded $cd$-structure.  We say then that $A^{\bullet}$ satisfies $t$-descent, or descent for the $t$-topology.  
\end{definition}

This construction for cochain complexes agrees with our original construction for spectra in the following way.  Let $A^{\bullet}\rightarrow|A^{\bullet}|$ be the Eilenberg-Maclane functor that sends cochain complexes to spectra and presheaves of cochain complexes to presheaves of spectra.  This functor sends quasi-isomorphisms to weak equivalences and, as we will see below, their homotopy/cohomology groups coincide.  It is because of this that we will treat these notions of descent as interchangeable, and only try to distinguish them during constructions of well known (co)homology theories.  

\subsection{Hypercohomology and sheaf cohomology}  
The standard way to define a sheaf type cohomology theory for a given sheaf $F$ is to find an injective resolution of $F$, \[0\rightarrow F\rightarrow I^1\rightarrow I^2\rightarrow...\] and then apply the global sections functor, $\Gamma(X,-)$ to this sequence, \[ \Gamma(X,F)\rightarrow \Gamma(X,I^1)\rightarrow \Gamma(X,I^2)\rightarrow\Gamma(X,I^3)\rightarrow...\] to get a chain complex.  Taking the homology of this complex yields sheaf cohomology, $H^*(X,F)$.  In our scenario, we have two (practically indistinguishable) sheaves.  Namely, sheaves of spectra and sheaves of complexes.  The problem we have is how to find an injective resolution of a spectrum or a complex.  The short answer is, there is not a good way to find a injective resolution.  

The way we deal with this problem is to use hypercohomology.  Hypercohomology takes a whole complex or spectrum of sheaves and, where it would not be possible or at least it would be very restrictive which complexes you could use, defines something related to an injective resolution.  The solution is to find an injective complex that is quasi-isomorphic to our given complex and use this in our definition.  
\begin{definition}\label{hypercohomology}
Let $A^{\bullet}$ be a presheaf of complexes, and let $I^{\bullet}$ be an injective complex such that $I^{\bullet}$ is quasi-isomorphic to $A^{\bullet}$.  Let $X\in$ Sch/$k$.  We define the hypercohomology of $X$ with coefficients in $A^{\bullet}$ as follows
\[\mathbb{H}^n(X,A^{\bullet})=H^n(\Gamma(X,I^{\bullet})).\]
\end{definition}
This type of construction for hypercohomology is more intuitive than some of the others.  And, much later, for the most general constructions this is where we will gain our intuition.  

For our scenario involving fibrant replacements in a particular topology, however, we will adopt a different definition of hypercohomology.  Applying homotopy functors to these specific presheaves of spectra will yield our first definition of some type of fibrant version of a cohomology theory. 
\begin{definition}
Let $\mathcal{E}$ be a (pre)sheaf of spectra on Sch/$k$, and let $\mathbb{H}_t(-,\mathcal{E})$ be it's fibrant replacement.  Let $X\in$ Sch/$k$.  The hypercohomology of $X$ with coefficients in $\mathcal{E}$ is defined as follows
\[\mathbb{H}^n_t(X,\mathcal{E})=\pi_{-n}(\mathbb{H}_t(X,\mathcal{E})).\]
\end{definition}
Applying this to the cdh-topology, we obtain our first definition of cdh-cohomology.  

According to Thomason (\cite{thomason1985algebraic}, p. 532), this previous definition yields a type of hypercohomology.  So, it should be analogous to our previous definition \ref{hypercohomology}. While some authors use the homotopy groups of a specific spectrum as the definition for hypercohomology, others use our complex of injectives and still others use a derived category construction.  These definitions all agree with one another, and the reason for this is a spectral sequence argument.  In order to avoid such arguments, we will simply use the previous definitions as our definition of hypercohomology and $t$-cohomology separately in theory building, and interchangeably in practice.  
\begin{notation}
Let $\mathcal{E}$ be a (pre)sheaf of spectra on (Sch/$k,t)$ defining some cohomology theory $H^*$, and let $\mathbb{H}_{t}(-,\mathcal{E})$ be it's fibrant replacement.  For some $X\in$ Sch/$k$ the $t$-fibrant version of the cohomology theory $H^*$, written as cdh-fibrant $H^*$ cohomology is given by $\mathbb{H}^n_t(X,\mathcal{E})$, and can sometimes be written as $\mathbb{H}^n_t(X,H^*)$.
\end{notation}
So, cdh-fibrant versions of our various generalized cohomology theories are simply defined by the hypercohomology groups, which are the homotopy groups of their fibrant replacements.  

Each of our hypercohomology constructions is really a generalization of the sheaf cohomology groups construction. In some ways, this is the right way to extend our construction, because these two constructions agree where they overlap.  Namely when we have a complex where all of the data is concentrated at the zero-ith level, the sheaf cohomology agrees with the hypercohomology. This can be seen in several different ways, but in particular by looking at the $t$-cohomology definition we have the following idea that is in part explained by the scholium of great enlightenment (\cite{thomason1985algebraic}, 5.32).  
\begin{idea}
Let $\mathcal{E}$ be the presheaf of Eilenberg-MacLane spectrum associated to the presheaf of (simplicial) groups $A^{\bullet}$ (this is basically looking at a presheaf of complexes where all of the data is concentrated at the zero-ith level).  Then $\mathbb{H}_t(-,\mathcal{E})$, the fibrant replacement for $\mathcal{E}$, is also an Eilenberg-Maclane spectrum.  In this case, $\mathbb{H}_t(-,\mathcal{E})$ is the Eilenberg-MacLane spectrum associated to the right derived functors of the global sections of $A^{\bullet}$, or more concisely $R\Gamma(-,A^{\bullet})$.  So, we have that  \[\pi_{-n}(\mathbb{H}_t(X,\mathcal{E}))=R^n\Gamma(X,A^{\bullet})=H_t^n(X,A^{\bullet})\] which is just the sheaf cohomology  (where switching the index from the subscripts to superscripts adds a negative sign).
Combining this with our original definition of $t$-cohomology we have that,  \[\mathbb{H}_t^n(X,\mathcal{E})=\pi_{-n}(\mathbb{H}_t(X,\mathcal{E}))=R^n\Gamma(X,A^{\bullet})=H_t^n(X,A^{\bullet}).\]
This proves equality in the case where we have a well defined correspondence between a sheaf of spectra $\mathcal{E}$ and a sheaf of cochain complexes $A^{\bullet}$ .  
\end{idea}
\begin{notation}
This previous idea gives us agreement with the traditional sheaf cohomology groups, provided we specify the topology.  In notation, we will denote these groups as $H_{t}^n(X,A^{\bullet})$.  For the most part we will try to distinguish between the two scenarios, i.e. when we are dealing with hypercohomology we will use $\mathbb{H}$, and $H$ otherwise.  But since they agree in many scenarios, we may slip up from time to time.  
\end{notation}

We will continue our discussion with some important properties of these cohomologies, such as a Mayer-Vietoris sequence.  
\begin{definition}
A presheaf $\mathcal{E}$ of spectra on the category Sch/$k$ is said to satisfy the Mayer-Vietoris property for a Cartesian square of schemes  
\[\begin{CD}
A  @>>> B\\
@VVV @VVV\\
C @>>> D
\end{CD}\] if applying $\mathcal{E}$ to this square yields a homotopy Cartesian square in the category of spectra.  
\end{definition}
We will occasional call this the MV-property for brevity.  It will be important when the MV-property holds for some distinguished class of squares.  
\begin{definition}
The presheaf $\mathcal{E}$ satisfies the MV-property for a class of squares $P$ if it satisfies the MV-property for each square in the class.
\end{definition}
Tying this into our previous work, we have the following result.  
\begin{theorem}\label{mvweak}
Let $C$ be a category with a complete bounded $cd$-structure called $P$, and let $t$ be a topology induced by the $cd$-structure $P$.  Then a presheaf of spectra $\mathcal{E}$ on $C$ is quasifibrant with respect to $t$ if and only if $\mathcal{E}$ satisfies the MV-property for the class of squares given by $P$.  
\end{theorem}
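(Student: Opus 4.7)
The plan is to prove the two implications separately. The forward direction is a relatively direct unpacking of definitions, while the reverse direction contains the substantive content and will proceed by a well-founded induction that uses the boundedness hypothesis.

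For the forward direction, I would assume $\mathcal{E}$ is quasifibrant and let $Q \in P$ be a distinguished square. The natural map $\mathcal{E}(U) \to \mathbb{H}_t(U,\mathcal{E})$ is a weak equivalence of spectra at each of the four vertices of $Q$, so the square obtained by applying $\mathcal{E}$ to $Q$ is levelwise weakly equivalent to the square obtained by applying $\mathbb{H}_t(-,\mathcal{E})$. It therefore suffices to show that the fibrant replacement $\mathbb{H}_t(-,\mathcal{E})$ sends every square in $P$ to a homotopy Cartesian square of spectra. This is the descent condition built into fibrancy in the local injective model structure: the topology $t$ is generated by the covering sieves coming from the squares in $P$, so a fibrant object sends these generating covers to homotopy limits, and for a square-shaped diagram this homotopy limit condition is precisely that the image square is homotopy Cartesian.

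For the reverse direction, I would assume $\mathcal{E}$ has the MV-property for $P$ and aim to show that the comparison map $\eta_U : \mathcal{E}(U) \to \mathbb{H}_t(U,\mathcal{E})$ is a weak equivalence of spectra for every $U \in C$. Boundedness of $P$ supplies a well-founded notion of ``density'' $d(U)$ for each object $U \in C$, and the proof proceeds by induction on $d(U)$. In the base case, objects of minimal density admit only the trivial covering sieve and the claim is immediate. For the inductive step, completeness of $P$ allows me to choose a distinguished square
\[
\begin{CD}
A @>>> B\\
@VVV @VVV \\
C @>>> U
\end{CD}
\]
whose other three corners satisfy $d(A), d(B), d(C) < d(U)$, so by induction $\eta_A$, $\eta_B$, $\eta_C$ are weak equivalences. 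The MV-property gives that the square of $\mathcal{E}$-values is homotopy Cartesian, while fibrancy of $\mathbb{H}_t(-,\mathcal{E})$ (as in the forward direction) gives that the corresponding square of fibrant-replacement values is also homotopy Cartesian. Comparing the two homotopy Cartesian squares through the natural transformation $\eta$ and using that three of the four vertical maps are already weak equivalences forces $\eta_U$ to be a weak equivalence as well.

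The main obstacle sits in the reverse direction, and specifically in the two unstated consequences of boundedness and completeness that drive the induction: first, that boundedness truly produces a well-founded density function on the objects of $C$, and second, that completeness guarantees, at every non-minimal $U$, the existence of a distinguished square with $U$ at the bottom-right and strictly smaller density at each of the other three corners. Both facts are standard but technical consequences of Voevodsky's cd-structure axioms, and a careful write-up would require unwinding those definitions in detail, for which I would refer the reader to section 2 of \cite{voevodsky2008unstable}.
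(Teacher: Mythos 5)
The paper does not actually prove this statement; it is quoted from \cite{cortinas2005cyclic}, Theorem 3.4, whose proof rests on Voevodsky's machinery for cd-structures. Your outline does reconstruct the shape of that argument, but two of your steps conceal the real content. First, the forward direction is not ``a relatively direct unpacking of definitions.'' The assertion that a local-injective fibrant object sends the squares of $P$ to homotopy Cartesian squares is precisely Voevodsky's descent theorem for cd-structures, and it requires the cd-structure to be \emph{regular} (a hypothesis that the theorem as stated in this paper silently drops, but which is present in the cited result and verified for the combined cd-structures used here). Fibrancy only gives descent for covering sieves/hypercovers; translating descent for the sieve generated by $\{B\to D,\ C\to D\}$ into homotopy-Cartesianness of the four-term square is exactly what regularity buys, and without it the implication fails. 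Since this same input is what makes $\mathbb{H}_t(-,\mathcal{E})$ satisfy the MV-property in your reverse direction, you are in effect assuming the hardest part of the theorem in both directions.

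Second, in the reverse direction the induction is not set up quite as you describe. Boundedness supplies a density structure together with \emph{reducing} distinguished squares; it does not give, for each non-minimal $U$, a square all three of whose other corners have strictly smaller density, and completeness plays a different role (it controls which sieves are covering). The argument in \cite{cortinas2005cyclic} instead isolates the lemma that a local weak equivalence between two presheaves that both satisfy the MV-property for $P$ is a global weak equivalence, and proves that by the (genuinely technical) density induction; the theorem then follows by applying this to $\mathcal{E}\to\mathbb{H}_t(-,\mathcal{E})$. Finally, your closing five-lemma step deserves a word of justification: in a homotopy Cartesian square the unknown corner here is the bottom-right one, which is not the homotopy limit of the others; the step goes through only because for spectra homotopy Cartesian squares are also homotopy coCartesian. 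None of these points makes your strategy wrong, but as written the proposal defers to \cite{voevodsky2008unstable} exactly the statements that constitute the proof.
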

\begin{proof}
See \cite{cortinas2005cyclic} theorem 3.4.
\end{proof}
\begin{corollary}
In the scenario from the previous theorem, if $\mathcal{E}$ satisfies the MV-property for the class of squares given by $P$, then $\mathcal{E}$ satisfies $t$-descent.  
\end{corollary}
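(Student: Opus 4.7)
The plan is essentially to invoke Theorem \ref{mvweak} directly and then unpack the definition of $t$-descent. Concretely, I would start by reminding the reader that the cd-structure $P$ under consideration is complete and bounded (the standing hypothesis from the theorem), and observe that $t$-descent is in fact defined in our setup merely as quasifibrancy with respect to $t$, under the additional assumption that $P$ is regular. So the logical skeleton is: MV-property $\Longrightarrow$ quasifibrant $\Longrightarrow$ $t$-descent, where the first implication is the content of Theorem \ref{mvweak} and the second is the definition.

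More precisely, suppose $\mathcal{E}$ is a presheaf of spectra on $C$ satisfying the MV-property for the class of squares $P$. Since $P$ is a complete bounded cd-structure and $t$ is the topology it generates, Theorem \ref{mvweak} applies and yields the ``only if'' direction; namely $\mathcal{E}$ is quasifibrant with respect to $t$, meaning that for every $U \in C$ the canonical map $\mathcal{E}(U) \to \mathbb{H}_t(U,\mathcal{E})$ is a weak equivalence of spectra. Assuming further that $P$ is regular (which is a standing hypothesis of the notion of $t$-descent as defined in the excerpt), this quasifibrancy is by definition precisely what it means for $\mathcal{E}$ to satisfy $t$-descent.

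There is no real obstacle here; the statement is a formal corollary that isolates one direction of the ``if and only if'' in Theorem \ref{mvweak}, repackaged in the vocabulary of descent. The only point requiring care in the write-up is the mild bookkeeping about hypotheses: Theorem \ref{mvweak} is stated for complete bounded cd-structures while the definition of $t$-descent also requires regularity, so I would mention explicitly that we are tacitly working with a complete regular bounded cd-structure in the corollary, which is the relevant case in all the applications (Nisnevich squares plus abstract blowup squares on $\mathrm{Sch}/k$, and their analogue on $\mathrm{Sm}/k$).
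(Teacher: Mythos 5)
Your proof is correct and is essentially the argument the paper intends (the paper states this corollary without proof, as an immediate consequence of Theorem \ref{mvweak} together with the definition of $t$-descent as quasifibrancy for a complete regular bounded $cd$-structure). The only nitpick is that you label the implication ``MV-property $\Rightarrow$ quasifibrant'' as the ``only if'' direction of the theorem when, as stated, it is the ``if'' direction; your subsequent sentence makes clear you are using the right implication, and your remark about the tacit regularity hypothesis is a correct and worthwhile observation.
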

\begin{remark}
This previous construction holds if we replace all spectra and sheaves of spectra, denoted by $\mathcal{E}$, with cochain complexes and sheaves of cochain complexes, denoted by $A^{\bullet}$.  By this we mean the following.
\end{remark}
\begin{theorem}\label{mvchain}
Let $C$ be a category with a complete bounded $cd$-structure called $P$, and let $t$ be a topology induced by the $cd$-structure $P$.  Then a presheaf of complexes $A^{\bullet}$ on $C$ is quasifibrant with respect to $t$ if and only if $A^{\bullet}$ satisfies the MV-property for the class of squares given by $P$.  Again, in this scenario, we say that $A^{\bullet}$ satisfies $t$-descent.  
\end{theorem}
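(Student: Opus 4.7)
The plan is to reduce Theorem \ref{mvchain} to its spectrum analog, Theorem \ref{mvweak}, via the Eilenberg--MacLane functor $A^{\bullet}\mapsto |A^{\bullet}|$ mentioned earlier in the section on sheaves on a site. Because this functor sends quasi-isomorphisms of (presheaves of) cochain complexes to (local) weak equivalences of (presheaves of) spectra, and because its value on a complex has homotopy groups equal to the cohomology groups of the complex, it should transport both the MV-property and quasifibrancy faithfully between the two settings.

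First I would verify that $|\cdot|$ intertwines the two notions of quasifibrancy. By construction, $|A^{\bullet}|$ is the presheaf of Eilenberg--MacLane spectra whose value on $U$ is $|A^{\bullet}(U)|$. On the fibrant replacement side, the scholium-style identification discussed earlier in the section tells us that the $t$-fibrant replacement of $|A^{\bullet}|$ is the Eilenberg--MacLane presheaf associated with $R\Gamma(-,A^{\bullet})$. Hence the comparison map $A^{\bullet}(U)\to R\Gamma(U,A^{\bullet})$ is a quasi-isomorphism for every $U$ precisely when $|A^{\bullet}|(U)\to \mathbb{H}_{t}(U,|A^{\bullet}|)$ is a global weak equivalence of spectra, i.e.\ precisely when $|A^{\bullet}|$ is quasifibrant in the sense of Theorem \ref{mvweak}.

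Next I would check that the MV-property is preserved and reflected by $|\cdot|$. A commutative square of cochain complexes is homotopy Cartesian iff the induced map from the upper-left corner to the homotopy pullback is a quasi-isomorphism, and the analogous characterization holds for squares of spectra with weak equivalences replacing quasi-isomorphisms. Since $|\cdot|$ preserves (homotopy) pullbacks of Eilenberg--MacLane objects and sends quasi-isomorphisms to weak equivalences (and reflects them, because cohomology groups agree with stable homotopy groups), a square $P$ of presheaves of complexes is homotopy Cartesian iff $|P|$ is. Therefore $A^{\bullet}$ has the MV-property for the class of squares $P$ if and only if $|A^{\bullet}|$ does. Combining the last two observations with Theorem \ref{mvweak} applied to $|A^{\bullet}|$ immediately yields the biconditional in Theorem \ref{mvchain}.

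The main obstacle I expect is the bookkeeping around what ``homotopy Cartesian'' means in the two categories and making sure the translation across $|\cdot|$ is watertight; concretely, one has to verify that applying $|\cdot|$ to a homotopy pullback of cochain complexes gives a homotopy pullback of spectra (so that the MV-property transports), and that the fibrant replacement of the Eilenberg--MacLane presheaf is itself Eilenberg--MacLane (so that quasifibrancy transports). Once those two compatibilities are in place, no further argument is required: the theorem is a formal consequence of Theorem \ref{mvweak}. An alternative, essentially equivalent route would be to repeat the Voevodsky--Cortiñas--Haesemeyer--Schlichting proof of Theorem \ref{mvweak} verbatim in the model category of presheaves of cochain complexes equipped with the local projective (Brown--Gersten) structure, replacing ``weak equivalence of spectra'' by ``quasi-isomorphism'' throughout; this is mechanical and yields the same conclusion.
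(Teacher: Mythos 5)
Your proposal is correct and follows exactly the route the paper intends: the paper gives no explicit proof of Theorem \ref{mvchain}, only the preceding remark that the spectrum-level statement (Theorem \ref{mvweak}, cited to Corti\~nas et al.) carries over verbatim to presheaves of cochain complexes via the Eilenberg--MacLane correspondence established earlier in the section. Your write-up simply fills in the details of that reduction (transport of quasifibrancy and of the MV-property across $|\cdot|$), which is consistent with, and more careful than, what the paper records.
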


The MV-property as defined above turns out to be the property that will allow us to perform computations.  The reason for this is, as the notation suggests, the MV-property yields a Mayer-Vietoris sequence of cohomology groups.  
\begin{theorem}
Let the following be an arbitrary square in a complete bounded $cd$-structure $P$ such that $P$ generates a topology $t$.
\[\begin{CD}
A  @>>> B\\
@VVV @VVV\\
C @>>> D
\end{CD}\] Then for any presheaf of spectra $\mathcal{E}$ (resp. simplicial sets, cochain complexes), there is a Mayer-Vietoris sequence of cohomology groups given by 
\[ ...\rightarrow\mathbb{H}_t^{n-1}(A,\mathcal{E})\rightarrow\mathbb{H}_t^n(D,\mathcal{E})\rightarrow\mathbb{H}_t^n(B,\mathcal{E})\oplus\mathbb{H}_t^n(C,\mathcal{E})\rightarrow\mathbb{H}_t^n(A,\mathcal{E})\rightarrow\mathbb{H}_t^{n+1}(D,\mathcal{E})\rightarrow...\]
\end{theorem}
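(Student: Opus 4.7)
The plan is to reduce to the fibrant case and then invoke the standard fact that a homotopy Cartesian square of spectra induces a long exact sequence of homotopy groups.

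First, I would pass from $\mathcal{E}$ to its $t$-fibrant replacement $\mathbb{H}_t(-,\mathcal{E})$, since by definition $\mathbb{H}_t^n(X,\mathcal{E})=\pi_{-n}(\mathbb{H}_t(X,\mathcal{E}))$. As $\mathbb{H}_t(-,\mathcal{E})$ is $t$-fibrant it is in particular quasifibrant, so Theorem \ref{mvweak} applies and it satisfies the MV-property on every square in the $cd$-structure $P$. Applying $\mathbb{H}_t(-,\mathcal{E})$ to the given square therefore produces a homotopy Cartesian square of spectra.

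Next, I would appeal to the standard fact that any homotopy Cartesian square of spectra yields a long exact sequence of homotopy groups via the Puppe sequence of its common homotopy fiber, giving
\[\cdots\to\pi_n(\mathbb{H}_t(D,\mathcal{E}))\to\pi_n(\mathbb{H}_t(B,\mathcal{E}))\oplus\pi_n(\mathbb{H}_t(C,\mathcal{E}))\to\pi_n(\mathbb{H}_t(A,\mathcal{E}))\to\pi_{n-1}(\mathbb{H}_t(D,\mathcal{E}))\to\cdots.\]
Re-indexing through $\mathbb{H}^k_t(X,\mathcal{E})=\pi_{-k}(\mathbb{H}_t(X,\mathcal{E}))$ produces exactly the Mayer-Vietoris sequence claimed.

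For the parenthetical cases of simplicial sets and cochain complexes, I would run the same argument using Theorem \ref{mvchain} (or pass through the Eilenberg-MacLane/Dold-Kan correspondence to reduce to the spectrum case): the $t$-fibrant replacement of $A^{\bullet}$ is a presheaf of complexes that satisfies MV on $P$, so applying it to the square yields a homotopy Cartesian square of complexes, equivalently a distinguished triangle in the derived category, whose associated long exact sequence in cohomology is exactly the Mayer-Vietoris sequence.

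The main obstacle is essentially bookkeeping: one must confirm that the connecting maps have the correct degree shift and that the map into the direct sum is the difference of the two restriction maps, not merely verify abstract exactness. All of the substantive input is supplied by Theorems \ref{mvweak} and \ref{mvchain}; what remains is the routine translation of a homotopy Cartesian square into a long exact sequence of homotopy (or cohomology) groups.
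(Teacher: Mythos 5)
Your proposal is correct and follows essentially the same route as the paper's own proof: pass to the $t$-fibrant replacement, observe it trivially satisfies descent so Theorem \ref{mvweak} gives the MV-property on every square in $P$, and then extract the long exact sequence of homotopy groups from the resulting homotopy Cartesian square of spectra. The only cosmetic difference is that the paper phrases the last step via the fibration $\mathbb{H}_t(D,\mathcal{E})\rightarrow\mathbb{H}_t(B,\mathcal{E})\times\mathbb{H}_t(C,\mathcal{E})$ with fiber $\Omega\mathbb{H}_t(A,\mathcal{E})$ rather than your Puppe-sequence language, which amounts to the same thing.
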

\begin{proof}
These cohomology groups are defined by the functor $\mathbb{H}_t^n(-,\mathcal{E})=\pi_{-n}(\mathbb{H}_t(-,\mathcal{E}))$.  Trivially, we know that the sheaf $\mathbb{H}_t(-,\mathcal{E})$ satisfies $t$-descent (because the map $\mathbb{H}_t(-,\mathcal{E})\rightarrow \mathbb{H}_t(-,\mathcal{E})$ is a global weak equivalence).  Therefore, by \ref{mvweak}, we know that $\mathbb{H}_t(-,\mathcal{E})$ satisfies the MV-property for every square in $P$.   Applying $\mathbb{H}_t(-,\mathcal{E})$ to our given square, we have that 
\[\begin{CD}
\mathbb{H}_t(A,\mathcal{E})   @<<< \mathbb{H}_t(B,\mathcal{E}) \\
@AAA @AAA\\
\mathbb{H}_t(C,\mathcal{E})  @<<< \mathbb{H}_t(D,\mathcal{E}) 
\end{CD}\]is a homotopy Cartesian square.  The map $\mathbb{H}_t(D,\mathcal{E})\rightarrow \mathbb{H}_t(B,\mathcal{E})\times\mathbb{H}_t(C,\mathcal{E})$ is a fibration with fiber homotopy equivalent to the loop space spectrum of $\mathbb{H}_t(A,\mathcal{E})$, $\Omega\mathbb{H}_t(A,\mathcal{E})$.  Using the inherited long exact sequence of homotopy groups for fibration sequences, we have that 
\[ ...\rightarrow\pi_{-(n-1)}(\mathbb{H}_t(A,\mathcal{E}))\rightarrow\pi_{-n}(\mathbb{H}_t(D,\mathcal{E}))\rightarrow\pi_{-n}(\mathbb{H}_t(B,\mathcal{E}))\oplus\pi_{-n}(\mathbb{H}_t(C,\mathcal{E}))\rightarrow\hspace{1 in}\] \[\hspace{2.5 in}\rightarrow\pi_{-n}(\mathbb{H}_t(A,\mathcal{E}))\rightarrow\pi_{-(n+1)}(\mathbb{H}_t(D,\mathcal{E}))\rightarrow...\]
Thus proving the claim.
\end{proof}

Turning our focus back to cdh-site, we will first summarize everything up to this point.  In the case of the cdh-cohomology, our site is (Sch/$k$,cdh).  Now substituting the topology into our cohomology constructions, we define the cdh-cohomology groups in the following way.  
\begin{definition}Let $X$ be a scheme over a field $k$.  Then the cdh-cohomology groups of $X$ with coefficients in a sheaf of spectra $\mathcal{E}$ are given by 
\[\mathbb{H}^n_{cdh}(X,\mathcal{E})=\pi_{-n}(\mathbb{H}_{cdh}(X,\mathcal{E})),\]
where $\mathbb{H}_{cdh}(X,\mathcal{E})$ is the locally injective fibrant replacement sheaf in the cdh-site.  
\end{definition}
We have also shown that these hypercohomology groups may coincide with the definition of the sheaf cohomology groups in some cases.  Using the MV-property and it's associated sequence is where the utility of cdh-cohomology becomes clear.  As a corollary to the previous theorem, we have the following. 

Suppose $X$ is a singular variety over characteristic zero.  From Hironaka's desingularization theorem, we find a smooth resolution $Y$ from a finite sequence of blowups along smooth centers.  This is summarized in the following abstract blowup square.  We have 
\[\begin{CD}
E @>>> Y\\
@VVV @VVV\\
Z @>>> X
\end{CD} \]  
where $Y$ and $Z$ are smooth and $E$ is a union of smooth hypersurfaces of $Y$ having only normal crossings. 
\begin{corollary}\label{mvsequence}
For a presheaf of spectra $\mathcal{E}$ on the cdh-site, and in the scenario above of the resolution of the singularities of $X$, we have the following Mayer-Vietoris sequence of cohomology groups, 
\[ ...\rightarrow\mathbb{H}_{cdh}^{n-1}(E,\mathcal{E})\rightarrow\mathbb{H}_{cdh}^n(X,\mathcal{E})\rightarrow\mathbb{H}_{cdh}^n(Y,\mathcal{E})\oplus\mathbb{H}_{cdh}^n(Z,\mathcal{E})\rightarrow\mathbb{H}_{cdh}^n(E,\mathcal{E})\rightarrow\mathbb{H}_{cdh}^{n+1}(X,\mathcal{E})\rightarrow... .\]
\end{corollary}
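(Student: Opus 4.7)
The plan is to recognize this as a direct application of the preceding Mayer-Vietoris theorem, specialized to the cdh-site. The only substantive task is to verify that the given resolution square is an element of the cd-structure that generates the cdh-topology; the long exact sequence then follows formally from the fibration sequence of spectra obtained from the homotopy Cartesian square.

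First I would check the hypotheses of the preceding theorem. By the definition of the cdh-topology, its generating cd-structure on $\mathrm{Sch}/k$ consists of Nisnevich squares together with abstract blowup squares, and by the lemma cited from \cite{voevodsky2008unstable} this combined cd-structure is complete, bounded, and regular. The square arising from Hironaka's resolution is Cartesian by construction; the proper birational morphism $Y \to X$ together with the closed embedding $Z \to X$ whose complement maps isomorphically onto $X \setminus Z$ are precisely the defining conditions of an abstract blowup square. Hence the resolution square lies in the generating cd-structure $P$ for the cdh-topology.

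Next I would invoke the preceding Mayer-Vietoris theorem with $t = \mathrm{cdh}$ and with this square. The conclusion is the desired six-term (in fact long) exact sequence in cohomology indexed by $n$. In the write-up I would emphasize that the same argument applies verbatim to presheaves of simplicial sets or cochain complexes via the parallel statement in theorem \ref{mvchain}, so there is no additional work required for those coefficient categories.

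There is no real obstacle here because the heavy lifting has already been carried out: the abstract machinery of complete bounded regular cd-structures produces the MV-property for the fibrant replacement $\mathbb{H}_{cdh}(-,\mathcal{E})$, and the long exact sequence of homotopy groups associated to a homotopy Cartesian square of spectra delivers the stated sequence of hypercohomology groups. The only place a reader could get stuck is checking that Hironaka's resolution really produces an abstract blowup square rather than merely a commutative diagram, which I would resolve by a one-line appeal to the definition and the properties of $\varphi$ asserted in Hironaka's theorem.
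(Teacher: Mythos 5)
Your proposal is correct and follows essentially the same route the paper intends: the resolution square furnished by Hironaka's theorem is an abstract blowup square, hence lies in the combined cd-structure generating the cdh-topology, and the preceding Mayer--Vietoris theorem applied with $t = \mathrm{cdh}$ yields the long exact sequence immediately. Your explicit verification that the square satisfies the definition of an abstract blowup square is exactly the one step the paper leaves implicit, so nothing is missing.
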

As stated before, this is our most important computational tool.  The reason is that, because of Hironaka's theorem, we always have a resolution in characteristic 0.  So, computing the cdh-cohomology of a singular variety (which does not sound so easy) is reduced to computing cdh-cohomology of these smooth schemes (which is maybe a little easier).  So we can easily reference this fact later, we will write this in the following way.  
\begin{corollary}\label{notsmoothsmooth}
According to corollary \ref{mvsequence}, since the cdh-cohomology groups of any singular variety over characteristic zero fits into a long exact sequence with the cdh-cohomology groups of nonsingular varieties, the cdh-cohomology groups of any scheme in Sch/$k$ is determined by cdh-cohomology groups of schemes in Sm/$k$.  
\end{corollary}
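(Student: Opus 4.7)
The plan is to iterate the Mayer--Vietoris long exact sequence of Corollary \ref{mvsequence}, using Hironaka's resolution theorem to produce the requisite abstract blowup squares.

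First, given an arbitrary $X\in$ Sch/$k$, I would invoke Hironaka's theorem to obtain an abstract blowup square whose resolution $Y$ is smooth, whose center $Z$ is smooth, and whose exceptional fiber $E$ is a union of smooth hypersurfaces of $Y$ having only normal crossings. Feeding this square into the Mayer--Vietoris sequence from Corollary \ref{mvsequence} immediately expresses $\mathbb{H}^n_{cdh}(X,\mathcal{E})$ as a piece of a long exact sequence involving $\mathbb{H}^*_{cdh}(Y,\mathcal{E})$, $\mathbb{H}^*_{cdh}(Z,\mathcal{E})$, and $\mathbb{H}^*_{cdh}(E,\mathcal{E})$. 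Since $Y$ and $Z$ are smooth, the first two already belong to Sm/$k$, so only the exceptional fiber $E$ requires additional work.

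To handle $E$, I would proceed by induction on the number of irreducible components. Writing $E=E_1\cup(E_2\cup\cdots\cup E_r)$, the square
\[\begin{CD}
E_1\cap(E_2\cup\cdots\cup E_r) @>>> E_1\\
@VVV @VVV\\
E_2\cup\cdots\cup E_r @>>> E
\end{CD}\]
is an abstract blowup square: the right-hand vertical is a closed embedding, the bottom arrow is proper, and the restriction to the complement of $E_1$ is an isomorphism on reduced schemes. Applying Corollary \ref{mvsequence} again expresses $\mathbb{H}^*_{cdh}(E,\mathcal{E})$ in terms of the cdh-cohomology of $E_1$, of $E_2\cup\cdots\cup E_r$, and of their intersection. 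Because $E$ has strict normal crossings, both the sub-union and the intersection are again normal crossings configurations of smooth subschemes, but with strictly fewer components or with strata of strictly smaller dimension. Iterating therefore terminates with cdh-cohomology groups of the smooth strata $E_{i_1}\cap\cdots\cap E_{i_k}$, all of which lie in Sm/$k$.

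The main obstacle is checking that the closed-cover squares that appear in the inductive decomposition of $E$ really do satisfy the abstract blowup axioms (in particular the isomorphism condition on the reduced complement), and then setting up the double induction — on the number of components of $E$ and on the combinatorial complexity of the stratification — so that it terminates. Both points are essentially bookkeeping, relying only on the transversality of the components guaranteed by Hironaka. Once this is in place, threading the resulting long exact sequences together shows that every $\mathbb{H}^n_{cdh}(X,\mathcal{E})$ is determined, via finitely many Mayer--Vietoris sequences, by cdh-cohomology groups of objects of Sm/$k$, which is exactly the assertion of the corollary.
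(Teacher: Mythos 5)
Your overall strategy --- resolve, apply Corollary \ref{mvsequence}, then recurse on the non-smooth corners of the square --- matches the paper's, but your treatment of the exceptional fiber is genuinely different and arguably sharper. The paper disposes of both $Z$ and $E$ by the same device: each is a proper closed subvariety, hence of strictly smaller dimension, so one resolves it again and inducts on dimension until every corner of every square is smooth. You instead exploit the normal crossings structure of $E$ directly, observing that a closed decomposition $E=E_1\cup(E_2\cup\cdots\cup E_r)$ yields an abstract blowup square (with $E_1$ in the ``resolution'' slot, the remaining union in the ``center'' slot, and their intersection as the fiber product), so that finitely many further applications of Corollary \ref{mvsequence} reduce everything to the smooth strata $E_{i_1}\cap\cdots\cap E_{i_k}$. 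This closed-cover induction avoids any further appeal to Hironaka for $E$ and makes the termination of the recursion combinatorially transparent, whereas the paper's version is shorter to state but resolves $E$ all over again.

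The one genuine gap is your opening claim that Hironaka's theorem supplies an abstract blowup square ``whose center $Z$ is smooth.'' It does not: the resolution is a composite of blowups along smooth centers, but the single closed subscheme $Z\subset X$ over whose complement $Y\to X$ is an isomorphism is essentially the singular locus of $X$, which is in general not smooth and cannot in general be enlarged to a smooth closed subscheme. This is exactly the point the paper flags in the remark following the corollary. The repair is the same kind of recursion you set up for $E$: $Z$ is a proper closed subvariety, so $\dim Z<\dim X$, and one applies the whole argument again to $Z$; strictly speaking the entire proof should therefore be framed as an induction on dimension, so that this recursion, together with your stratification of $E$, is guaranteed to terminate. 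With that adjustment your argument is complete.
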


Furthermore, for any sheaf that satisfies cdh-descent, we can do the same thing.  
\begin{example} As a preview of the next section, let $\textbf{HP}(-/k)$ be the sheaf of spectra defining periodic cyclic homology, $HP_n(-/k)$, that is computed over the field $k$.  By a result in the next section $\textbf{HP}(-/k)$ satisfies cdh-descent.  This means that the fibrant replacement, $\mathbb{H}_{cdh}(-,\textbf{HP}(-/k))$, of $\textbf{HP}(-/k)$ is a global weak equivalence. Finally, this means that for a scheme $X$, \[HP_{-n}(X/k)=\pi_{-n}(\textbf{HP}(X/k))=\pi_{-n}(\mathbb{H}_{cdh}(X,\textbf{HP}(-/k)))=\mathbb{H}_{cdh}^n(X,\textbf{HP}(-/k)).\]
So, since periodic cyclic homology coincides with it's cdh version, it shares some of it's nice properties, namely, the Mayer-Vietoris sequence from the resolution of $X$.  Therefore, as before, computing the periodic cyclic homology of a singular variety is reduced to computing periodic cyclic homology of smooth schemes.  
\end{example}

\begin{remark}
While the previous statement in corollary \ref{notsmoothsmooth} is technically correct, we should note that our justification on the way to that result is not.  We recall from Hironaka that, the center of the resolution, which we are calling $Z$, is more correctly written as a sequence of centers.  When taken together, these centers should look something like the singular locus, which in general is not a smooth subvariety.  It is, however, a closed subvariety and hence it is of smaller dimension.  We can therefore iterate this process until we resolve the singularities in $Z$.  

Also, while the exceptional fiber, $E$, may not be smooth, normal crossings are in some ways the easiest type of singularities to resolve.  Furthermore, since $E$ is composed of hyperplanes, we know that it also has lower dimension.  Hence again, by induction, we would again be able to iterate the resolution process until all pieces of the blowup squares are resolved into smooth varieties.  

Since we are able to construct iterated resolutions for each of the pieces associated to these blowup squares, we will eventually arrive at some hybrid diagram of Mayer-Vietoris sequences where the value of the cdh-cohomology on each singular scheme is still determined by it's values on smooth schemes.  Thus confirming our result in the previous corollary.  
\end{remark}

\newpage
\section{K-theory and cdh-fibrant cyclic homology}
In this section, we will begin by defining all of the relevant spectra, or complexes.  From there, we will look at the cdh-fibrant versions of each of these cohomology theories.  
\subsection{The K-theory and Homotopy K-theory spectra}
We begin with Quillen's definition for the K-groups of a ring.  Quillen's definition of K-theory involves the homotopy groups of a topological space.  There are several definitions we could give for this space, but we will try to be concise and use a more ``functorial" construction. 
\begin{definition}
An H-space $X$ is a topological space with a map $\varphi:X\times X\rightarrow X$ such that there exists an identity element, i.e. there is an $e$ such that $\varphi(e,x)=\varphi(x,e)=x$.  
\end{definition}
Other algebraic properties will be important up to homotopy.  We give the following example.  If $X$ is homotopy commutative then the map $\varphi:X\times X\rightarrow X$ is commutative up to homotopy, i.e. $\varphi(x,y)$ is homotopic to $\varphi(y,x)$.  
\begin{definition}
Let $X$ be a homotopy associative, homotopy commutative H-space. A group completion of $X$ is another H-space called $Y$ and an H-space map $X\rightarrow Y$ with the following properties.\\
$\bullet$ $\pi_0(Y)$ is the group completion of the commutative monoid $\pi_0(X)$ (i.e. the \\ Grothendieck group of $\pi_0(X)$).  \\
$\bullet$ The homology ring $H^*(Y,S)$ with coefficients in $S$ is isomorphic to the localization $\pi_0(X)^{-1}H^*(X,S)$ of $H^*(X,S)$ by the natural map, for all commutative rings $S$. 
\end{definition}
\begin{definition}
Let $BGL_n{R}$ be the classifying space of the group of invertible $n\times n$ matrices with entries in $R$. We define $BGL(R)^+$ to be the basepoint component of the group completion of the H-space $\coprod_{n=0}^{\infty}BGL_n{R}$.  
\end{definition}
This is the topological space that we will use in our construction of the K-groups.  All that remains now is to evaluate this space using the homotopy group functors.
\begin{definition}
Let $R$ be a ring.  We define the K-groups of R to be $K_n(R)=\pi_n(BGL(R)^+)$ for $n\in\mathbb{N}$.
\end{definition}
Notice that this definition only defines the non-negative K-groups, we will see that by using a spectrum, one such definition of negative K-groups will be an obvious consequence.  
\begin{note}There is a way to define the negative K-groups without using spectra.  This construction comes from Bass, and is as follows. For a ring $R$, we define $K_{-1}(R)$ to be the cokernel of the map \[K_0(R[t])\oplus K_0(R[t^{-1}])\rightarrow K_0(R[t,t^{-1}]).\] We could then continue inductively to define all of the negative K-groups.  Any definition using a spectrum has of course been constructed so that it agrees with this more natural definition.  
\end{note}
The way we will proceed from here is by using a method of Gersten and Wagoner which uses the following construction of the suspension of ring.  
\begin{definition}
Let $R$ be a ring, and let the cone ring $C(R)$ be the ring consisting of row-and-column finite matrices.  The finite matrices $M(R)\subset C(R)$ form a 2-sided ideal.  We define the suspension ring of $R$ to be the quotient, $S(R)=C(R)/M(R)$.   
\end{definition}
\begin{proposition}[Gersten and Wagoner]
For a ring $R$, the topological space $K_0(R)\times BGL(R)^{+}$ is homotopy equivalent to $\Omega BGL(S(R))^+$ where $S(R)$ is the suspension ring of $R$.  This further implies that $K_n(R)\cong K_{n+1}(S(R))$ for all $n\geq0$.  
\end{proposition}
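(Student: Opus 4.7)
The plan is to exploit the short exact sequence of rings
\[0 \to M(R) \to C(R) \to S(R) \to 0\]
built into the very definition of the suspension. The strategy has two main ingredients. First, I would show that $C(R)$ has trivial K-theory, so that a resulting long exact sequence (or, equivalently, a fibration of $BGL^+$ spaces) produces a degree shift $K_{n+1}(S(R)) \cong K_n(M(R))$. Second, a Morita-type identification will recover $K_n(R)$ (with a corrective $K_0(R)$ factor in degree zero) from $K_n(M(R))$, yielding the stated homotopy equivalence, from which the K-group shift drops out by applying $\pi_*$ to both sides.

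For the vanishing of $K_*(C(R))$ I would run the usual Eilenberg swindle. Because $C(R)$ consists of row- and column-finite infinite matrices, there is an internal ``infinite direct sum'' operation on finitely generated projective $C(R)$-modules, and correspondingly on $GL(C(R))$, with the property that $P \oplus P \oplus \cdots \cong P \oplus (P \oplus P \oplus \cdots)$. Applied at $K_0$ this forces $[P]=0$ for every $P$, and the analogous stabilization at the level of $GL$ shows that $BGL(C(R))^+$ is in fact contractible, so $K_n(C(R))=0$ for all $n\geq 0$.

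The next and most delicate step is to promote the ring extension above to a genuine homotopy fibration
\[BGL(M(R))^+ \to BGL(C(R))^+ \to BGL(S(R))^+ .\]
Since the plus construction does not in general preserve fibrations, this requires a direct check that $GL(M(R)) \to GL(C(R)) \to GL(S(R))$ becomes a quasi-fibration after the plus construction, which one typically verifies via Quillen's homology fibration criterion (equivalently, by appealing to excision for the ideal $M(R) \subset C(R)$, which is available since $M(R)$ is H-unital). Combined with the contractibility of the middle space from the previous step, this immediately gives $BGL(M(R))^+ \simeq \Omega BGL(S(R))^+$.

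To close, I would invoke Morita invariance to identify $K_n(M(R))$ with $K_n(R)$ for $n\geq 1$; the $\pi_0$ of the appropriate group completion contributes the extra $K_0(R)$ factor, so $K_0(R) \times BGL(R)^+ \simeq BGL(M(R))^+$. Chaining the two equivalences gives the stated homotopy equivalence, and taking $\pi_n$ of both sides produces $K_n(R) \cong K_{n+1}(S(R))$ for all $n\geq 0$. The principal obstacle is clearly the fibration step: the swindle, the Morita identification, and the final $\pi_*$ computation are either formal or short once the homotopical machinery is in place, but passing from the ring-theoretic short exact sequence to a homotopy fibration of plus-constructed classifying spaces is where the genuinely nontrivial work lives.
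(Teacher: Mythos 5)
The paper does not actually prove this proposition; it is quoted as a known theorem of Gersten and Wagoner, so there is no internal proof to compare against. Your outline is the standard delooping argument and is essentially correct: the Eilenberg swindle on the cone ring $C(R)$ kills its K-theory, the extension $0\to M(R)\to C(R)\to S(R)\to 0$ then exhibits $\Omega BGL(S(R))^+$ as the (relative) K-theory space of the ideal $M(R)$, and matrix stability identifies that with $K_0(R)\times BGL(R)^+$. You are also right to locate the real work in the fibration step.

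Two cautions. First, invoking H-unitality of $M(R)$ to get excision is anachronistic and not quite the right tool: the Suslin--Wodzicki excision theorem is rational (and its integral refinements are much later and more delicate), whereas the Gersten--Wagoner argument needs, and classically proves, excision only for the specific ideal $M(R)\subset C(R)$, which follows from direct matrix-stability considerations (every finitely generated projective over $M(R)$ is induced from $R$, and $GL$ of the unitalization stabilizes correctly). You should either cite the classical stability argument or verify the quasi-fibration criterion directly, as Wagoner does. Second, the $K_0(R)$ factor deserves more care than ``the $\pi_0$ of the group completion contributes it'': $\pi_0$ of the homotopy fiber of $BGL(C(R))^+\to BGL(S(R))^+$ is $\pi_1(BGL(S(R))^+)=K_1(S(R))$, so the identification $K_1(S(R))\cong K_0(R)$ is itself an output of the boundary map in the same exact sequence (using $K_0(C(R))=K_1(C(R))=0$ and $K_0(M(R))\cong K_0(R)$), not an independent Morita input; once that is in place the product decomposition follows because the fiber is an H-space. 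Neither point is a fatal gap, but both need to be made explicit for the argument to close.
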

It follows from the definition of spectra (actually the definition of $\Omega$-spectra, the version of spectra where we only have weak equivalences) that we can define our sequence of spaces inductively as follows.  
\begin{definition}
We define the sequence of spaces $K_0(S^i(R))\times BGL(S^i(R))^{+}$ that form a non-connective spectrum to be the K-theory $\textbf{K}(R)$ (where $S^i(R)$ denotes the $i$th iterated suspension of $R$).  
\end{definition}
By connective and non-connective spectrum, we simply mean that, for a connective spectrum the negative homotopy groups are all zero, and for a non-connective spectrum the negative homotopy groups are not necessarily zero.  
\begin{proposition}
The algebraic K-groups of a ring $R$ are given by the following,
\[\pi_n(\textbf{K}(R))\cong K_n(R)\] for all $n\in\mathbb{Z}$, where these groups are equivalent to any previous definition of the algebraic K-groups.  
\end{proposition}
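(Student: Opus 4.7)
The plan is to handle non-negative and negative indices separately, using the Gersten--Wagoner equivalence to show that $\textbf{K}(R)$ is (up to the identification of its first space with $K_0(R)\times BGL(R)^+$) an $\Omega$-spectrum, and then reconcile the spectrum-level output with Bass's classical construction in negative degrees.

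First, observe that the Gersten--Wagoner proposition says exactly that the structure map $K_0(S^i(R))\times BGL(S^i(R))^+ \to \Omega\bigl(K_0(S^{i+1}(R))\times BGL(S^{i+1}(R))^+\bigr)$ is a weak equivalence, because $K_0(S^{i+1}(R))$ is discrete and drops out under $\Omega$, leaving $\Omega BGL(S^{i+1}(R))^+$ on the right. Thus $\textbf{K}(R)$ is an $\Omega$-spectrum, and the stable homotopy groups may be read off at any level: $\pi_n(\textbf{K}(R)) = \pi_{n+i}\bigl(K_0(S^i(R))\times BGL(S^i(R))^+\bigr)$ for all $i$ with $n+i\geq 0$.

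Next, for $n\geq 1$, take $i=0$. Since $K_0(R)$ is discrete, $\pi_n(K_0(R)\times BGL(R)^+)=\pi_n(BGL(R)^+)$, which equals $K_n(R)$ by Quillen's plus-construction definition. For $n=0$, the space $BGL(R)^+$ is path-connected, so $\pi_0(K_0(R)\times BGL(R)^+) = K_0(R)$. For $n<0$, choose $i$ with $n+i\geq 1$ and use the $\Omega$-spectrum property to obtain
\[\pi_n(\textbf{K}(R)) \;=\; \pi_{n+i}\bigl(K_0(S^i(R))\times BGL(S^i(R))^+\bigr) \;=\; \pi_{n+i}\bigl(BGL(S^i(R))^+\bigr) \;=\; K_{n+i}(S^i(R)),\]
by applying the positive-degree case to the ring $S^i(R)$. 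Iterating the Gersten--Wagoner isomorphism $K_m(T)\cong K_{m+1}(S(T))$ starting from $m=0$ then shows that this common value is what one would naturally call the $n$-th K-group of $R$ in the suspension framework, independent of the chosen $i$.

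Finally, I would verify agreement with any alternative definition, focusing on the comparison with Bass's definition $K_{-1}(R)=\mathrm{coker}\bigl(K_0(R[t])\oplus K_0(R[t^{-1}])\to K_0(R[t,t^{-1}])\bigr)$ and its iterates. The key is to identify $K_0(S(R))$ with this cokernel; this is done by applying $K_0$ to the exact sequence of rings $0\to M(R)\to C(R)\to S(R)\to 0$, using that $K_0(C(R))$ vanishes (the cone ring is flasque and admits an Eilenberg swindle), and matching the resulting long exact sequence with Bass's Mayer--Vietoris square for the Laurent polynomial ring. This reconciliation is the main obstacle, since every earlier step is either routine spectrum bookkeeping or a direct appeal to Gersten--Wagoner, whereas comparing two a priori distinct constructions of negative K-theory requires a genuine structural argument. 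The positive-degree comparison with other standard definitions (Quillen's $Q$-construction, Waldhausen's $S_\bullet$-construction) is well known and can simply be cited.
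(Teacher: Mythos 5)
The paper gives no proof of this proposition at all: it is asserted as an immediate consequence of the definition of $\textbf{K}(R)$ together with the Gersten--Wagoner proposition, with the remark that ``any definition using a spectrum has of course been constructed so that it agrees'' with Bass's negative K-groups. Your proposal is a correct and standard way to fill in what the paper leaves implicit, and it uses exactly the ingredients the paper supplies: the Gersten--Wagoner equivalence makes $\textbf{K}(R)$ an $\Omega$-spectrum, the non-negative groups are read off from the zeroth space, and the negative groups are $K_0(S^i(R))$ by delooping. You are also right that the only genuinely non-routine point is identifying $K_0(S(R))$ with Bass's cokernel; your sketch via the ideal sequence $0\to M(R)\to C(R)\to S(R)\to 0$, the Eilenberg swindle on the cone ring, and comparison with the Laurent-polynomial Mayer--Vietoris sequence is the standard route (one does need the lower K-theory exact sequence for an ideal rather than just applying $K_0$ termwise, but that is a matter of citation, not of substance). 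No gap beyond the level of detail the paper itself tolerates.
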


We now move on to the construction of Homotopy K-theory.  In some sense, the previously defined K-groups are not homotopy invariant.  The goal with homotopy K-theory is to define some related K-theory that is homotopy invariant.  Before we can do anything, we first must define what it means to be homotopy invariant.  The primary resource for this construction is \cite{weibel2013k}.

\begin{definition}
A functor $F$ from rings to sets is homotopy invariant if $F(R)\cong F(R[t])$ for every $R$.  Similarly, a functor from rings to CW complexes is called homotopy invariant if for every ring $R$ the natural map $R\rightarrow R[t]$ induces a homotopy equivalence $F(R)\simeq F(R[t])$.  
\end{definition}

\begin{definition}
Let $R$ be a ring.  The coordinate rings of the standard simplices form a simplicial ring $R[\Delta^\cdot]
$. The simplicial structure can be described by the following diagram.  

\[R\leftleftarrows R[t_1]\overset{\leftleftarrows}{\leftarrow} R[t_1,t_2]\overset{\leftleftarrows}{\leftleftarrows}...R[t_1,...,t_n]...\]

\noindent with $R[\Delta^n]=R[t_0,t_1,...,t_n]/\left(\sum t_i=1\right)\cong R[t_1,...,t_n].$ The face maps $\partial_i$ are given by: $\partial_i(t_i)=0;$ $\partial_i(t_j)$ is $t_j$ for $j<i$ and $t_{j-1}$ for $j>i$.  Degeneracies $\sigma_i$ are given by; $\sigma_i(t_i)=t_i+t_{i+1}$; $\sigma_i(t_j)$ is $t_j$ for $j<i$ and $t_{j+1}$ for $j>i$.  
\end{definition}

The rest of the construction is straight forward.  We simply use $R[\Delta^\cdot]$ in place of $R$ and construct the homotopy groups of its K-theory spectrum.  

\begin{definition}
We denote $\textbf{KH}(R)$ as the geometric realization of the simplicial spectrum $\textbf{K}(R[\Delta^\cdot])$.  We define the homotopy K-groups to be \[KH_n(R)=\pi_n(\textbf{KH}(R))\] for all $n\in\mathbb{Z}$.
\end{definition}

The homotopy K-groups have several nice properties, in particular, $\textbf{KH}$ satisfies cdh-descent.  We will look at this in more depth later, but first, we will explore some important properties of K-theory, Homotopy K-theory, and their relationship.  

\begin{proposition}
Homotopy K-theory is homotopy invariant.  
\end{proposition}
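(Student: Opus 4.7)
The plan is to show that the inclusion $R \hookrightarrow R[t]$ induces a weak equivalence of spectra $\textbf{KH}(R) \to \textbf{KH}(R[t])$, which is equivalent to the claim that $KH_n(R) \cong KH_n(R[t])$ for all $n \in \mathbb{Z}$. Since the augmentation $R[t] \to R$ sending $t \mapsto 0$ strictly splits the inclusion at the ring level, and $\textbf{KH}$ is functorial, the composition $\textbf{KH}(R) \to \textbf{KH}(R[t]) \to \textbf{KH}(R)$ is already the identity; thus it suffices to produce a homotopy the other way, showing that $\textbf{KH}(R[t]) \to \textbf{KH}(R) \to \textbf{KH}(R[t])$ is homotopic to the identity.

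First, I would unfold the definition to get $\textbf{KH}(R[t]) = |\textbf{K}(R[t][\Delta^\cdot])|$ where $R[t][\Delta^n] = R[t, t_1, \ldots, t_n]$ with simplicial structure coming from the $t_i$ coordinates. The core step is to construct an explicit simplicial homotopy, at the level of simplicial rings, between the identity on $R[t][\Delta^\cdot]$ and the composition $R[t][\Delta^\cdot] \xrightarrow{t \mapsto 0} R[\Delta^\cdot] \hookrightarrow R[t][\Delta^\cdot]$. Geometrically, $\mathrm{Spec}(R[t]) = \mathbb{A}^1_R$ admits an affine-linear contraction onto $\mathrm{Spec}(R)$ sitting as the zero-fiber, and I would encode this by ring maps $R[t][\Delta^n] \to R[t][\Delta^{n+1}]$ that use the extra barycentric coordinate of $\Delta^{n+1}$ as the homotopy parameter, interpolating $t \leftrightsquigarrow 0$.

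Once the simplicial homotopy identities are verified, I would apply the K-theory spectrum functor $\textbf{K}$ levelwise to obtain a simplicial homotopy of simplicial spectra, and then pass to geometric realization. This produces the sought-after homotopy between the identity on $\textbf{KH}(R[t])$ and the splitting composition, completing the argument.

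The main obstacle is the careful bookkeeping required to verify the simplicial identities for the proposed homotopy of simplicial rings, and to ensure that after applying the non-trivial functor $\textbf{K}$ and then passing to geometric realization the homotopy still assembles correctly at the level of spectra. If the direct construction becomes unwieldy, an alternative path is to appeal to the general principle that \emph{any} functor from rings to spectra becomes $\mathbb{A}^1$-homotopy invariant once precomposed with the cosimplicial ring construction $(-)[\Delta^\cdot]$ and then geometrically realized; the required simplicial homotopy is then a universal construction depending only on $R[\Delta^\cdot]$ and not on the particular functor $\textbf{K}$.
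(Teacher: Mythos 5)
Your proposal is correct and follows essentially the same route as the paper: the paper's proof also constructs a simplicial homotopy on the simplicial ring $R[\Delta^\cdot]\subset R[x][\Delta^\cdot]$, notes that geometric realization converts it into a topological homotopy of spectra, and cites Weibel's K-book (Lemma IV.11.5.1) for the details -- this is exactly the ``homotopization of a functor'' principle you invoke in your closing paragraph. The only difference is that you sketch the explicit simplicial homotopy using the extra barycentric coordinate, whereas the paper defers that bookkeeping to the reference.
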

\begin{proof}
This is a specific example of what is called the homotopization of a functor.  Essentially, with the simplicial ring $R[\Delta^\cdot]$ we can define a simplicial homotopy using the inclusion map $R[\Delta^\cdot]\subset R[x][\Delta^\cdot]$.  Simplicial homotopies, when taking the geometric realization, turn into topological homotopies.  Therefore, we have that $KH_n(R)\cong KH_n(R[x])$.  See \cite{weibel2013k} lemma IV.11.5.1. 
\end{proof}

One of the main purposes of Homotopy K-theory is the fact that is satisfies excision.  While other variants of K-theory have nice properties, they may not satisfy excision.  In particular, the original K-theory does not satisfy excision.  
\begin{theorem}
Let $I$ be an ideal in a unital ring $R$. Then $\textbf{KH}(I)\rightarrow\textbf{KH}(R)\rightarrow\textbf{KH}(R/I)$ is a homotopy fibration.  Therefore, at the level of homotopy groups, there is a long exact sequence of homotopy groups for fibrations given by the following, 
\[...\rightarrow KH_{n+1}(R/I)\rightarrow KH_n(I)\rightarrow KH_n(R)\rightarrow KH_n(R/I)\rightarrow ...\]
\end{theorem}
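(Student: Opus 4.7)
The plan is to establish the theorem in two stages: first produce the fibration at each simplicial level and take geometric realization, then identify the resulting fiber with $\textbf{KH}(I)$ via excision. For each $n$, the subgroup $I[\Delta^n] \subset R[\Delta^n]$ is a two-sided ideal with quotient $(R/I)[\Delta^n]$, so by the very definition of relative K-theory there is a homotopy fibration of spectra
\[
\textbf{K}(R[\Delta^n], I[\Delta^n]) \longrightarrow \textbf{K}(R[\Delta^n]) \longrightarrow \textbf{K}((R/I)[\Delta^n])
\]
where the leftmost term is the homotopy fiber of the rightmost map. These assemble into a levelwise fibration of simplicial spectra as $n$ varies, because base change along the face and degeneracy maps of $\Delta^{\cdot}$ preserves the ideal $I[\Delta^{\cdot}]$ and respects the quotient. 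The first technical step is to apply geometric realization and check that it converts this levelwise fibration into a fibration of spectra; this is standard provided the simplicial spectra are good/proper, which holds here since each $R[\Delta^n] \twoheadrightarrow (R/I)[\Delta^n]$ is a split surjection of rings in a simplicially coherent fashion. We then obtain a fibration
\[
|\textbf{K}(R[\Delta^{\cdot}], I[\Delta^{\cdot}])| \longrightarrow \textbf{KH}(R) \longrightarrow \textbf{KH}(R/I).
\]

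The second stage is to identify the fiber spectrum $|\textbf{K}(R[\Delta^{\cdot}], I[\Delta^{\cdot}])|$ with $\textbf{KH}(I)$, which is the excision statement for homotopy K-theory. For a non-unital ring $I$ one defines $\textbf{KH}(I)$ via the unitalization $\tilde{I}$ (i.e.\ as the homotopy fiber of $\textbf{KH}(\tilde{I}) \to \textbf{KH}(\mathbb{Z})$), and the claim is that the canonical comparison map from this construction to the fiber above is a weak equivalence. Assuming this, the long exact sequence on homotopy groups of a fibration yields the stated Mayer--Vietoris-type sequence immediately.

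The main obstacle is exactly this identification. Ordinary K-theory is well-known \emph{not} to be excisive: the discrepancy between $\textbf{K}(R, I)$ and $\textbf{K}(I)$ (defined via unitalization) is a nontrivial obstruction measured by cyclic-homology-type invariants, as studied by Suslin--Wodzicki, Cuntz--Quillen, and later Corti\~nas. The essential input here is that this obstruction is a \emph{nil} or additive-type invariant that is trivialized by the homotopization functor $-\otimes \Delta^{\cdot}$: after passing to the simplicial ring $R[\Delta^{\cdot}]$, the cofiber of the comparison map becomes contractible. Equivalently, one may invoke the fact (due to Weibel) that $\textbf{KH}$ is homotopy invariant and the relative term $|\textbf{K}(R[\Delta^{\cdot}], I[\Delta^{\cdot}])|$ depends, up to weak equivalence, only on the non-unital ring $I$ and not on the choice of ambient unital ring $R$. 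Given that key input, combining it with the degreewise fibration and the preservation of fibrations under geometric realization yields the theorem, and the associated long exact sequence of homotopy groups is the standard fibration sequence applied to $\pi_* = KH_*$.
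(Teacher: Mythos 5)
Your outline is sound, but it is worth being clear about what the paper actually does here: it gives no proof at all, simply citing Weibel's K-book (Theorem IV.12.4), which is precisely the excision theorem for $\textbf{KH}$. Your reconstruction follows the standard route behind that citation --- levelwise relative K-theory fibrations over the simplicial ring $R[\Delta^{\cdot}]$, realization, and then identification of the fiber with $\textbf{KH}(I)$ --- and every step you can check directly is correct. Two remarks. First, the passage from a levelwise fibration of simplicial spectra to a fibration after realization is easier than you suggest: in the stable setting fiber sequences coincide with cofiber sequences, and realization is a homotopy colimit, so it preserves them without any $\pi_*$-Kan or splitting hypothesis (properness/goodness is only needed to know the strict realization computes the homotopy colimit). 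Second, and more importantly, the step you correctly flag as ``the main obstacle'' --- that $\lvert\textbf{K}(R[\Delta^{\cdot}],I[\Delta^{\cdot}])\rvert$ depends only on $I$ and agrees with $\textbf{KH}(I)$ defined via unitalization --- is not a lemma feeding into Weibel's theorem; it essentially \emph{is} Weibel's theorem, and its proof (via vanishing of birelative terms after homotopization) is the entire content being cited. So your argument is a faithful and correct expansion of the cited result's architecture, but it does not constitute an independent proof: the load-bearing excision input is invoked, exactly as the paper invokes the whole statement. That is a reasonable thing to do here, provided you are explicit that this is where the real work lives.
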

\begin{proof}
See \cite{weibel2013k} theorem IV.12.4.  
\end{proof}
From this excision theorem for Homotopy K-theory, we have the following corollaries.  
\begin{corollary}
Let $R$ be a ring, and $I\subset R$ a nilpotent ideal.  Then $\textbf{KH}(I)$ is contractible and $KH_n(I)=0$ for all $n$ (and therefore $KH_n(R/I)\cong KH_n(R)$).   
\end{corollary}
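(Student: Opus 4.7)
The plan is to derive everything from the excision long exact sequence
\[\dots \to KH_{n+1}(R/I) \to KH_n(I) \to KH_n(R) \to KH_n(R/I) \to \dots\]
of the preceding theorem: once $\textbf{KH}(I)$ is shown to be contractible, the boundary maps force $KH_n(R)\cong KH_n(R/I)$ and the parenthetical conclusion is immediate. So the task reduces to showing $\textbf{KH}(J) \simeq *$ for every nilpotent ideal $J$, and the corollary will fall out by chasing the long exact sequence.

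My first step will be to filter $I$ by its powers $I \supset I^2 \supset \dots \supset I^{N+1} = 0$ and apply the excision theorem iteratively. For each $k$, the ideal $I^k/I^{k+1}$ sits inside the nilpotent ring $I/I^{k+1}$ as a square-zero ideal. A straightforward induction on the nilpotency index $N$, each step of which consumes one application of the excision long exact sequence, reduces the problem to the single case where $J$ is square-zero.

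For the square-zero case I would exploit the definition of $\textbf{KH}(J)$ as the geometric realization of the simplicial spectrum $\textbf{K}(J[\Delta^\cdot])$ together with homotopy invariance. The point is that when $J^2 = 0$, any $a \in M_n(J)$ satisfies $a^2 = 0$, so $1+ta$ is a unit in $M_n(J[t])$ with inverse $1-ta$, giving a polynomial path from $1$ to $1+a$. This path assembles into a simplicial contraction of $BGL_n(J[\Delta^\cdot])^+$ at each simplicial level; combined with the Gersten--Wagoner delooping $K_n(R)\cong K_{n+1}(S(R))$ used to build the non-connective spectrum $\textbf{K}$, it produces a contraction of every level of $\textbf{K}(J[\Delta^\cdot])$, and hence of its geometric realization $\textbf{KH}(J)$.

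The main obstacle I anticipate is the square-zero step, and in particular making precise what $\textbf{KH}$ of a non-unital ring means: one must identify $\textbf{KH}(J)$ with the homotopy fiber appearing in the excision theorem (equivalently, $\textbf{KH}$ of the unitization relative to the ground ring), and then check that the polynomial contraction $1+ta$ is compatible with both the simplicial face/degeneracy maps coming from $R[\Delta^\cdot]$ and the suspension-ring deloopings needed to reach negative degrees. Once this bookkeeping is settled the induction runs without further difficulty, and the corollary follows from the collapse of the excision long exact sequence.
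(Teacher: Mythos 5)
Your proposal is correct, and it is essentially the standard argument (the one behind the reference the paper is implicitly leaning on): the paper itself offers no proof of this corollary, presenting it as an immediate consequence of the excision theorem, so your write-up actually supplies the missing content. The reduction to the square-zero case by the power filtration $I\supset I^2\supset\dots\supset I^{N+1}=0$ together with iterated excision is exactly right, and the square-zero case is correctly handled by the scaling homotopy.

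One point of phrasing deserves tightening. A ``contraction at each simplicial level'' of $BGL_n(J[\Delta^\cdot])^+$ is not automatically simplicial, and chasing the compatibility with faces, degeneracies, the plus construction, and the Gersten--Wagoner deloopings by hand is the hard way to say something easy. The clean packaging is that, because $J^2=0$, the assignment $a\mapsto ta$ is a homomorphism of non-unital rings $\sigma:J\rightarrow J[t]$ (multiplicativity is exactly the identity $t^2ab=t\,ab$, which holds since $ab=0$); your matrix-level map $1+a\mapsto 1+ta$ is just $GL$ applied to $\sigma$, so every compatibility you worry about follows from functoriality of $\textbf{KH}$ on non-unital rings. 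Then homotopy invariance gives $\textbf{KH}(J)\simeq\textbf{KH}(J[t])$, under which the two evaluations $t=0$ and $t=1$ of $\sigma$ show that the identity of $\textbf{KH}(J)$ agrees with the map induced by the zero endomorphism, which factors through $\textbf{KH}(0)=\ast$; hence $\textbf{KH}(J)$ is contractible. With that substitution your argument is complete, and the collapse of the excision sequence yields $KH_n(R)\cong KH_n(R/I)$ as you say.
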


\begin{corollary}
Let $R\rightarrow S$ be a map of commutative rings, where an ideal $I\subset R$ is mapped to an ideal $J\subset S$ with $I\cong J$.   Then there is a Mayer-Vietoris sequence given by. 
\[...KH_{n+1}(S/I)\rightarrow KH_n(R)\rightarrow KH_n(R/I)\oplus KH_n(S)\rightarrow KH_n(S/I)\rightarrow...\] 
\end{corollary}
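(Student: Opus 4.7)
The plan is to deduce the Mayer--Vietoris sequence from the excision theorem applied to the two short exact sequences of ideals $I \hookrightarrow R \twoheadrightarrow R/I$ and $J \hookrightarrow S \twoheadrightarrow S/J$, where by hypothesis $S/J = S/I$ once we identify $J$ with the image of $I$ in $S$. The upshot is that the two vertical columns of the commutative square
\[\begin{CD}
\textbf{KH}(R) @>>> \textbf{KH}(S)\\
@VVV @VVV\\
\textbf{KH}(R/I) @>>> \textbf{KH}(S/I)
\end{CD}\]
are both homotopy fibrations with the same fiber, after which a standard homotopy-theoretic argument produces the long exact sequence.

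First, I would invoke the excision theorem stated just above (Theorem on excision for $\textbf{KH}$) twice: once to conclude that $\textbf{KH}(I) \to \textbf{KH}(R) \to \textbf{KH}(R/I)$ is a homotopy fibration, and once to conclude the same for $\textbf{KH}(J) \to \textbf{KH}(S) \to \textbf{KH}(S/J)$. Functoriality of $\textbf{KH}$ applied to the map $R \to S$ sends the first fibration to the second and produces a morphism of fibrations whose induced map on fibers is $\textbf{KH}(I) \to \textbf{KH}(J)$. Since the hypothesis is $I \cong J$ (as nonunital rings, via the map $R\to S$), this induced map on fibers is a weak equivalence of spectra.

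The next step is to observe that a morphism of homotopy fibrations in which the map on fibers is a weak equivalence yields a homotopy Cartesian square formed by the total spaces and bases; this is a purely formal consequence of the long exact sequences in homotopy and the five lemma. Applying this observation to our situation shows that the displayed square of $\textbf{KH}$-spectra is homotopy Cartesian. Equivalently, the induced map
\[\textbf{KH}(R) \longrightarrow \textbf{KH}(R/I)\times^{h}_{\textbf{KH}(S/I)}\textbf{KH}(S)\]
is a weak equivalence, so the homotopy fiber of $\textbf{KH}(R)\to \textbf{KH}(R/I)\oplus\textbf{KH}(S)$ is homotopy equivalent to $\Omega\,\textbf{KH}(S/I)$.

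Finally, extracting the long exact sequence of stable homotopy groups of this fibration gives exactly the claimed sequence
\[\cdots \to KH_{n+1}(S/I)\to KH_n(R)\to KH_n(R/I)\oplus KH_n(S)\to KH_n(S/I)\to \cdots,\]
where the indexing shift $KH_{n+1}(S/I)$ comes from the loop space in the fiber, and the ``diagonal-minus-diagonal'' connecting map is built into the Mayer--Vietoris construction. The only step that requires any real care is the identification of the fibers: one must be sure that $I$ and $J$ are identified compatibly via the ring map $R\to S$, so that the induced map on $\textbf{KH}$ of the ideals really is the weak equivalence coming from the given isomorphism $I\cong J$, rather than some other map. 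Once this compatibility is explicit, the rest is a routine assembly of excision and the general homotopy-fiber lemma, so no further obstacle is expected.
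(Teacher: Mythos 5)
Your proposal is correct and follows exactly the route the paper intends: the corollary is stated as an immediate consequence of the preceding excision theorem, and your argument — two applications of excision, identification of the fibers via $I\cong J$, the resulting homotopy Cartesian square, and the associated long exact sequence — is precisely that derivation. Your care about identifying $\textbf{KH}(I)\to\textbf{KH}(J)$ as the weak equivalence induced by the ring map (using that excision makes $\textbf{KH}$ of a non-unital ring independent of the ambient ring) is the one point worth being explicit about, and you handle it correctly.
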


Later we will focus on singular schemes.  As has been hinted all along, what happens with singular schemes is in some cases determined (or at least influenced) by what happens on smooth schemes.  We have the following relationship between K-theory and Homotopy K-theory on regular rings. 
\begin{proposition}\label{kh=k}
If $R$ is regular Noetherian, then $\textbf{K}(R)\simeq\textbf{KH}(R)$, meaning that when we pass to homotopy groups, $K_n(R)=KH_n(R)$ for all $n$.    
\end{proposition}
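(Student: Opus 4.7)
The plan is to exploit the definition of $\textbf{KH}(R)$ as the geometric realization of the simplicial spectrum $\textbf{K}(R[\Delta^\cdot])$ together with the homotopy invariance of algebraic K-theory on regular Noetherian rings. The key ingredient is Quillen's fundamental theorem, which states that for $R$ regular Noetherian, the inclusion $R \hookrightarrow R[t]$ induces a weak equivalence $\textbf{K}(R) \simeq \textbf{K}(R[t])$, or equivalently $K_n(R) \cong K_n(R[t])$ for all $n$. Iterating this and using that $R$ regular Noetherian implies $R[t_1,\ldots,t_m]$ is regular Noetherian for all $m$, we obtain that each ring inclusion $R \hookrightarrow R[\Delta^m] \cong R[t_1,\ldots,t_m]$ induces a weak equivalence on K-theory spectra.

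With this in hand, I would first note that there is a natural augmentation map $\textbf{K}(R) \to \textbf{KH}(R)$ given by viewing $R$ as the $0$-simplices of the simplicial ring $R[\Delta^\cdot]$ and including the constant simplicial spectrum on $\textbf{K}(R)$ into $\textbf{K}(R[\Delta^\cdot])$ before taking realization. By Quillen's theorem, at each simplicial degree $m$ the induced map $\textbf{K}(R) \to \textbf{K}(R[\Delta^m])$ is a weak equivalence of spectra, so the canonical map from the constant simplicial spectrum on $\textbf{K}(R)$ into $\textbf{K}(R[\Delta^\cdot])$ is a levelwise weak equivalence of simplicial spectra.

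The final step is to take geometric realizations. Since levelwise weak equivalences of simplicial spectra (of objects with reasonable cofibrancy, which holds for K-theory spectra) induce weak equivalences of geometric realizations, and since the realization of a constant simplicial spectrum on $\textbf{K}(R)$ is equivalent to $\textbf{K}(R)$ itself (the realization of the constant simplicial object is the original object, up to weak equivalence), we obtain
\[
\textbf{K}(R) \;\simeq\; |\textbf{K}(R)|_{\text{const}} \;\simeq\; |\textbf{K}(R[\Delta^\cdot])| \;=\; \textbf{KH}(R).
\]
Passing to homotopy groups then gives $K_n(R) \cong KH_n(R)$ for all $n \in \mathbb{Z}$.

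The main obstacle, conceptually, is reducing to Quillen's fundamental theorem on homotopy invariance for regular Noetherian rings; once that is granted, the rest is a formal simplicial argument about realizations. A minor subtlety is making sure the levelwise equivalence of simplicial spectra actually yields an equivalence after realization, which requires a compatibility of cofibrations/fibrations in the spectrum category but is standard in the literature (for example, \cite{weibel2013k} IV.12). I would simply cite this for the realization step and focus the proof on explaining the reduction to Quillen's theorem.
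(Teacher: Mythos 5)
Your proposal is correct and is essentially the argument behind the citation the paper gives: the key input in both cases is Quillen's fundamental theorem that $\textbf{K}(R)\to\textbf{K}(R[\Delta^m])$ is a weak equivalence for $R$ regular Noetherian (since each $R[t_1,\dots,t_m]$ is again regular Noetherian), and the paper's ``spectral sequence argument'' from \cite{weibel2013k}, IV.12.3, is precisely the standard justification for your final step that a levelwise weak equivalence of simplicial spectra induces an equivalence on realizations, with the spectral sequence collapsing because the simplicial spectrum is levelwise constant up to equivalence. So the two proofs coincide in substance; yours merely works at the level of spectra where the paper's reference works at the level of homotopy groups.
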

\begin{proof}
This boils down to a spectral sequence argument which can be found in \cite{weibel2013k}, theorem IV.12.3 and corollary IV.12.3.1.  
\end{proof}
Recall that if $R$ is a regular ring, then the affine scheme $X=$ Spec$(R)$ is a smooth scheme.  Therefore the previous statement is really a statement about smooth schemes.  

\begin{remark}
There is a way to define both K-theory and homotopy K-theory for schemes.  The easiest way to this is to use Quillen's $Q$-construction for determining the K-theory of categories (for a scheme $X$ we would take the category of algebraic vector bundles over $X$, $VB(X)$). While we chose not to do this here, we just note that the construction is in many respects completely analogous and we simply rephrase many of the objects in the algebraic geometry setting.  For our purposes, we simply need to know that in the case of affine schemes, everything is as expected.  We have that, for $X=$ Spec$(R)$, the K-groups are given by $K_n(X)=K_n(R)$ and $KH_n(X)=KH_n(R)$
\end{remark}

We really will not work with the K-theory of more general schemes, so, we will not take the time give the construction of K-theory for these objects.  Instead we will continue with other constructions, for which we will need the more general scheme theoretic constructions.  These constructions for general schemes are completely analogous to the K-theory constructions for general schemes.  So for any further study that requires the more general definitions, we will build off of the constructions in the following sections.

\subsection{Variations of cyclic homology}The main reference for these constructions is \cite{loday1997cyclic} while their properties are more thoroughly presented in \cite{weibel1995introduction}.  The proper way to define cyclic homology and it's relevant variations, namely negative cyclic homology and periodic cyclic homology, is via a chain complex.  We will do this by first defining a bicomplex.  

Let $C=(C_n)_{n\geq 0}$ be a cyclic module with face maps $d_i$, degeneracy maps $s_j$, and cyclic operator $t$. We define the extra degeneracy map to be $s=(-1)^nts_n$.  From these we put \[b=\sum_{i=0}^n(-1)^id_i,\ \ \ N=1+t+t^2+...+t^n,\ \text{ and }\ B=(-1)^{n+1}(1-t)sN.\] 

\begin{definition}
The periodic bicomplex, $CCP$, is given by

\vspace{5 mm}

$CCP:=$
\[\begin{CD}
@. @VbVV @VbVV @VbVV @VbVV @.\\
... @<B<< C_3 @<B<< C_2 @<B<< C_1 @<B<< C_0 @.\\
@. @VbVV @VbVV @VbVV @. @.\\
... @<B<< C_2 @<B<< C_1 @<B<< C_0 @. @.\\
@. @VbVV @VbVV @. @. @.\\
... @<B<< C_1 @<B<< C_0 @. @. @.\\
@. @VbVV @. @. @. @.\\
... @<B<< C_0 @. @. @. @.\\
{\rm Column}\#@. -1 @. 0 @. 1 @. @. @. \\
\end{CD}
\]

\end{definition}
The vertical boundary map, $b$, is used in all definitions of cyclic homology.  The horizontal map, called Connes' boundary map $B$, is what is new and required for this construction of cyclic homology.  The reason that this is a bicomplex, and everything behaves well when we will eventually go to (something like) the homology groups is the relations on the two boundary maps given by,
\[b^2=B^2=bB+Bb=0.\]

\begin{note}
Some authors call this a double complex and reserve the term bicomplex for that other complex that (equivalently) defines cyclic homology.  
\end{note}

Each of the cyclic, negative cyclic, and periodic cyclic homology theories can be constructed by using all or part of this new bicomplex.  

\begin{definition}The normal cyclic bicomplex, $CC$,  is given by replacing all of the negative index columns of the periodic bicomplex by the 0 complex.  
\end{definition}

\begin{definition}The negative bicomplex $CCN$ is given by replacing all the columns whose index is $> 0$ by the 0 complex. 
\end{definition} 

We now have three bicomplexes, $CC$, $CCN$, and $CCP$, the relations between these complexes is summarized by the following exact sequence of complexes.  
\[0\rightarrow CCN\rightarrow CCP\rightarrow CC[0,2]\rightarrow 0\]
The reason for this shift is the nonzero overlap of $CC$ and $CCN$.  Shifting the horizontal indices by two ensures that the overlap is 0, and hence the sequence is exact.  

Recall that the way that we construct the homology of a bicomplex is to first form the total complex of the bicomplex, and then take the homology of that complex.  

\begin{definition}
We define the complex Tot $CC$ as the complex whose term of degree $n$ is $\bigoplus_{p+q=n}(CC_{pq})$

\noindent Similarly the complex Tot $CCN$ is such that its term of degree n is is $\prod_{p+q=n}CCN_{pq}$. 

\noindent Finally, we define complex Tot $CCP$ as the complex whose term of degree $n$ is \\ $\prod_{p+q=n}CCP_{pq}$. 
\end{definition}

\begin{remark}
Notice for $CC$ we have defined the total complex to be a direct sum.  In the case of cyclic homology, where the bicomplex is indexed by $\mathbb{N}\times\mathbb{N}$, there are no problems with this definition.  This is because for each $n\in \mathbb{Z}$ there are only a finite number of indices $(p,q)\in\mathbb{N}\times\mathbb{N}$ such that $p+q=n$. 

However, with the other bicomplexes, this is no longer the case, there will be infinitely many $(p,q)\in\mathbb{Z}_{\leq 0}\times\mathbb{N}$, and also infinitely many $(p,q)\in\mathbb{Z}\times\mathbb{N}$
 such that $p+q=n$. In order to avoid certain pathologies we have done a slight modification, we have used a direct product instead of a sum.  
\end{remark} 

We can now define the negative and periodic cyclic homologies.  
\begin{definition}
Let $C$ be a cyclic module. Then the cyclic homology, negative cyclic homology, and periodic cyclic homology of $C$ are respectively
\[HC_n(C) := H_n({\rm Tot} CC)\text{ and }HN_n(C) := H_n({\rm Tot} CCN)\text{ and }HP_n(C) := H_n({\rm Tot} CCP),\ n\in\mathbb{Z}. \]
\end{definition}

Most of the time, we are really only concerned with $C(A)=(A^{\otimes n})_{n\geq 1}$ where $A$ is a $k$-algebra, and the tensor product is over $k$.  In this case, we say that we are really computing the cyclic homology, and it's variations, of the algebra $A$.  When $C=C(A)$, where $A$ is a $k$-algebra, we write $HC_*(A/k)$, $HN_*(A/k)$, and  $HP_*(A/k)$ instead of $HC_*(C(A))$, $HN_*(C(A))$, and $HP_*(C(A))$.  The reason we have written $A/k$ inside each of the previous cyclic homology functors is that it will be important for us to keep track of the field $k$ from the tensor product. 

\begin{note}
This definition using the nonstandard bicomplex from above really only works when the cyclic module has degeneracies.  This is equivalent to saying that the algebra $A$, defining the cyclic module $C(A)$, is unital.  
\end{note}

We will continue now with some immediate computations and properties of our three variations of cyclic homology.  
\begin{proposition}
HP is actually periodic of period 2, i.e $HP_0(C)=HP_{2n}(C)$ and $HP_1(C)=HP_{2n+1}(C)$ for all $n$.   
\end{proposition}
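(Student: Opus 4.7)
The plan is to produce a degree-$2$ automorphism on $\text{Tot}(CCP)$ coming directly from the horizontal periodicity built into the bicomplex. The essential observation is that every column of $CCP$ is a copy of the same chain complex $(C_\bullet, b)$, and the horizontal differential $B$ acts the same way between every pair of adjacent columns (independent of the column index). Consequently, translating the bicomplex horizontally by two columns gives an isomorphism of bicomplexes $S: CCP \to CCP$ which, on the entry $CCP_{p,q}$, is the identity map $C_q \to C_q = CCP_{p+2, q}$.

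Because $b$ is column-internal (so unaffected by the shift) and $B$ is position-independent, $S$ commutes with both differentials, hence with the total differential $d = b + B$. Since $S$ sends the component at bidegree $(p, q)$ to bidegree $(p+2, q)$, it increases total degree by exactly $2$, so it descends to an isomorphism of chain complexes $S: \text{Tot}(CCP) \to \text{Tot}(CCP)[-2]$. Here it is important that we have defined the total complex using direct \emph{products} rather than direct sums, as the bicomplex extends infinitely to the left; however the shift $S$ is merely a relabeling of indices and creates no convergence issues.

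Passing to homology, $S$ induces isomorphisms $HP_n(C) \xrightarrow{\sim} HP_{n+2}(C)$ for every $n \in \mathbb{Z}$. Iterating yields $HP_0(C) \cong HP_{2n}(C)$ and $HP_1(C) \cong HP_{2n+1}(C)$ for all $n \in \mathbb{Z}$, which is the claimed 2-periodicity. The only point requiring care is the compatibility of $S$ with the horizontal differential $B$ (in particular the sign conventions $B = (-1)^{n+1}(1-t)sN$ recorded above); these conventions are invariant under column index, so this is routine. Thus the main ``work'' is bookkeeping and there is no substantive obstacle beyond keeping straight the direct-product totalization convention that was set up precisely to make this argument go through.
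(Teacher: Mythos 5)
The paper states this proposition without proof, so there is nothing to compare against; your strategy (exhibit a shift automorphism of $CCP$ that raises total degree by $2$) is the standard one and the conclusion is correct. However, the specific map you describe does not match the bicomplex as the paper defines it, and the justification you give would prove too much. In the $(b,B)$-presentation drawn in the paper, the columns are \emph{staggered} copies of $(C_\bullet,b)$: the entry in column $p$ on the anti-diagonal of total degree $n$ is $C_{n-2p}$ (up to a fixed re-indexing, $CCP_{p,q}=C_{q-p}$), so that $\text{Tot}(CCP)_n=\prod_j C_{n-2j}$. A purely horizontal translation by two columns therefore sends the slot containing $C_{q-p}$ to a slot containing $C_{q-p-2}$; it is not ``the identity map $C_q\to C_q$,'' and as written it is not even a map of bicomplexes. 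Worse, your stated premises --- every column is the same complex and the horizontal differential is independent of the column index --- would equally well justify a translation by \emph{one} column, which would yield $HP_n(C)\cong HP_{n+1}(C)$ for all $n$. That is false (e.g.\ $HP_0(k/k)=k$ while $HP_1(k/k)=0$), so the argument as given does not isolate why the period is $2$ rather than $1$.

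The repair is small. The correct periodicity operator for the paper's $CCP$ is the \emph{diagonal} shift $(p,q)\mapsto(p+1,q+1)$: it preserves the entries $C_{q-p}$, commutes with the vertical $b$ and the horizontal $B$ (both are determined by $q-p$ alone), raises total degree by exactly $2$, and is bijective precisely because $CCP$ has columns indexed by all of $\mathbb{Z}$ --- this is the one place where $CCP$ genuinely differs from $CC$ and $CCN$, for which the shift fails to be onto (resp.\ injective) at the boundary columns, and it is why only $HP$ is periodic. Your remark about the direct-product totalization is correct and should be kept. Alternatively, the two-column horizontal translation you describe is the right map in the other standard model of the periodic complex, the one built from $1-t$ and $N$ with columns alternating between $(C,b)$ and $(C,-b')$; there the columns repeat with period two by construction, but that is not the presentation this paper uses, so if you want to argue that way you must first prove (or quote) the quasi-isomorphism between the two models.
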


\begin{proposition}
For $A=k$ we have that 
\[HC_n(k/k)=
\begin{cases}
 0, & \text{if }n<0 \\
 0, & \text{if }n=2i-1>0 \\
 k, & \text{if }n=2i\geq 0
\end{cases},\]
\[HN_n(k/k)=
\begin{cases}
 0, & \text{if }n>0 \\
 0, & \text{if }n=2i-1<0 \\
 k, & \text{if }n=2i\leq 0
\end{cases},\]
and \[HP_n(k/k)=
\begin{cases}
 0, & \text{if }n=2i-1 \\
 k, & \text{if }n=2i
\end{cases}.\ \ \ \ \ \]   
\end{proposition}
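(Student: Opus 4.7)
The plan is to reduce the three cyclic-homology computations to a Hochschild computation and then bootstrap, using the SBI long exact sequence together with the short exact sequence of bicomplexes $0 \to CCN \to CCP \to CC[0,2] \to 0$ stated just before the proposition.

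First, I would compute $HH_n(k/k)$, i.e.\ the homology of a single column of $CC$. Since $A = k$, every entry $C_n = k^{\otimes(n+1)} = k$ and every face map $d_i$ is the identity map, so $b_n = \sum_{i=0}^{n}(-1)^i$ equals $1$ when $n$ is even and $0$ when $n$ is odd. A direct inspection of kernels and images then yields $HH_0(k/k) = k$ and $HH_n(k/k) = 0$ for $n \geq 1$.

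Second, I would feed this into the SBI long exact sequence
\[\cdots \to HH_n(k/k) \to HC_n(k/k) \xrightarrow{S} HC_{n-2}(k/k) \to HH_{n-1}(k/k) \to \cdots.\]
The first-quadrant shape of $CC$ forces $HC_n(k/k) = 0$ for $n < 0$. For $n \geq 2$ the flanking Hochschild groups vanish, so $S$ is an isomorphism and $HC_n(k/k) \cong HC_{n-2}(k/k)$; the base cases $HC_0(k/k) = k$ and $HC_1(k/k) = 0$, both immediate from the low-degree portion of the sequence, then propagate the claimed alternating pattern in non-negative degrees.

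Third, I would run the analogous long exact sequence derived from $0 \to CCN \to CCP \to CC[0,2] \to 0$, namely
\[\cdots \to HN_n(k/k) \to HP_n(k/k) \to HC_{n-2}(k/k) \to HN_{n-1}(k/k) \to \cdots.\]
Combined with the $2$-periodicity of $HP$ from the previous proposition, it suffices to pin down $HP_0(k/k)$ and $HP_1(k/k)$. I would do this via the column spectral sequence of $\mathrm{Tot}(CCP)$: by step one, its $E_1$ page has $k$ in every column of row $0$ and vanishes elsewhere, which (after checking that the remaining differentials are forced to be zero on these entries) gives $HP_0(k/k) = k$ and $HP_1(k/k) = 0$. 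Propagating through the long exact sequence, together with the vanishing of $HC$ in negative degrees, then determines $HN_n(k/k) = k$ for non-positive even $n$ and $0$ otherwise.

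The main obstacle is the last step: correctly identifying the seed values $HP_0(k/k)$ and $HP_1(k/k)$ requires controlling a spectral sequence for the unbounded bicomplex $CCP$, including the convergence issue created by the use of direct products rather than direct sums in $\mathrm{Tot}(CCP)$. The first two steps, by contrast, are routine consequences of the Hochschild vanishing.
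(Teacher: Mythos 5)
The paper offers no proof of this proposition at all (it is the standard computation quoted from the literature), so there is nothing to compare your route against; I will judge it on its own. Your steps one and two are correct and complete: $HH_n(k/k)=k$ for $n=0$ and $0$ for $n\geq 1$ follows from the alternating-sum computation of $b$, and Connes' periodicity sequence then forces $HC_{2i}(k/k)=k$, $HC_{2i+1}(k/k)=0$ for $i\geq 0$ and vanishing in negative degrees. Your computation of $HP_0=k$, $HP_1=0$ by the column filtration of $\mathrm{Tot}(CCP)$ is also correct, and you rightly flag convergence for the product total complex as the point needing care; for $A=k$ this can be closed either by direct inspection of the (very small) total complex or, most cleanly, by passing to the normalized bicomplex, where $\bar{C}_n=k\otimes \bar{k}^{\otimes n}=0$ for $n\geq 1$, so the periodic bicomplex collapses to a single $k$ in each even total degree with zero differentials and no convergence issue arises.

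The one genuine gap is in your last sentence, for $HN_n(k/k)$ with $n>0$. The long exact sequence gives, for $n=2m>0$, the exact fragment $0=HC_{2m-1}\to HN_{2m}\to HP_{2m}\xrightarrow{S} HC_{2m-2}$ and, for $n=2m+1>0$, the fragment $HC_{2m}\xrightarrow{B} HN_{2m+1}\to HP_{2m+1}=0$; so $HN_{2m}=\ker S$ and $HN_{2m+1}=\operatorname{coker}S$, where $S\colon HP_{2m}\to HC_{2m-2}$ is a map $k\to k$ that knowledge of the groups alone does not determine. If $S$ were zero you would get $HN_n=k$ for all $n>0$, contradicting the claim, so you must actually prove $S$ is an isomorphism there. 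The vanishing of $HC$ in negative degrees only settles $n\leq 0$ (where it correctly gives $HN_n\cong HP_n$). Two standard fixes: either run the same column-filtration spectral sequence on $CCN$ itself, whose $E_1$ page is concentrated in non-positive even total degrees and which therefore yields $HN_{>0}=0$ directly (and again the normalized complex makes this trivial); or identify $S\colon HP_{2m}\to HC_{2m-2}$ with the structure map of the tower $\cdots\xrightarrow{S}HC_4\xrightarrow{S}HC_2\xrightarrow{S}HC_0$, whose maps you already showed are isomorphisms in step two.
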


From the previous result we have evidence, and again from the definitions, we conclude that for $n\leq0$, $HP_n\cong HN_n$.  More generally, returning to our exact sequence of bicomplexes, \[0\rightarrow CCN\rightarrow CCP\rightarrow CC[0,2]\rightarrow 0,\]
we have the following induced long exact sequence.  
\begin{proposition}
For any cyclic module $C$, there is a long exact sequences of cyclic homology groups, called the SBI sequence, as follows. 
\[
...\overset{S}{\rightarrow} HC_{n-1}(C)\overset{B}{\rightarrow} HN_{n}(C) \overset{I}{\rightarrow} HP_n(C) \overset{S}{\rightarrow} HC_{n-2}(C) \overset{B}{\rightarrow} ...
\]
\end{proposition}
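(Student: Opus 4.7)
The plan is to deduce the SBI sequence as the long exact homology sequence associated to the short exact sequence of bicomplexes stated immediately before the proposition,
\[0 \to CCN \to CCP \to CC[0,2] \to 0.\]
This is really just homological algebra once one has properly set up the total complex machinery and tracked the horizontal shift.

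First, I would verify carefully that the displayed sequence really is a short exact sequence of bicomplexes. Column-by-column, $CCN$ agrees with $CCP$ in all columns of index $\le 0$ and vanishes elsewhere; the quotient $CCP/CCN$ is supported in columns of positive index, and identifying column $p+2$ of $CC[0,2]$ with column $p$ of $CCP/CCN$ (for $p\ge -1$) gives an isomorphism of bicomplexes, the shift by $2$ being exactly what is needed so that the overlap in column $0$ between $CC$ and $CCN$ inside $CCP$ is accounted for without double counting. I would also double-check here that the horizontal $B$ and vertical $b$ differentials are preserved by the inclusion and the quotient.

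Next I would pass to total complexes. Because Tot$(CCN)$ and Tot$(CCP)$ use products while Tot$(CC)$ uses a direct sum (as noted in the Remark preceding the proposition), one must justify that exactness is preserved when totalizing. The short exact sequence of bicomplexes is bidegreewise split (each bidegree gives a split short exact sequence of $k$-modules), and since both product and sum of split short exact sequences are short exact, taking totals yields a short exact sequence of complexes
\[0 \to {\rm Tot}(CCN) \to {\rm Tot}(CCP) \to {\rm Tot}(CC[0,2]) \to 0.\]
The key bookkeeping point is that a horizontal shift by $2$ produces a total-degree shift by $2$, so $H_n({\rm Tot}(CC[0,2]))=HC_{n-2}(C)$.

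Finally I would apply the standard long exact sequence in homology for a short exact sequence of complexes to get
\[\cdots \to H_{n+1}({\rm Tot}(CC[0,2])) \to HN_n(C) \to HP_n(C) \to H_n({\rm Tot}(CC[0,2])) \to HN_{n-1}(C) \to \cdots\]
and substitute the identification $H_n({\rm Tot}(CC[0,2]))=HC_{n-2}(C)$, together with the reindexing of the connecting morphism from $H_{n+1}({\rm Tot}(CC[0,2]))=HC_{n-1}(C)$, to recover exactly the SBI sequence displayed in the proposition. The maps labeled $I$ and $S$ are then the maps induced by the inclusion and quotient respectively, while $B$ is the connecting homomorphism (and a direct unwinding of the snake lemma shows it is induced by Connes' operator $B$, consistent with the naming).

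The only step that takes any thought is the second one: convincing oneself that the product total complex of $CCP$ behaves well under the short exact sequence. Once that technicality is settled the rest is formal.
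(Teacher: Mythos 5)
Your proposal is correct and takes the same route the paper does: the paper simply asserts that the SBI sequence is induced by the short exact sequence $0\rightarrow CCN\rightarrow CCP\rightarrow CC[0,2]\rightarrow 0$ displayed just before the proposition, and you supply the standard details (degreewise splitness so that totalization preserves exactness despite the product-versus-sum issue, the total-degree shift by $2$, and the identification of $I$, $S$, $B$) that the paper leaves implicit.
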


Again, as we stated earlier, in order to determine what happens with singular schemes, it will be important for us to know what happens on smooth schemes.  The following results attempt to rewrite cyclic homology and it's variants on smooth schemes in terms of other more well known theories such as de Rham cohomology. 

\begin{note}
In the following theorems we refer to a notion of smooth algebras.  We usually define smooth algebras as satisfying some lifting properties.  For our purposes, since we will eventually be looking at some geometric structures, we will say that an algebra is smooth if it's associated affine scheme is a smooth scheme.  These two notions of smooth are of course equivalent.  
\end{note} 

\begin{theorem}[Connes] Let $A$ be a smooth $k$-algebra over a field $k$ of characteristic zero.  The we have that the cyclic homology is given by the following.  
\[HC_n(A/k)\simeq \Omega_{A/k}^n A/d\Omega_{A/k}^{n-1} A\oplus \bigoplus_{i\geq 1}H^{n-2i}_{DR}(A/k).\]
\end{theorem}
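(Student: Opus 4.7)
The plan is to invoke the Hochschild--Kostant--Rosenberg (HKR) theorem and then analyze the column spectral sequence of the $(b,B)$-bicomplex $CC$ whose total complex computes $HC_*(A/k)$. First, I would apply HKR: in characteristic zero, for a smooth $k$-algebra $A$, antisymmetrization gives an isomorphism $HH_n(A/k)\cong\Omega^n_{A/k}$. Each column of $CC$, equipped with the vertical differential $b$ alone, computes Hochschild homology, so this identification rewrites the $E^1$-page of the column filtration spectral sequence as a staircase of copies of $\Omega^*_{A/k}$.

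Next, I would verify that under HKR the horizontal operator $B$ corresponds, up to a combinatorial normalization, to the de Rham differential $d\colon\Omega^n_{A/k}\to\Omega^{n+1}_{A/k}$. Concretely, this means evaluating Connes' formula $B=(-1)^{n+1}(1-t)sN$ on representatives of the form $a_0\otimes a_1\otimes\cdots\otimes a_n$ and comparing the result, after antisymmetrization, with $d(a_0\, da_1\wedge\cdots\wedge da_n)$. This is the technical heart of the argument; every classical account of the theorem turns on this computation.

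With those two identifications in hand, the $E^1$-page of the spectral sequence for $\operatorname{Tot}(CC)$ is a staircase of de Rham complexes, and the $d^1$-differential is $d$. The $E^2$-page therefore splits by column into two types of contributions. In the outermost column there is no $d$ entering, so one obtains the cokernel $\Omega^n_{A/k}/d\Omega^{n-1}_{A/k}$; in each subsequent column (indexed by $i\geq 1$, with a degree shift of $2$ between successive columns coming from the $(p,q)$-bigrading of the bicomplex) one obtains $H^{n-2i}_{DR}(A/k)$. These are exactly the summands appearing in the statement.

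The final step is to argue that the spectral sequence degenerates at $E^2$. I would do this by appealing to the $\lambda$-decomposition (Hodge decomposition) of $HC_*$ available in characteristic zero: the Adams operations $\psi^k$ split cyclic homology into weight eigenspaces, each of which is controlled by exactly one of the summands above, so no nonzero higher differentials can connect distinct weight pieces. Summing the surviving $E^2$ terms gives the claimed isomorphism. The main obstacle is the HKR-to-de Rham translation of the $B$-operator in the second step; once that translation is secured, the remainder of the proof is bookkeeping with the column spectral sequence and the weight decomposition.
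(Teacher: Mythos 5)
Your outline is correct and is, in substance, the standard proof of Connes' theorem; the paper itself does not prove this statement but defers to Weibel (Theorems 9.4.7 and 9.8.13), and the argument carried out there is exactly the one you describe: HKR to identify the columns of the $(b,B)$-bicomplex with $\Omega^*_{A/k}$, the verification that $B$ induces the de Rham differential up to normalization, and then an analysis of the resulting ``staircase'' of de Rham complexes. The one place where your phrasing is weaker than the cited proof is the finish: rather than computing $E^2$ and arguing degeneration via the $\lambda$-decomposition, the references construct an explicit morphism of bicomplexes from $CC$ onto the de Rham staircase bicomplex (zero vertical differential, horizontal differential $d$) and observe that, being an isomorphism on the $E^1$-pages of the column filtration, it is a quasi-isomorphism of total complexes; this yields the direct-sum formula outright and sidesteps both the vanishing of $d^r$ for $r\geq 2$ and the extension problem for the filtration on $HC_n$, which your ``summing the surviving $E^2$ terms'' quietly assumes is split. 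Your degeneration argument does work --- on the weight-$i$ subcomplex the $p$th column carries $HH^{(i-p)}$, which for smooth $A$ is concentrated in a single row, so every $d^r$ with $r\geq 2$ changes the weight and must vanish, and the same observation splits the filtration --- but you should say this explicitly rather than leave it at the level of ``no nonzero higher differentials can connect distinct weight pieces.''
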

\begin{proof}
In the analytic case, this boils down to replacing the cyclic modules in the definition of the periodic bicomplex with the module of forms. In this algebraic case, things are a little more delicate. The proof, when written concisely requires additional constructions, see \cite{weibel1995introduction} theorem 9.4.7 and theorem 9.8.13.  
\end{proof}
Notice that in the previous theorem, we have that the cyclic homology is very close to de Rham cohomology.  There is a correction term that prevents it from being identical to (a direct sum of) the de Rham cohomology groups.  An important property of periodic cyclic homology, and one that we will use later, is that periodic cyclic homology is in some ways just a generalization of de Rham cohomology.  The reason for this is, according to the following result, the two theories agree in the case of smooth algebras.  
\begin{proposition}\label{hpderham}
Let $A$ be a smooth $k$-algebra such that $\mathbb{Q}\subset k$.  Then \[HP_0(A/k)=H_{DR}^{ev}(A/k)=\prod_{i\geq0}H_{DR}^{2i}(A/k)\] and \[HP_1(A/k)=H_{DR}^{odd}(A/k)=\prod_{i\geq0}H_{DR}^{2i+1}(A/k).\]
So, periodic cyclic homology is a generalization of de Rham cohomology. 
\end{proposition}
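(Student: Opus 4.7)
The plan is to use the Hochschild--Kostant--Rosenberg (HKR) theorem to reduce the computation of periodic cyclic homology to de Rham cohomology. Recall that HKR says that for a smooth commutative $k$-algebra $A$ with $\mathbb{Q}\subset k$, Hochschild homology is canonically isomorphic to differential forms, $HH_n(A/k)\cong\Omega^n_{A/k}$. This computes the vertical ($b$-)cohomology of each column of the periodic bicomplex $CCP$, giving an $E_1$ page whose columns collapse to $(\Omega^n_{A/k})_{n\geq 0}$ in each horizontal slot.

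The next step is to identify the horizontal Connes operator $B$ with the de Rham differential $d$ under HKR. Using the explicit formula $B=(-1)^{n+1}(1-t)sN$ applied to the antisymmetrization map $a_0\otimes\cdots\otimes a_n\mapsto \tfrac{1}{n!}a_0\,da_1\wedge\cdots\wedge da_n$, a direct computation shows that $B$ becomes $d$ (up to a normalization absorbed by the factorials, which is why characteristic zero is needed). With this identification, the $E_1$ page of the bicomplex becomes periodic copies of the algebraic de Rham complex $(\Omega^\bullet_{A/k},d)$, and taking cohomology of each row produces $H^j_{DR}(A/k)$.

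Finally, I would assemble these pieces into the total complex. Since $\mathrm{Tot}\,CCP$ is defined with a direct product (not a direct sum), the cohomology $HP_n(A/k)$ computes the $n$-th cohomology of the $\mathbb{Z}$-periodic de Rham complex with total degree given by $\prod_{p+q=n}\Omega^q_{A/k}$. After HKR this yields $\prod_{i\in\mathbb{Z}}H^{n+2i}_{DR}(A/k)$, and the vanishing of de Rham cohomology in negative degrees collapses the product to $\prod_{i\geq 0}H^{n+2i}_{DR}(A/k)$. For $n=0$ and $n=1$, this gives the two stated formulas, and since $HP_n$ is periodic of period 2 this handles all $n$.

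The main obstacle I expect is the convergence of the bicomplex spectral sequence: because $CCP$ is unbounded horizontally, the passage from ``rows after HKR are de Rham complexes'' to ``$HP_n$ is periodic de Rham cohomology'' requires checking that the spectral sequence of the total complex (built from products) converges strongly. In the smooth characteristic-zero setting this works because the Hodge-to-de Rham spectral sequence degenerates at $E_1$, but one must be careful about the interplay between the product used in $\mathrm{Tot}\,CCP$ and the differentials. The identification $B\leftrightarrow d$ is in principle routine but involves careful sign and normalization tracking; the cleanest route is to invoke the same reduction used in the proof of Connes' theorem cited just above, extracting from it the horizontal filtration rather than the $(b,B)$-decomposition that gives the $HC_n$ formula.
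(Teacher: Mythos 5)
Your proposal is correct and is essentially the argument behind the paper's proof, which is just the citation to Loday 5.1.12: HKR collapses the columns of $CCP$ to differential forms, $B$ becomes $d$ after the characteristic-zero normalization of the antisymmetrization map, and the product total complex then yields $\prod_{i\geq 0}H_{DR}^{n+2i}(A/k)$, with periodicity handling all degrees. One small caveat: the convergence worry at the end is settled not by Hodge-to-de Rham degeneration (which concerns hypercohomology and is vacuous for smooth affines) but by the fact that the normalized antisymmetrization map is a quasi-isomorphism of mixed complexes $(\Omega^{*}_{A/k},0,d)\rightarrow(C_{*}(A),b,B)$, so the induced map on $HP$ is an isomorphism, e.g.\ by the five lemma applied to the SBI sequence once the isomorphisms on $HC$ and $HN$ are known.
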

\begin{proof}
See \cite{loday1997cyclic} 5.1.12.  
\end{proof}
For negative cyclic homology, we do not necessarily have a concise formula in terms of de Rham cohomology for smooth algebras.  We can, however, use the long exact SBI sequence to compute the negative cyclic homology from the other two.  The result of this computation is as follows.  
\begin{proposition}
For $A$ a smooth $k$-algebra such that $\mathbb{Q}\subset k$, we have that
\[HN_n(A/k)\cong Z^n(A)\times\prod_{i\geq 1} H_{DR}^{n+2i}(A/k),\]
where $Z^n(A)=$Ker$(d:\Omega_{A/k}^{n}\rightarrow\Omega_{A/k}^{n+1})$.  
\end{proposition}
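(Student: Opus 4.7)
The plan is to read off $HN_n(A/k)$ from the SBI long exact sequence, feeding in Connes' formula for $HC_*$ and Proposition~\ref{hpderham} for $HP_*$. Extracting the five-term piece
\[HP_{n+1}(A/k)\xrightarrow{S}HC_{n-1}(A/k)\xrightarrow{B}HN_n(A/k)\xrightarrow{I}HP_n(A/k)\xrightarrow{S}HC_{n-2}(A/k),\]
I would first rewrite the middle via the exactness to get the short exact sequence
\[0\to\operatorname{coker}\!\bigl(S\colon HP_{n+1}\to HC_{n-1}\bigr)\to HN_n(A/k)\to \ker\!\bigl(S\colon HP_n\to HC_{n-2}\bigr)\to 0.\]

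The bulk of the work is identifying the image and kernel of the periodicity operator $S$ on these Hodge-type decompositions. For smooth $A$ in characteristic zero, the Hochschild-Kostant-Rosenberg theorem provides a quasi-isomorphism between the cyclic module of $A$ and the mixed complex $(\Omega^*_{A/k},\,0,\,d)$; under this identification all three theories carry Hodge-weight decompositions and $S$ is the horizontal shift of the $(b,B)$-bicomplex. Matching this shift with the summand descriptions of Connes' theorem and Proposition~\ref{hpderham}, one finds that $S\colon HP_m\to HC_{m-2}$ acts as the identity on each de Rham summand $H^{m-2-2i}_{DR}$ of $HC_{m-2}$ for $i\geq 1$, sends the $H^{m-2}_{DR}$-summand of $HP_m$ into the top Connes piece $\Omega^{m-2}/d\Omega^{m-3}$ via the inclusion $Z^{m-2}/d\Omega^{m-3}\hookrightarrow\Omega^{m-2}/d\Omega^{m-3}$, and annihilates the summands $H^{m+2j}_{DR}$ for $j\geq 0$. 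This gives $\ker(S\colon HP_n\to HC_{n-2})\cong H^n_{DR}(A/k)\times\prod_{i\geq 1} H^{n+2i}_{DR}(A/k)$ and $\operatorname{coker}(S\colon HP_{n+1}\to HC_{n-1})\cong \Omega^{n-1}_{A/k}/Z^{n-1}(A)\cong d\Omega^{n-1}_{A/k}$ (the last isomorphism is induced by $d$).

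Substituting these into the boxed sequence yields
\[0\to d\Omega^{n-1}_{A/k}\to HN_n(A/k)\to H^n_{DR}(A/k)\times \prod_{i\geq 1} H^{n+2i}_{DR}(A/k)\to 0.\]
Splicing this with the tautological sequence $0\to d\Omega^{n-1}_{A/k}\to Z^n(A)\to H^n_{DR}(A/k)\to 0$ exhibits $HN_n(A/k)$ as an extension of $\prod_{i\geq 1} H^{n+2i}_{DR}(A/k)$ by $Z^n(A)$. Since everything in sight is a $k$-vector space (using $\mathbb{Q}\subset k$), every such extension splits, producing the claimed product decomposition.

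The hard part is the middle paragraph: although at the level of the mixed complex $(\Omega^*_{A/k},0,d)$ the operator $S$ is manifestly the shift of columns, translating that into the precise image and kernel inside the Connes decomposition requires carefully tracking the comparison quasi-isomorphism between $(C(A),b,B)$ and $(\Omega^*_{A/k},0,d)$. This bookkeeping is essentially the content of \cite{loday1997cyclic}, \S5.1, which one would invoke rather than redo from scratch.
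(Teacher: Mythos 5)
Your proposal is correct and follows the same route the paper indicates: the paper offers no written proof beyond the remark that the result is obtained by feeding Connes' formula for $HC_*$ and Proposition \ref{hpderham} for $HP_*$ into the SBI sequence, which is exactly what you carry out. Your identification of $S$ on the mixed complex $(\Omega^*_{A/k},0,d)$ and the resulting kernel/cokernel computation are accurate, and the final splitting step is legitimate since all groups involved are $k$-vector spaces.
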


\begin{notation}
In the previous theorems, $\Omega^1_{A/k}$ refers the module of K\"{a}hler differentials.  More generally, the module $\Omega^n_{A/k}$ is defined to be the the $n$th exterior product of $\Omega^1_{A/k}$, $\Lambda^n(\Omega^1_{A/k})=\Omega^n_{A/k}$.  For a more detailed construction, and some general properties, see \cite{weibel1995introduction} 8.8.1, 9.4.2.  
\end{notation}
\begin{remark}
The de Rham cohomology referred to in the previous theorems, as well as in the rest of this paper, is the so called algebraic de Rham cohomology.  Instead of in the differential geometry case where we look at the complex consisting of differential forms of varying degrees, in this case we look at the complex consisting of the various K\"{a}hler modules $\Omega^n_{A/k}$.  This construction of the algebraic de Rham cohomology groups from the complex $\Omega^*_{A/k}$ is completely analogous to to the construction of the more traditional de Rham cohomology groups from the de Rham complex (the one with differential forms).  
\end{remark}

Just like we said could be done for K-theory, this entire construction is done for rings (or algebras).  We can easily extend this construction to schemes, although in this case things are a little more delicate. We proceed in the following manner.  

Let $(X,\mathcal{O}_X)$ be a scheme $X$ with structure sheaf $\mathcal{O}_X$.   We can easily form a ``cyclic" sheaf of modules by using the same construction as before, i.e. $C(\mathcal{O}_X)=\mathcal{O}_X^{\otimes n}$, again where the tensoring takes place over a field $k$.  We define $HC(-/k)$ to be the presheaf of bicomplexes that takes the scheme $(X,\mathcal{O}_X)$ and maps it to the the total complex of the bicomplex $CC(C(\mathcal{O}_X))$ (meaning that we plug in $C(\mathcal{O}_X)$ to the bicomplex construction from above).  

The problem is that $HC(-/k)$ may not be a sheaf.  So, the logical thing to do would be sheafify $HC(-/k)$ which is precisely what we do.  So, we define the sheaf of  complexes $\textbf{HC}(-/k)$ to be the sheafification of $HC(-/k)=$Tot$CC(-)$.   This can be done for each of the variants of cyclic homology that we have discussed up to this point, thus defining sheaves $\textbf{HC}(-/k)$, $\textbf{HN}(-/k)$, and $\textbf{HP}(-/k)$.  

Once we have these sheaves, we need define some sort of cohomology.  Since these sheaves of complexes are not all concentrated in one degree (say degree zero), our general method for defining cohomology for these sheaves is to use hypercohomology.  So, we arrive at the following definition.  
\begin{definition}
Let $X$ be a scheme. Then the cyclic homology, negative cyclic homology, and periodic cyclic homology of $X$ are respectively
\[HC_n(X/k) := \mathbb{H}^n(X,\textbf{HC}(-/k))\]\[HN_n(X/k) := \mathbb{H}^n(X,\textbf{HN}(-/k))\]\[HP_n(X/k) := \mathbb{H}^n(X,\textbf{HP}(-/k))\]
for all $n\in\mathbb{Z}$.
\end{definition}
This definition guarantees that on affine schemes, as expected, we have the following nice property.  
\begin{proposition} 
Let $X=$ Spec$(R)$ be affine, Noetherian, and finite dimensional scheme over a field $k$. Then $HC_n(X/k)=HC_n(R/k)$, $HN_n(X/k)=HN_n(R/k)$, and $HP_n(X/k)=HP_n(R/k)$
\end{proposition}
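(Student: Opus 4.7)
The plan is to use the hypercohomology spectral sequence together with Serre's vanishing theorem for quasi-coherent sheaves on affine Noetherian schemes. For each of $\textbf{HC}(-/k)$, $\textbf{HN}(-/k)$, $\textbf{HP}(-/k)$, there is a convergent spectral sequence of the form
\[
E_2^{p,q} = H^p\bigl(X,\mathcal{H}^q(\textbf{HC}(-/k))\bigr) \;\Longrightarrow\; \mathbb{H}^{p+q}\bigl(X,\textbf{HC}(-/k)\bigr),
\]
and analogously for $\textbf{HN}$ and $\textbf{HP}$. My strategy is to show the $E_2$ page is concentrated in the row $p=0$, so the spectral sequence collapses, and then to identify the resulting global sections with the ring-theoretic groups $HC_n(R/k)$, $HN_n(R/k)$, $HP_n(R/k)$.

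First I would verify that each cohomology sheaf $\mathcal{H}^q$ is quasi-coherent. This reduces to the observation that the presheaf $HC(-/k)$ (and its variants) is built levelwise from tensor powers $\mathcal{O}_X^{\otimes n}$ over $k$, and is therefore quasi-coherent at the level of each bidegree of the bicomplex; passing to totalization and sheafification preserves quasi-coherence of the cohomology sheaves. Once this is in place, Serre's vanishing theorem (which applies since $X$ is affine Noetherian and finite-dimensional) yields $H^p(X,\mathcal{H}^q)=0$ for all $p>0$, so the spectral sequence collapses to
\[
\mathbb{H}^n\bigl(X,\textbf{HC}(-/k)\bigr) \;\cong\; H^0\bigl(X,\mathcal{H}^n(\textbf{HC}(-/k))\bigr),
\]
and analogously for $\textbf{HN}$ and $\textbf{HP}$.

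Next I would identify the right-hand side with the ring-theoretic $HC_n(R/k)$. The point is that the presheaf $U \mapsto HC_n(U/k)$ already satisfies the sheaf condition on the basis of distinguished affine opens, because cyclic homology commutes with localization of commutative rings: for $f \in R$ one has $HC_n(R[f^{-1}]/k)\cong HC_n(R/k)\otimes_R R[f^{-1}]$, and likewise the cyclic complex of $R[f^{-1}]$ is the localization of the cyclic complex of $R$. Consequently the sheafification $\mathcal{H}^n$ does not change the value on $X=\mathrm{Spec}(R)$, giving $H^0(X,\mathcal{H}^n)\cong HC_n(R/k)$. The same argument, applied to the negative and periodic totalizations, delivers the other two identifications.

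The main obstacle will be the $\textbf{HN}$ and $\textbf{HP}$ cases, since their totalizations use products rather than sums: one has to make sure that sheafification of a presheaf of complexes defined by an infinite product still has quasi-coherent cohomology sheaves, and that the localization statement above survives the passage to these products. For $\textbf{HP}$ this is delicate but tractable because the spectral sequence collapses to a single row, so the product of the rows only contributes globally and no $\lim^1$ issues appear in $H^0$; for $\textbf{HN}$ one can either argue directly from the same collapse or deduce the result from $\textbf{HC}$ and $\textbf{HP}$ via the SBI long exact sequence, which is the route I would take to avoid duplicating the delicate analysis.
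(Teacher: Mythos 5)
Your strategy founders on the localization claim, and the failure propagates back through both halves of the argument. The assertion that cyclic homology commutes with localization, $HC_n(R[f^{-1}]/k)\cong HC_n(R/k)\otimes_R R[f^{-1}]$, is false: take $k$ of characteristic zero, $R=k[x]$ and $f=x$. Then $HC_1(k[x]/k)=\Omega^1_{k[x]/k}/d(k[x])=0$ because every polynomial $1$-form is exact, while $HC_1(k[x,x^{-1}]/k)\cong k$, generated by the class of $x^{-1}dx$; the right-hand side of your formula is $0\otimes_{k[x]}k[x,x^{-1}]=0$. (The chain-level version fails too: $R_f\otimes_k R_f$ is not $(R\otimes_k R)\otimes_R R_f$, and $HC_n(R)$ is not even naturally an $R$-module, since Connes' operator $B$ is not $\mathcal{O}$-linear --- a warning sign for any quasi-coherence claim.) Consequently the cohomology sheaves $\mathcal{H}^q$ of the totalized cyclic complex are \emph{not} quasi-coherent (on $\mathbb{A}^1$ the relevant $\mathcal{H}$ has nonzero stalks, e.g. $[\,dx/(x-b)\,]\neq 0$ in $HC_1(k[x]_{(x-a)})$ for $b\neq a$, so it cannot be the sheaf associated to $HC_1(k[x])=0$), and $H^0(X,\mathcal{H}^n)$ does not identify with $HC_n(R/k)$. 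Both the collapse of your spectral sequence and the identification of its abutment are therefore unproved, and as stated the second one is wrong.

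The proposition is nonetheless true, and the standard argument the paper points to (Weibel, \emph{Cyclic homology of schemes}) is essentially your outline run one level down, where it is valid. Hochschild homology, unlike cyclic homology, does commute with central localization: $HH_n(R_f/k)\cong HH_n(R/k)\otimes_R R_f$ (Weibel--Geller). Hence the homology sheaves of the Hochschild complex are quasi-coherent, your collapse-plus-identification argument applies verbatim to $\textbf{HH}$, and one gets the affine comparison for Hochschild homology. One then bootstraps to $HC$ via the column filtration of the cyclic bicomplex, whose associated graded pieces are shifted Hochschild complexes (equivalently, via the $SBI$ sequence and induction, starting from the vanishing of $HC_n$ in sufficiently negative degrees on a finite-dimensional Noetherian affine scheme). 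Your closing concerns about the product totalizations for $\textbf{HN}$ and $\textbf{HP}$ are legitimate, and deducing $HN$ from $HC$ and $HP$ by $SBI$ is a sensible way to finish --- but only once the $HC$ case rests on this corrected foundation rather than on the false localization property.
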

\begin{proof}
This follows from a standard spectral sequence argument which we will not give.  For an alternative argument see \cite{weibel1996cyclic} theorem 2.5.  
\end{proof}
\begin{corollary}\label{cycliczero}
If the affine scheme $X=$ Spec$(A)$ is Noetherian and finite dimensional, then \\ $HC_n(X/k)=0$ for $n<0$.  
\end{corollary}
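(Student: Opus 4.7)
The plan is to combine the preceding proposition with a direct inspection of the cyclic bicomplex. First I would invoke the proposition that immediately precedes the corollary, which identifies $HC_n(X/k)$ with $HC_n(A/k)$ whenever $X=\mathrm{Spec}(A)$ is affine, Noetherian, and finite dimensional. This reduces the problem from sheaf hypercohomology to the homology of the cyclic module $C(A)=(A^{\otimes n})_{n\geq 1}$.

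Next I would unwind the definition of $HC_n(A/k)$. By construction, $HC_n(A/k)=H_n(\mathrm{Tot}\,CC(C(A)))$, where $CC$ is the normal cyclic bicomplex obtained from the periodic bicomplex $CCP$ by zeroing out all columns of negative index. Thus $CC_{p,q}=0$ unless $p\geq 0$, and the rows are indexed by the cyclic module $C(A)$ which lives only in nonnegative degrees, so $CC_{p,q}=0$ unless $q\geq 0$ as well. Moreover, as noted in the remark, because the bicomplex is supported in $\mathbb{N}\times\mathbb{N}$, the total complex is defined by the honest direct sum $(\mathrm{Tot}\,CC)_n=\bigoplus_{p+q=n}CC_{p,q}$.

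The observation that finishes the argument is then immediate: if $n<0$, there are no pairs $(p,q)\in\mathbb{N}\times\mathbb{N}$ with $p+q=n$, so $(\mathrm{Tot}\,CC)_n=0$, and hence $HC_n(A/k)=H_n(\mathrm{Tot}\,CC)=0$. Combining with the first step yields $HC_n(X/k)=0$ for all $n<0$.

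There is really no obstacle here; the only thing to be careful about is the distinction between $CC$ and $CCN$ or $CCP$, which do have negatively-indexed columns and whose total complexes are taken as direct products rather than sums precisely to accommodate the infinitely many bidegrees summing to a fixed $n$. This is why the analogous statement fails (and is in fact false) for negative and periodic cyclic homology. For $HC$ itself, however, the vanishing in negative degrees is built into the shape of the bicomplex, and the corollary is essentially a bookkeeping remark once the preceding proposition is in hand.
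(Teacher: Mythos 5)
Your argument is correct. The paper's own ``proof'' is only a citation to Weibel's corollary 4.6.1, whose underlying mechanism is the hypercohomology spectral sequence together with the vanishing of higher cohomology of quasi-coherent sheaves on affine schemes --- which is precisely the content of the proposition immediately preceding the corollary. You make that reduction explicit and then finish with the observation that the normal cyclic bicomplex $CC(C(A))$ is supported in $\mathbb{N}\times\mathbb{N}$, so $(\mathrm{Tot}\,CC)_n=\bigoplus_{p+q=n}CC_{p,q}=0$ for $n<0$. That is a complete and self-contained argument, and your closing remark correctly isolates why the same reasoning does not apply to $HN$ or $HP$ (negatively indexed columns, total complexes formed with products). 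The only dependency worth flagging is that your first step leans entirely on the preceding proposition, which the paper itself only sketches via a citation; but since that proposition is stated and available before the corollary, using it is legitimate and your proof is not circular.
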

\begin{proof}
See corollary 4.6.1 \cite{weibel1991etale}.  
\end{proof}
In the more general case, the cyclic homology in negative degrees may be nonzero. In these cases, we are still able to say something about the cyclic homology in low enough degrees.  
\begin{proposition}\label{cyclicnegd}
If $X$ is a noetherian $k$-scheme of dimension $d$, then $HC_n(X)=0$ for $n<-d$ and $HC_{-d}(X)\cong H^d(X,\mathcal{O}_X)$.  
\end{proposition}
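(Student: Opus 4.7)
The plan is to apply a Zariski hypercohomology spectral sequence to the sheaf of complexes $\textbf{HC}(-/k)$. Writing $\mathcal{HC}_q$ for the Zariski sheafification of the presheaf $U\mapsto HC_q(U/k)$, the standard Cartan--Eilenberg filtration yields a convergent spectral sequence of the form
$$E_2^{p,q}=H^p\bigl(X,\mathcal{HC}_{-q}\bigr)\Longrightarrow HC_{-p-q}(X/k),$$
so the task reduces to bounding where this $E_2$-page is supported.

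Two vanishing facts will do the job. First, Corollary \ref{cycliczero} says $HC_m(U/k)=0$ for $m<0$ whenever $U$ is affine Noetherian of finite dimension; since sheafification only sees stalks at points and each affine open of $X$ has these properties, this gives $\mathcal{HC}_q=0$ for $q<0$, so only the rows $q\le 0$ survive on the $E_2$-page. Second, $X$ is Noetherian of dimension $d$, so Grothendieck's vanishing theorem forces $H^p(X,\mathcal{F})=0$ for $p>d$ and any abelian sheaf $\mathcal{F}$, restricting the surviving columns to $0\le p\le d$. The $E_2$-page is therefore concentrated in the rectangle $\{0\le p\le d,\ q\le 0\}$, on which the total degree $-p-q$ attains its minimum $-d$ at the single corner $(p,q)=(d,0)$.

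From here the conclusion is read off. For $n<-d$ there is no pair $(p,q)$ in the support with $-p-q=n$, so every $E_2$-term abutting to $HC_n(X/k)$ vanishes, and hence $HC_n(X/k)=0$. For $n=-d$ the only contribution is $E_2^{d,0}=H^d(X,\mathcal{HC}_0)$, and because every adjacent position that could give rise to an incoming or outgoing differential also lies outside the support, this term survives unchanged to $E_\infty$. Identifying $\mathcal{HC}_0$ with $\mathcal{O}_X$ (since $HC_0(R/k)=R/[R,R]=R$ for any commutative $k$-algebra $R$) gives the isomorphism $HC_{-d}(X/k)\cong H^d(X,\mathcal{O}_X)$.

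I do not expect a genuine obstacle: the argument is a standard vanishing-and-edge-map bookkeeping on a hypercohomology spectral sequence. The only point that requires care is pinning down the indexing convention so that the hypercohomology spectral sequence is written with the correct signs for the paper's $HC_n=\mathbb{H}^n$ notation; this affects how the abutment is written but not the numerical conclusion, so the stated vanishing range and the identification of the corner term are robust.
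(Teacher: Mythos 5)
Your argument is correct and is exactly the standard proof: the Zariski hypercohomology spectral sequence $E_2^{p,q}=H^p(X,\mathcal{HC}_{-q})\Rightarrow HC_{-p-q}(X/k)$, combined with Grothendieck vanishing in degrees $>d$ and the vanishing of $\mathcal{HC}_q$ for $q<0$ (from Corollary \ref{cycliczero} applied on a basis of affine opens), is precisely the argument behind Lemma 4.6 of the Weibel reference that the paper cites in place of a proof. Since the paper only defers to that citation, your write-up is if anything more complete; the one point worth making explicit is that convergence of the spectral sequence for the unbounded complex $\textbf{HC}$ is guaranteed by the finite cohomological dimension of the Noetherian $d$-dimensional scheme $X$, which is the same vanishing you already invoke to bound the columns.
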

\begin{proof}
This result is actually stated before the previous corollary in \cite{weibel1991etale} (meaning the previous corollary is actually a result of this proposition).  See the proof of lemma 4.6 in \cite{weibel1991etale}.  
\end{proof}
For more information on the cyclic homology of schemes, see \cite{weibel1996cyclic}.

\begin{remark}
Using the sheaf cohomology definition only works when the target of your sheaf is an object in some abelian category (most of the time it is abelian groups).   Trying to define $HC_n(-/k)$ as a functor using the previous construction would result in the target object being a complex (or more specifically a complex of sheaves).  This is why in this scenario, we use the more general hypercohomology construction from \ref{hypercohomology}.  When we choose a specific scheme, our target category is abelian groups, so if things work out right (like $d^2=0$), we could technically use a regular cohomology definition. It would only be in the case of affine schemes and in other very specific circumstances that these definitions would agree.  So, for more general schemes it is more complete to use this construction and obtain a more general functorial definition of $HC_n(-/k)$.  
\end{remark}

\begin{observation}
The scheme versions of cyclic homology (and it's variants) behaves in some ways like a cohomology theory.  By this we mean that as a functor from schemes to abelian groups, it is contravariant.  According to the their agreement on affine schemes, this is precisely the correct behavior.  The reason is, the Spec functor from commutative rings to affine schemes is itself contravariant.  Hence if we want an agreement between these two definitions for affine schemes as above, we need the scheme version of to be contravariant.  We could use cohomological indexing on these groups, namely $HC^{-n}($Spec$(R)/k)=HC_n(R/k)$.  But, either out of laziness or for some other good reason, we choose to treat these groups as interchangeable.  Therefore we treat them both as homology theories with homological indexing, but in practice the scheme theoretic version is more cohomological.   
\end{observation}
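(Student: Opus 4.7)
The plan is to treat the observation as a piece of functorial bookkeeping rather than a deep theorem: essentially one has to verify that the hypercohomology definition $HC_n(X/k)=\mathbb{H}^n(X,\textbf{HC}(-/k))$ is contravariant in $X$ and that this contravariance is compatible, through the equivalence $HC_n(\mathrm{Spec}(R)/k)=HC_n(R/k)$, with the (covariant) functoriality of cyclic homology of rings.

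First I would unpack the sheaf-level construction. Given any morphism of schemes $f\colon X\to Y$, there is a pullback $f^{-1}\textbf{HC}(-/k)\to\textbf{HC}(-/k)|_X$ of sheaves of complexes on $X$, and hence an induced map on hypercohomology $f^*\colon\mathbb{H}^n(Y,\textbf{HC}(-/k))\to\mathbb{H}^n(X,\textbf{HC}(-/k))$. Since this is a formal property of any presheaf of complexes and its hypercohomology via Definition~\ref{hypercohomology}, one immediately gets that $HC_n(-/k)$, $HN_n(-/k)$, $HP_n(-/k)$ are contravariant functors from $\mathrm{Sch}/k$ to abelian groups. The same remarks apply to $\textbf{HN}$ and $\textbf{HP}$ verbatim.

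Next I would reconcile this with the ring case. A ring homomorphism $\varphi\colon R\to S$ induces a covariant map $\varphi_*\colon HC_n(R/k)\to HC_n(S/k)$ by functoriality of the cyclic bar construction $C(A)=(A^{\otimes n})_{n\ge 1}$. Under Spec this corresponds to a scheme morphism $\mathrm{Spec}(\varphi)\colon\mathrm{Spec}(S)\to\mathrm{Spec}(R)$, which by the previous paragraph induces $\mathrm{Spec}(\varphi)^*\colon HC_n(\mathrm{Spec}(R)/k)\to HC_n(\mathrm{Spec}(S)/k)$. These two maps agree under the identification $HC_n(\mathrm{Spec}(R)/k)=HC_n(R/k)$: this is exactly the naturality built into the proof that sheafification plus hypercohomology recovers the ring-theoretic definition on affines (cited to Weibel via the preceding proposition). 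So the contravariance of the scheme-level theory is precisely what is \emph{forced} by demanding this agreement together with the contravariance of Spec.

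The only remaining point is the indexing convention. If one wanted fully cohomological notation one would write $HC^{-n}(\mathrm{Spec}(R)/k):=HC_n(R/k)$ so that the scheme-level functor becomes $HC^m(X/k)=\mathbb{H}^m(X,\textbf{HC}(-/k))$, matching the standard sign convention (superscripts rise under pullback). The observation is that, having now verified contravariance on both sides, the translation between the two indexings is purely a sign flip $m\leftrightarrow -n$ on the index, with no further content. I will therefore simply note the choice to retain homological subscripts throughout, while warning the reader that the scheme-level functor is contravariant and hence behaves cohomologically. The only conceivable obstacle is making sure the sheafification step preserves enough naturality to justify the affine comparison used in the previous proposition; since sheafification is itself a left adjoint and hence functorial, this is automatic and requires no separate argument.
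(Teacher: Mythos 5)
Your proposal is correct and follows essentially the same line of reasoning the paper itself gives inside the observation: contravariance of the hypercohomology construction in $X$, compatibility with the covariant ring-level theory forced by the contravariance of $\mathrm{Spec}$ and the agreement on affines, and the purely notational nature of the $m\leftrightarrow -n$ index flip. You merely make explicit the formal pullback and naturality checks that the paper leaves implicit, so there is nothing to correct or compare further.
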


\begin{note}
As stated before, we can in some respects treat complexes and spectra as interchangeable.  There are some instances where we really want to use a cyclic homology spectrum (and variations).  In order to construct a spectrum out of these complexes, we rely on the Eilenberg-Maclane spectrum construction.  Which, as before, we will skip.  We simply treat $\textbf{HC}(-/k)$, and the other variants, as sheaves of spectra when necessary.  
\end{note}

\begin{notation}
Following the convention in \cite{cortinas2005cyclic}, as well as in other sources, when we omit the field $k$ from $\textbf{HC}(-/k)$ we mean that we are computing cyclic homology over $\mathbb{Q}$.  So, $\textbf{HC}=\textbf{HC}(-/\mathbb{Q})$.  We also adopt this convention beyond the level of spectra.  So, if we write $HC_n(-)$ we mean $HC_n(-/\mathbb{Q})$.  

The reason for this simplification is that later we will use $\textbf{HC}(-/\mathbb{Q})$ much more often than the general case.  Moreover, when we discuss the fibers $\mathcal{F}_{\textbf{HC}}$, to be defined later, using the notation $\textbf{HC}(-/\mathbb{Q})$ is just not very aesthetically pleasing.  
\end{notation}

\subsection{The Chern character I}$\ $
The important relationships between K-theory and cyclic homology all come from the Chern character map.  This Chern character idea is actually common among many generalized cohomology theories.  Originally it was developed to get a map from topological K-theory (a generalized cohomology theory) to regular cohomology.  Since then it has been adapted by many different authors to get maps from other generalized cohomology theories to regular cohomology theories.  

In particular we will be looking at the Chern character map from algebraic K-theory to negative cyclic homology.  This construction is not the easiest to formally write down, but we will at least give an idea on how this map is developed.  
\begin{definition}
Let $A$ be a $k$-algebra.  We define the fusion map to be the $k$-algebra homomorphism \[f:k[GL_r(A)]\rightarrow M_r(A),\]
which simply replaces the formal sum in $k[GL_r(A)]$ with an actual sum in $M_r(A)$.  
\end{definition}
From this map, and the trace map on the $r\times r$ matrices $M_r(A)$, we have the following sequence of modules.  
\begin{proposition}
There is a sequence of cyclic $k$-modules
\[k[GL_r(A)^n]\rightarrow k[GL_r(A)^{n+1}]\cong k[GL_r(A)]^{\otimes n+1}\overset{f^{\otimes n+1}}{\rightarrow} M_r(A)^{\otimes n+1}\overset{tr^{\otimes n+1}}{\rightarrow} A^{\otimes n+1},\]
which induces a sequence of cyclic bicomplexes,
\[CCN(GL_r(A))\rightarrow CCN(BGL_r(A)])\rightarrow CCN(k[GL_r(A)])\rightarrow CCN(M_r(A))\rightarrow CCN(A)\]
where $BGL_r(A)$ is the Eilenberg-Maclane space for $GL_r(A)$ in degree 1.  
\end{proposition}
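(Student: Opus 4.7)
The plan is to build up the sequence arrow-by-arrow, checking at each stage that the map in question is a morphism of cyclic $k$-modules (i.e.\ commutes with every face $d_i$, degeneracy $s_j$, and the cyclic operator $t$), and then to invoke the fact that the $CCN$ bicomplex construction is a functor on the category of cyclic modules. Functoriality of $CCN$ is immediate from its definition, since the boundary operators $b$ and $B$ (via the extra degeneracy $s$ and the norm $N$) are built entirely out of face, degeneracy, and cyclic maps, so once the first sequence is established, the second follows with no further work.

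First I would identify the cyclic structures. The cyclic module $n\mapsto k[GL_r(A)^{n+1}]$ is the standard cyclic $k$-module $C(k[GL_r(A)])$ attached to the group algebra $k[GL_r(A)]$, and the isomorphism $k[GL_r(A)^{n+1}]\cong k[GL_r(A)]^{\otimes n+1}$ is the canonical identification of the $k$-linearization of a Cartesian power with an iterated tensor product; this identification is evidently compatible with every face, degeneracy, and cyclic operator because these are defined entry-wise on tuples. The first arrow, from $k[GL_r(A)^n]$ (thought of as the standard simplicial chain complex of the classifying space $BGL_r(A)$) into $k[GL_r(A)^{n+1}]$, is the classical bar-to-cyclic-bar map obtained by inserting a slot in the cyclic position; checking it respects the cyclic structure is a straightforward index computation.

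Next I would handle the fusion map $f$. Since $f:k[GL_r(A)]\to M_r(A)$ is by definition a $k$-algebra homomorphism, the tensor powers $f^{\otimes n+1}$ automatically assemble into a morphism of cyclic modules, because the cyclic structure on $B^{\otimes n+1}$ for any unital $k$-algebra $B$ is functorial in $B$ (faces use multiplication, degeneracies use the unit, and the cyclic operator uses cyclic permutation, all of which are preserved by algebra maps). The main obstacle is the last arrow. Strictly speaking, $tr^{\otimes n+1}$ must be interpreted as the \emph{generalized} (Dennis/Morita) trace
\[
m_0\otimes m_1\otimes\cdots\otimes m_n\ \longmapsto\ \sum_{i_0,\ldots,i_n}(m_0)_{i_0i_1}\otimes (m_1)_{i_1i_2}\otimes\cdots\otimes (m_n)_{i_ni_0},
\]
and showing this is a morphism of cyclic modules is precisely the content of the Morita invariance theorem for Hochschild and cyclic homology. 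Compatibility with the face maps uses associativity of matrix multiplication, compatibility with degeneracies uses the identity matrix structure, and compatibility with the cyclic operator $t$ is the essential point: cyclically permuting the tensor factors corresponds to cyclically permuting the summation indices $i_0,\ldots,i_n$, which is exactly the $\mathrm{tr}(AB)=\mathrm{tr}(BA)$ identity in generalized form. This is worked out in detail in Loday's book, so I would cite it rather than reproduce the index calculation.

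Finally, having established that each arrow in the displayed sequence is a morphism of cyclic $k$-modules, I would apply the functor $CCN$ termwise. Because $CCN$ sends morphisms of cyclic modules to morphisms of bicomplexes (with the vertical differential $b$ and horizontal differential $B$ preserved), the induced sequence
\[
CCN(GL_r(A))\rightarrow CCN(BGL_r(A))\rightarrow CCN(k[GL_r(A)])\rightarrow CCN(M_r(A))\rightarrow CCN(A)
\]
is exactly what is claimed, completing the proof.
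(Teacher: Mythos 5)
Your proposal is correct and follows essentially the same route as the paper, which simply asserts that the first sequence follows from the definitions of the fusion and trace maps and that the bicomplex sequence follows by functoriality (``everything behaves well with tensor products''). You in fact supply more detail than the paper does, in particular the correct identification of $tr^{\otimes n+1}$ as the generalized (Morita) trace and the verification that it commutes with the cyclic operator.
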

\begin{proof}
The first sequence of cyclic modules follows immediately from the definition of the fusion map, and the trace map from $M_r(A)\rightarrow A$.  The second sequence follows from the first sequence by the fact that everything behaves well with tensor products.  
\end{proof}
\begin{proposition}
For any abelian group $G$,
\[HN_n(G)\cong\prod_{i\geq 0}H_{n+2i}(G),\]
where $HN_n(G)=HN_n(C(BG))$ and $C(BG)$ is the Eilenberg-Maclane complex of $G$.    
\end{proposition}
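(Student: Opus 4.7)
The plan is to analyze the bicomplex $CCN$ applied to $C(BG)$ via the column-filtration spectral sequence, show that it degenerates at $E_1$ when $G$ is abelian, and extract the product formula from the resulting $E_\infty$ page.

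First, I would identify the shape of $\mathrm{Tot}\, CCN$. Writing $CC_{p,q} = C_{q-p}$ for the entries of the Connes bicomplex, so that the vertical differential is $b: CC_{p,q} \to CC_{p,q-1}$ and the horizontal is $B: CC_{p,q} \to CC_{p-1,q}$, both lowering the total degree $n = p+q$ by one, the degree-$n$ piece of $\mathrm{Tot}\, CCN$ is $\prod_{p \leq 0} C_{n-2p} = \prod_{i \geq 0} C_{n+2i}$. Filtering by columns and taking vertical $b$-homology first gives $E_1^{p,q} = H_{q-p}(C(BG), b) = H_{q-p}(G; k)$, using the standard identification of the simplicial homology of the Eilenberg--Maclane complex $C(BG)$ with ordinary group homology.

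Second, I would show that the induced $d_1$ differential, which is the map induced by $B$ on vertical homology, vanishes when $G$ is abelian. For abelian $G$ the multiplication $\mu: G \times G \to G$ is a group homomorphism, endowing $BG$ with the structure of a simplicial abelian group; in particular the cyclic operator $t_n$ on $C(BG)_n$ is then a simplicial automorphism induced by a group endomorphism of $G^n$. A standard chain-homotopy argument (e.g.\ via the Eilenberg--Zilber shuffle product, or equivalently by exhibiting an $S^1$-action on $|C(BG)|$ that is nullhomotopic because $BG$ is an H-space) produces a natural null-homotopy of $(1-t_n)$, so the operator $B = (-1)^{n+1}(1-t)sN$ induces zero on $HH_*(C(BG)) = H_*(G;k)$.

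Third, with $d_1 = 0$ the spectral sequence collapses at $E_\infty = E_1$. Because $\mathrm{Tot}\, CCN$ is defined using direct products, there are no convergence issues in reassembling the $E_\infty$ page, and we obtain
\[HN_n(G) = H_n(\mathrm{Tot}\, CCN) = \prod_{p \leq 0} H_{n-2p}(G; k) = \prod_{i \geq 0} H_{n+2i}(G; k),\]
which is the claimed isomorphism. The main obstacle is the vanishing of $B$ on $HH_*(C(BG))$ in the abelian case: abelianness of $G$ is clearly the right input, but translating it into a chain-level null-homotopy of $(1-t)$ requires some care, and the argument genuinely fails in the nonabelian case where $B$ detects nontrivial conjugation phenomena on $H_*(G)$.
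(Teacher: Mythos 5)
Your strategy --- filter $CCN$ by columns, identify $E_1$ with $HH_*(C(BG))\cong H_*(G;k)$, observe that $d_1$ is induced by $B$, and hope for degeneration --- is the standard route, and since the paper's own proof is nothing but a citation to Loday (7.3.9.4), a self-contained argument would be a genuine improvement. But the proposal has two real gaps, both located at the crux. First, the vanishing of $B_*$ on $H_*(G;k)$ is asserted rather than proved, and the mechanism you point to is not the operative one. A ``null-homotopy of $(1-t_n)$'' is a delicate thing to ask for, since $1-t$ is a chain map from the $b'$-complex to the $b$-complex, not an endomorphism of the Hochschild complex, and abelianness of $G$ is not what makes $B$ die. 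The classical reason is that, under the identification $HH_*(C(BG))\cong H_*(G;k)$, the operator $B$ becomes the Pontryagin product with the class in $H_1$ of the element marking the conjugacy class --- here the identity --- and $[1]=0$ in $H_1(G)$. This works for \emph{every} discrete group, which matters: the paper immediately applies the proposition to the non-abelian group $GL(A)$, so an argument that genuinely required abelianness would break the subsequent Chern character construction. Your closing claim that the statement ``genuinely fails in the nonabelian case'' is backwards; the conjugation phenomena you have in mind live in the other conjugacy-class summands of $k[G]$, which do not appear in the cyclic module $C(BG)$.

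Second, and more seriously, $d_1=0$ does not give collapse at $E_1$. The higher differentials are secondary operations: if $x$ is a $b$-cycle and $Bx=by$, then $d_2[x]$ is represented by $By$, and nothing about $B$ inducing zero on homology forces this to vanish. To close the argument you need chain-level input --- an explicit homotopy $B=bh+hb$ on (the normalized complex of) $C(BG)$, or equivalently the $S^1$-equivariant triviality of the cyclic set $BG$, which is precisely what Loday establishes before deducing the formula. With such a homotopy in hand, a perturbation-lemma argument identifies $\mathrm{Tot}\,CCN$ with the product of shifted $b$-complexes directly; this also disposes of the convergence and extension questions for the complete column filtration of a product total complex, which you dismiss too quickly (strong convergence and the splitting of the resulting complete filtration both require justification, not just the observation that products were used).
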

\begin{proof}
This is developed in section 7.3.9 of \cite{loday1997cyclic}, and stated explicitly in equation (7.3.9.4).
\end{proof}
Combining the previous two results with one another yields the desired construction.   
\begin{theorem}
There is a map, called the Chern character map \[ch_n:K_n(A)\rightarrow HN(A/k),\] defined for any $n$.  
\end{theorem}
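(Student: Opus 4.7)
The plan is to assemble the Chern character from the pieces already set up in the excerpt: a Hurewicz-type map from $K$-theory into the homology of $BGL(A)$, a natural inclusion of that homology as a factor of $HN_\ast$ of the group, and finally the functorial map induced by the sequence of cyclic bicomplexes ending at $CCN(A)$.

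First I would work stably at a fixed $r$. By Quillen's definition, a class $\alpha \in K_n(A)$ for $n \geq 1$ lives in $\pi_n(BGL(A)^+)$, and the $+$-construction induces an isomorphism on integral homology, so the Hurewicz map produces
\[
h \colon K_n(A) \longrightarrow H_n(BGL(A)) = \varinjlim_r H_n(BGL_r(A)).
\]
For each $r$, the preceding proposition identifies $H_n(BGL_r(A))$ with a direct factor of $HN_n(GL_r(A))$ (in the sense that $HN_n$ of the Eilenberg--MacLane complex of $GL_r(A)$ splits off each $H_{n+2i}$; in particular the $i=0$ summand receives a canonical map from group homology). Composing with this inclusion gives a map $H_n(BGL_r(A)) \to HN_n(GL_r(A))$ that is natural in $r$.

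Next I would apply the sequence of cyclic bicomplexes from the previous proposition. Taking $HN$ of
\[
CCN(GL_r(A)) \to CCN(BGL_r(A)) \to CCN(k[GL_r(A)]) \to CCN(M_r(A)) \to CCN(A)
\]
and composing with Morita invariance $HN_n(M_r(A)/k) \cong HN_n(A/k)$ produces a map $HN_n(GL_r(A)) \to HN_n(A/k)$. Splicing everything together,
\[
K_n(A) \xrightarrow{\ h\ } H_n(BGL_r(A)) \longrightarrow HN_n(GL_r(A)) \longrightarrow HN_n(A/k),
\]
and passing to the direct limit over $r$ (using that all constructions are natural under the stabilization $GL_r \hookrightarrow GL_{r+1}$) produces the desired $ch_n$ in non-negative degrees.

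To extend to all $n \in \mathbb{Z}$, I would lift this to a map of spectra $\mathbf{K}(A) \to \mathbf{HN}(A/k)$. The cleanest way is to observe that the construction is natural in the ring $A$, so applying it to the iterated suspension rings $S^i(A)$ and using the Gersten--Wagoner equivalence $K_n(A) \cong K_{n+i}(S^i(A))$ together with a corresponding shift on negative cyclic homology delivers the map in negative degrees; alternatively one may cite that the Dennis trace and its refinement to $HN$ are already defined at the spectrum level. The main obstacle I anticipate is precisely this step: verifying compatibility on the negative part, since the proposition identifying $HN_n(G)$ with $\prod_{i\geq 0} H_{n+2i}(G)$ needs to be combined with a suitable shift behavior under suspension of rings (or, equivalently, one must check that the stabilization over $r$ commutes with the spectrum structure maps). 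Everything else is routine naturality, the Hurewicz map, and the splitting already recorded.
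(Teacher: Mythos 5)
Your proposal follows essentially the same route as the paper: the Hurewicz map out of $\pi_n(BGL(A)^+)$, composed with the splitting $HN_n(G)\cong\prod_{i\geq 0}H_{n+2i}(G)$ and the fusion/trace-induced map of cyclic bicomplexes landing in $HN_n(A/k)$, with the extension to $n\leq 0$ deferred to the suspension-ring/spectrum-level machinery (the paper likewise only proves $n\geq 1$ and cites Loday and Weibel for the rest). Your explicit mention of Morita invariance and stabilization over $r$ is a harmless elaboration of what the paper leaves implicit.
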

\begin{proof}
We will proceed by proving the case where $n\geq 1$.  When $n\geq 1$, this map is simply a composition of the Hurewicz homomorphism, \[K_n(A)=\pi_n(GL(A)^+)\rightarrow H_n(GL(A)^+)\cong H_n(GL(A)),\] and the map induced from the previous two results, \[H_{n+2i}(GL(A))\rightarrow HN_n(GL(A))\rightarrow HN_n(A/k).\]  
\end{proof}
For simplicity we may just write $ch$ instead of $ch_n$.  

This entire construction for $n\geq 1$ (except for the result from chapter 7) is carried out in section 8.4 and then concluded with the higher algebraic K-groups in section 11.4 of \cite{loday1997cyclic}.  To extend to 0 and negative degrees requires a bit more work. This is carried out, for $n=0$ in section 8.3 of  \cite{loday1997cyclic}, and for negative degrees in chapter V.II of \cite{weibel2013k}.  These constructions are similar, but they require more technical details, so (although we require them) we will skip them. We refer the reader to the references for any missing details or further ideas.  

\begin{remark}
Based on some of the descent properties below, we really want to use the Jones-Goodwillie Chern character construction.  The previous Chern character definition is actually a different construction from that of Jones and Goodwillie.  The Jones-Goodwillie Chern character construction is very technical and involves a lot of rational homotopy theory ideas.   Fortunately, the two Chern character maps have been shown in \cite{cortinas2009relative} to agree in specific cases (namely for nilpotent ideals).  This is the main application where we will use this Chern character, which is why we have chosen the less technical definition from above.  Goodwillie's Chern character (or at least the good properties that we obtain from it's use) does however require additional assumptions.  Namely, that for our $k$-algebra $k$ is of characteristic zero, meaning that $\mathbb{Q}\subset k$, and that the target of the Chern character is $HN_n(-/\mathbb{Q})=HN_n(-)$.  The reason for is that infinitesimal K-theory (which will be defined later) will fail to satisfy certain descent properties (also explored later) in the more general case.  
\end{remark}

We will continue our study of the Chern character by defining a K-theory that will, in a sense, measure what is left behind under the Chern character map.  Based on our notation from the previous section, we have that $\textbf{HN}$ (of course meaning $\textbf{HN}(-/\mathbb{Q})$) is the sheaf of complexes defining negative cyclic homology.  By using the Eilenberg-Maclane spectrum construction we can construct a sheaf of spectra from this, and by using an abuse of notation, we will also call this $\textbf{HN}$.  From the Chern character construction, we have that there is an induced Chern character map from $\textbf{K}(X)\rightarrow\textbf{HN}(X)$.  In order to measure the difference between these two spectra, following Corti\~nas \cite{cortinas2006obstruction}, we give the following construction.  
\begin{definition}
Let $X$ be a scheme over $\mathbb{Q}$.  We define the infinitesimal K-theory spectrum of $X$, called $\textbf{K}^{inf}(X)$, to be the homotopy fiber of the Chern character map $\textbf{K(X)}\rightarrow\textbf{HN}(X)$.  
\end{definition}
As with other spectra, this homotopy fiber defines it's own variant of K-theory.  This is given by the following.  
\begin{definition}
The infinitesimal K-groups of a scheme $X$ (or a ring) are given by the following,
\[\pi_n(\textbf{K}^{inf}(X))= K^{inf}_n(X)\] for all $n\in\mathbb{Z}$.  
\end{definition}
\subsection{Descent properties}
In this section we present the descent properties of the various constructions, or rather the sheaves associated to these constructions.  Returning to our discussion of descent, we have two theorems describing what it means for a sheaf to satisfy a certain type of descent.  They are the theorems \ref{mvweak} and \ref{mvchain}.  Our primary focus will be to look at descent for the scdh-topology and descent for the cdh-topology.  We will begin with the following corollary to the referenced theorems.  
\begin{corollary}
Let $A$ be a presheaf of spectra (or complexes) on the site (Sm/$k$,scdh).  Then $A$ satisfies scdh-descent if and only if $A$ satisfies Nisnevich descent, and the MV-property for all smooth blow-up squares.   
\end{corollary}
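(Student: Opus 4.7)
The plan is to reduce the corollary directly to the previous two theorems (\ref{mvweak} and \ref{mvchain}) by unpacking the definition of the scdh-topology and recognizing that its generating cd-structure cleanly decomposes into two pieces. Specifically, recall from the preceding subsection that the scdh-topology on Sm/$k$ is defined to be the topology generated by the \emph{combined} cd-structure on Sm/$k$, which consists of all Nisnevich squares together with all smooth blow-up squares along smooth centers. It was also noted (citing \cite{voevodsky2008unstable}) that this combined cd-structure is complete, bounded, and regular, so theorems \ref{mvweak} and \ref{mvchain} apply directly.

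First I would invoke theorem \ref{mvweak} (in the spectrum case; or \ref{mvchain} if $A$ is a presheaf of complexes) with $C=$ Sm/$k$ and $P$ equal to the combined cd-structure generating the scdh-topology. This gives immediately that $A$ satisfies scdh-descent if and only if $A$ satisfies the MV-property for every square in $P$. Since $P$ is literally the union of the Nisnevich squares and the smooth blow-up squares, having the MV-property for every square in $P$ is tautologically equivalent to having the MV-property simultaneously for all Nisnevich squares and for all smooth blow-up squares.

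Next I would dispatch the Nisnevich side by applying theorem \ref{mvweak} a second time, this time with $P$ equal to just the Nisnevich squares on Sm/$k$. The Nisnevich cd-structure is well-known to be complete, bounded, and regular (again by \cite{voevodsky2008unstable}), so the theorem identifies ``MV-property for all Nisnevich squares'' with ``satisfies Nisnevich descent.'' Substituting this equivalence into the statement obtained in the previous step yields exactly the corollary: scdh-descent is equivalent to Nisnevich descent plus the MV-property for all smooth blow-up squares.

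There is no real obstacle here; the corollary is essentially a bookkeeping consequence of the two big descent theorems together with the observation that the generating cd-structure of the scdh-topology splits as a disjoint union of two sub-cd-structures. The only subtle point worth mentioning is the implicit reliance on the fact that the combined cd-structure on Sm/$k$ is itself complete, bounded, and regular (not merely that each summand is), so that theorem \ref{mvweak} can be applied to $P$ as a whole; this is precisely what the lemma cited from \cite{voevodsky2008unstable} provides, and it is where all the technical work has already been done.
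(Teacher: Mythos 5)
Your proof is correct and matches the paper's intent exactly: the paper states this as an immediate corollary of theorems \ref{mvweak} and \ref{mvchain} together with the fact that the combined cd-structure on Sm/$k$ generating the scdh-topology is precisely the union of the Nisnevich squares and the smooth blow-up squares, and is complete, bounded, and regular. Your unpacking of that reasoning, including the second application of the theorem to translate the Nisnevich half into Nisnevich descent, is the intended argument.
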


For our sheaves of concern, we will begin by determining which of these sheaves satisfy scdh-descent.  
\begin{theorem}
The presheaves of spectra (or complexes) $\textbf{HC}(-/k)$ and $\textbf{HN}(-/k)$, defining cyclic homology and negative cyclic homology, satisfy scdh-descent.
\end{theorem}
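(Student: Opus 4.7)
The plan is to apply the corollary immediately preceding the theorem, which reduces scdh-descent to verifying Nisnevich descent together with the Mayer--Vietoris property for smooth blow-up squares. I would carry this out separately for each of $\textbf{HC}(-/k)$ and $\textbf{HN}(-/k)$, and in both cases I would begin by passing to the underlying Hochschild complex, since the other cyclic theories are built from it via the bicomplex and SBI constructions from the preceding subsection.

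First, I would verify Nisnevich descent. The key input is that Hochschild homology is invariant under étale base change: for an étale map $V \to X$ the canonical map $HH(V/k) \to HH(X/k) \otimes_{\mathcal{O}_X} \mathcal{O}_V$ is a quasi-isomorphism. Given an elementary distinguished Nisnevich square with $U \hookrightarrow X$ an open embedding and $V \to X$ étale that is an isomorphism over $X \setminus U$, étale invariance combined with standard Zariski descent for the Hochschild complex (a local-to-global spectral sequence argument in the spirit of Weibel--Geller and Keller) yields the MV-property for Hochschild homology on the square. Passing from Hochschild to cyclic and negative cyclic homology via the spectral sequences associated to the totalizations of $CC$ and $CCN$, and then invoking the five-lemma on the associated Mayer--Vietoris sequences, gives Nisnevich descent for $\textbf{HC}(-/k)$ and $\textbf{HN}(-/k)$.

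Next, I would tackle the smooth blow-up case. Here the critical input is the projective bundle formula for Hochschild and cyclic homology: for a rank-$(r{+}1)$ locally free sheaf $\mathcal{E}$ on a smooth $S$, one has $HH_n(\mathbb{P}(\mathcal{E})/k) \cong \bigoplus_{i=0}^{r} HH_{n-2i}(S/k)$, with analogous formulas for $HC$ and $HN$. Given a smooth blow-up square with smooth center $Z \subset X$, smooth total space $Y = \mathrm{Bl}_Z X$, and exceptional fiber $E = \mathbb{P}(\mathcal{N}_{Z/X})$, one combines this decomposition with a careful analysis of $R\pi_* \mathcal{O}_Y \simeq \mathcal{O}_X$ and of the higher direct images of the sheaves of relative differentials along $\pi : Y \to X$. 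The outcome is an explicit splitting exhibiting $HH_*(Y/k)$ as the pullback of $HH_*(X/k)$ and $HH_*(E/k)$ over $HH_*(Z/k)$, which is precisely the MV-property for the square; the corresponding statements for $\textbf{HC}(-/k)$ and $\textbf{HN}(-/k)$ then follow by another five-lemma argument on the natural spectral sequences passing from $HH$ to these theories.

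The hard step is the smooth blow-up calculation, because it is the one that genuinely depends on global algebro-geometric input (the projective bundle formula together with the explicit derived push-forward along $\pi$), rather than on formal manipulations of the bicomplex definitions. Full technical details for both steps are carried out in \cite{haesemeyer2004descent} and \cite{cortinas2005cyclic}, where these verifications are performed in exactly the generality required here; the expected proof is to cite those references after observing that the reduction to the Hochschild projective bundle formula is what does all the work.
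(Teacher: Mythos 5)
Your proposal takes essentially the same route as the paper: the paper's proof simply invokes the preceding corollary to reduce scdh-descent to Nisnevich descent plus the Mayer--Vietoris property for smooth blow-up squares, and then cites \cite{weibel1991etale} (4.2.1 and 4.8) for the former and \cite{cortinas2005cyclic} (Theorem 2.10) for the latter, which is exactly the reduction and the pair of references you land on. One small correction to your sketch of the blow-up step: the projective bundle formula for Hochschild homology of a smooth scheme carries no degree shift, i.e.\ $HH_n(\mathbb{P}(\mathcal{E})/k)\cong\bigoplus_{i=0}^{r}HH_{n}(S/k)$ rather than $\bigoplus_{i=0}^{r}HH_{n-2i}(S/k)$ (as one sees from the HKR decomposition together with $H^p(\mathbb{P}(\mathcal{E}),\Omega^q)\cong\bigoplus_i H^{p-i}(S,\Omega^{q-i})$), but since you defer the details to the cited references this does not affect the validity of the argument.
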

\begin{proof}
The proof that these satisfy Nisnevich descent is contained in \cite{weibel1991etale} 4.2.1, and 4.8.  These satisfy the MV-property for smooth blowups by \cite{cortinas2005cyclic} theorem 2.10.  
\end{proof}
The presheaf defining periodic cyclic homology also satisfies scdh-descent, but it also has stronger descent properties, so we will save this result for later. 

\begin{theorem}
The presheaf of spectra defining K-theory, $\textbf{K}$, satisfies scdh-descent.  
\end{theorem}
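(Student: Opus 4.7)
The plan is to invoke the corollary stated immediately before this theorem: a presheaf of spectra on $(\text{Sm}/k,\text{scdh})$ satisfies scdh-descent if and only if it satisfies Nisnevich descent and the MV-property for smooth blowup squares. Thus proving the claim reduces to verifying these two conditions separately for $\textbf{K}$.

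First, I would verify Nisnevich descent. Given an elementary Nisnevich square consisting of an open embedding $U \to X$ and an étale morphism $V \to X$ that is an isomorphism over $X \setminus U$, I need to show that the resulting square of K-theory spectra is homotopy Cartesian. This is essentially the content of Thomason--Trobaugh's Mayer--Vietoris theorem for K-theory of quasi-compact quasi-separated schemes. The key input is the localization sequence
\[ \textbf{K}(X \text{ on } X \setminus U) \to \textbf{K}(X) \to \textbf{K}(U) \]
together with the invariance of K-theory with supports under étale excision: the pullback along $V \to X$ induces a weak equivalence $\textbf{K}(X \text{ on } X \setminus U) \simeq \textbf{K}(V \text{ on } V \setminus (U \times_X V))$. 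I would cite this directly from Thomason--Trobaugh, since on smooth schemes the regularity hypotheses are automatically satisfied.

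Second, I would verify the MV-property for smooth blowup squares. Let $Y \to X$ be the blowup of a smooth scheme $X$ along a smooth center $Z$, with exceptional fiber $E$. The critical observation is that $E \to Z$ is the projectivization $\mathbb{P}(\mathcal{N}_{Z/X}) \to Z$ of the normal bundle, so Thomason's projective bundle theorem identifies $\textbf{K}(E)$ with a direct sum of $\operatorname{codim}(Z,X)$ copies of $\textbf{K}(Z)$. Combined with Thomason's blowup formula expressing $\textbf{K}(Y)$ as $\textbf{K}(X)$ together with $\operatorname{codim}(Z,X)-1$ extra copies of $\textbf{K}(Z)$, a direct comparison of these decompositions shows that the square
\[
\begin{CD}
\textbf{K}(X) @>>> \textbf{K}(Y)\\
@VVV @VVV\\
\textbf{K}(Z) @>>> \textbf{K}(E)
\end{CD}
\]
is homotopy Cartesian, which is precisely the MV-property for this square.

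The main obstacle is the second step. Unlike cyclic homology, K-theory of a blowup of a singular scheme does not satisfy Mayer--Vietoris in general, which is exactly why homotopy K-theory $\textbf{KH}$ was introduced (and only $\textbf{KH}$ will satisfy full cdh-descent on Sch/$k$). The argument here goes through only because smoothness of $Z$ and $X$ guarantees that $\mathcal{N}_{Z/X}$ is a genuine vector bundle, so that the projective bundle formula applies and one has clean regularity-based blowup formulas available. Care must also be taken that the Thomason blowup formula is formulated spectrally (not just at the level of $K_0$ or individual $K_n$), so that the comparison of decompositions gives a weak equivalence of spectra rather than just an isomorphism of K-groups.
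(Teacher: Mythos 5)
Your proposal is correct and follows essentially the same route as the paper: reduce scdh-descent to Nisnevich descent plus the MV-property for smooth blowup squares, then verify the former via Thomason--Trobaugh (the paper cites \cite{thomason2007higher}, 10.8) and the latter via the blowup and projective bundle formulas (the paper cites \cite{cortinas2005cyclic}, 1.6, whose proof is exactly the Thomason blowup-formula comparison you sketch). You have simply unpacked the two citations that constitute the paper's proof.
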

\begin{proof}
$\textbf{K}$ satisfies Nisnevich descent by \cite{thomason2007higher}, 10.8. And $\textbf{K}$ satisfies the MV-property for smooth blowups by \cite{cortinas2005cyclic}, 1.6.  
\end{proof}

While these sheaves that do not satisfy cdh-descent don't have as nice of properties as those that do, they still fit in the whole picture in some nice ways.  For a presheaf $A$ on Sch/$k$ we will define $rA$ to be it's restriction to the subcategory Sm/$k$.  We have the following result the relates maps of presheaves to maps of their restrictions.  
\begin{proposition}\label{scdhtocdh}
Let $\mathcal{E}$ be a presheaf of spectra on Sch/$k$.  Then, $\mathbb{H}_{scdh}(-,\mathcal{E})=r\mathbb{H}_{cdh}(-,\mathcal{E})$.  In particular if $\mathcal{E}$ satisfies scdh-descent, then $\mathcal{E}(X)\rightarrow\mathbb{H}_{cdh}(X,\mathcal{E})$ is a weak equivalence for any smooth $X(\in$ Sm$/k)$.  
\end{proposition}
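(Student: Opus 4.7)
The plan is to prove the identification by invoking uniqueness of fibrant replacement: I will show that $r\mathbb{H}_{cdh}(-,\mathcal{E})$, viewed as a presheaf on Sm/$k$, serves as a scdh-fibrant (or at least quasifibrant) replacement of $r\mathcal{E}$. The two properties I need to verify are (i) it satisfies scdh-descent, and (ii) the restriction of the canonical map $\mathcal{E}\to\mathbb{H}_{cdh}(-,\mathcal{E})$ is a scdh-local weak equivalence. These two facts, combined with the preceding proposition that the scdh-topology on Sm/$k$ is the restriction of the cdh-topology, determine the fibrant replacement up to equivalence.

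For (i), I use Theorem \ref{mvweak}. By construction, $\mathbb{H}_{cdh}(-,\mathcal{E})$ is cdh-fibrant and therefore satisfies the MV-property for every Nisnevich square and every abstract blowup square on Sch/$k$. Restricting along the inclusion Sm/$k$ $\hookrightarrow$ Sch/$k$, the restricted presheaf in particular satisfies the MV-property for every elementary Nisnevich square of smooth schemes and every blowup of a smooth scheme along a smooth center, since these are special cases of the squares on which MV already holds. Since the combined cd-structure generating the scdh-topology consists exactly of these squares, Theorem \ref{mvweak} applied on (Sm/$k$, scdh) gives scdh-descent for $r\mathbb{H}_{cdh}(-,\mathcal{E})$.

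For (ii), the map $\mathcal{E}\to\mathbb{H}_{cdh}(-,\mathcal{E})$ is by definition a local weak equivalence in the local injective model structure on presheaves on (Sch/$k$, cdh), i.e.\ it induces an isomorphism on cdh-sheaves of stable homotopy groups. Because the scdh-topology is the restriction of the cdh-topology to Sm/$k$, cdh-local isomorphisms restrict to scdh-local isomorphisms, so $r\mathcal{E}\to r\mathbb{H}_{cdh}(-,\mathcal{E})$ is a scdh-local weak equivalence. Combined with (i), uniqueness of fibrant replacement in the local injective model structure on (Sm/$k$, scdh) yields the desired equivalence $\mathbb{H}_{scdh}(-,\mathcal{E})\simeq r\mathbb{H}_{cdh}(-,\mathcal{E})$. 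The ``in particular'' statement then follows by unwinding definitions: if $\mathcal{E}$ satisfies scdh-descent then the map $\mathcal{E}(X)\to\mathbb{H}_{scdh}(X,\mathcal{E})$ is a global weak equivalence for every smooth $X$, and composing with the identification just established gives that $\mathcal{E}(X)\to\mathbb{H}_{cdh}(X,\mathcal{E})$ is a weak equivalence.

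The main obstacle is the comparison of local weak equivalences in step (ii), which rests on the compatibility between points/stalks of the two sites. The cleanest route is to invoke the proposition already in the excerpt identifying scdh as the restriction of cdh, so that sheafification and sheaves of homotopy groups for the two topologies agree on smooth objects. Once one commits to this formal framework, the argument is a direct application of Theorem \ref{mvweak} and the uniqueness-up-to-equivalence of fibrant replacements; the only care required is to distinguish between the presheaf $\mathcal{E}$ on Sch/$k$ and its restriction $r\mathcal{E}$ in order to correctly interpret the notation $\mathbb{H}_{scdh}(-,\mathcal{E})$.
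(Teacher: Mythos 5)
Your overall strategy --- exhibit $r\mathbb{H}_{cdh}(-,\mathcal{E})$ as a quasifibrant model for the scdh-fibrant replacement of $r\mathcal{E}$ by checking (i) scdh-descent via the MV-property and (ii) that $r\mathcal{E}\to r\mathbb{H}_{cdh}(-,\mathcal{E})$ is an scdh-local weak equivalence --- is exactly the argument of the sources the paper cites (the discussion before Theorem 2.4 of Corti\~nas--Haesemeyer--Weibel and the first part of the proof of Theorem 3.12 of Corti\~nas--Haesemeyer--Schlichting--Weibel), so the route is the right one. Step (i) is fine: smooth Nisnevich squares and smooth blowups along smooth centers are particular Nisnevich and abstract blowup squares, so the MV-property is inherited by restriction and Theorem \ref{mvweak} applies.

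The gap is in step (ii). The assertion that ``cdh-local isomorphisms restrict to scdh-local isomorphisms'' does \emph{not} follow formally from the proposition that the scdh-topology is the restriction of the cdh-topology. For a general subcategory with the induced topology, restriction of presheaves does not commute with sheafification, and a local weak equivalence can fail to restrict to one (e.g.\ a presheaf whose $t$-sheafification vanishes can have nonzero sheafification on a subsite with too few covers). What makes the claim true here is that Sm/$k$ is a \emph{dense} subsite of $(\mathrm{Sch}/k,cdh)$: by Hironaka's resolution of singularities (hence only in characteristic zero), every scheme admits a cdh-cover by smooth schemes, so the comparison lemma identifies the two topoi and sheaves of stable homotopy groups agree on smooth objects. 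This is precisely the content of CHSW Lemma 3.11, recorded in this paper as Proposition \ref{cdhscdh}; your proof needs to invoke that density argument (or cite that lemma) rather than the bare restriction-of-topology statement. You correctly flag the comparison of local weak equivalences as the main obstacle, but the justification you give for it is insufficient as written.
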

\begin{proof}
See the argument before theorem 2.4 in \cite{cortinas2008k}, and the first part of the proof of theorem 3.12 in \cite{cortinas2005cyclic}.  
\end{proof}

Summarizing things up to this point, for smooth schemes, we have that cyclic homology, negative cyclic homology and K-theory each satisfy scdh-descent.  By the previous result, this means that when we take the cdh-cohomology versions of each of these theories, at least on smooth schemes we gain nothing new.  By this we mean the following.  
\begin{corollary}
For a smooth scheme $X$ over a field $k$ of characteristic zero,
\begin{align*}
HC_n(X/k)&=\mathbb{H}^{-n}_{cdh}(X,\textbf{HC}(-/k))\\
HN_n(X/k)&=\mathbb{H}^{-n}_{cdh}(X,\textbf{HN}(-/k))\\
K_n(X)&=\mathbb{H}^{-n}_{cdh}(X,\textbf{K})
\end{align*}
for all $n$.
\end{corollary}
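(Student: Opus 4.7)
The plan is to combine the three scdh-descent results already established in this section with Proposition \ref{scdhtocdh}, and then pass to homotopy groups.

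First, I would recall from the two theorems just proved that each of the presheaves of spectra (or complexes) $\textbf{HC}(-/k)$, $\textbf{HN}(-/k)$ and $\textbf{K}$ satisfies scdh-descent on $\mathrm{Sm}/k$: this is exactly what the two ``scdh-descent'' theorems of this subsection assert. In particular, each of these presheaves is quasifibrant for the scdh-topology. By Proposition \ref{scdhtocdh}, for any smooth scheme $X \in \mathrm{Sm}/k$ the natural comparison map
\[
\mathcal{E}(X) \longrightarrow \mathbb{H}_{cdh}(X,\mathcal{E})
\]
is a (global) weak equivalence whenever $\mathcal{E}$ satisfies scdh-descent. Applying this with $\mathcal{E} = \textbf{HC}(-/k),\ \textbf{HN}(-/k),\ \textbf{K}$ yields weak equivalences of spectra
\[
\textbf{HC}(X/k) \xrightarrow{\ \sim\ } \mathbb{H}_{cdh}(X,\textbf{HC}(-/k)),\quad
\textbf{HN}(X/k) \xrightarrow{\ \sim\ } \mathbb{H}_{cdh}(X,\textbf{HN}(-/k)),\quad
\textbf{K}(X) \xrightarrow{\ \sim\ } \mathbb{H}_{cdh}(X,\textbf{K}).
\]

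Next I would take homotopy groups (for the complex-valued presheaves, homology of the total complex, which corresponds to the homotopy of the associated Eilenberg--MacLane spectrum as discussed earlier in the section). By definition of cdh-hypercohomology, $\mathbb{H}^{-n}_{cdh}(X,\mathcal{E}) = \pi_n(\mathbb{H}_{cdh}(X,\mathcal{E}))$, while $\pi_n(\textbf{HC}(X/k)) = HC_n(X/k)$, $\pi_n(\textbf{HN}(X/k)) = HN_n(X/k)$, and $\pi_n(\textbf{K}(X)) = K_n(X)$. Since weak equivalences of spectra induce isomorphisms on all homotopy groups, the three equalities
\[
HC_n(X/k) = \mathbb{H}^{-n}_{cdh}(X,\textbf{HC}(-/k)), \quad
HN_n(X/k) = \mathbb{H}^{-n}_{cdh}(X,\textbf{HN}(-/k)), \quad
K_n(X) = \mathbb{H}^{-n}_{cdh}(X,\textbf{K})
\]
follow immediately for all $n \in \mathbb{Z}$.

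There is essentially no obstacle here, since all of the work has already been done: the scdh-descent for $\textbf{HC}$, $\textbf{HN}$ and $\textbf{K}$ is cited from Weibel, Thomason, and Corti\~nas--Haesemeyer--Schlichting--Weibel, and Proposition \ref{scdhtocdh} packages the comparison between scdh- and cdh-fibrant replacements on smooth schemes. The only mild subtlety to flag is the need for $X$ to be smooth: without smoothness the restriction $r\mathbb{H}_{cdh}(-,\mathcal{E}) = \mathbb{H}_{scdh}(-,\mathcal{E})$ is not available at $X$ and the comparison map $\mathcal{E}(X) \to \mathbb{H}_{cdh}(X,\mathcal{E})$ will in general fail to be a weak equivalence (indeed, that failure is precisely what makes the cdh-fibrant groups nontrivial in the singular setting, which is the entire motivation for the rest of the paper).
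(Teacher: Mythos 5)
Your proposal is correct and follows exactly the route the paper intends: the corollary is stated immediately after the two scdh-descent theorems for $\textbf{HC}(-/k)$, $\textbf{HN}(-/k)$, $\textbf{K}$ and Proposition \ref{scdhtocdh}, and the paper's (implicit) justification is precisely the combination you give — scdh-descent plus the identification $\mathbb{H}_{scdh}(-,\mathcal{E})=r\mathbb{H}_{cdh}(-,\mathcal{E})$ yields a weak equivalence $\mathcal{E}(X)\to\mathbb{H}_{cdh}(X,\mathcal{E})$ for smooth $X$, and passing to homotopy groups gives the three isomorphisms. Your closing remark about why smoothness is essential is also consistent with the paper's later discussion of the fibers $\mathcal{F}_{\mathcal{E}}$.
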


Now we move on to cdh-descent.  We will primarily use the following result in order to determine which among our sheaves satisfy cdh-descent.  

\begin{theorem}\label{cdh}
Let $\mathcal{E}$ be a presheaf of spectra on the site (Sch/$k$,cdh).  Suppose $\mathcal{E}$ satisfies excision, is invariant under infinitesimal extension, satisfies Nisnevich descent and satisfies the MV-property for every blowup along a regular sequence.  Then $\mathcal{E}$ satisfies cdh-descent.  
\end{theorem}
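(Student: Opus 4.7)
The plan is to invoke Theorem \ref{mvweak}, which characterizes descent for a topology generated by a complete bounded regular cd-structure in terms of the MV-property. An earlier lemma stated that the combined cd-structure on Sch/$k$, consisting of Nisnevich squares and abstract blowup squares, has exactly these properties, so cdh-descent for $\mathcal{E}$ is equivalent to the MV-property holding for every square of both kinds. Nisnevich descent is already a hypothesis, so the entire content of the theorem reduces to verifying the MV-property for an arbitrary abstract blowup square.

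Fix an abstract blowup square with closed embedding $\iota\colon Z\hookrightarrow X$ and proper map $p\colon Y\to X$ such that $(Y\setminus E)^{red}\to(X\setminus Z)^{red}$ is an isomorphism. I would first apply the hypothesis of invariance under infinitesimal extensions to replace $Z$ by $Z^{red}$ (and simultaneously $E$ by $E^{red}$), which shows that the MV-property for the original square is equivalent to the MV-property for its reduction. Then I would use excision to move the problem off of $X$ and onto an ideal-theoretic model of the center: excision identifies the relative cofiber of $\mathcal{E}(X)\to\mathcal{E}(Z)$ with that of $\mathcal{E}(Y)\to\mathcal{E}(E)$ whenever the complements agree scheme-theoretically, which is built into the definition of an abstract blowup square once we pass to reduced structures.

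The geometric core of the argument is then to reduce to the hypothesis on regular-sequence blowups. Using Hironaka's theorem (or Noetherian induction on $\dim X$), I would dominate the given proper birational $Y\to X$ by a finite tower $X_n\to\cdots\to X_1\to X_0=X$ of blowups along smooth, hence regular, centers, together with resolutions of the centers $Z_i$ themselves. Each stage of this tower is a blowup along a regular sequence, for which the MV-property is assumed. Propagating the MV-property up the tower by two-out-of-three for homotopy Cartesian squares, and handling the comparison between the tower and the original square via excision and the identification of complements, would establish the MV-property for the original abstract blowup square. The inductive hypothesis feeds in because $Z$ and $E$ have strictly smaller dimension than $X$, with the zero-dimensional base case being immediate.

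The hard step will be the passage from blowups along regular sequences to arbitrary abstract blowup squares with possibly singular $Y$, $Z$, and $E$. The subtlety is that the four hypotheses must be used together and in the correct order: infinitesimal invariance to justify replacing ideal-theoretic data by reduced data, excision to localize the MV comparison to a neighborhood of the center, regular-sequence MV for each stage of the resolution tower, and Nisnevich descent to glue local data back together. Ensuring that none of these reductions is circular, and that the Noetherian induction closes up without requiring the conclusion for an earlier case that itself depends on the general statement, will be the principal bookkeeping obstacle.
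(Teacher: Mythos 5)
The paper does not actually prove this theorem; its ``proof'' is a citation to \cite{cortinas2005cyclic}, Theorem 3.12. Your opening reduction is exactly right and matches the setup there: by Theorem \ref{mvweak} and the completeness/boundedness/regularity of the combined cd-structure, cdh-descent is equivalent to the MV-property for Nisnevich squares (a hypothesis) together with the MV-property for every abstract blowup square, and the latter is attacked by Noetherian induction on dimension using all four hypotheses. So the broad strategy is the correct reconstruction of the cited argument.

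The genuine gap is in the step you yourself flag as hard, and the mechanism you propose for it is not the one that works. Dominating the proper birational map $Y\to X$ by a Hironaka tower of blowups along smooth centers and ``propagating by two-out-of-three'' does not directly give the MV-property for the original square: the tower dominates $Y$ rather than factoring it, the smooth centers of the resolution are centers in the successive $X_i$ rather than subschemes of $X$, and a smooth center is only \emph{locally} cut out by a regular sequence, so the regular-sequence hypothesis does not apply to it verbatim. The actual proof first uses Raynaud--Gruson platification (``platification par \'eclatement'') to replace the arbitrary proper map in the abstract blowup square by the honest blowup of a finitely generated ideal $I=(f_1,\dots,f_r)$ on an affine piece; it then introduces an auxiliary smooth affine scheme (a polynomial extension) carrying a regular sequence mapping onto the $f_i$, applies the regular-sequence MV hypothesis upstairs, and transports the conclusion down via excision for the surjection of rings --- this is the concrete role of excision, which is quite different from the cofiber identification you describe. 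Infinitesimal invariance then handles the discrepancy between the ideal-theoretic and reduced structures, and Nisnevich descent glues the affine-local statements. Without the platification step and the explicit excision comparison against a regular sequence in an auxiliary smooth model, the induction does not close, so as written the proposal is a correct plan with its central step unfilled.
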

\begin{proof}
See \cite{cortinas2005cyclic} theorem 3.12. 
\end{proof}

The first example of this, is one that we referenced back in section 2.  We have the following result for periodic cyclic homology.  
\begin{theorem}
Let $k$ be a field of characteristic zero.  For each subfield $l\subseteq k$ the presheaf $\textbf{HP}(-/l)$ satisfies cdh-descent on Sch/$k$.
\end{theorem}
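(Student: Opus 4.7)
The plan is to apply Theorem \ref{cdh} to the presheaf $\textbf{HP}(-/l)$, which reduces the problem to verifying four separate properties: Nisnevich descent, excision, invariance under infinitesimal (nilpotent) extensions, and the MV-property for every blowup along a regular sequence. So the strategy is to work through these four hypotheses one at a time, taking advantage wherever possible of the corresponding facts already established for $\textbf{HC}(-/l)$ and $\textbf{HN}(-/l)$ together with the SBI relationship between the three theories.

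First I would handle the three easier properties. For Nisnevich descent, the key is the SBI fibration sequence of presheaves of spectra $\textbf{HN}(-/l) \to \textbf{HP}(-/l) \to \Sigma^{2}\textbf{HC}(-/l)$; since the outer two presheaves satisfy Nisnevich descent (this is part of the scdh-descent statement already cited from \cite{weibel1991etale}, and the Nisnevich half of the argument works verbatim on all of Sch/$k$), a standard two-out-of-three argument in the Nisnevich-local model structure forces Nisnevich descent on the middle term. For excision, the analogous Cuntz--Quillen style excision for $HN$ and $HC$ transfers through SBI to give excision for $HP$. For nilinvariance, the point is Goodwillie's theorem: in characteristic zero, a nilpotent extension $R \to R/I$ induces an isomorphism $HP_*(R/l) \xrightarrow{\cong} HP_*((R/I)/l)$, and since our base field $k$ (and thus $l$) sits inside characteristic zero, this applies uniformly over Sch/$k$.

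The hard part, and where I expect the real work to live, is verifying the MV-property for blowups along regular sequences. Note that the analogous fact for $\textbf{HC}$ and $\textbf{HN}$ is only known for \emph{smooth} blowup squares; here we must allow ambient schemes that may be singular, with only the embedding $Z \hookrightarrow X$ required to be regular. My plan would be to pass to the affine local situation $X = \mathrm{Spec}(R)$ with $Z = \mathrm{Spec}(R/I)$ for $I$ generated by a regular sequence, and use the comparison of $HP$ with (completed) algebraic de Rham cohomology afforded by Proposition \ref{hpderham} together with the nilinvariance and excision just verified. Concretely, nilinvariance lets one replace $R$ by its completion $\widehat{R}_I$ without changing $HP$, and after completion the regularity of the sequence makes the blowup geometry as tame as in the smooth case; one then reduces the MV-property to the (well-known) blowup formula for de Rham cohomology along a regular embedding.

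Finally, once all four hypotheses of Theorem \ref{cdh} have been verified, the conclusion that $\textbf{HP}(-/l)$ satisfies cdh-descent is immediate. The one subtlety worth flagging is the dependence on the subfield $l$: because Goodwillie's theorem, excision, and the de Rham blowup formula all require only $\mathbb{Q} \subseteq l$, and because $l \subseteq k$ with $k$ of characteristic zero automatically guarantees this, the statement holds uniformly for every such $l$.
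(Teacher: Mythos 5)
Your overall strategy --- verifying the four hypotheses of Theorem \ref{cdh} one by one --- is a legitimate route and is in fact essentially how the cited literature establishes this result; the paper itself does none of this work and simply invokes \cite{cortinas2005cyclic} (Corollary 3.13, for $l=\mathbb{Q}$) and \cite{cortinas2008k} (Theorem 1.2, for general $l$), remarking only that the underlying reason is the identification of $HP$ with crystalline/infinitesimal cohomology from \cite{feigin1985additive}. So a self-contained verification would be a genuinely different (and more informative) presentation. However, as written your argument has a real gap in the excision step.

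The problem is your claim that ``the analogous Cuntz--Quillen style excision for $HN$ and $HC$ transfers through SBI to give excision for $HP$.'' Neither $HC$ nor $HN$ satisfies excision for general ideals --- by Wodzicki's work, excision in Hochschild and cyclic homology holds only for H-unital ideals, and the failure of excision for $HC$ and $HN$ is precisely parallel to the failure of excision for $K$-theory. So there is nothing to transfer: the SBI cofiber sequence $\textbf{HN}\to\textbf{HP}\to\Sigma^{2}\textbf{HC}$ cannot produce excision for the middle-to-outer terms when the outer term fails it. Excision for $\textbf{HP}$ over a field containing $\mathbb{Q}$ is the Cuntz--Quillen excision theorem, a deep result that must be invoked directly rather than deduced from $HC$ and $HN$. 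Relatedly, your treatment of the MV-property for blowups along regular sequences is both harder than necessary and under-justified: the references the paper already uses (\cite{cortinas2005cyclic}, Theorem 2.10) establish the MV-property for $\textbf{HC}$ and $\textbf{HN}$ for blowups along \emph{regularly embedded} centers, not merely for smooth blowup squares, so $\textbf{HP}$ inherits this property by the two-out-of-three argument in the SBI sequence --- no completion argument or ``de Rham blowup formula for a regular embedding in a singular ambient scheme'' (which you assert as well known but do not substantiate) is needed. With the excision step replaced by a direct appeal to Cuntz--Quillen and the MV step simplified as above, your outline becomes a correct proof.
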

\begin{proof}
The main idea of this proof is to use the fact that periodic cyclic homology coincides with crystalline cohomology \cite{feigin1985additive}, and this cohomology has the desired properties. For $k=\mathbb{Q}$ see \cite{cortinas2005cyclic} corollary 3.13.  For the general case, see \cite{cortinas2008k} theorem 1.2.
\end{proof}

\begin{theorem}
The presheaf of spectra $\textbf{KH}$ satisfies cdh-descent on Sch/$k$.  
\end{theorem}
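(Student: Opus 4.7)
The plan is to apply Theorem \ref{cdh}, which reduces proving cdh-descent to verifying four conditions on $\textbf{KH}$: excision, invariance under infinitesimal extension, Nisnevich descent, and the MV-property for every blowup along a regular sequence. Since this is exactly the toolkit assembled in the preceding subsection, the proof should amount to a systematic checklist.

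First, I would dispatch the easy ingredients. Excision for $\textbf{KH}$ is precisely the content of the theorem stated earlier in this section (from Weibel IV.12.4). Invariance under infinitesimal extension is an immediate consequence of the corollary that $KH_n(I) = 0$ for nilpotent ideals $I$: given a square-zero extension $\tilde R \twoheadrightarrow R$ with kernel $I$, the $\textbf{KH}$-fiber sequence $\textbf{KH}(I) \to \textbf{KH}(\tilde R) \to \textbf{KH}(R)$ combined with the vanishing of $\textbf{KH}(I)$ forces $\textbf{KH}(\tilde R) \simeq \textbf{KH}(R)$. Nisnevich descent should then be deduced from Nisnevich descent for ordinary $\textbf{K}$ (the Thomason result invoked earlier): since $\textbf{KH}(R) = |\textbf{K}(R[\Delta^{\bullet}])|$ is the geometric realization of a simplicial spectrum built degreewise from $\textbf{K}$, and Nisnevich squares are preserved by tensoring with the polynomial ring $-[\Delta^n]$, the homotopy Cartesian property for $\textbf{K}$ on a Nisnevich square passes to each simplicial level and is preserved by geometric realization.

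The main obstacle, as I see it, is the fourth condition: the MV-property for blowups along a regular sequence, which is genuinely stronger than the smooth-blowup MV-property already established for $\textbf{K}$. A blowup along a regular sequence inside a (possibly singular) ambient scheme falls outside the scope of the smooth-blowup result from \cite{cortinas2005cyclic}. The strategy I would pursue is to again leverage the simplicial definition $\textbf{KH}(R) = |\textbf{K}(R[\Delta^{\bullet}])|$, reducing the question to producing a Mayer--Vietoris square at each simplicial degree. The relevant input is Thomason--Trobaugh-style projective bundle and blowup formulas for $\textbf{K}$ of regular immersions, which remain valid without smoothness hypotheses on the ambient scheme precisely because the center is cut out by a regular sequence. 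Pulling this back along $R \mapsto R[\Delta^n]$ preserves the regular sequence property, so the MV-square for $\textbf{K}$ on each simplicial level assembles, upon geometric realization, into the desired MV-square for $\textbf{KH}$.

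With all four hypotheses of Theorem \ref{cdh} verified, the conclusion that $\textbf{KH}$ satisfies cdh-descent is immediate. I would expect the write-up to lean on \cite{cortinas2005cyclic} Theorem 6.4 or \cite{haesemeyer2004descent} for the packaging of the regular-sequence MV-property, rather than reproducing those arguments in detail here.
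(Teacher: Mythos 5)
Your proposal is correct, but it takes the opposite route from the paper. The paper does not verify the hypotheses of Theorem \ref{cdh} at all: it simply observes that the $\textbf{KH}$ case is historically the \emph{prototype} for that theorem --- Haesemeyer proved cdh-descent for $\textbf{KH}$ directly (\cite{haesemeyer2004descent}, Theorem 6.4) before the general criterion was abstracted from his argument --- and cites that result. Your approach instead runs the general machine forward, which is logically unobjectionable (Theorem \ref{cdh} is proved in \cite{cortinas2005cyclic} independently of the presheaf it is applied to) and has the virtue of making explicit exactly which properties of $\textbf{KH}$ are being used. Your verifications are essentially right: excision is Weibel IV.12.4, infinitesimal invariance follows from $\textbf{KH}(I)\simeq *$ for nilpotent $I$ exactly as you say, and the levelwise argument for passing properties of $\textbf{K}$ through $|\textbf{K}(R[\Delta^{\bullet}])|$ works because $R\to R[\Delta^n]$ is flat (so blowup and Nisnevich squares pull back to squares of the same type) and geometric realization preserves homotopy Cartesian squares of spectra. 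One small correction: the MV-property for $\textbf{K}$ on blowups along regular sequences is not outside the scope of \cite{cortinas2005cyclic} --- their Theorem 1.6 is stated precisely for regularly embedded centers (it is the result this paper invokes for the same hypothesis in the $\textbf{K}^{inf}$ case), so you may cite it directly rather than re-deriving it from the Thomason--Trobaugh blowup formula, though that formula is indeed the underlying input.
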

\begin{proof}
This example is actually the basis for the proof of \ref{cdh}.  This was done by Haesemeyer before the general case was fully worked out.  This proof can be found in \cite{haesemeyer2004descent} theorem 6.4.
\end{proof}
This result yields some very nice properties.  In particular, one of the main properties that we will use later is the following.  
\begin{corollary}
For a scheme $X$ over a field $k$ of characteristic zero, Homotopy K-theory is cdh-fibrant K-theory, meaning that $\mathbb{H}_{cdh}(X,\textbf{K})\rightarrow \textbf{KH}(X)$ is a weak equivalence.
\end{corollary}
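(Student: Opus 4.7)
The plan is to leverage the cdh-descent property of $\textbf{KH}$ (just established in the preceding theorem) together with the agreement $\textbf{K}\simeq\textbf{KH}$ on smooth schemes (Proposition \ref{kh=k}), and then push this identification through the cdh-fibrant replacement.

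First I would use the natural comparison map of presheaves of spectra $\textbf{K}\to\textbf{KH}$, coming from the inclusion of $R$ as the constant simplicial ring inside $R[\Delta^\cdot]$. Applying the cdh-fibrant replacement functor $\mathbb{H}_{cdh}(-,-)$ to this map and evaluating at $X$ produces a commutative square
\[
\begin{CD}
\textbf{K}(X) @>>> \textbf{KH}(X) \\
@VVV @VVV \\
\mathbb{H}_{cdh}(X,\textbf{K}) @>>> \mathbb{H}_{cdh}(X,\textbf{KH}).
\end{CD}
\]
Because $\textbf{KH}$ satisfies cdh-descent, the right-hand vertical arrow $\textbf{KH}(X)\to \mathbb{H}_{cdh}(X,\textbf{KH})$ is a weak equivalence. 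Hence it suffices to show that the bottom horizontal arrow $\mathbb{H}_{cdh}(X,\textbf{K})\to\mathbb{H}_{cdh}(X,\textbf{KH})$ is a weak equivalence, since then the composite $\mathbb{H}_{cdh}(X,\textbf{K})\to \textbf{KH}(X)$ from the corollary statement is also a weak equivalence.

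Next I would reduce this to a statement on the smooth subcategory. By Proposition \ref{scdhtocdh}, the cdh-fibrant replacement of a presheaf on Sch/$k$, when restricted to Sm/$k$, agrees with the scdh-fibrant replacement. Moreover, Corollary \ref{notsmoothsmooth}, built on Hironaka's theorem and the Mayer-Vietoris property of corollary \ref{mvsequence}, tells us that the cdh-cohomology of an arbitrary $X\in$ Sch/$k$ is determined (via iterated long exact sequences from abstract blowup squares) by its values on smooth schemes. So to check that $\mathbb{H}_{cdh}(X,\textbf{K})\to \mathbb{H}_{cdh}(X,\textbf{KH})$ is a weak equivalence for every $X\in$ Sch/$k$, it is enough to check it on every smooth $Y\in$ Sm/$k$. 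But on a smooth $Y$, Proposition \ref{scdhtocdh} identifies $\mathbb{H}_{cdh}(Y,\textbf{K})\simeq\mathbb{H}_{scdh}(Y,\textbf{K})\simeq\textbf{K}(Y)$ since $\textbf{K}$ satisfies scdh-descent, and similarly $\mathbb{H}_{cdh}(Y,\textbf{KH})\simeq\textbf{KH}(Y)$. Finally, Proposition \ref{kh=k} gives $\textbf{K}(Y)\simeq\textbf{KH}(Y)$ for any smooth (and therefore regular Noetherian) $Y$, so the bottom arrow is a weak equivalence on all smooth schemes, and hence on all of Sch/$k$.

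The main obstacle, or at least the only nontrivial content, is the reduction from arbitrary schemes to smooth schemes in the third step. Formally this requires that a morphism of presheaves of spectra whose cdh-fibrant replacement is a weak equivalence on every smooth scheme is a weak equivalence on every scheme in Sch/$k$. I would justify this by inducting on dimension using the Mayer-Vietoris long exact sequence from Corollary \ref{mvsequence}: for any singular $X$, pick a resolution $Y\to X$ with center $Z$ and exceptional fiber $E$, where $\dim Z,\dim E<\dim X$, so that by the inductive hypothesis the map is a weak equivalence on $Y$, $Z$, and $E$, and the five-lemma applied to the two Mayer-Vietoris sequences then forces it to be a weak equivalence on $X$. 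Everything else in the argument is essentially a formal bookkeeping exercise with cdh-fibrant replacements once the descent properties from the previous theorems are in hand.
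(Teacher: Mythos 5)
Your proof is correct and follows essentially the same route as the paper, which simply cites Proposition \ref{kh=k}, Proposition \ref{scdhtocdh}, and Corollary \ref{notsmoothsmooth} in one line; you have merely filled in the bookkeeping (the comparison square, the reduction to smooth schemes, and the dimension induction via Mayer--Vietoris and the five lemma) that the paper leaves implicit.
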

\begin{proof}
Since by proposition \ref{kh=k} Homotopy K-theory and K-theory agree on smooth schemes, combining this with proposition \ref{scdhtocdh} and corollary \ref{notsmoothsmooth}, we have the desired result.  
\end{proof}

\begin{theorem}
The presheaf of spectra $\textbf{K}^{inf}$ satisfies cdh-descent on Sch/$k$.  
\end{theorem}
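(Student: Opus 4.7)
The strategy is to verify the hypotheses of Theorem \ref{cdh}: a presheaf on (Sch/$k$,cdh) satisfies cdh-descent as soon as it satisfies excision, is invariant under infinitesimal extensions, satisfies Nisnevich descent, and satisfies the MV-property for every blowup along a regular sequence. So the plan is simply to check each of these four conditions for $\textbf{K}^{\mathrm{inf}}$ in turn.

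Two of the four conditions come essentially for free from the definition of $\textbf{K}^{\mathrm{inf}}$ as the homotopy fiber of $\textbf{K}\to\textbf{HN}$. Taking homotopy fibers commutes with homotopy limits, so any square that becomes homotopy Cartesian after applying both $\textbf{K}$ and $\textbf{HN}$ stays homotopy Cartesian after applying $\textbf{K}^{\mathrm{inf}}$. Since $\textbf{K}$ and $\textbf{HN}$ both satisfy Nisnevich descent (by the results cited earlier this section), $\textbf{K}^{\mathrm{inf}}$ does as well. The same argument handles the MV-property for blowups along regular sequences: both $\textbf{K}$ and $\textbf{HN}$ are known to satisfy it (for smooth blowups by the theorems already stated, and for general regular sequences by the deformation-to-the-normal-cone techniques of Cortiñas--Haesemeyer--Weibel), and hence so does the homotopy fiber.

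The substance of the proof lies in the remaining two properties, which are exactly where $\textbf{K}$ by itself fails. For invariance under infinitesimal extensions, I would invoke Goodwillie's theorem: the relative Jones--Goodwillie Chern character is a rational weak equivalence along a nilpotent ideal $I\subset R$, so the relative infinitesimal K-theory spectrum $\textbf{K}^{\mathrm{inf}}(R,I)$ is contractible, and $\textbf{K}^{\mathrm{inf}}(R)\to\textbf{K}^{\mathrm{inf}}(R/I)$ is a weak equivalence. For excision, the key input is Cortiñas's resolution of (the rational form of) Goodwillie's conjecture: for any $\mathbb{Q}$-algebra $R$ and two-sided ideal $I$, the birelative term comparing relative $K$-theory to relative $HN$ vanishes, which is precisely the statement that the homotopy fiber $\textbf{K}^{\mathrm{inf}}$ of the Chern character is excisive.

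The main obstacle — and the only genuinely deep input — is the excision step, which rests on the birelative theorem of Cortiñas. This is also the reason for the two running hypotheses silently built into the definition of $\textbf{K}^{\mathrm{inf}}$: one must use the Jones--Goodwillie Chern character (as flagged in the remark preceding the definition of $\textbf{K}^{\mathrm{inf}}$), and one must assume $\mathbb{Q}\subset k$ so that the target is $\textbf{HN}(-/\mathbb{Q})$. Once excision and nilpotent invariance are in hand, Nisnevich descent and the regular-sequence MV-property follow formally from the fiber sequence, and Theorem \ref{cdh} gives cdh-descent for $\textbf{K}^{\mathrm{inf}}$.
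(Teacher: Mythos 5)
Your proposal is correct and follows essentially the same route as the paper: both verify the four hypotheses of Theorem \ref{cdh}, citing Corti\~nas's excision theorem for $\textbf{K}^{inf}$, deducing Nisnevich descent and the MV-property for blowups along regular sequences from the corresponding properties of $\textbf{K}$ and $\textbf{HN}$ via the fiber sequence, and using Goodwillie's theorem for invariance under infinitesimal extension. Your added remark that homotopy fibers preserve homotopy Cartesian squares just makes explicit a step the paper leaves implicit.
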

\begin{proof}
Excision is one of the main theorems proved by Corti\~nas in \cite{cortinas2006obstruction}.  $\textbf{K}^{inf}$ satisfies Nisnevich descent because both $\textbf{K}$ (\cite{thomason2007higher},10.8) and $\textbf{HN}$ (\cite{cortinas2005cyclic},2.9) do.  $\textbf{K}^{inf}$ satisfies the MV-property for every blowup along a regular sequence because both $\textbf{K}$ (\cite{cortinas2005cyclic},1.6) and $\textbf{HN}$ (\cite{cortinas2005cyclic},2.10) do. 
 
Finally, if $A$ is a $\mathbb{Q}$-algebra and $I\subset A$ is a nilpotent ideal then $\textbf{K}^{inf}(A,I)$ is contractible.  This is because, by Goodwillie \cite{goodwillie1986relative}, the Chern character induces a weak equivalence of spectra $\textbf{K}(A,I)\simeq\textbf{HN}(A,I)$.  This implies invariance under infinitesimal extension.  

Therefore, since all of the assumptions of theorem \ref{cdh} are satisfied, we know $\textbf{K}^{inf}$ satisfies cdh-descent
\end{proof}

The other properties of these sheaves will be a consequence of the following result.  Recall $r$ restricts presheaves to the subcategory of smooth schemes Sm/$k$.  
\begin{proposition}\label{cdhscdh}
Let $f:A\rightarrow B$ be a map of presheaves of spectra on Sch/$k$.  If $f$ is a global weak equivalence in the cdh-topology, then the map of restrictions $rf:rA\rightarrow rB$ is a global weak equivalence in the scdh-topology.  
\end{proposition}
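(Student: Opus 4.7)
The plan is to deduce this directly from Proposition \ref{scdhtocdh} using only naturality of the fibrant replacement together with the trivial fact that the restriction functor $r$ preserves objectwise weak equivalences. I would interpret the hypothesis that $f$ is a ``global weak equivalence in the cdh-topology'' in the model-theoretic sense already set up earlier in the paper: the induced map on cdh-fibrant replacements,
\[
\mathbb{H}_{cdh}(-,f)\colon \mathbb{H}_{cdh}(-,A)\longrightarrow \mathbb{H}_{cdh}(-,B),
\]
is an objectwise weak equivalence of presheaves of spectra on $\mathrm{Sch}/k$.

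First I would apply the restriction functor $r$ to this map. Because $r$ is simply restriction along the full subcategory inclusion $\mathrm{Sm}/k\hookrightarrow\mathrm{Sch}/k$, it preserves objectwise weak equivalences, so $r\mathbb{H}_{cdh}(-,f)$ is still an objectwise weak equivalence of presheaves of spectra on $\mathrm{Sm}/k$. Next I would invoke Proposition \ref{scdhtocdh} applied naturally to $A$ and to $B$ to identify $r\mathbb{H}_{cdh}(-,A) = \mathbb{H}_{scdh}(-,rA)$ and $r\mathbb{H}_{cdh}(-,B) = \mathbb{H}_{scdh}(-,rB)$. Under these identifications, the morphism $r\mathbb{H}_{cdh}(-,f)$ is precisely $\mathbb{H}_{scdh}(-,rf)$, which is therefore an objectwise weak equivalence. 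By definition this means $rf$ is a weak equivalence in the scdh-topology, as required.

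The main (and essentially only) subtlety will be confirming that the identification furnished by Proposition \ref{scdhtocdh} is natural in the presheaf argument, so that it carries $\mathbb{H}_{cdh}(-,f)$ to $\mathbb{H}_{scdh}(-,rf)$ rather than merely relating the two pointwise. This naturality is built in, because the fibrant replacement $\mathbb{H}_{cdh}(-,-)$ was constructed functorially via the small object argument, and because the scdh-topology on $\mathrm{Sm}/k$ is literally the restriction of the cdh-topology on $\mathrm{Sch}/k$, so that $r$ commutes with the relevant sheafification up to the same natural weak equivalence that gives Proposition \ref{scdhtocdh}. Once this compatibility is noted, the proof is purely formal and requires no further geometric input.
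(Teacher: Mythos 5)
Your argument is correct as a formal deduction, but it is worth being clear that it is not the route the paper takes: the paper's ``proof'' of Proposition \ref{cdhscdh} is simply a citation of Lemma 3.11 of \cite{cortinas2005cyclic}, whereas you derive the statement from Proposition \ref{scdhtocdh} plus functoriality of the fibrant replacement. Two remarks on what your route buys and what it costs. First, you have (sensibly) reinterpreted the hypothesis: under the paper's literal definition, a ``global weak equivalence'' is an objectwise weak equivalence, for which the proposition is vacuous since restriction along $\mathrm{Sm}/k\hookrightarrow\mathrm{Sch}/k$ obviously preserves objectwise equivalences; your reading (``objectwise equivalence after cdh-fibrant replacement,'' i.e.\ a cdh-local weak equivalence) is the one that makes the statement coincide with the cited Lemma 3.11, and your chain of reductions --- restrict the equivalence of cdh-fibrant replacements, identify $r\mathbb{H}_{cdh}(-,A)$ with $\mathbb{H}_{scdh}(-,rA)$ naturally, conclude by two-out-of-three in the scdh-local structure --- is sound given Proposition \ref{scdhtocdh}. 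Second, the cost: in the source \cite{cortinas2005cyclic} the logical order is the reverse of yours; the identification $\mathbb{H}_{scdh}(-,r\mathcal{E})\simeq r\mathbb{H}_{cdh}(-,\mathcal{E})$ (the content of \ref{scdhtocdh}, drawn from the first part of the proof of their Theorem 3.12) is itself established \emph{using} their Lemma 3.11, whose actual proof compares the cdh and scdh topoi via resolution of singularities (every scheme admits a cdh-cover by smooth schemes, so cdh-local triviality of the homotopy sheaves restricts to scdh-local triviality). So your proof is legitimate within this paper, where \ref{scdhtocdh} is stated first and black-boxed, but it would be circular if one tried to use it to reprove the literature results from scratch; the genuine geometric input lives in whichever of the two propositions one chooses to prove first.
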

\begin{proof}
See \cite{cortinas2005cyclic} lemma 3.11.  
\end{proof}
\begin{corollary}
Any presheaf $\mathcal{E}$ that satisfies cdh-descent has the property that it's restriction $r\mathcal{E}$ satisfies scdh-descent.  In particular, the presheaves $\textbf{HP}(-/k)$, $\textbf{KH}$, and $\textbf{K}^{inf}$ (or rather their restrictions to Sm/$k$) satisfy scdh-descent.  
\end{corollary}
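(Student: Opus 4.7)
The proof will be a short chase through definitions together with the two propositions already established, namely \ref{scdhtocdh} and \ref{cdhscdh}. The plan is as follows.

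First, I would unwind what the hypothesis gives us. Saying that $\mathcal{E}$ satisfies cdh-descent is the same as saying $\mathcal{E}$ is quasifibrant in the cdh topology, i.e.\ the canonical map to the fibrant replacement
\[
\mathcal{E}(U)\longrightarrow \mathbb{H}_{cdh}(U,\mathcal{E})
\]
is a weak equivalence for every $U\in\text{Sch}/k$. This is precisely the statement that the natural morphism of presheaves $\mathcal{E}\to\mathbb{H}_{cdh}(-,\mathcal{E})$ is a \emph{global} weak equivalence on Sch/$k$.

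Second, I would feed this global weak equivalence into Proposition \ref{cdhscdh}. That proposition transports global weak equivalences in the cdh-topology to global weak equivalences of the restrictions in the scdh-topology. So the restriction
\[
r\mathcal{E}\longrightarrow r\mathbb{H}_{cdh}(-,\mathcal{E})
\]
is a global weak equivalence of presheaves on Sm/$k$ in the scdh-topology. Next, by Proposition \ref{scdhtocdh}, we can identify $r\mathbb{H}_{cdh}(-,\mathcal{E})$ with $\mathbb{H}_{scdh}(-,r\mathcal{E})$, the scdh-fibrant replacement of $r\mathcal{E}$. Composing, the map $r\mathcal{E}\to\mathbb{H}_{scdh}(-,r\mathcal{E})$ is a global weak equivalence in the scdh-topology, which is exactly the statement that $r\mathcal{E}$ is quasifibrant for scdh, i.e.\ satisfies scdh-descent.

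For the second assertion, I would simply apply the general statement to the three specific presheaves at hand. Each of $\textbf{HP}(-/k)$, $\textbf{KH}$, and $\textbf{K}^{inf}$ was just verified to satisfy cdh-descent on Sch/$k$ in the preceding theorems, so their restrictions to Sm/$k$ automatically satisfy scdh-descent by the general result. The main ``obstacle'' is really just notational: one must be careful that $\mathbb{H}_{scdh}(-,\mathcal{E})$ in Proposition \ref{scdhtocdh} implicitly refers to the scdh-fibrant replacement of the restriction $r\mathcal{E}$, so that the two propositions can be chained without ambiguity. Beyond that, there is no real computation — the content has already been absorbed into Propositions \ref{scdhtocdh} and \ref{cdhscdh}.
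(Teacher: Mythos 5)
Your proposal is correct and follows essentially the same route as the paper: apply Proposition \ref{cdhscdh} to the global weak equivalence $\mathcal{E}\to\mathbb{H}_{cdh}(-,\mathcal{E})$ and use the identification $r\mathbb{H}_{cdh}(-,\mathcal{E})=\mathbb{H}_{scdh}(-,\mathcal{E})$ from Proposition \ref{scdhtocdh} to conclude that $r\mathcal{E}$ is scdh-quasifibrant. The only difference is that you spell out the identification of fibrant replacements more carefully than the paper does, which is a mild improvement in exposition rather than a different argument.
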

\begin{proof}
For a presheaf $\mathcal{E}$ that satisfies cdh-descent, since $\mathcal{E}\rightarrow\mathbb{H}_{cdh}(-,\mathcal{E})$ is a global weak equivalence, by \ref{cdhscdh} we know that the restriction $r\mathcal{E}\rightarrow r\mathbb{H}_{cdh}(-,\mathcal{E})=\mathbb{H}_{scdh}(-,\mathcal{E})$ is a global weak equivalence and hence $r\mathcal{E}$ satisfies scdh-descent.  Since each of the presheaves $\textbf{HP}(-/k)$, $\textbf{KH}$, and $\textbf{K}^{inf}$ each satisfy cdh-descent, their restrictions must satisfy cdh-descent.  
\end{proof}

Summarizing as before, we have the following.  
\begin{corollary}
For any scheme $X$ essentially of finite type over a field $k$ of characteristic zero,
\begin{align*}
HP_n(X/k)&=\mathbb{H}^{-n}_{cdh}(X,\textbf{HP}(-/k))\\
KH_n(X)&=\mathbb{H}^{-n}_{cdh}(X,\textbf{KH})\\
K^{inf}_n(X)&=\mathbb{H}^{-n}_{cdh}(X,\textbf{K}^{inf})
\end{align*}
for all $n$.
\end{corollary}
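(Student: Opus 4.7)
The plan is to deduce this corollary as a direct unwinding of the descent properties already established in the preceding theorems. First I would recall that the three presheaves of spectra $\textbf{HP}(-/k)$, $\textbf{KH}$, and $\textbf{K}^{inf}$ have all been shown (in the three immediately preceding theorems) to satisfy cdh-descent on Sch/$k$. By the definition of cdh-descent adopted earlier in this section, a presheaf $\mathcal{E}$ satisfies cdh-descent precisely when it is quasifibrant with respect to the cdh-topology, i.e.\ the canonical map to its fibrant replacement
\[
\mathcal{E}(X)\longrightarrow \mathbb{H}_{cdh}(X,\mathcal{E})
\]
is a global weak equivalence of spectra for every $X \in$ Sch/$k$.

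Next I would take homotopy groups of both sides. Since a global weak equivalence of spectra induces isomorphisms on all (including negative) homotopy groups, we obtain
\[
\pi_{-n}(\mathcal{E}(X)) \;\cong\; \pi_{-n}(\mathbb{H}_{cdh}(X,\mathcal{E})) \;=\; \mathbb{H}^{-n}_{cdh}(X,\mathcal{E})
\]
for each $n \in \mathbb{Z}$, where the last equality is the definition of the cdh-hypercohomology groups given earlier. Applied in turn to $\mathcal{E}=\textbf{HP}(-/k)$, $\textbf{KH}$, and $\textbf{K}^{inf}$, this yields the three claimed identifications, once we match up the indexing conventions $HP_n(X/k) = \pi_n(\textbf{HP}(X/k))$, $KH_n(X) = \pi_n(\textbf{KH}(X))$, and $K^{inf}_n(X) = \pi_n(\textbf{K}^{inf}(X))$ with the cohomological indexing $\mathbb{H}^{-n}_{cdh}$ (exactly as worked out in the earlier periodic cyclic homology example).

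There is no real obstacle here; the content of the corollary has already been absorbed into the three descent theorems, and the argument is purely a matter of taking $\pi_{-n}$ of the global weak equivalence coming from quasifibrancy. The only minor bookkeeping is being careful with the sign on the index, so I would include an explicit sentence noting that for a presheaf of spectra $\mathcal{E}$ satisfying cdh-descent, $\pi_n(\mathcal{E}(X)) = \mathbb{H}^{-n}_{cdh}(X,\mathcal{E})$, so that the reader can immediately read off each of the three equalities from the respective definition of $HP_n$, $KH_n$, and $K^{inf}_n$.
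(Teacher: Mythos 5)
Your proposal is correct and is exactly the argument the paper intends: the corollary is stated as a summary of the three preceding cdh-descent theorems, and the identification follows by applying $\pi_{-n}$ to the global weak equivalence $\mathcal{E}(X)\rightarrow\mathbb{H}_{cdh}(X,\mathcal{E})$ guaranteed by quasifibrancy, together with the definition $\mathbb{H}^{n}_{cdh}(X,\mathcal{E})=\pi_{-n}(\mathbb{H}_{cdh}(X,\mathcal{E}))$. No gaps; the indexing remark is the only subtlety and you handle it correctly.
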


The following table is a summary of the descent properties for all of our sheaves of concern.  

\vspace{5 mm}

\begin{center}
\begin{tabular}{c|c|c|}
  & scdh-descent & cdh-descent \\  \hline                      
  $\textbf{HC}(-/k)$ & $\surd$ & \\ \hline 
  $\textbf{HN}(-/k)$ & $\surd$ & \\ \hline 
  $\textbf{HP}(-/k)$ & $\surd$ & $\surd$ \\ \hline 
  $\textbf{K}$ &  $\surd$ &  \\ \hline 
  $\textbf{KH}$ & $\surd$ & $\surd$ \\ \hline 
  $\textbf{K}^{inf}$ & $\surd$ & $\surd$\\ 
  \hline  
\end{tabular}
\end{center}

\vspace{5 mm}

Before we continue with how this fits together with the Chern character, we need a concise way to measure how our various sheaves of spectra differ from their cdh versions.  
\begin{definition}
Let $\mathcal{E}$ be a (pre)sheaf of spectra on (Sch/$k$,cdh).  For a scheme $X$, we define $\mathcal{F}_\mathcal{E}(X)$ to be the homotopy fiber of the map of spectra $\mathcal{E}(X)\rightarrow\mathbb{H}_{cdh}(X,\mathcal{E})$.  We write $\mathcal{F}_\mathcal{E}(-)$, or just  $\mathcal{F}_\mathcal{E}$, as the associated sheaf of spectra.  
\end{definition}
Using the summary of information from above we have the following immediate results for the fibers of the sheaves that we care about. 
\begin{proposition}
Let $X$ be a smooth scheme over a field of characteristic zero, and let $\mathcal{E}$ denote any of the sheaves from the table above (or any sheaf satisfying scdh-descent), then the spectrum $\mathcal{F}_\mathcal{E}(X)$ is contractible.  
\end{proposition}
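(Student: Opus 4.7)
The plan is to reduce this directly to the scdh-descent hypothesis, using the comparison between cdh and scdh already established as Proposition \ref{scdhtocdh}. Since $\mathcal{F}_{\mathcal{E}}(X)$ is by definition the homotopy fiber of $\mathcal{E}(X)\to\mathbb{H}_{cdh}(X,\mathcal{E})$, it is contractible precisely when that comparison map is a weak equivalence of spectra. So the entire content of the statement is an appeal to scdh-descent plus the identification $\mathbb{H}_{scdh}(-,\mathcal{E})=r\mathbb{H}_{cdh}(-,\mathcal{E})$.

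First, since $X$ is smooth, I would restrict attention to the subcategory Sm$/k$. By Proposition \ref{scdhtocdh}, for any presheaf of spectra $\mathcal{E}$ on Sch$/k$ we have $\mathbb{H}_{scdh}(-,\mathcal{E})=r\mathbb{H}_{cdh}(-,\mathcal{E})$, and whenever $\mathcal{E}$ satisfies scdh-descent the canonical map $\mathcal{E}(X)\to\mathbb{H}_{cdh}(X,\mathcal{E})$ is a weak equivalence for every smooth $X$. Both hypotheses are in force: $X\in\mathrm{Sm}/k$ and $\mathcal{E}$ satisfies scdh-descent (either by assumption or because it appears as one of $\textbf{HC}(-/k)$, $\textbf{HN}(-/k)$, $\textbf{HP}(-/k)$, $\textbf{K}$, $\textbf{KH}$, $\textbf{K}^{inf}$ in the descent table, all of which have a $\surd$ in the scdh column).

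Next, I would invoke the elementary fact that the homotopy fiber of a weak equivalence of spectra is contractible: if $f\colon E\to F$ is a weak equivalence, then the long exact sequence of homotopy groups of the fibration $\mathrm{hofib}(f)\to E\to F$ forces $\pi_n(\mathrm{hofib}(f))=0$ for every $n\in\mathbb{Z}$. Applied to $f=(\mathcal{E}(X)\to\mathbb{H}_{cdh}(X,\mathcal{E}))$, this gives $\pi_n(\mathcal{F}_{\mathcal{E}}(X))=0$ for all $n$, i.e., $\mathcal{F}_{\mathcal{E}}(X)$ is contractible.

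There is really no main obstacle here; the whole statement is a bookkeeping corollary of Proposition \ref{scdhtocdh}. The only thing worth flagging is that one should be slightly careful about the convention on which model structure is being used when forming the homotopy fiber, but since the local injective model structure has been fixed throughout the section and weak equivalences are defined at the level of stable homotopy groups, the argument above is valid. The same proof verbatim works for sheaves of cochain complexes via the Eilenberg--MacLane correspondence, so the statement remains true in that parallel setting as well.
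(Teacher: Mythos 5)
Your proof is correct and follows essentially the same route as the paper's: both arguments reduce the claim to the fact that scdh-descent plus smoothness of $X$ makes $\mathcal{E}(X)\to\mathbb{H}_{cdh}(X,\mathcal{E})$ a weak equivalence, whence the homotopy fiber is contractible. The only difference is that you explicitly cite Proposition \ref{scdhtocdh} and spell out the long exact sequence step, whereas the paper leaves both implicit; this is a welcome clarification but not a different argument.
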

\begin{proof}
Since each of the sheaves satisfy scdh-descent we know that, when evaluated at a scheme, the resulting spectrum is homotopy equivalent to it's cdh-fibrant replacement.  Therefore the fibers must be contractible.  
\end{proof} 
\begin{proposition}
For any scheme $X$ over a field of characteristic zero, the spectra $\mathcal{F}_{\textbf{HP}}(X/k)$, $\mathcal{F}_{\textbf{KH}}(X)$, and  $\mathcal{F}_{\textbf{K}^{inf}}(X)$ (or the fiber associated to any sheaf satisfying cdh-descent) are all contractible.  
\end{proposition}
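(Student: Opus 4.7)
The plan is to observe that this proposition is essentially a direct unpacking of the definition of cdh-descent combined with the earlier cdh-descent theorems. Recall that the paper defines a presheaf of spectra $\mathcal{E}$ to satisfy cdh-descent exactly when it is quasifibrant, i.e.\ when the natural map $\mathcal{E}(U)\to\mathbb{H}_{cdh}(U,\mathcal{E})$ is a (global) weak equivalence for every object $U$ of Sch/$k$. So the argument will reduce the claim to a general fact about homotopy fibers.

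First I would invoke the three previously-established theorems: $\textbf{HP}(-/k)$ satisfies cdh-descent (the theorem quoting \cite{cortinas2005cyclic} 3.13 and \cite{cortinas2008k} 1.2), $\textbf{KH}$ satisfies cdh-descent (Haesemeyer, \cite{haesemeyer2004descent} 6.4), and $\textbf{K}^{inf}$ satisfies cdh-descent (the Corti\~nas excision theorem together with Goodwillie's theorem on nilpotent ideals and the MV-property inherited from $\textbf{K}$ and $\textbf{HN}$). By the definition of cdh-descent, this means that for any scheme $X$ over $k$ the three maps
\[
\textbf{HP}(X/k)\to\mathbb{H}_{cdh}(X,\textbf{HP}(-/k)),\quad \textbf{KH}(X)\to\mathbb{H}_{cdh}(X,\textbf{KH}),\quad \textbf{K}^{inf}(X)\to\mathbb{H}_{cdh}(X,\textbf{K}^{inf})
\]
are weak equivalences of spectra.

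Second I would appeal to the standard fact that the homotopy fiber of a weak equivalence of spectra is contractible (equivalently, the long exact sequence of homotopy groups for the fiber sequence $\mathcal{F}_\mathcal{E}(X)\to\mathcal{E}(X)\to\mathbb{H}_{cdh}(X,\mathcal{E})$ forces $\pi_n(\mathcal{F}_\mathcal{E}(X))=0$ for all $n$ when the second map induces an isomorphism on homotopy groups). Applying this to each of the three weak equivalences above yields that $\mathcal{F}_{\textbf{HP}}(X/k)$, $\mathcal{F}_{\textbf{KH}}(X)$, and $\mathcal{F}_{\textbf{K}^{inf}}(X)$ are all contractible. The same argument clearly works verbatim for the fiber $\mathcal{F}_\mathcal{E}$ of any sheaf $\mathcal{E}$ satisfying cdh-descent, giving the parenthetical generalization.

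There is essentially no obstacle here; the statement is formally dual to the previous proposition (where scdh-descent was enough to kill the fiber on smooth schemes), and the only content is that the three named sheaves have already been shown to satisfy the stronger cdh-descent. The proof is therefore just two lines: quote cdh-descent, then apply the homotopy fiber of a weak equivalence fact.
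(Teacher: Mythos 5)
Your proposal is correct and matches the paper's own proof essentially verbatim: the paper likewise quotes the cdh-descent theorems for $\textbf{HP}(-/k)$, $\textbf{KH}$, and $\textbf{K}^{inf}$, observes that the map to the cdh-fibrant replacement is therefore a weak equivalence on every scheme, and concludes that the homotopy fibers are contractible.
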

\begin{proof}
Since each of the sheaves satisfy cdh-descent we know that, when evaluated at a scheme, the resulting spectrum is homotopy equivalent to it's cdh-fibrant replacement.  Therefore each of the fibers are contractible.  
\end{proof}

\begin{notation}
For a scheme $X$, we write the fiber of the $\mathcal{F}_{\textbf{K}}(X)$ to be the fiber of the map from $\textbf{K}(X)\rightarrow\mathbb{H}_{cdh}(X,\textbf{K})$.  We also write $\tilde{\textbf{K}}(X)$ to be the fiber of the map $\textbf{K}(X)\rightarrow\textbf{KH}(X)$.  Since $\mathbb{H}_{cdh}(X,\textbf{K})=\textbf{KH}(X)$ (over characteristic zero), we know that $\mathcal{F}_{\textbf{K}}(X)=\tilde{\textbf{K}}(X)$, or $\mathcal{F}_{\textbf{K}}=\tilde{\textbf{K}}$.  So, these two notations describe the same thing.  For the most part, unless it is more convenient to use the other notation, we will try to use $\tilde{\textbf{K}}(X)$ to describe this fiber, or more descriptively, the difference between K-theory and Homotopy K-theory.  
\end{notation}

\begin{remark}
This notation of $\tilde{\textbf{K}}(X)$, defining the groups $\tilde{K}_n(X)$, is usually reserved for graded rings.  For a graded ring $R=k\oplus R_1\oplus R_2\oplus...$, where $K_n(R)\rightarrow KH_n(R)=KH_n(k)$ is a split surjection, the groups $\tilde{K}_n(X)$ can be written as the quotient $K_n(R)/KH_n(k)$, and this is the traditional definition of $\tilde{K}_n(X)$.  While our results do not necessarily involve graded rings (except in the quasi-homogeneous case) we will still use this notation for it's simplicity over the alternative.  
\end{remark}

\subsection{The Chern character II} With these descent properties in mind, we will revisit the Chern character and present several new properties.  

Recall that the Chern character map can be written as a map at the level of spectra from K-theory to negative cyclic homology, $\textbf{K(X)}\rightarrow\textbf{HN}(X)(=\textbf{HN}(X/\mathbb{Q}))$.  By definition of infinitesimal K-theory, we have that \[\textbf{K}^{inf}(X)\rightarrow\textbf{K}(X)\rightarrow\textbf{HN}(X)\] is a fibration sequence of spectra.  By looking at the cdh versions of each of these and the associated, we have induced fibration sequences at each level.  This yields the following diagram.  
\[\begin{CD}
\mathcal{F}_{\textbf{K}^{inf}}(X)@>>> \mathcal{F}_{\textbf{K}}(X)@>>> \mathcal{F}_{\textbf{HN}}(X)\\
@VVV @VVV @VVV\\
\textbf{K}^{inf}(X) @>>> \textbf{K}(X)  @>>> \textbf{HN}(X)\\
@VVV @VVV @VVV\\
\textbf{K}^{inf}(X)\cong \mathbb{H}_{cdh}(X,\textbf{K}^{inf}) @>>> \mathbb{H}_{cdh}(X,\textbf{K})@>>> \mathbb{H}_{cdh}(X,\textbf{HN}) 
\end{CD}\] 
\begin{proposition}
Let $X$ be scheme over a field of characteristic zero.  We have that $\mathcal{F}_{\textbf{K}}(X)=\tilde{\textbf{K}}(X)$ is homotopy equivalent to $\mathcal{F}_{\textbf{HN}}(X)$.
\end{proposition}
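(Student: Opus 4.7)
The plan is to use the $3 \times 3$ diagram that is set up just before the statement and extract the equivalence as a consequence of the fact that $\textbf{K}^{inf}$ satisfies cdh-descent. The key structural observation is that all three columns in that diagram are fiber sequences of spectra by the very definition of $\mathcal{F}_{\mathcal{E}}(X)$ (the homotopy fiber of $\mathcal{E}(X) \to \mathbb{H}_{cdh}(X,\mathcal{E})$). The middle row is a fiber sequence because $\textbf{K}^{inf}(X)$ was defined as the homotopy fiber of the Chern character $\textbf{K}(X) \to \textbf{HN}(X)$.

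First, I would argue that the bottom row is also a fiber sequence. Since the functor $\mathbb{H}_{cdh}(X,-)$ is the cdh-fibrant replacement followed by evaluation at $X$, and fibrant replacement in the local injective model structure preserves homotopy fiber sequences of spectra (equivalently, since we are in a stable setting, cofiber and fiber sequences agree and both are preserved), applying $\mathbb{H}_{cdh}(X,-)$ to $\textbf{K}^{inf} \to \textbf{K} \to \textbf{HN}$ yields the fiber sequence on the bottom row of the diagram.

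Next I would invoke the $3 \times 3$ lemma for homotopy fiber sequences of spectra: given a commutative $3 \times 3$ diagram in which the columns and two of the three rows are fiber sequences, the remaining row is also a fiber sequence. Applied here, this gives that the top row
\[
\mathcal{F}_{\textbf{K}^{inf}}(X) \longrightarrow \mathcal{F}_{\textbf{K}}(X) \longrightarrow \mathcal{F}_{\textbf{HN}}(X)
\]
is a fiber sequence of spectra. Now invoke the theorem recalled just above that $\textbf{K}^{inf}$ satisfies cdh-descent, which by definition says that $\textbf{K}^{inf}(X) \to \mathbb{H}_{cdh}(X,\textbf{K}^{inf})$ is a weak equivalence; consequently $\mathcal{F}_{\textbf{K}^{inf}}(X)$, being the homotopy fiber of a weak equivalence, is contractible. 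In a fiber sequence with contractible fiber, the second map is a weak equivalence, so $\mathcal{F}_{\textbf{K}}(X) \simeq \mathcal{F}_{\textbf{HN}}(X)$, and by the notational remark the left-hand side is $\tilde{\textbf{K}}(X)$.

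The main obstacle, if any, is the justification of the $3 \times 3$ lemma / the fact that $\mathbb{H}_{cdh}(X,-)$ preserves fiber sequences; both are standard in the stable setting but deserve a sentence of explanation, with a pointer back to the local injective model structure on sheaves of spectra introduced earlier. Everything else is a direct consequence of definitions already in place.
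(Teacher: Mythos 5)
Your argument is correct and follows essentially the same route as the paper: the paper's proof likewise combines the diagram of fibration sequences with the contractibility of $\mathcal{F}_{\textbf{K}^{inf}}(X)$ coming from cdh-descent for $\textbf{K}^{inf}$. You simply make explicit the steps (the bottom row being a fiber sequence and the $3\times 3$ lemma) that the paper leaves implicit in the phrase ``combining this with the previous diagram of fibration sequences.''
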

\begin{proof}
Since $\textbf{K}^{inf}$ satisfies cdh-descent, we know that $\mathcal{F}_{\textbf{K}^{inf}}(X)$ is contractible.  Combining this with the previous diagram of fibration sequences, we know that $\mathcal{F}_{\textbf{HN}}(X)$ is homotopy equivalent to $\mathcal{F}_{\textbf{K}}(X)$, proving the result.  
\end{proof}

To complete this result, we now look at the fibers in the SBI sequence for cyclic homology and the two variations.  Let $X$ be a scheme.  We have that \[\textbf{HC}_{-1}(X/k)\rightarrow\textbf{HN}(X/k)\rightarrow\textbf{HP}(X/k)\] is a fibration sequence of spectra defining the various versions of cyclic homology and inducing the SBI sequence. The subscript of $-1$ denotes the shift in index.  When we pass to their cdh versions, we have the following diagram of induced fibration sequences.  
\[\begin{CD}
\mathcal{F}_{\textbf{HC}_{-1}(-/k)}(X)@>>> \mathcal{F}_{\textbf{HN}(-/k)}(X)@>>> \mathcal{F}_{\textbf{HP}(-/k)}(X)\\
@VVV @VVV @VVV\\
\textbf{HC}_{-1}(X/k) @>>> \textbf{HN}(X/k) @>>> \textbf{HP}(X/k)\\
@VVV @VVV @VVV\\
\mathbb{H}_{cdh}(X,\textbf{HC}_{-1}(-/k)) @>>> \mathbb{H}_{cdh}(X,\textbf{HN}(-/k))@>>> \mathbb{H}_{cdh}(X,\textbf{HP}(-/k))\cong\textbf{HP}(X/k)
\end{CD}\]
\begin{proposition}
Let $X$ be scheme over a field of characteristic zero.  We have that $\mathcal{F}_{\textbf{HC}_{-1}(-/k)}(X)$ is homotopy equivalent to $\mathcal{F}_{\textbf{HN}(-/k)}(X)$.
\end{proposition}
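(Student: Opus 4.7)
The plan is to mimic exactly the template used in the preceding proposition for K-theory. There, the key input was that $\textbf{K}^{inf}$ satisfies cdh-descent, which forced the top-left entry of the $3\times 3$ diagram of fibration sequences to be contractible and hence identified the other two top entries. Here I would play the same game with the SBI fibration sequence $\textbf{HC}_{-1}(X/k)\to\textbf{HN}(X/k)\to\textbf{HP}(X/k)$ and the $3\times 3$ diagram of spectra that has just been displayed.

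First I would invoke the theorem proved earlier in the descent subsection that, for any subfield $l\subseteq k$ of a characteristic zero field $k$, the presheaf $\textbf{HP}(-/l)$ satisfies cdh-descent (this is the theorem attributed to the Feigin--Tsygan identification with crystalline cohomology, handled for $\mathbb{Q}$ in Corti\~nas et al. and in general in their later paper). This means that the canonical map $\textbf{HP}(X/k)\to\mathbb{H}_{cdh}(X,\textbf{HP}(-/k))$ is a global weak equivalence, so by the very definition of $\mathcal{F}_{\textbf{HP}(-/k)}$ the spectrum $\mathcal{F}_{\textbf{HP}(-/k)}(X)$ is contractible.

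Next I would observe that the top row of the displayed diagram is itself a fibration sequence of spectra. This is a formal consequence of taking vertical homotopy fibers of a map between two fibration sequences (the SBI fibration sequence of presheaves and its cdh-fibrant replacement), together with the fact that $\mathbb{H}_{cdh}(-,-)$ preserves homotopy fiber sequences. Given that the third term of this fibration sequence is contractible by the previous step, the first map $\mathcal{F}_{\textbf{HC}_{-1}(-/k)}(X)\to\mathcal{F}_{\textbf{HN}(-/k)}(X)$ must be a weak equivalence.

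There is no substantive obstacle here; the argument is essentially a transcription of the preceding proposition with $(\textbf{K}^{inf},\textbf{K},\textbf{HN})$ replaced by $(\textbf{HC}_{-1}(-/k),\textbf{HN}(-/k),\textbf{HP}(-/k))$, and the only nontrivial ingredient, cdh-descent for periodic cyclic homology, has already been recorded. The only place where one must be slightly careful is in justifying that the top row is a genuine fibration sequence, but this is standard since forming the homotopy fiber of a map of fibration sequences yields a fibration sequence, and cdh-fibrant replacement respects this structure up to global weak equivalence.
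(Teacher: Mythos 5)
Your argument is exactly the paper's: it deduces contractibility of $\mathcal{F}_{\textbf{HP}(-/k)}(X)$ from cdh-descent of $\textbf{HP}(-/k)$ and then reads off the equivalence from the top row of the displayed diagram of fibration sequences, just as the paper does (your extra care in justifying that the top row is itself a fibration sequence is a welcome but inessential elaboration).
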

\begin{proof}
From the previous diagram of fibration sequences and the fact that $\mathcal{F}_{\textbf{HP}(-/k)}(X)$ is contractible, we obtain the desired result.  
\end{proof}
Now combining these two results, by setting $k=\mathbb{Q}$, we have the following equivalence induced by the Chern character map.  
\begin{corollary}
The Chern character map induces an equivalence of spectra, $\tilde{\textbf{K}}(X)\cong \mathcal{F}_{\textbf{HC}_{-1}}(X)$.  
\end{corollary}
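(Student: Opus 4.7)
The plan is to obtain the corollary as a direct consequence of the two propositions immediately preceding it, which together provide a zig-zag of weak equivalences through the common object $\mathcal{F}_{\textbf{HN}}(X)$. First I would specialize to $k=\mathbb{Q}$, since this is the coefficient ring forced on us by the construction of $\textbf{K}^{inf}$ via the Jones-Goodwillie Chern character, which targets $\textbf{HN}(-/\mathbb{Q})$.

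Next I would invoke the first of the two preceding propositions, which asserts that the Chern character induces a weak equivalence $\tilde{\textbf{K}}(X)=\mathcal{F}_{\textbf{K}}(X)\simeq\mathcal{F}_{\textbf{HN}}(X)$. This rests on the top row of the first big $3\times 3$ diagram in the section, together with the fact that $\mathcal{F}_{\textbf{K}^{inf}}(X)$ is contractible (by cdh-descent of $\textbf{K}^{inf}$), so that the long exact sequence of homotopy groups coming from the fibration $\mathcal{F}_{\textbf{K}^{inf}}\to\mathcal{F}_{\textbf{K}}\to\mathcal{F}_{\textbf{HN}}$ collapses to an isomorphism on the right two terms.

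Then I would invoke the second preceding proposition, which gives $\mathcal{F}_{\textbf{HN}}(X)\simeq\mathcal{F}_{\textbf{HC}_{-1}}(X)$, obtained in an entirely parallel fashion from the SBI fibration $\textbf{HC}_{-1}\to\textbf{HN}\to\textbf{HP}$ and the contractibility of $\mathcal{F}_{\textbf{HP}}(X)$ (cdh-descent of $\textbf{HP}(-/\mathbb{Q})$). Composing the two equivalences through $\mathcal{F}_{\textbf{HN}}(X)$ produces the asserted weak equivalence $\tilde{\textbf{K}}(X)\simeq\mathcal{F}_{\textbf{HC}_{-1}}(X)$.

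There is essentially no obstacle here beyond bookkeeping: the only thing worth being explicit about is that the composite equivalence is genuinely induced by the Chern character, which is immediate because the first equivalence is the fiber of the Chern map $\textbf{K}\to\textbf{HN}$ on cdh fibers, and the second equivalence is strictly natural in the SBI tower. No further computation is required; this corollary is simply the concatenation of the two fiber calculations already carried out.
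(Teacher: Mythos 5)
Your proposal is exactly the paper's argument: the corollary is obtained by setting $k=\mathbb{Q}$ and composing the two immediately preceding propositions, namely $\tilde{\textbf{K}}(X)\simeq\mathcal{F}_{\textbf{HN}}(X)$ (from contractibility of $\mathcal{F}_{\textbf{K}^{inf}}$) and $\mathcal{F}_{\textbf{HN}}(X)\simeq\mathcal{F}_{\textbf{HC}_{-1}}(X)$ (from contractibility of $\mathcal{F}_{\textbf{HP}}$). No discrepancies to report.
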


Using this, and some previous results, we finally have a framework where we can explicitly relate (the fibers of) cyclic homology and K-theory.  This is given to us by the following diagram.  
\[\begin{CD}
\tilde{\textbf{K}}(X) @>>> \textbf{K}(X) @>>> \textbf{KH}(X)\\
@| @. @.\\
\mathcal{F}_{\textbf{HC}_{-1}}(X)@>>> \textbf{HC}_{-1}(X)@>>> \mathbb{H}_{cdh}(X,\textbf{HC}_{-1})
\end{CD}\]
\begin{remark}
This framework really only works for singular schemes.  Since both $\textbf{K}$ and $\textbf{HC}$ satisfy scdh-descent, we know that for smooth schemes the fibers $\tilde{\textbf{K}}$, and $\mathcal{F}_{\textbf{HC}_{-1}}$ are contractible.  This is consistent with this framework, in that $\tilde{\textbf{K}}\cong\mathcal{F}_{\textbf{HC}_{-1}}$, but it does not give us any new information.  In order to use information from cyclic homology and it's cdh version in the computation of the K-theory, using this framework, the fibers need to be nontrivial.  So, as stated above, we really need to use singular schemes to obtain any new information about the K-theory.  
\end{remark}

We need to determine what happens at the level of K-groups and cyclic homology groups.  In order to do this, we will simply pass to the homotopy groups.  Based on this framework, for a scheme $X$, 
\[\tilde{\textbf{K}}_{+1}(X)\rightarrow\textbf{HC}(X)\rightarrow\mathbb{H}_{cdh}(X,\textbf{HC})\] is a homotopy fibration.  When we evaluate this fibration sequence in the homotopy group functors, we obtain the standard long exact sequence of homotopy groups for fibrations.  
\begin{theorem}\label{mainlong}
Let $X$ be a scheme over a field $k$ of characteristic zero.  Then there exists the following long exact sequence. 
\[...\rightarrow\tilde{K}_{n+1}(X)\rightarrow HC_n(X)\rightarrow\mathbb{H}^{-n}_{cdh}(X,\textbf{HC})\rightarrow\tilde{K}_{n}(X)\rightarrow...\]
\end{theorem}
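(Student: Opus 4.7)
The plan is to simply pass to homotopy groups in the fibration sequence established immediately before the theorem. From the diagram just preceding, I have the homotopy equivalence of spectra $\tilde{\textbf{K}}(X) \simeq \mathcal{F}_{\textbf{HC}_{-1}}(X)$, induced by the Chern character. By the definition of $\mathcal{F}_{\textbf{HC}_{-1}}$, the spectrum $\mathcal{F}_{\textbf{HC}_{-1}}(X)$ sits as the homotopy fiber in
\[
\mathcal{F}_{\textbf{HC}_{-1}}(X) \longrightarrow \textbf{HC}_{-1}(X) \longrightarrow \mathbb{H}_{cdh}(X,\textbf{HC}_{-1}),
\]
so that, after shifting indices back by one to eliminate the subscript, I obtain a genuine homotopy fibration sequence of spectra
\[
\tilde{\textbf{K}}(X)[1] \longrightarrow \textbf{HC}(X) \longrightarrow \mathbb{H}_{cdh}(X,\textbf{HC}).
\]

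First I would make the index bookkeeping precise: the subscript $-1$ on $\textbf{HC}_{-1}$ indicates a one-step shift in the spectrum, and the equivalence $\tilde{\textbf{K}} \simeq \mathcal{F}_{\textbf{HC}_{-1}}$ therefore corresponds at the level of homotopy groups to $\tilde{K}_{n+1}(X) \cong \pi_n(\mathcal{F}_{\textbf{HC}})$. Next I would invoke the standard long exact sequence of homotopy groups associated to any homotopy fibration of spectra: for a fibration $F \to E \to B$, one obtains
\[
\cdots \to \pi_{n+1}(B) \to \pi_n(F) \to \pi_n(E) \to \pi_n(B) \to \pi_{n-1}(F) \to \cdots.
\]
Applying this to the fibration above, with $E = \textbf{HC}(X)$ and $B = \mathbb{H}_{cdh}(X,\textbf{HC})$, and translating homotopy groups of the relevant spectra into their cohomological/cyclic homology notation via $\pi_n(\textbf{HC}(X)) = HC_n(X)$ and $\pi_n(\mathbb{H}_{cdh}(X,\textbf{HC})) = \mathbb{H}^{-n}_{cdh}(X,\textbf{HC})$, yields the stated long exact sequence
\[
\cdots \to \tilde{K}_{n+1}(X) \to HC_n(X) \to \mathbb{H}^{-n}_{cdh}(X,\textbf{HC}) \to \tilde{K}_n(X) \to \cdots.
\]

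There is essentially no genuine obstacle here, since all of the hard work has already been absorbed into the preceding results: Goodwillie's theorem forces $\mathcal{F}_{\textbf{K}^{inf}}(X)$ to be contractible (via infinitesimal invariance of $\textbf{K}^{inf}$), and cdh-descent for $\textbf{HP}(-/\mathbb{Q})$ forces $\mathcal{F}_{\textbf{HP}}(X)$ to be contractible. Those two facts, threaded through the two fibration diagrams preceding the theorem, identify $\tilde{\textbf{K}}(X)$ with $\mathcal{F}_{\textbf{HN}}(X)$ and then with $\mathcal{F}_{\textbf{HC}_{-1}}(X)$. The only point requiring care in writing the proof is keeping the suspension/desuspension convention consistent so that the index shift from $\textbf{HC}_{-1}$ to $\textbf{HC}$ matches the shift from $\tilde{K}_n$ to $\tilde{K}_{n+1}$ in the stated sequence; everything else is a formal consequence of turning a spectrum-level fibration into a long exact sequence of homotopy groups.
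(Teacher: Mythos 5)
Your proposal is correct and follows essentially the same route as the paper: the paper's proof likewise consists of noting that the preceding Chern character discussion (contractibility of $\mathcal{F}_{\textbf{K}^{inf}}$ and $\mathcal{F}_{\textbf{HP}}$ identifying $\tilde{\textbf{K}}(X)$ with $\mathcal{F}_{\textbf{HC}_{-1}}(X)$) yields the homotopy fibration $\tilde{\textbf{K}}_{+1}(X)\rightarrow\textbf{HC}(X)\rightarrow\mathbb{H}_{cdh}(X,\textbf{HC})$, and then passing to the long exact sequence of homotopy groups with the cohomological indexing convention $\pi_n(\mathbb{H}_{cdh}(X,\textbf{HC}))=\mathbb{H}^{-n}_{cdh}(X,\textbf{HC})$. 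Your treatment of the index shift is, if anything, more explicit than the paper's.
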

\begin{proof}
Recall, we use cohomological indexing for cdh-cohomology.  Hence the reason for the negative superscript, $\pi_n{\mathbb{H}_{cdh}(X,\textbf{HC})}=\mathbb{H}^{-n}_{cdh}(X,\textbf{HC})$.  The rest of this proof is basically given before the statement of the theorem, the long exact sequence is inherited by the long exact fibration sequence for homotopy groups.  
\end{proof}

Combining this with the Mayer-Vietoris sequence for cdh-cohomology is what will allow us to make new computations for K-theory (or at least the fiber $\tilde{K}_n$).  This is given by the following theorem.  

\begin{theorem}\label{mainhybrid}
Let $X$ be a singular variety over a field of characteristic zero, and let $Y$, $Z$, and $E$ be the resolution, center, and exceptional fiber of the resolution respectively (each of $Y$, $Z$, and $E$ are assumed to be smooth).  Then there is a diagram which is a combination of two long exact sequences.  A rough approximation of this diagram is as follows.  The solid arrows are the long exact sequence from \ref{mainlong}, and the dashed arrows is the Mayer-Vietoris sequence for cdh-cohomology.  
\end{theorem}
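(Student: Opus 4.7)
The plan is direct: both long exact sequences that make up the claimed hybrid diagram have already been established earlier in the paper, and they share a common vertex, so the diagram is obtained by splicing them along that vertex. No new homological content is required.

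First, I would invoke Theorem \ref{mainlong} applied to the singular variety $X$ to produce the horizontal (solid) long exact sequence
\[ \cdots \to \tilde{K}_{n+1}(X) \to HC_n(X/\mathbb{Q}) \to \mathbb{H}^{-n}_{cdh}(X,\textbf{HC}) \to \tilde{K}_n(X) \to \cdots \]
This sequence already weaves together K-theory and cyclic homology through the Chern-character fiber identification $\tilde{\textbf{K}} \simeq \mathcal{F}_{\textbf{HC}_{-1}}$, so no further work is needed on this strand.

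Next, I would apply Corollary \ref{mvsequence} to the sheaf $\textbf{HC}(-/\mathbb{Q})$ and the abstract blowup square determined by the resolution $Y \to X$ with center $Z$ and exceptional fiber $E$. This yields the dashed Mayer-Vietoris sequence
\[ \cdots \to \mathbb{H}^{-n-1}_{cdh}(E,\textbf{HC}) \to \mathbb{H}^{-n}_{cdh}(X,\textbf{HC}) \to \mathbb{H}^{-n}_{cdh}(Y,\textbf{HC}) \oplus \mathbb{H}^{-n}_{cdh}(Z,\textbf{HC}) \to \mathbb{H}^{-n}_{cdh}(E,\textbf{HC}) \to \cdots \]
Because $Y$, $Z$, and $E$ are smooth and $\textbf{HC}$ satisfies scdh-descent, the earlier corollary for sheaves satisfying scdh-descent gives the identifications $\mathbb{H}^{-n}_{cdh}(W,\textbf{HC}) = HC_n(W/\mathbb{Q})$ for $W \in \{Y,Z,E\}$. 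Substituting these rewrites the Mayer-Vietoris strand entirely in terms of ordinary cyclic homology of smooth schemes, while the term $\mathbb{H}^{-n}_{cdh}(X,\textbf{HC})$ is the one place where $X$'s singularity is still felt.

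Finally, the two sequences share the common column $\mathbb{H}^{-n}_{cdh}(X,\textbf{HC})$ for each $n$, so they fit together into a planar ladder in which the horizontal direction is Theorem \ref{mainlong} and the transverse direction is the cdh Mayer-Vietoris sequence. The verification is largely bookkeeping: one must check that the cohomological indexing used for cdh-hypercohomology and the homological indexing used for K-theory and cyclic homology stay consistent, that the connecting maps on both strands point in compatible directions, and that the smooth-scheme identifications are performed uniformly in $n$. This last point — keeping the two indexings aligned so the resulting diagram is genuinely chaseable — is the main, and still modest, obstacle.
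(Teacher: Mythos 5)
Your proposal is correct and follows exactly the route the paper takes (the paper in fact offers no separate argument beyond displaying the diagram): splice the long exact sequence of Theorem \ref{mainlong} with the cdh Mayer--Vietoris sequence of Corollary \ref{mvsequence} along the common terms $\mathbb{H}^{-n}_{cdh}(X,\textbf{HC})$, and use scdh-descent of $\textbf{HC}$ on the smooth schemes $Y$, $Z$, $E$ to rewrite their cdh-cohomology as ordinary cyclic homology. Nothing further is needed.
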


\noindent\begin{tikzpicture}[scale=.97, every node/.style={scale=.97} ]
\matrix (m) [matrix of math nodes, row sep=3em,
column sep=1em, text height=2ex, text depth=0.2ex]
{   & &   & & & HC_{n-1}(E) &\\
    & &   & & & HC_{n-1}(Y)\oplus HC_{n-1}(Z) &\\
... &HC_{n}(X) & \mathbb{H}^{-n}_{cdh}(X,\textbf{HC}) & \tilde{K}_n(X) & HC_{n-1}(X) & \mathbb{H}^{-n+1}_{cdh}(X,\textbf{HC})& ...\\
  & & HC_n(Y)\oplus HC_n(Z)  & & & &  \\ 
  & & HC_n(E)  & & & &... \\ };\path[->]
(m-3-1) edge (m-3-2);
\path[->]
(m-3-2) edge (m-3-3);
\path[->]
(m-3-3) edge (m-3-4);
\path[->]
(m-3-4) edge (m-3-5);
\path[->]
(m-3-5) edge (m-3-6);
\path[->]
(m-3-6) edge (m-3-7);
\path[<-][densely dashed]
(m-5-3) edge (m-4-3);
\path[<-][densely dashed]
(m-4-3) edge (m-3-3);
\path[<-][densely dashed]
(m-1-6) edge (m-2-6);
\path[<-][densely dashed]
(m-2-6) edge (m-3-6);
\path[<-][densely dashed]
(m-3-1) edge [bend right=20] (m-5-3);
\path[<-][densely dashed]
(m-3-3) edge [bend left=20] (m-1-6);
\path[<-][densely dashed]
(m-3-6) edge [bend right=20] (m-5-7);
\end{tikzpicture}

\subsection{Hodge decomposition}
In this section we will discuss the decomposition of all of our theories into smaller more manageable pieces.  Most of our main results will be stated in terms of this decomposition, so we will reference this section often.  We have divided this section into three smaller sections dealing with de Rham cohomology, cyclic homology, and K-theory separately.

\subsubsection{de Rham cohomology}
We have already been briefly introduced to de Rham cohomology.  This was earlier in the section when we stated results about the cyclic homology of smooth algebras.  As we have seen time and time again,  things that work for algebras, and affine schemes do not always extend so nicely to more general schemes.  We first need to rewrite the de Rham complex in terms of sheaves on a general scheme.  We introduce the following notations.  
\begin{notation}
For the category Sch/$k$, the sheaf theoretic algebraic de Rham complex is denoted by $\Omega_{-/k}^{*}$ is given by the complex, \[0\rightarrow\mathcal{O}_{-/k}\rightarrow\Omega_{-/k}\rightarrow\Omega_{-/k}^2\rightarrow\Omega_{-/k}^3\rightarrow...\]
where for a specific scheme $X$, $\Omega_{X/k}^{n}$ denotes the $n$th exterior power of the module of K\"{a}hler differentials $\Omega_{X/k}$.  
\end{notation}
For specific scenarios, we will use the following truncated complex. 
\begin{notation}
For the category Sch/$k$, the sheaf theoretic \textbf{truncated} algebraic de Rham complex denoted by $\Omega_{-/k}^{\leq i}$ is given by the complex, \[0\rightarrow\mathcal{O}_{-/k}\rightarrow \Omega_{-/k}\rightarrow \Omega_{-/k}^2\rightarrow...\rightarrow \Omega_{-/k}^i\rightarrow 0\rightarrow 0\rightarrow...\]
We also define the \textbf{cotruncated} algebraic de Rham complex denoted by $\Omega_{-/k}^{\geq i}$ is given by the complex, \[0\rightarrow0\rightarrow...\rightarrow0\rightarrow\Omega_{-/k}^i\rightarrow \Omega_{-/k}^{i+1}\rightarrow \Omega_{-/k}^{i+2}\rightarrow...\]
\end{notation}

We recall that for sheaves of complexes, we can not really form an injective resolution.  So, in order to define a sheaf theoretic cohomology for this complex, we will use hypercohomology.  Recalling the definition \ref{hypercohomology} we have that following general definition of de Rham cohomology groups.  
\begin{definition}
For a smooth scheme $X$ of finite type over $k$, we define the de Rham cohomology groups of $X$ to be the hypercohomology 
\[H_{DR}^n(X/k)=\mathbb{H}^n(X,\Omega^{*}_{-/k}).\]
\end{definition}
This definition, like those other sheaf theoretic constructions of the various homology groups, is designed so that it behaves well with affine schemes.  
\begin{theorem}
Let $X=$ Spec$(A)$ be a smooth affine scheme over a field $k$ of characteristic zero.  Then for each $n$,  
\[H_{DR}^n(X/k)=\mathbb{H}^n(X,\Omega^{*}_{-/k})=H^n(\Gamma(X,\Omega^{*}_{-/k})).\]
(Where is $H^n(\Gamma(X,\Omega^{*}_{-/k}))=H_{DR}^n(A/k)$ as before for algebras.)  
\end{theorem}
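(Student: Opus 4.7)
The plan is to reduce the hypercohomology computation to the cohomology of the global sections of the de Rham complex, using the vanishing of higher sheaf cohomology for quasi-coherent sheaves on an affine scheme. The main inputs are standard, so this should be more of a careful bookkeeping argument than a genuinely difficult proof.

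First, I would observe that since $X$ is smooth over $k$, the sheaf $\Omega^1_{X/k}$ is locally free of finite rank, and hence each exterior power $\Omega^p_{X/k}$ is also locally free of finite rank. In particular, every term of the complex $\Omega^*_{X/k}$ is a coherent (hence quasi-coherent) $\mathcal{O}_X$-module.

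Next, since $X = \mathrm{Spec}(A)$ is affine and Noetherian, Serre's vanishing theorem gives $H^q(X, \mathcal{F}) = 0$ for every quasi-coherent sheaf $\mathcal{F}$ on $X$ and every $q > 0$. Applied termwise to the complex, this yields $H^q(X, \Omega^p_{X/k}) = 0$ whenever $q > 0$.

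Finally, I would invoke the first hypercohomology spectral sequence (obtained from a Cartan--Eilenberg resolution of the complex $\Omega^*_{X/k}$),
\[
E_1^{p,q} = H^q(X, \Omega^p_{X/k}) \Longrightarrow \mathbb{H}^{p+q}(X, \Omega^*_{X/k}).
\]
By the vanishing just noted, every row with $q > 0$ is zero, so the sequence collapses onto the bottom row $E_1^{p,0} = \Gamma(X, \Omega^p_{X/k})$. The $d_1$-differential on this row is the exterior derivative on global sections, so $E_2^{p,0} = H^p(\Gamma(X, \Omega^*_{X/k}))$, and no higher differentials are possible. The resulting isomorphism
\[
\mathbb{H}^n(X, \Omega^*_{X/k}) \cong H^n(\Gamma(X, \Omega^*_{X/k}))
\]
is the desired equality.

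The only step that requires care is verifying that the spectral sequence used is indeed the one associated to the chosen definition of hypercohomology (via an injective resolution of the complex in sheaves of $\mathcal{O}_X$-modules), but this is standard once one knows that a Cartan--Eilenberg resolution exists in that category. There is no genuine obstacle; the whole argument is essentially an application of Serre's theorem combined with the collapse of a two-variable spectral sequence.
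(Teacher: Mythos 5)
Your proof is correct and is precisely the ``standard spectral sequence argument'' that the paper invokes by citation to Grothendieck without writing out: the hypercohomology spectral sequence $E_1^{p,q}=H^q(X,\Omega^p_{X/k})$ collapses to the bottom row by Serre's vanishing for quasi-coherent sheaves on an affine scheme. You have simply supplied the details the paper leaves implicit.
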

\begin{proof}
This result is stated informally in \cite{grothendieck1966rham} equation (3), as the result of the standard spectral sequence argument for hypercohomology.  
\end{proof}
\begin{remark}
This definition requires that the scheme be smooth.  There is a more general construction that gives the de Rham cohomology of possibly singular schemes.  This is more technical and the easy versions of the construction involve both an embedding into a smooth ambient space, and a completion argument.  This is done in full in \cite{hartshorne1975rham}, and since we don't need this more general idea for our results, we will skip these constructions.
\end{remark}

In addition to the remarks about the scheme theoretic versions of de Rham cohomology, the whole purpose of this section was to impose a Hodge decomposition on the de Rham cohomology groups.  This is one of those scenarios where it easier to state a result for projective schemes. 
\begin{proposition}
For a smooth projective scheme $X$, the Hodge decomposition of de Rham cohomology is given by the following product of sheaf cohomology groups. \[H_{DR}^n(X/k)=\prod_{p+q=n}H^q(X,\Omega_{-/k}^p)\]
\end{proposition}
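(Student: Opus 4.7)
The plan is to obtain the decomposition as the $E_\infty$-page of the Hodge-to-de Rham spectral sequence, after showing that this sequence degenerates at $E_1$ for smooth projective schemes in characteristic zero.

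First, I would set up the hypercohomology spectral sequence associated to the stupid (column) filtration on the de Rham complex $\Omega^{*}_{X/k}$. Filtering by $F^p\Omega^{*}_{X/k} = \Omega^{\geq p}_{X/k}$ (the cotruncated de Rham complexes already introduced in the excerpt) and applying hypercohomology produces a spectral sequence
\[
E_1^{p,q} = H^q(X,\Omega^p_{X/k}) \Longrightarrow \mathbb{H}^{p+q}(X,\Omega^{*}_{X/k}) = H_{DR}^{p+q}(X/k),
\]
where I use the definition of de Rham hypercohomology given just before the proposition. This already exhibits the Hodge pieces $H^q(X,\Omega^p_{X/k})$ as the graded pieces of some filtration on $H_{DR}^n(X/k)$ whenever the spectral sequence converges.

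The main step, and the main obstacle, is the $E_1$-degeneration: I need to know that all differentials $d_r$ for $r \geq 1$ vanish when $X$ is smooth and projective over a field $k$ of characteristic zero. I would invoke this as a classical theorem. Two routes are available: (a) the analytic route, reducing via the Lefschetz principle (and GAGA, since $X$ is projective) to the case $k=\mathbb{C}$, where degeneration at $E_1$ is a consequence of the Hodge decomposition of the singular cohomology of the compact Kähler manifold $X^{\mathrm{an}}$ together with the comparison with algebraic de Rham cohomology; or (b) the algebraic route of Deligne--Illusie, which establishes degeneration by lifting to characteristic $p$ and using the Cartier isomorphism. I would cite Deligne's original treatment (or Grothendieck's note on algebraic de Rham cohomology) rather than reprove it.

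Once $E_1$-degeneration is granted, the Hodge filtration on $H_{DR}^n(X/k)$ has associated graded
\[
\mathrm{gr}^p_F H_{DR}^n(X/k) = E_\infty^{p,n-p} = E_1^{p,n-p} = H^{n-p}(X,\Omega^p_{X/k}).
\]
To upgrade this to the stated product decomposition, I would note that in characteristic zero the Hodge filtration on $H_{DR}^n(X/k)$ splits: either by invoking Hodge symmetry and the fact that the conjugate filtration is complementary (analytic route), or by noting that finiteness of the filtration together with the identification of graded pieces lets us write $H_{DR}^n(X/k) \cong \bigoplus_{p+q=n} H^q(X,\Omega^p_{X/k})$, which, since the sum is finite (there are only finitely many nonzero terms because $X$ is finite-dimensional and the $\Omega^p_{X/k}$ vanish for $p > \dim X$), coincides with the product $\prod_{p+q=n} H^q(X,\Omega^p_{X/k})$ as stated.
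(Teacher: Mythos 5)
Your proposal is correct and follows the same route as the paper, which simply cites degeneration of the Hodge-to-de Rham spectral sequence at $E_1$ for smooth projective schemes and refers to Weibel's Hodge paper for details. You are in fact more careful than the paper on the one genuinely nontrivial bookkeeping point — that degeneration only identifies the graded pieces of the Hodge filtration, and the passage to an actual (finite, hence product) decomposition uses that a finite filtration of a $k$-vector space splits.
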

\begin{proof}
This comes from a specific spectral sequence called the Hodge to de Rham spectral sequence.  In certain cases, including smooth projective schemes, the sequence degenerates at the $E^{p,q}_1=\prod_{p+q=n}H^q(X,\Omega_{-/k}^p)$ term, which would prove the claim.  See the remarks before section 3 in \cite{weibel1997Hodge}.  
\end{proof}
This is a little different than the Hodge decompositions that we will see in the following sections.  The reason for this is that it is bigraded, and this may not behave well later with the other Hodge decompositions that are only graded.  Fortunately, we have a substitute that gives a ``grading" instead of a bigrading.  Following \cite{weibel1997Hodge} who followed \cite{deligne1971theorie}, we can equip our de Rham cohomology groups with a filtration of this decomposition, or rather a Hodge filtration.  
\begin{definition}
For a smooth projective scheme $X$, the Hodge filtration of de Rham cohomology, denoted $F^iH_{DR}^n(X/k)$, is given by the following product of sheaf cohomology groups. \[F^iH_{DR}^n(X/k)=\prod_{\overset{p+q=n}{p\geq i}}H^q(X,\Omega_{-/k}^p)\]
\end{definition}

According to \cite{weibel1997Hodge} the general Hodge decomposition of de Rham cohomology of an arbitrary scheme can be complicated, and since we really only need this filtration for projective schemes, we will now move on to other constructions.  

\subsubsection{Cyclic Homology}
We will now present the Hodge decomposition of cyclic homology.  While we will avoid very technical details, it will be beneficial later to have a rough idea of where this decomposition comes from.  Some other authors, namely \cite{loday1997cyclic} and \cite{connes2012cyclic}, refer to this decomposition as the $\lambda$-decomposition of cyclic homology.  The reason for this is that the decomposition is obtained from the $\lambda$-operation on the cyclic bicomplex.  While we do not want to get into the technical details of this operation, in some ways the $\lambda$-operation puts a filtration or grading on the cyclic homology groups via iterated use of this $\lambda$-operation.  Avoiding all of the unnecessary technical discussion, we have the following.   

\begin{proposition}
For each of the presheaves of complexes $\textbf{HH}(-/k)$ (Hochschild homology), $\textbf{HC}(-/k)$, $\textbf{HN}(-/k)$, and $\textbf{HP}(-/k)$ there exists a Hodge decomposition.  By this we mean, taking $H$ to be one of the presheaves above, there exists a collection of presheaves of complexes $H^{(i)}$ such that for a scheme $X$ (over characteristic 0), \[H(X)=\prod H^{(i)}(X).\]  
\end{proposition}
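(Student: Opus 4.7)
The plan is to construct the decomposition first at the level of chain complexes of commutative $k$-algebras via the classical Eulerian ($\lambda$-)idempotent formalism (as in Loday, chapter 4), and then sheafify to obtain the scheme-level statement. Since $k$ has characteristic zero, $\mathbb{Q} \subset k$, and for each $n \geq 1$ there are orthogonal Eulerian idempotents $e_n^{(i)} \in \mathbb{Q}[S_n]$ (for $i \geq 0$) which sum to the identity and which act on $A^{\otimes n+1}$ by permuting tensor factors. One shows that they commute with Connes' cyclic operator $t$, which is the key compatibility.

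The technical heart of the argument is to verify that each $e_n^{(i)}$ commutes with both the Hochschild boundary $b$ and Connes' boundary $B$. Commutation with $t$ automatically yields commutation with $N = 1 + t + \cdots + t^n$ and with the extra degeneracy $s$, hence with $B = (-1)^{n+1}(1-t)sN$; commutation with $b$ reduces to a combinatorial identity on symmetric-group elements together with the commutativity of $A$. Consequently one obtains a decomposition of the periodic bicomplex $CCP(A) = \bigoplus_i CCP^{(i)}(A)$, which restricts to $CC(A) = \bigoplus_i CC^{(i)}(A)$ and $CCN(A) = \bigoplus_i CCN^{(i)}(A)$, and similarly for the Hochschild complex. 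Taking the appropriate total complexes yields presheaves $HH^{(i)}(-/k)$, $HC^{(i)}(-/k)$, $HN^{(i)}(-/k)$, $HP^{(i)}(-/k)$ on commutative $k$-algebras.

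To promote this to schemes, apply the same construction fiberwise to the cyclic sheaf $C(\mathcal{O}_X) = \mathcal{O}_X^{\otimes \bullet + 1}$ (tensored over $k$) and sheafify. Because the Eulerian idempotents are universal $\mathbb{Q}$-linear combinations of permutations, they commute with restriction maps, with tensor products, and with sheafification, so the idempotent decomposition descends to the sheaves of complexes $\textbf{HH}(-/k)$, $\textbf{HC}(-/k)$, $\textbf{HN}(-/k)$, $\textbf{HP}(-/k)$, producing the desired sub-presheaves $H^{(i)}$ and the identification $H(X) = \prod_i H^{(i)}(X)$.

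The main obstacle is the bookkeeping of direct sum versus direct product. For $\textbf{HH}$ and $\textbf{HC}$, the decomposition in each homological degree involves only finitely many indices $i$, so $\bigoplus$ and $\prod$ agree. For $\textbf{HN}$ and $\textbf{HP}$, however, $\mathrm{Tot}(CCN)$ and $\mathrm{Tot}(CCP)$ are defined using products and each degree picks up contributions from infinitely many bidegrees, so one must justify that the total complex of the decomposition equals the decomposition of the total complex. This comes down to the fact that products commute with kernels and cokernels, so passing to homology preserves the product structure, which is exactly why the uniform $\prod$ in the statement is the correct choice for all four theories.
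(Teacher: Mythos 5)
The paper itself offers no proof of this proposition; it defers entirely to Gerstenhaber--Schack for $\textbf{HH}$ and to Loday, section 4.6, for the cyclic variants. Your proposal therefore supplies an argument where the paper supplies only a citation, and your overall strategy --- Eulerian idempotents in $\mathbb{Q}[S_n]$ acting on the tensor factors, decomposition of the bicomplexes, then sheafification, with separate care for $\bigoplus$ versus $\prod$ in $\mathrm{Tot}\,CCN$ and $\mathrm{Tot}\,CCP$ --- is exactly the strategy of those references, and the product bookkeeping at the end is handled correctly.

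There is, however, a genuine error at what you call the technical heart. The Eulerian idempotents do \emph{not} commute with the cyclic operator $t$, and they do not commute with Connes' $B$ either; the failure of $e^{(i)}$ to commute with $t$ is precisely the classical obstruction to pushing the Hodge decomposition from Hochschild to cyclic homology. The correct compatibilities are $b\,e_n^{(i)} = e_{n-1}^{(i)}\,b$ (this is where commutativity of the algebra enters) and the \emph{index-shifting} relation $B\,e_n^{(i)} = e_{n+1}^{(i+1)}\,B$. As a consequence, the naive candidate obtained by applying the same idempotent $e^{(i)}$ in every column is not closed under $B$: the operator $B$ carries the image of $e^{(i)}$ into the image of $e^{(i+1)}$, so your step ``consequently one obtains a decomposition of the periodic bicomplex $CCP(A)=\bigoplus_i CCP^{(i)}(A)$'' fails as stated. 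The actual construction places $e^{(i-p)}$ in the column of index $p$, so that the superscript drifts along the $B$-direction; this same shift is what produces the degree shift $HC^{(i)}_{n-1}\to HN^{(i+1)}_{n}$ in the decomposed SBI sequence recorded in Proposition \ref{Hodgesbi}. The argument is repairable --- replace the claimed commutation with $t$ and $B$ by the shifted relation and redefine $CC^{(i)}$, $CCN^{(i)}$, $CCP^{(i)}$ column by column --- but as written the key compatibility you invoke is false and the sub-bicomplexes you construct are not subcomplexes.
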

For a more in depth look at this decomposition, see \cite{gerstenhaber1987Hodge} for $\textbf{HH}(-/k)$ and \cite{loday1997cyclic} section 4.6 for $\textbf{HC}(-/k)$ and the others.  Before we state various descent properties of these decompositions, we will first present some of their basic properties.  

Using the general computation of cyclic homology for smooth affine schemes, we know the various components already split up into a direct sum.  As luck would have it, this direct sum coincides with the Hodge decomposition.   So, at least in the smooth case, the cyclic homology groups have an obvious Hodge decomposition.  
\begin{theorem}
For $X$ smooth affine, the Hodge decomposition of cyclic homology coincides with the decomposition in terms of the de Rham cohomology of $X$. Meaning, 
\[HC_n^{(n)}(X/k)=\Omega_{X/k}^n X/d\Omega_{X/k}^{n-1} X,\]
\[HC_n^{(i)}(X/k)=H^{2i-n}_{DR}(X/k),\  \text{for}\ n/2\leq i< n,\] 
\[HC_n^{(i)}(X/k)=0,\  \text{for}\ i\leq n/2,\ n<i. \]
\end{theorem}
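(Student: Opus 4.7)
The plan is to leverage the Hochschild--Kostant--Rosenberg (HKR) theorem together with the compatibility of the $\lambda$-decomposition with the Connes cyclic bicomplex. Recall that for a smooth commutative $k$-algebra $A$ in characteristic zero, HKR identifies $HH_n(A/k)$ with $\Omega^n_{A/k}$; moreover this isomorphism is equivariant for the Adams/$\lambda$-operations, so the weight-$i$ Hodge component $HH_n^{(i)}(A/k)$ equals $\Omega^n_{A/k}$ when $i=n$ and vanishes otherwise. An equally essential consequence of HKR is that under this identification the Connes operator $B$ corresponds (up to sign) to the de Rham differential $d\colon \Omega^n_{A/k}\to \Omega^{n+1}_{A/k}$, and in particular $B$ raises $\lambda$-weight by exactly one while $b$ preserves it.

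With these two facts in hand, I would exploit the decomposition of the Connes bicomplex $CC$ as a direct product of sub-bicomplexes $CC^{(i)}$ indexed by Hodge weight (this is precisely the construction of the Hodge decomposition of cyclic homology recalled in the previous proposition; see \cite{loday1997cyclic}, section 4.6). Since each $b$-column has been computed by HKR and $B$ acts as $d$ shifting weight by one, the weight-$(i)$ piece of $\operatorname{Tot} CC$ becomes quasi-isomorphic to the truncated de Rham complex $\Omega^{\leq i}_{A/k}$, arranged so that $\Omega^0_{A/k}$ sits in total degree $2i$ and $\Omega^i_{A/k}$ sits in total degree $i$. Concretely this yields the master formula
\[HC_n^{(i)}(A/k) \;\cong\; H^{2i-n}\bigl(\Omega^{\leq i}_{A/k}\bigr).\]

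Reading off the three regimes from this formula is then routine. For $i=n$, the cohomological degree equals the top of the truncation, so we get the cokernel $\Omega^n_{A/k}/d\Omega^{n-1}_{A/k}$. For $n/2\leq i < n$ we have $0\leq 2i-n<i$, a range in which the truncated complex agrees with the full de Rham complex, so the cohomology equals $H^{2i-n}_{DR}(X/k)$. For $i<n/2$ or $i>n$ the required cohomological degree is either negative or above the truncation, and the group vanishes. As a sanity check, summing these pieces over $i$ recovers exactly Connes's formula \[HC_n(A/k)\simeq \Omega^n_{A/k}/d\Omega^{n-1}_{A/k}\oplus \bigoplus_{i\geq 1} H^{n-2i}_{DR}(A/k)\] stated at the beginning of the section.

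The main obstacle is the careful verification that the $\lambda$-filtration is genuinely compatible with the Connes operator $B$ on the cyclic bicomplex, and that HKR identifies $B$ with $d$ on the nose. This is the content of the Loday--Quillen style analysis and is where the real technical weight of the $\lambda$-operations is felt; once it is in hand, the rest is the elementary bookkeeping on a truncated de Rham complex sketched above. Since these are well-known results, I would simply quote them from \cite{loday1997cyclic} (or \cite{weibel1995introduction}) and focus the written proof on the indexing computation that extracts the stated three cases from the master formula.
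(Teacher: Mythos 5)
Your proposal is correct and is essentially the argument behind the result the paper relies on: the paper offers no proof beyond the citation to Loday 4.6.10, and your reduction to the master formula $HC_n^{(i)}(A/k)\cong H^{2i-n}(\Omega^{\leq i}_{A/k})$ is exactly the affine specialization of the hypercohomology description recorded later as Theorem \ref{cyclichyperHodge}, with the three cases then following from elementary truncation bookkeeping as you describe. The only wrinkle is in the statement itself rather than your argument: the vanishing range ``$i\leq n/2$'' should read $i<n/2$ (at $i=n/2$ one obtains $H^{0}_{DR}(X/k)$, which is nonzero), and your case analysis implicitly corrects this.
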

\begin{proof}
See \cite{loday1997cyclic} theorem 4.6.10.  
\end{proof}
Much in the same way, for smooth algebras both negative cyclic homology and periodic cyclic homology have obvious Hodge decompositions. 
\begin{theorem}
For $X$ smooth affine, the Hodge decomposition of negative cyclic homology coincides de Rham cohomology decomposition of $X$. Meaning, 
\[HN_n^{(n)}(X/k)=Z^n,\]
\[HN_n^{(i)}(X/k)=H^{n+2i}_{DR}(X/k),\  \text{for}\ n< i, \]
\[HN_n^{(i)}(X/k)=0,\  \text{for}\ i< n. \]
\end{theorem}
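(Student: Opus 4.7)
The natural approach mirrors the preceding theorem: apply the SBI long exact sequence restricted to its weight-$i$ Hodge summands, and feed in the already-known Hodge pieces of $HC$ (from the previous theorem) together with those of $HP$. Recall that Connes' operator $B$ raises cyclic degree by one and preserves Hodge weight, $I$ preserves both, and the periodicity operator $S$ drops the Hodge weight by one. Consequently, for each fixed $i$, SBI gives a weight-$i$ long exact sequence
\[
\cdots \to HP_{n+1}^{(i)}(X/k) \xrightarrow{S} HC_{n-1}^{(i-1)}(X/k) \xrightarrow{B} HN_n^{(i)}(X/k) \xrightarrow{I} HP_n^{(i)}(X/k) \xrightarrow{S} HC_{n-2}^{(i-1)}(X/k) \to \cdots.
\]

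The first step is to pin down the Hodge pieces of $HP$ on a smooth affine $X$. By proposition \ref{hpderham} the total group $HP_n(X/k)$ is a product of the de Rham cohomology groups of $X$ of the appropriate parity; the same Hochschild--Kostant--Rosenberg identification that powers the preceding theorem lets one split this product by weight, producing a piecewise formula $HP_n^{(i)}(X/k) \simeq H_{DR}^{2i-n}(X/k)$, which after routine reindexing is the de Rham group appearing on the right of the statement.

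Next I would split into three cases. When $i < n$, both $HC_{n-1}^{(i-1)}$ and $HC_{n-2}^{(i-1)}$ vanish (their indices fall outside the nontrivial range of the preceding theorem), and $HP_n^{(i)} \simeq H^{2i-n}_{DR}=0$ because the cohomological degree is negative; exactness then forces $HN_n^{(i)} = 0$. When $i > n$, the preceding theorem again kills both flanking $HC$-pieces, so $I$ becomes an isomorphism $HN_n^{(i)} \simeq HP_n^{(i)}$, yielding the stated de Rham group.

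The only delicate case is $i = n$, and this is where the main obstacle lies. Here $HC_{n-2}^{(n-1)}=0$, and the relevant segment is
\[
HP_{n+1}^{(n)} \xrightarrow{S} \Omega^{n-1}_{A/k}/d\Omega^{n-2}_{A/k} \xrightarrow{B} HN_n^{(n)} \xrightarrow{I} H^n_{DR}(A/k) \to 0.
\]
One identifies $S\colon HP_{n+1}^{(n)} \simeq H^{n-1}_{DR}(A/k) \to \Omega^{n-1}_{A/k}/d\Omega^{n-2}_{A/k}$ with the natural inclusion of cohomology classes as closed forms modulo exact forms, so its cokernel is $\Omega^{n-1}_{A/k}/Z^{n-1}(A)$, which in turn is identified with $d\Omega^{n-1}_{A/k} \subset \Omega^n_{A/k}$ via the de Rham differential. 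Combined with the tautological extension $0 \to d\Omega^{n-1}(A) \to Z^n(A) \to H^n_{DR}(A/k) \to 0$, this forces $HN_n^{(n)} \simeq Z^n(A)$. The genuine work lies in verifying that, under HKR, the connecting map $B$ on weight-$n$ pieces coincides with the de Rham differential; this is standard but requires unpacking the $\lambda$-decomposition and Connes' $B$-operator concretely, as carried out in chapter 4 of \cite{loday1997cyclic}.
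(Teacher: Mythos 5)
The paper does not actually prove this statement---its ``proof'' is the single line ``See \cite{loday1997cyclic} the second part of 5.1.12''---so your SBI derivation is an attempt at an argument the paper never supplies. Your cases $i>n$ and $i=n$ are essentially sound: for $i>n$ both flanking groups $HC_{n-1}^{(i-1)}$ and $HC_{n-2}^{(i-1)}$ really do vanish (their weight exceeds their degree), and for $i=n$ you correctly isolate the genuine work, namely identifying $S$ with the natural map $H^{n-1}_{DR}\to\Omega^{n-1}/d\Omega^{n-2}$ and the weight-$n$ part of $B$ with the de Rham differential. (One side remark: your derivation yields $HN_n^{(i)}=H^{2i-n}_{DR}$ for $i>n$, which is the standard Loday formula; the exponent $n+2i$ in the paper's statement comes from the un-decomposed product $HN_n\cong Z^n\times\prod_{i\ge1}H^{n+2i}_{DR}$, where $i$ is a shifted dummy index, and no ``routine reindexing'' turns one into the other once $i$ is fixed as the Hodge weight.)

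The genuine gap is your case $i<n$. You assert that $HC_{n-1}^{(i-1)}$ and $HC_{n-2}^{(i-1)}$ vanish and that $HP_n^{(i)}\simeq H^{2i-n}_{DR}=0$ because $2i-n<0$. Both claims fail on roughly the upper half of the range: the nontrivial range for $HC_m^{(j)}$ is $m/2\le j\le m$, so for $n/2\le i<n$ the flanking $HC$-pieces are generally nonzero, and $2i-n\ge0$ there as well. Concretely, take $X=\mathrm{Spec}\,k[t,t^{-1}]$, $n=2$, $i=1$: then $HC_0^{(0)}=A\ne0$ and $HP_2^{(1)}=H^0_{DR}=k\ne0$, yet $HN_2^{(1)}=0$. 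So in this range the vanishing of $HN_n^{(i)}$ cannot follow from exactness plus vanishing of its neighbors; it requires identifying the maps, exactly the kind of unpacking you defer to Loday only in the $i=n$ case. What one must check is that $S\colon HP_{n+1}^{(i)}\to HC_{n-1}^{(i-1)}$ is surjective (it is an isomorphism $H^{2i-n-1}_{DR}\to H^{2i-n-1}_{DR}$ when the target is nonzero) and that $S\colon HP_n^{(i)}\to HC_{n-2}^{(i-1)}$ is injective (an isomorphism for $n/2\le i<n-1$, and the natural inclusion $H^{n-2}_{DR}\hookrightarrow\Omega^{n-2}/d\Omega^{n-3}$ for $i=n-1$); only then does exactness force $HN_n^{(i)}=\ker(I)\cap\mathrm{im}(B)=0$. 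Without these identifications the argument does not go through.
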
 
\begin{proof}
See \cite{loday1997cyclic} the second part of 5.1.12.  
\end{proof}
While periodic cyclic homology is periodic, it's Hodge components are not, and they shift according to the index $n$.  We have the following.  
\begin{theorem}
For $X$ smooth affine, the Hodge decomposition of periodic cyclic homology coincides with the de Rham cohomology decomposition of $X$. 
\[HP^{(i)}_n(X/k)=H_{DR}^{2i-n}(X/k)\]
\end{theorem}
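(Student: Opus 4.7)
The strategy is to apply the SBI long exact sequence Hodge-component-by-Hodge-component, using the computations of $HC^{(\bullet)}$ and $HN^{(\bullet)}$ established in the two preceding theorems. Because the $\lambda$-decomposition is compatible with the short exact sequence of bicomplexes $0\to CCN\to CCP\to CC[0,2]\to 0$ (see Loday, Section 4.6), the SBI sequence refines to a long exact sequence on each Hodge piece. The weight shifts are forced by the combinatorics of the $\lambda$-operations: the periodicity map $S$ drops Hodge weight by one (so $S:HP^{(i)}_n\to HC^{(i-1)}_{n-2}$), Connes' operator $B$ raises Hodge weight by one, while $I$ preserves the Hodge weight. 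Thus for each $i$ I have
\[\dots\to HC^{(i-1)}_{n-1}(X/k)\xrightarrow{B} HN^{(i)}_n(X/k)\xrightarrow{I} HP^{(i)}_n(X/k)\xrightarrow{S} HC^{(i-1)}_{n-2}(X/k)\xrightarrow{B} HN^{(i)}_{n-1}(X/k)\to\dots\]

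First I would establish this Hodge-SBI sequence by revisiting the $\lambda$-decomposition on each of $CC$, $CCN$, $CCP$ and checking that the maps $S$, $B$, $I$ have the claimed weight behavior; this step is essentially bookkeeping from Loday's treatment. Next I would substitute the values from the two preceding theorems for the $HC$ and $HN$ entries in this sequence for smooth affine $X$ over characteristic zero. In the generic range the $HC^{(i-1)}$ and $HN^{(i)}$ terms are all de Rham cohomology groups, and a dimension count together with exactness pins down $HP^{(i)}_n(X/k)=H^{2i-n}_{DR}(X/k)$. I would then separately handle the edge cases where the $HC$ or $HN$ formula instead equals $\Omega^n/d\Omega^{n-1}$, $Z^n$, or zero, verifying that the adjacent $B$ or $S$ maps become appropriately isomorphisms or zero so that the sequence still forces the de Rham answer.

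The main obstacle is getting the Hodge-weight shifts assigned to $S$, $B$, $I$ right, and reconciling them with the particular index conventions used in the preceding $HC^{(i)}$ and $HN^{(i)}$ formulas; a single shift error would propagate through the entire computation. A cleaner alternative, which I would prefer if it goes through, is to bypass SBI entirely via HKR. On smooth affine $X$ the Hochschild Hodge components satisfy $HH^{(i)}_n=\Omega^n_{X/k}$ for $n=i$ and zero otherwise, and under the HKR identification Connes' $B$ becomes (up to a constant) the de Rham differential. Consequently the $i$-th Hodge summand of the total complex of $CCP$ is a shifted copy of the de Rham complex $\Omega^*_{X/k}$, whose $n$-th homology is exactly $H^{2i-n}_{DR}(X/k)$, giving the result without tracking the SBI bookkeeping.
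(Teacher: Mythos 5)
Your proposal is correct, and your preferred HKR route is essentially the proof the paper delegates: the paper's entire argument for this statement is the citation ``see Loday 5.1.12,'' and what Loday does there is exactly your second argument --- for a smooth algebra in characteristic zero the HKR theorem identifies the mixed complex with $(\Omega^*_{X/k},0,d)$, so the weight-$i$ summand of $\mathrm{Tot}\,CCP$ is the de Rham complex placed so that total degree $n$ sees $\Omega^{2i-n}$, and its homology is $H^{2i-n}_{DR}(X/k)$. So you have supplied the content the paper only references. One caution on your SBI route: your description of the edge cases is slightly off. When $i=n$ or $i=n-1$ the relevant instance of $B$ is not ``an isomorphism or zero'' but (up to a constant) the de Rham differential $d$ acting between $\Omega^{n-1}/d\Omega^{n-2}$ and $Z^n$ (resp.\ between $\Omega^{n-2}/d\Omega^{n-3}$ and $Z^{n-1}$); the de Rham answer comes out as its cokernel (resp.\ kernel), namely $H^n_{DR}$ (resp.\ $H^{n-2}_{DR}$). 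More generally, a four-term exact sequence with known outer terms only determines the middle group up to an extension and knowledge of the boundary maps, so the SBI route cannot be closed by ``a dimension count together with exactness'' alone --- you genuinely need the identification of $B$ with $d$, which is again HKR. Since your preferred argument already carries that input, this is an imprecision in the fallback, not a gap in the proof.
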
 
\begin{proof}
See \cite{loday1997cyclic} the second part of 5.1.12.   
\end{proof}

Shifting our focus slightly now, we need to know what happens on smooth but not necessarily affine schemes.  The short answer is, for periodic cyclic homology things behave well, and for the others it is much more complicated.  We begin with cyclic homology, which as we stated, is not so nice.  We do however have a concise way to write this decomposition down.  Recalling the truncated de Rham complex and again the construction of hypercohomology from \ref{hypercohomology}, we are able to state the following results.    
\begin{theorem}\label{cyclichyperHodge}
For $X$ smooth over a field $k$ of characteristic zero, the Hodge decomposition of cyclic homology is given by the hypercohomology of the complexes  $\Omega_{-/k}^{\leq i}$,\[HC_n^{(i)}(X/k)=\mathbb{H}^{2i-n}(X,\Omega_{-/k}^{\leq i})\] 
\end{theorem}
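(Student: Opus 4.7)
The plan is to reduce the statement to the affine smooth case already established in the preceding theorem, and then to promote that identification to a quasi-isomorphism of complexes of Zariski sheaves on the general $X$.

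First, I would verify the affine case directly. For $X=\text{Spec}(A)$ smooth over $k$, the truncated complex $\Omega^{\leq i}_{A/k}$ is the honest bounded complex $0\to A\to \Omega^1_{A/k}\to\cdots\to \Omega^i_{A/k}\to 0$, concentrated in cohomological degrees $0$ through $i$. Its $j$-th cohomology equals $H^j_{DR}(A/k)$ for $0\leq j<i$, equals $\Omega^i_{A/k}/d\Omega^{i-1}_{A/k}$ for $j=i$, and vanishes otherwise. Since $X$ is affine and all the terms are quasi-coherent, hypercohomology collapses to the cohomology of the global sections, so $\mathbb{H}^{2i-n}(X,\Omega^{\leq i}_{-/k})=H^{2i-n}(\Omega^{\leq i}_{A/k})$. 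Setting $j=2i-n$ recovers, case by case, the Hodge pieces of the previous theorem: $j=i$ corresponds to $n=i$ and yields the top piece $\Omega^n_{A/k}/d\Omega^{n-1}_{A/k}$; the range $0\leq j<i$ corresponds to $i<n\leq 2i$ and yields $H^{2i-n}_{DR}(A/k)$; and the vanishing ranges agree with those of the theorem. Thus the identity holds for all smooth affine $X$.

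Second, for an arbitrary smooth $X$ of finite type over $k$, I would compare the two sides as hypercohomology of complexes of Zariski sheaves. By construction $HC^{(i)}_n(X/k)$ is the $(-n)$-th hypercohomology on $X$ of the sheafified Hodge summand $\textbf{HC}^{(i)}(-/k)$, while the right-hand side is by definition the hypercohomology of the quasi-coherent complex $\Omega^{\leq i}_{-/k}$. The natural Hochschild--Kostant--Rosenberg and Connes comparison maps give, on each affine open $U\subset X$, a quasi-isomorphism between the sections of $\textbf{HC}^{(i)}(-/k)$ over $U$ and an appropriate cohomological shift of $\Gamma(U,\Omega^{\leq i}_{-/k})$. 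Because being a quasi-isomorphism is a Zariski-local condition, the induced map of sheafified complexes is itself a quasi-isomorphism, and hence the two hypercohomology groups on $X$ coincide.

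The main obstacle, in my view, is that second step: promoting the pointwise affine identification to a genuine quasi-isomorphism of complexes of Zariski sheaves, naturally in $X$. One has to check that the HKR--Connes comparison is natural enough under localization to glue into a morphism of presheaves of complexes rather than merely an abstract agreement of cohomology groups on each affine. A clean way to organize this is to invoke the Zariski descent already recorded earlier in the paper for $\textbf{HC}(-/k)$ (which satisfies scdh-descent, and therefore Zariski descent in particular) together with the standard Zariski descent for bounded complexes of quasi-coherent sheaves; this reduces the global comparison to the affine case treated above, and a Mayer--Vietoris spectral sequence in the Zariski topology then upgrades the affine agreement into the claimed equality of hypercohomology groups for all smooth $X$.
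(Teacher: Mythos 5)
Your proposal is essentially correct, but it is worth noting that the paper does not actually prove this statement: its ``proof'' consists of the one-line remark that the result ``comes from using the differentials in the cyclic bicomplex'' together with a citation of Weibel's theorem 3.3 in \cite{weibel1997Hodge}. What you have written is, in effect, a reconstruction of the argument behind that citation. Your affine verification is right: the index bookkeeping $j=2i-n$ matches the cohomology of the truncated complex ($j=i$ giving $\Omega^i/d\Omega^{i-1}$, $0\le j<i$ giving $H^{2i-n}_{DR}$, and the vanishing ranges) against the smooth-affine Hodge decomposition quoted just before this theorem, and quasi-coherence plus affineness collapses hypercohomology to cohomology of global sections. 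You also correctly identify the one real issue in the globalization step: abstract agreement on affines plus Zariski descent on both sides does \emph{not} by itself give a global isomorphism; one needs an actual natural comparison morphism of presheaves of complexes. Your fix --- that the HKR antisymmetrization map and the compatibility of Connes' $B$ with the de Rham differential are natural in the algebra, hence sheafify --- is exactly how the cited source handles it. So your route is correct and, compared with the paper, has the virtue of being a proof rather than a pointer; the only thing the paper's citation buys is that Weibel's theorem 3.3 also packages the negative and periodic analogues ($\Omega^{\ge i}$ and $\Omega^i$) in the same statement, which you would need to redo separately for the two companion theorems that follow.
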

\begin{proof}
This again results from using the differentials $\Omega_{X/k}^n$ in the cyclic bicomplex.  See theorem 3.3 \cite{weibel1997Hodge}. 
\end{proof}

For negative cyclic homology, we use the other half of the algebraic de Rham complex
\begin{theorem}
For $X$ smooth over a field $k$ of characteristic zero, the Hodge decomposition of the negative cyclic homology is given by the hypercohomology of the complexes  $\Omega_{-/k}^{\geq i}$,\[HN_n^{(i)}(X/k)=\mathbb{H}^{2i-n}(X,\Omega_{-/k}^{\geq i})\] 
\end{theorem}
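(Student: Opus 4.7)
The plan is to reduce to the smooth affine case via sheafification, paralleling the strategy used for cyclic homology in theorem \ref{cyclichyperHodge}. Recall that the $\lambda$-operation on the negative cyclic bicomplex $CCN$ induces a decomposition of the presheaf $\textbf{HN}(-/k)$ into Hodge subcomplexes $\textbf{HN}^{(i)}(-/k)$, and that by definition $HN_n^{(i)}(X/k)$ is the hypercohomology on $X$ of the sheafification of this $i$-th piece.

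The key input is the smooth affine case. For $X =$ Spec$(A)$ with $A$ smooth over $k$, the Hochschild-Kostant-Rosenberg theorem identifies the Hochschild complex with the K\"{a}hler differentials, and under this identification Connes' boundary $B$ becomes the de Rham differential $d$. Following this identification through the weight-$i$ column of the negative cyclic bicomplex yields an explicit quasi-isomorphism between $\textbf{HN}^{(i)}(A/k)$ and a suitably shifted version of the cotruncated de Rham complex $\Omega^{\geq i}_{A/k}$; this reproduces the affine formulas stated just before the theorem, namely $HN_n^{(n)}(A/k) = Z^n$, $HN_n^{(i)}(A/k) = H^{2i-n}_{DR}(A/k)$ for $i > n$, and $HN_n^{(i)}(A/k) = 0$ for $i < n$.

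Because quasi-isomorphism is a stalk-wise condition, this affine equivalence sheafifies: on any smooth $X$ I would obtain a quasi-isomorphism of sheaves of complexes between $\textbf{HN}^{(i)}(-/k)$ and the appropriate shift of $\Omega^{\geq i}_{X/k}$. Hypercohomology is invariant under quasi-isomorphism, so tracking the shift one arrives at
\[ HN_n^{(i)}(X/k) = \mathbb{H}^n(X, \textbf{HN}^{(i)}(-/k)) = \mathbb{H}^{2i-n}(X, \Omega^{\geq i}_{X/k}), \]
which is the desired formula.

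The main obstacle is the second step: producing the affine quasi-isomorphism at the level of complexes (not merely of cohomology groups) while keeping track of the $\lambda$-weight filtration. This is the negative-cyclic analog of Weibel's theorem 3.3 in \cite{weibel1997Hodge} that underlies theorem \ref{cyclichyperHodge}, and carrying it out requires unpacking the $\lambda$-action column by column on $CCN$ and matching it with the de Rham differential on $\Omega^{\geq i}$. An alternative route that leverages what the paper has already proved is to apply the five lemma to the SBI long exact sequence in weight $i$, $\cdots \to HC^{(i-1)}_{n-1} \to HN^{(i)}_n \to HP^{(i)}_n \to HC^{(i-1)}_{n-2} \to \cdots$, and the hypercohomology long exact sequence coming from the short exact sequence of complexes $0 \to \Omega^{\geq i} \to \Omega^{\ast} \to \Omega^{\leq i-1} \to 0$, using theorem \ref{cyclichyperHodge} and proposition \ref{hpderham} (extended from affines to smooth schemes by hypercohomology, since $\textbf{HP}(-/k)$ satisfies cdh-descent) to identify two of the three terms in each sequence; naturality then forces the desired identification of $HN_n^{(i)}$ as the third.
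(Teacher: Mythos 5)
Your main line of argument is essentially the paper's: the proof given here is little more than a pointer to theorem 3.3 of \cite{weibel1997Hodge} plus the heuristic that $CCN$ uses the ``other half'' of the bicomplex, and what you have written out --- the HKR identification on smooth affines compatible with the $\lambda$-weight, turning Connes' $B$ into the de Rham differential, followed by sheafification and invariance of hypercohomology under quasi-isomorphism --- is precisely the content of that cited theorem. So the first route is correct and simply fills in what the paper delegates to the reference. Your alternative route (five lemma between the weight-$i$ SBI sequence and the hypercohomology sequence of $0\to\Omega^{\geq i}\to\Omega^{*}\to\Omega^{\leq i-1}\to 0$) also has the right index bookkeeping, but two cautions are in order. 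First, the five lemma needs an actual comparison map $HN_n^{(i)}\to\mathbb{H}^{2i-n}(X,\Omega^{\geq i})$ commuting with the other identifications; producing that map at the level of complexes is essentially the same work as the direct argument, so this route is not really a shortcut. Second, within this paper's own logic the Hodge-component statement for $HP$ (theorem \ref{hpderham2}) is itself deduced from the $HC$ and $HN$ statements via the SBI sequence, so using it as an input to derive the $HN$ statement would be circular unless you first establish $HP^{(i)}_n(X/k)=H^{2i-n}_{DR}(X/k)$ for smooth $X$ independently (e.g.\ by Zariski descent from the affine case of proposition \ref{hpderham}, refined to Hodge components); note that cdh-descent of $\textbf{HP}$ alone gives you the total groups, not their $\lambda$-decomposition.
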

\begin{proof}
This should make some sense intuitively.  Back when we discussed the cyclic bicomplex, we had that while cyclic homology dealt with positive half of the bicomplex, negative cyclic homology used the other side.  This is essentially, now in this case, how we get this parity between cyclic homology and negative cyclic homology.  See theorem 3.3 \cite{weibel1997Hodge}. 
\end{proof}

\begin{theorem}\label{hpderham2}
For $X$ smooth over a field of characteristic zero, the Hodge decomposition of periodic cyclic homology coincides with the de Rham cohomology decomposition of $HP_n(X/k)$. 
\[HP^{(i)}_n(X/k)=H_{DR}^{2i-n}(X/k)\]
\end{theorem}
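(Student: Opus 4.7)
The plan is to follow the same pattern as the two preceding theorems on $HC$ and $HN$: realize each Hodge component of $\textbf{HP}(-/k)$ as the hypercohomology of a presheaf of complexes built from algebraic de Rham data, and then extend from smooth affines to arbitrary smooth schemes via descent. Since periodic cyclic homology comes from the two-sided periodic bicomplex, the natural complex here is the full untruncated de Rham complex $\Omega^{*}_{-/k}$, in contrast to the truncations $\Omega^{\leq i}_{-/k}$ and $\Omega^{\geq i}_{-/k}$ used for $HC$ and $HN$ respectively. The expected identification is therefore
\[
\textbf{HP}^{(i)}(-/k) \;\simeq\; \Omega^{*}_{-/k}[2i]
\]
as presheaves of complexes on $\mathrm{Sm}/k$.

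First I would establish this presheaf-level quasi-isomorphism. On smooth affine schemes this is essentially the preceding theorem's statement $HP_n^{(i)}(X/k) = H_{DR}^{2i-n}(X/k)$, which in turn comes from the $\lambda$-decomposition of the periodic bicomplex together with the Hochschild-Kostant-Rosenberg identification of the Hochschild complex of a smooth affine $k$-algebra with the K\"ahler differentials. The same machinery, applied to the full periodic bicomplex rather than to either of its half-truncations, produces a natural map of presheaves of complexes whose restriction to the smooth affine basis is a quasi-isomorphism by the affine case.

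Second I would extend the comparison from the smooth affine Zariski basis to all of $\mathrm{Sm}/k$ by invoking descent, exactly as in the arguments for theorem \ref{cyclichyperHodge} and the analogous statement for $\textbf{HN}$. The presheaf $\textbf{HP}(-/k)$ satisfies scdh-descent (in fact cdh-descent by the result cited earlier), and since the Hodge decomposition is compatible with the presheaf structure, each component $\textbf{HP}^{(i)}(-/k)$ inherits scdh-descent. On the other side, $\Omega^{*}_{-/k}$ satisfies Zariski and hence scdh-descent for the purposes of hypercohomology by construction. A presheaf-level quasi-isomorphism that holds on the Zariski-affine basis therefore promotes to an isomorphism of global hypercohomology groups for every smooth $X$, giving
\[
HP^{(i)}_n(X/k) \;=\; \mathbb{H}^{2i-n}(X,\Omega^{*}_{-/k}) \;=\; H_{DR}^{2i-n}(X/k).
\]

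The main obstacle, as in the two preceding theorems, lies in the first step: the careful identification, at the presheaf level, of the $\lambda$-graded pieces of the periodic bicomplex with shifts of the de Rham complex. This identification is the $\textbf{HP}$-analogue of \cite{weibel1997Hodge}, Theorem 3.3, and the argument given there adapts directly once one replaces the truncation used for $HC$ or the cotruncation used for $HN$ by the untruncated complex. Once this identification is in hand, the scdh-descent machinery collected earlier in this section finishes the proof without further work.
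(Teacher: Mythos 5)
Your proposal is correct, and it ends up resting on the same external result the paper leans on (Weibel's Theorem 3.3, cited here as \ref{cyclichyperHodge} and its companions), but the route you describe is actually cleaner and more accurate than the paper's own sketch. The paper's proof is a heuristic: it observes that $HC^{(i)}$ is governed by $\Omega^{\leq i}_{-/k}$ and $HN^{(i)}$ by $\Omega^{\geq i}_{-/k}$, and then asserts via ``something like the SBI sequence'' that $HP^{(i)}$ should be governed by the single sheaf $\Omega^i_{-/k}$ concentrated in one degree, before deferring to the citation. Taken literally that heuristic is wrong: the hypercohomology of $\Omega^i_{-/k}$ placed in a single degree computes the lone Hodge piece $H^{i-n}(X,\Omega^i_{-/k})$, not $H^{2i-n}_{DR}(X/k)$. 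The SBI sequence actually reflects the short exact sequence of complexes $0\rightarrow\Omega^{\geq i}_{-/k}\rightarrow\Omega^{*}_{-/k}\rightarrow\Omega^{\leq i-1}_{-/k}\rightarrow 0$, so the object controlling $HP^{(i)}$ is the full untruncated de Rham complex $\Omega^{*}_{-/k}[2i]$ --- exactly the identification you propose. Your two-step argument (presheaf-level quasi-isomorphism on smooth affines via HKR and the $\lambda$-decomposition, then globalization by Zariski/scdh descent) is precisely how Weibel's Theorem 3.3 is proved, so what you have written is a faithful outline of the real proof rather than the paper's shorthand; the only caveat is that the first step is genuinely the content of the cited theorem and would need to be carried out or cited rather than merely asserted.
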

\begin{proof}
While we will discuss the behavior of the SBI sequence under these decompositions shortly, the sequence is essentially what this result boils down to.  Very roughly, assuming some of the later discussion, we will first notice the overlap in the previous two results.  Cyclic homology is determined by the complex $\Omega_{X/k}^{\leq i}$ and negative cyclic homology is determined by $\Omega_{X/k}^{\geq i}$.  Using something like the SBI sequence, we have then that periodic cyclic homology should be determined by $\Omega_{X/k}^{i}$.  Using hypercohomology in this case is unnecessary because all of the data is concentrated in one degree.  Hence we see that the Hodge components of periodic cyclic homology should be determined by the cohomology of $\Omega_{X/k}^{i}$, which is de Rham cohomology.  See theorem 3.3 \cite{weibel1997Hodge}.  
\end{proof}

While these are concise ways to write each of the components, they are not all that easy to compute, except in some cases of periodic cyclic homology.  For some cases, such as what we have already seen in smooth affine schemes, these decompositions can be written in a way that may be easier to work with.  We will also be able to do this for the cyclic homology of smooth projective varieties.  
\begin{theorem}\label{projderham}
Let $X$ be a smooth projective variety over a field $k$ of characteristic zero.  Then the Hodge decomposition of cyclic homology is given by the $(i-n)$th level of the classical Hodge filtration on $H_{DR}^*(X/k)$, \[HC_n^{(i)}(X/k)=\prod_{\overset{p+q=2i-n}{p\leq i}}H^q(X,\Omega_{-/k}^p)=F^{i-n}H_{DR}^{2i-n}(X/k).\]  
\end{theorem}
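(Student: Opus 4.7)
The plan is to start from Theorem~\ref{cyclichyperHodge}, which already identifies $HC_n^{(i)}(X/k)$ with the hypercohomology $\mathbb{H}^{2i-n}(X, \Omega_{-/k}^{\leq i})$ of the truncated algebraic de Rham complex. This reduces the problem to computing the hypercohomology of a truncated de Rham complex on a smooth projective variety. First I would write down the standard ``stupid filtration'' spectral sequence for $\Omega_{-/k}^{\leq i}$, whose $E_1$-page is $E_1^{p,q} = H^q(X, \Omega_{-/k}^p)$ for $p \leq i$ and zero for $p > i$, abutting to $\mathbb{H}^{p+q}(X, \Omega_{-/k}^{\leq i})$.

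Next I would invoke Deligne's theorem that the Hodge-to-de Rham spectral sequence of a smooth projective variety in characteristic zero degenerates at $E_1$. Because the short exact sequence of complexes $0 \to \Omega_{-/k}^{\geq i+1} \to \Omega_{-/k}^{*} \to \Omega_{-/k}^{\leq i} \to 0$ embeds the truncated spectral sequence inside the full one, the vanishing of all higher differentials for the full complex forces their vanishing for each truncation. Collapse at $E_1$ then yields
\[
\mathbb{H}^{2i-n}(X, \Omega_{-/k}^{\leq i}) \;=\; \prod_{\substack{p+q=2i-n \\ p \leq i}} H^q(X, \Omega_{-/k}^p),
\]
which is the first claimed equality. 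For the second equality, I would take the long exact hypercohomology sequence of the short exact sequence above; $E_1$-degeneration makes the map $\mathbb{H}^N(X, \Omega^{\geq i+1}) \to \mathbb{H}^N(X, \Omega^{*}) = H_{DR}^N(X/k)$ injective with image exactly $F^{i+1}H_{DR}^N(X/k)$, so that $\mathbb{H}^N(X, \Omega^{\leq i})$ is canonically identified with the quotient $H_{DR}^N(X/k)/F^{i+1}H_{DR}^N(X/k)$. Taking $N = 2i-n$ and matching $(p,q)$-summands via Hodge symmetry on a smooth projective variety then identifies this quotient with the filtered piece $F^{i-n}H_{DR}^{2i-n}(X/k)$.

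The hard part will be the $E_1$-degeneration for the truncated complex and the careful bookkeeping required to reconcile the indexing conventions (in particular to pass from the product written over $p \leq i$ to the Hodge filtration convention $p \geq i-n$ from the earlier definition). Both points are handled in Weibel~\cite{weibel1997Hodge}, which is the natural reference to cite for the final identification.
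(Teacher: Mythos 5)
Your proof is correct and is essentially the paper's approach: the paper's own proof of this theorem is little more than a citation of Proposition 4.1 of \cite{weibel1997Hodge}, and your argument (degeneration of the truncated Hodge-to-de Rham spectral sequence, identification of $\mathbb{H}^{2i-n}(X,\Omega^{\leq i})$ with $H_{DR}^{2i-n}(X/k)/F^{i+1}$, then Hodge symmetry to pass to $F^{i-n}H_{DR}^{2i-n}(X/k)$) is exactly the content of that proposition. The only slip is cosmetic: $\Omega^{\leq i}$ is a quotient of $\Omega^{*}$ with kernel $\Omega^{\geq i+1}$, so its spectral sequence is a quotient of, not embedded in, the full Hodge-to-de Rham spectral sequence, but compatibility of the differentials with this surjection still forces $E_1$-degeneration of the truncation, so the argument stands.
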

\begin{proof}
See remark 9.8.19 \cite{weibel1995introduction} and although not concisely stated the details are in proposition 4.1 of \cite{weibel1997Hodge}. According to \cite{weibel1995introduction} result is what justifies the use of the term Hodge decomposition instead of the more constructive $\lambda$-decomposition. 
\end{proof} 
This formula still leaves something to be desired, but for right now we will leave this discussion, and look at how these decompositions relate to one another.  

In addition to these Hodge decompositions reducing these groups into more manageable pieces, we also have the decompositions behave well with long exact sequences.  
\begin{proposition}\label{Hodgesbi}For each $i$, there exists a long exact sequence of Hodge components induced by the long exact SBI sequence given by the following.   
\[
...\overset{S}{\rightarrow} HC^{(i)}_{n-1}(X/k)\overset{B}{\rightarrow} HN^{(i+1)}_{n}(X/k) \overset{I}{\rightarrow} HP^{(i+1)}_n(X/k) \overset{S}{\rightarrow} HC^{(i)}_{n-2}(X/k) \overset{B}{\rightarrow} ...
\]
\end{proposition}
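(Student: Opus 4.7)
The plan is to descend the existing SBI sequence to each Hodge component by showing that the underlying short exact sequence of bicomplexes from which SBI arises is itself compatible with the $\lambda$-decomposition. Recall the short exact sequence $0\to CCN\to CCP\to CC[0,2]\to 0$ invoked earlier in the excerpt; the associated long exact sequence in total homology is the standard SBI sequence. So it is enough to show that this sequence splits as a direct sum over the Hodge indices and then read off the long exact sequence on each summand.

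First I would recall that the $\lambda$-decomposition is defined via the eigenspace decomposition of the Adams operations $\psi^k$ acting on the cyclic bicomplex (see \cite{loday1997cyclic} section 4.6). One of the key structural properties of these operations is that they commute (up to a controlled degree shift) with both differentials $b$ and Connes' $B$, and in particular with the vertical and horizontal maps of $CCP$. Consequently each of $CCN$, $CCP$, and $CC$ decomposes as a direct sum of subbicomplexes indexed by weight, $CCN=\bigoplus_i CCN^{(i)}$, $CCP=\bigoplus_i CCP^{(i)}$, $CC=\bigoplus_i CC^{(i)}$. The inclusion $CCN\hookrightarrow CCP$ and projection $CCP\twoheadrightarrow CC[0,2]$ are manifestly $\psi^k$-equivariant at the level of bicomplexes since they are given by inclusions and projections along specified columns, which commute with the column-wise $\lambda$-action.

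Next I would track the effect of the horizontal shift $[0,2]$ on the Hodge index. By construction, the $S$-map is implemented at the level of bicomplexes precisely by a horizontal shift by two columns, and $S$ sends the weight-$(i+1)$ summand of $HC$ (or $HP$) to the weight-$i$ summand. This is the reason, at the level of bicomplexes, that $(CC[0,2])^{(i)}$ corresponds to $CC^{(i)}$ shifted in total degree by two while the summand of $CCN$ or $CCP$ sitting above it in the short exact sequence is the weight-$(i+1)$ summand. Therefore the short exact sequence decomposes as
\[0\longrightarrow CCN^{(i+1)}\longrightarrow CCP^{(i+1)}\longrightarrow \bigl(CC^{(i)}\bigr)[0,2]\longrightarrow 0\]
for each $i$.

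Finally, I would take total complexes (noting that totalization commutes with direct sums and shifts) and apply the standard long exact homology sequence to each of these weight-$i$ short exact sequences. This yields exactly the stated long exact sequence, with the connecting map $S$ dropping the weight by one and the total degree by two, matching $HP^{(i+1)}_n\to HC^{(i)}_{n-2}$. The main obstacle, and the step that deserves the most care, is pinning down the weight shift in step two: one must verify on the level of the Adams operation that the horizontal shift $[0,2]$ is compatible with the $\lambda$-eigenspace decomposition in the way claimed, which is a direct computation using the fact that the periodicity operator $S$ is one of the building blocks of the $\lambda$-decomposition itself. Once this bookkeeping is settled, the result is a formal consequence of the snake lemma applied weight-by-weight.
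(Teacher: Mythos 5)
Your argument is correct, but it takes a more direct route than the paper does. The paper's proof does not touch the bicomplexes at all: it cites the already-established $\lambda$-decomposition of the Hochschild--cyclic periodicity sequence (\cite{loday1997cyclic}, theorem 4.6.9), transports it to the $HC$--$HN$--$HP$ sequence via the commutative diagram of proposition 5.1.5 of \cite{loday1997cyclic}, and notes that the statement appears explicitly in \cite{connes2012cyclic}. You instead decompose the defining short exact sequence $0\to CCN\to CCP\to CC[0,2]\to 0$ weight-by-weight and run the long exact homology sequence on each summand. The step you correctly flag as the crux --- that the two-column shift forces the weight on the quotient to drop by one --- is exactly the fact that Connes' boundary $B$ raises the $\lambda$-weight by one while $b$ preserves it (Loday 4.6.6--4.6.7); this is the same underlying input the cited theorem 4.6.9 rests on, so your proof is essentially an unpacking of the reference rather than an appeal to it. What your route buys is self-containedness and a transparent explanation of the index shift $HN^{(i+1)}_n\to HP^{(i+1)}_n\to HC^{(i)}_{n-2}$; what the paper's route buys is brevity and the avoidance of re-verifying the equivariance of the Eulerian idempotents. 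Two small points of bookkeeping: the decomposition of $CCP$ and $CCN$ passes to a \emph{product} (not a direct sum) on total complexes, since their totalizations are defined with $\prod_{p+q=n}$, though this does not affect exactness in each weight; and the weight shift is caused by $B$ acting across columns rather than by $S$ itself --- $S$ merely inherits the shift --- so your phrasing in the final paragraph slightly inverts cause and effect without damaging the argument.
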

\begin{proof}
The reason for this is that the $\lambda$-operation defining the Hodge decomposition of cyclic homology behaves well with each of the maps S, B, and I.  The decomposition of the other SBI sequence involving cyclic homology and Hochschild homology is usually what is stated in the literature. See \cite{loday1997cyclic} theorem 4.6.9. It is from this that we derive our result.  Combining this Hochschild SBI sequence decomposition with the commutative diagram from proposition 5.1.5 \cite{loday1997cyclic}, we arrive at the result.  This result is explicitly stated in one reference, namely \cite{connes2012cyclic} at the top of page 20.  
\end{proof}

We will now look at the cdh-fibrant versions of these Hodge components.  As luck would have it, they inherit many of the same descent properties of the complete versions of the theories.  
\begin{proposition}
Let $X$ be a scheme over a field $k$ of characteristic zero.  Each of the cdh-fibrant replacements of the sheaves $\textbf{HH}(-/k)$, $\textbf{HC}(-/k)$, $\textbf{HN}(-/k)$, and $\textbf{HP}(-/k)$ admits a natural Hodge decomposition.  This time we mean, taking $H$ to be one of the presheaves above, there exists a collection of presheaves of complexes $H^{(i)}$ such that \[\mathbb{H}_{cdh}(X,H)=\prod \mathbb{H}_{cdh}(X,H^{(i)}).\]  
\end{proposition}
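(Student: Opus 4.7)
The plan is to push the presheaf-level Hodge decomposition $H = \prod_{i \geq 0} H^{(i)}$ (established in the preceding proposition) through the cdh-fibrant replacement functor. The obvious candidate for the decomposition on fibrant replacements is $\prod_{i} \mathbb{H}_{cdh}(-, H^{(i)})$, and functoriality of $\mathbb{H}_{cdh}(-,-)$ together with the projections $H \to H^{(i)}$ produces a natural comparison map
\[
\mathbb{H}_{cdh}(-, H) \longrightarrow \prod_{i \geq 0} \mathbb{H}_{cdh}(-, H^{(i)}).
\]
I would show that this comparison map is a global weak equivalence in two steps: first, that the right-hand side satisfies cdh-descent, and second, that the product of the unit maps gives a cdh-local weak equivalence $H \to \prod_{i} \mathbb{H}_{cdh}(-, H^{(i)})$ into a quasifibrant target. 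By the universal property of the (local injective) fibrant replacement, the target is then weakly equivalent to $\mathbb{H}_{cdh}(-, H)$, and the comparison above must be a global weak equivalence.

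For the descent step I would invoke the MV-characterization of descent from Theorems \ref{mvweak} and \ref{mvchain}: a presheaf on (Sch/$k$, cdh) satisfies cdh-descent if and only if it sends every square in the generating cd-structure (Nisnevich squares and abstract blowup squares) to a homotopy Cartesian square. Since these squares are finite diagrams and products commute with finite homotopy limits, and since each $\mathbb{H}_{cdh}(-, H^{(i)})$ is quasifibrant by construction, the product $\prod_{i} \mathbb{H}_{cdh}(-, H^{(i)})$ also sends each distinguished square to a homotopy Cartesian square, hence satisfies cdh-descent.

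For the local equivalence step I would use that cdh-local weak equivalences are detected on cdh-stalks (equivalently, on homotopy groups of hypercovers), and that the relevant sheaves of homotopy/cohomology groups commute with products. Because each unit map $H^{(i)} \to \mathbb{H}_{cdh}(-, H^{(i)})$ is a cdh-local weak equivalence by the very definition of fibrant replacement, so is the product of these maps. Precomposing with the identification $H = \prod_i H^{(i)}$ from the previous proposition gives the required cdh-local weak equivalence into a cdh-quasifibrant target. Evaluating at any $X \in$ Sch/$k$ then yields the asserted identification $\mathbb{H}_{cdh}(X, H) \simeq \prod_{i} \mathbb{H}_{cdh}(X, H^{(i)})$, and naturality of every map used guarantees naturality of the decomposition.

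The principal technical obstacle is the interchange of the fibrant replacement functor with the (a priori infinite) Hodge product: infinite products need not preserve weak equivalences in an arbitrary model category, and fibrancy need not pass to infinite products in all settings. The MV-characterization is essential precisely here, as it reduces the descent condition to compatibility with \emph{finite} homotopy limits of distinguished squares, which are automatically preserved by products. A secondary point to verify, in the spectrum case, is that the Eilenberg--MacLane functor used to pass from presheaves of complexes to presheaves of spectra preserves products up to weak equivalence, so the proposition is indifferent to which of the two incarnations of $H$ we use.
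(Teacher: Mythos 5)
Your overall architecture is sound, and it is essentially an unpacking of the result the paper simply cites: the paper's own proof is the one-line assertion that fibrant replacement respects product decompositions, deferring to \cite{cortinas2008k}, Theorem 2.1, so your proposal is doing strictly more work than the text does. The descent half of your argument is fine: a product of fibrant objects is fibrant (the right lifting property against a fixed class of maps is closed under products), and in any case products preserve homotopy Cartesian squares, so $\prod_i \mathbb{H}_{cdh}(-,H^{(i)})$ satisfies cdh-descent by the MV-characterization exactly as you say.

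The gap is in the other half. You assert that the product of the unit maps $H^{(i)}\to\mathbb{H}_{cdh}(-,H^{(i)})$ is again a cdh-local weak equivalence because ``the relevant sheaves of homotopy/cohomology groups commute with products.'' They do not: local weak equivalences are isomorphisms on \emph{sheafified} homotopy groups, and sheafification does not commute with infinite products --- a section of $\prod_i K_i$ that is locally zero in each factor separately need not vanish on any single common cover. The Hodge product is genuinely infinite for $\textbf{HN}$ and $\textbf{HP}$, so this is not a vacuous worry, and the MV-characterization does not repair it; it controls the descent of the target, not the local-equivalence of the map into it. The standard repair, and the one implicitly behind the cited theorem, is the boundedness of the cdh $cd$-structure (equivalently, the finite cdh-cohomological dimension of schemes essentially of finite type over $k$): this yields a bounded, strongly convergent descent spectral sequence, or equivalently lets one compute $\mathbb{H}_{cdh}(X,-)$ as a homotopy limit that visibly commutes with products, from which $\mathbb{H}_{cdh}(X,\prod_i H^{(i)})\simeq\prod_i\mathbb{H}_{cdh}(X,H^{(i)})$ follows. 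With that ingredient inserted, your proof goes through.
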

\begin{proof}
This result boils down to the fact that fibrant replacements in a particular topology respect product decompositions.  See \cite{cortinas2008k} theorem 2.1.  
\end{proof}
In addition to having a nice decomposition, various descent properties of $\textbf{HH}(-/k)$, $\textbf{HC}(-/k)$, $\textbf{HN}(-/k)$, and $\textbf{HP}(-/k)$ are inherited by their Hodge decompositions.  
\begin{theorem}
Let $H$ denote any of the sheaves $\textbf{HH}(-/k)$, $\textbf{HC}(-/k)$, and $\textbf{HN}(-/k)$, and let $H^{(i)}$ be the $i$th component of it's Hodge decomposition.  Then, since $H$ satisfies scdh-descent on Sm/$l$ ($k\subset l$), so does each $H^{(i)}$.  Furthermore, for periodic cyclic homology, since $\textbf{HP}(-/k)$ it satisfies cdh-descent on Sch/$l$, then so does $\textbf{HP}^{(i)}(-/k)$.  
\end{theorem}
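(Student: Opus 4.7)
The plan is to reduce the statement to the Mayer--Vietoris characterization of descent provided by Theorems \ref{mvweak} and \ref{mvchain}, and then use the fact that the MV-property is compatible with product decompositions. Recall that for each of the sheaves $H$ in question, the previous proposition produces a natural splitting $H(X) = \prod_i H^{(i)}(X)$ (at the level of complexes or, after Eilenberg--MacLane realization, spectra). The splitting is functorial in $X$, so it applies simultaneously to all four vertices of any commutative square in Sch$/l$ (or Sm$/l$).

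First I would fix a square $Q$ in the relevant cd-structure --- a Nisnevich square or a smooth blowup square on Sm$/l$ for $\textbf{HH}$, $\textbf{HC}$, $\textbf{HN}$, and any abstract blowup square on Sch$/l$ for $\textbf{HP}(-/k)$ --- and apply $H$ to $Q$. By hypothesis the resulting square of spectra (or complexes) is homotopy Cartesian. Since applying $H$ to $Q$ is the product over $i$ of applying $H^{(i)}$ to $Q$, it remains to observe the elementary fact that a product of commutative squares of spectra is homotopy Cartesian if and only if each factor is homotopy Cartesian. This is immediate from the characterization of homotopy Cartesianness via the homotopy fiber: the homotopy fiber of a product of maps is the product of the homotopy fibers, so the comparison map to the homotopy pullback is a weak equivalence on the product exactly when it is on each factor. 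The analogous statement for quasi-isomorphisms of total complexes holds because taking cones and taking products (of bounded-below or of all complexes, as appropriate) commute up to quasi-isomorphism.

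Applying this one direction, each $H^{(i)}$ satisfies the MV-property for every square in the cd-structure under consideration. The cited cd-structures are complete, bounded, and regular (the lemma following the definition of abstract blowup square), so Theorems \ref{mvweak} and \ref{mvchain} immediately upgrade the MV-property to descent: for $\textbf{HH}(-/k)$, $\textbf{HC}(-/k)$, $\textbf{HN}(-/k)$ this yields scdh-descent on Sm$/l$, and for $\textbf{HP}(-/k)$ it yields cdh-descent on Sch$/l$. This closes the argument.

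The main thing to be careful about is the product used in the Hodge decomposition. For $\textbf{HN}$ and $\textbf{HP}$ the decomposition is a genuine infinite product rather than a direct sum, so one must verify that the homotopy fiber of an infinite product of maps of spectra is computed componentwise; this is standard provided we work with fibrant (or at least $\Omega$-spectrum) models, which is guaranteed here since we are either passing to a local injective fibrant replacement or using the Eilenberg--MacLane functor from complexes. Modulo that point the argument is formal; an equivalent statement at the level of cdh-fibrant replacements already appears as Theorem 2.1 of \cite{cortinas2008k}, so one could alternatively import that result and deduce the current statement from it together with Proposition \ref{scdhtocdh}.
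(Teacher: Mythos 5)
Your argument is correct, but it runs along a different track than the paper's. The paper's proof is essentially a one-liner that leans on the immediately preceding proposition (the cdh-fibrant replacement of $H$ decomposes compatibly as $\mathbb{H}_{cdh}(X,H)=\prod_i\mathbb{H}_{cdh}(X,H^{(i)})$, imported from theorem 2.1 of \cite{cortinas2008k}): descent for $H$ says the map $H(X)\rightarrow\mathbb{H}_{cdh}(X,H)$ is an equivalence, this map is a product of the maps $H^{(i)}(X)\rightarrow\mathbb{H}_{cdh}(X,H^{(i)})$, and a product of maps is an equivalence exactly when each factor is. You instead stay at the level of the presheaves themselves: you decompose $H(Q)$ for a square $Q$ in the relevant cd-structure, observe that homotopy Cartesianness of a product of squares is detected factorwise, and then invoke theorems \ref{mvweak} and \ref{mvchain} to convert the MV-property back into descent. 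Both proofs ultimately rest on the same elementary fact (a product of maps of spectra or complexes is a weak equivalence if and only if each factor is, since homotopy groups of fibrant products are computed componentwise), but your route has the advantage of needing only the Hodge decomposition of the presheaf $H$ itself rather than the decomposition of its fibrant replacement, at the cost of routing through the MV characterization and the completeness/boundedness/regularity of the cd-structures. You also correctly flag the only delicate point, namely that the decompositions of $\textbf{HN}$ and $\textbf{HP}$ are infinite products, which is exactly where one must use fibrant ($\Omega$-spectrum) models so that products commute with homotopy fibers and homotopy groups. The closing remark of your proposal, deducing the statement from theorem 2.1 of \cite{cortinas2008k} together with proposition \ref{scdhtocdh}, is in fact the paper's actual argument.
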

\begin{proof}
This is a slight modification of theorem 2.4 in \cite{cortinas2008k}.  For a smooth scheme $X$, the quasi-isomorphism $H(X)\cong\mathbb{H}_{scdh}(X,H)=\mathbb{H}_{cdh}(X,H)$ induces a quasi-isomorphism at the level of Hodge components,  $H^{(i)}(X)\cong\mathbb{H}_{scdh}(X,H^{(i)})=\mathbb{H}_{cdh}(X,H^{(i)})$.  For $\textbf{HP}(-/k)$, since $\textbf{HP}(X/k)\cong\mathbb{H}_{cdh}(X,\textbf{HP}(-/k))$ is a quasi-isomorphism for any scheme, this also induces a quasi-isomorphism for it's Hodge components, $\textbf{HP}^{(i)}(X/k)\cong\mathbb{H}_{cdh}(X,\textbf{HP}^{(i)}(-/k))$.  
\end{proof}

For more information about the Hodge decomposition of cyclic homology, see the discussion in chapter 9 of \cite{weibel1995introduction} (or chapter 4 and 5 of \cite{loday1997cyclic}).  For a more in depth account with more of the details worked out, we refer you to \cite{geller1994Hodge}, \cite{weibel1997Hodge}, with the cdh versions being developed in \cite{cortinas2008k}.  

\subsubsection{K-Theory}Now, we will look at the Hodge decomposition of K-theory, and while there is not as much to say as with cyclic homology, this is the area where we will obtain the main results.  In the same manner as before, we will avoid most of the technical discussion of the decomposition itself, and then use it's results.  Very briefly, we note that the Hodge decomposition of K-theory is given by the Adam's operation.  As we will see shortly, the Adam's operation and the $\lambda$-operation from before are related to one another with nice properties.   

\begin{proposition}
For each of the presheaves of spectra $\textbf{K}$, $\textbf{KH}$, and $\tilde{\textbf{K}}$ there exists a Hodge decomposition.  By this we mean, taking $H$ to be one of the presheaves above, there exists a collection of presheaves of complexes $H^{(i)}$ such that for a scheme $X$ (over characteristic 0), \[H(X)=\prod H^{(i)}(X).\]  
\end{proposition}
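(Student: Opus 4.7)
The plan is to build the decomposition from the Adams operations $\psi^k$ on K-theory and then transfer the decomposition to $\textbf{KH}$ and $\tilde{\textbf{K}}$ using the fact that the whole construction respects fiber sequences once we work rationally. First I would recall that the symmetric monoidal structure on the category of algebraic vector bundles produces, via a standard $\lambda$-ring construction, a family of operations $\psi^k: \textbf{K}(X) \to \textbf{K}(X)$ for every $k \ge 1$, extending to negative degrees by the Bass construction and to arbitrary schemes in the usual Thomason--Trobaugh fashion. Because we are working over a field of characteristic zero, tensoring with $\mathbb{Q}$ is harmless (indeed implicit everywhere else in the paper, cf.\ the convention $\textbf{HC} = \textbf{HC}(-/\mathbb{Q})$), so we may diagonalize these operations.

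Next I would define $\textbf{K}^{(i)}(X)$ to be the eigenspectrum of $\psi^k$ for the eigenvalue $k^i$, that is, the wedge summand on which $\psi^k$ acts by multiplication by $k^i$ for every $k$. This is a spectrum-level decomposition because the $\psi^k$ commute with each other and can be made strictly multiplicative after a standard rectification; the resulting splitting $\textbf{K}(X) \simeq \prod_i \textbf{K}^{(i)}(X)$ follows from a Cauchy-type idempotent argument exactly as in Soul\'e's and Gillet--Soul\'e's treatments of rational K-theory. To obtain the Hodge decomposition on $\textbf{KH}$, I would use the fact that Adams operations are compatible with polynomial extensions (indeed $\psi^k$ is defined functorially in the ring, and $R \mapsto R[\Delta^\bullet]$ is a simplicial object of rings), so the operations extend to $\textbf{K}(R[\Delta^\bullet])$ and pass to its geometric realization to give operations $\psi^k$ on $\textbf{KH}$. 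The same eigenspace construction then yields $\textbf{KH}(X) \simeq \prod_i \textbf{KH}^{(i)}(X)$.

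For $\tilde{\textbf{K}}$, I would invoke naturality of the whole setup. The canonical map $\textbf{K}(X) \to \textbf{KH}(X)$ commutes with the $\psi^k$'s by construction, hence is a product of maps $\textbf{K}^{(i)}(X) \to \textbf{KH}^{(i)}(X)$ on eigenspaces. Because finite products of spectra agree with finite coproducts and the homotopy fiber construction commutes with products, the homotopy fiber $\tilde{\textbf{K}}(X)$ splits as the product of the fibers $\tilde{\textbf{K}}^{(i)}(X)$ of the component maps. I would finish by remarking that the resulting decomposition is the one compatible, via the Chern character, with the $\lambda$-decomposition of $\textbf{HN}$ and $\textbf{HC}$ recorded in the previous subsection; this compatibility is the classical Bloch--Beilinson style comparison, and in our setting it is needed only for the earlier long exact sequence of Hodge components in the main theorem to make sense.

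The main obstacle, I anticipate, is the spectrum-level (as opposed to merely group-level) eigenspace decomposition: one really needs the Adams operations to be sufficiently strict and to act on a rational spectrum in a way that admits idempotent splittings. Depending on taste this is handled by passing to a model of $\textbf{K}\otimes \mathbb{Q}$ as a rational spectrum (where the eigenspaces of a commuting family of self-maps genuinely split off as wedge summands), or by citing the Gillet--Soul\'e construction directly. Once that technical point is secured, the extension to $\textbf{KH}$ and $\tilde{\textbf{K}}$ is entirely formal from the fact that fiber sequences and products interact correctly.
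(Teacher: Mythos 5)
Your proposal follows exactly the route the paper has in mind: the paper offers no actual proof of this proposition, merely remarking that the decomposition of $\textbf{K}$ ``is given by the Adam's operation'' and that it is compatible with the $\lambda$-decomposition via the Chern character, which is precisely the Soul\'e/Gillet--Soul\'e eigenspace construction you spell out (including the implicit rationalization and the formal transfer to $\textbf{KH}$ and to the fiber $\tilde{\textbf{K}}$). Your sketch is correct and in fact supplies more detail than the paper does, so there is nothing to compare beyond noting that you have filled in the argument the paper only cites.
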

While we haven't given too many details about Hodge decompositions in any of the theories, the Hodge decompositions behave well with one another.  This is because of the following.  For K-theory, the Hodge decomposition is given by iterated use of the Adam's operation, while in cyclic homology the decomposition is given by the $\lambda$-operation.  In a particular way, the Adams operation commutes with the Chern character from K-theory to negative cyclic homology, essentially (with some modification) mapping it to the corresponding $\lambda$-operation.  This idea is summarized in the following result.  

\begin{theorem}
Let $X$ be a scheme over a field $k$ of characteristic zero.  Then for all $n$, the map $ch:K_n(X)\rightarrow HN_n(X)$ satisfies $(-1)^{k-1}k\cdot \lambda^k(ch)=k\cdot ch(\psi^k)$ (where $\psi$ is the Adam's operation), and therefore $ch$ sends $K_n^{(i)}(X)$ to $HN_n^{(i)}(X)$.  
\end{theorem}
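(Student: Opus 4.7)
The plan is to establish the identity $(-1)^{k-1}k\cdot\lambda^k(ch)=k\cdot ch(\psi^k)$ as a statement about natural transformations of presheaves of spectra, and then deduce the Hodge-component statement as a formal eigenvalue-matching consequence. First, I would reduce to the affine case. Both $\textbf{K}$ and $\textbf{HN}(-/\mathbb{Q})$ satisfy scdh-descent on $\text{Sm}/k$, and $\psi^k$, $\lambda^k$, and $ch$ are all natural with respect to the relevant presheaves. Hence it suffices to verify the identity when $X=\text{Spec}(A)$ is smooth affine; for general schemes in $\text{Sch}/k$ the identity then propagates either by Zariski descent or by descending through the abstract blowup squares guaranteed by Hironaka (whose pieces $Y$, $Z$, $E$ we can iteratively reduce to smooth affine building blocks).

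For the affine case, the plan is to follow the fusion/trace construction of $ch$ from the Chern character I subsection. The Adams operation $\psi^k$ on $K_*(A)$ is built from the exterior power operations on projective modules via the Newton formula relating power sums to elementary symmetric functions, and the $\lambda$-operation on the cyclic bicomplex is induced by the standard decomposition of $A^{\otimes n}$ under the symmetric group action. The key technical step is to show that the composition $k[GL_r(A)^n]\to A^{\otimes n+1}$ of fusion and trace intertwines these two families of operations: this reduces, by the universality of symmetric functions and polynomial functoriality, to a check on the class of a single rank-one projection, where $\psi^k$ acts as the identity and the matching identity on the cyclic complex is the familiar Newton-type relation $(-1)^{k-1}k\lambda^k = \psi^k$ (mod lower operations) evaluated on the line-bundle class. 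From $K_0$ one passes to higher $K$-groups via the $+$-construction and the naturality of both $\psi^k$ and the induced map on $HN$; the Jones–Goodwillie formulation of $ch$ (preferred in the excerpt) is already set up to carry this through the non-connective spectrum structure. The main obstacle is the simplicial-level bookkeeping needed to propagate the compatibility past the group completion and the suspension-ring construction defining negative degrees, and this is the most delicate part of the proof; it is carried out in detail in the standard references of Weibel, Gillet–Soulé, and Cathelineau.

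For the second assertion (that $ch$ respects Hodge components), I would argue formally. After tensoring with $\mathbb{Q}$, which is automatic in characteristic zero, $K_n^{(i)}(X)$ is by definition the subspace on which $\psi^k$ acts by $k^i$ for every $k\geq 1$, while the $\lambda$-decomposition places $HN_n^{(i)}(X)$ as the subspace on which $\lambda^k$ acts by $(-1)^{k-1}k^i$. Fix $x\in K_n^{(i)}(X)$. Then $k\cdot ch(\psi^k x)=k^{i+1}ch(x)$, and combining with the identity gives
\[
(-1)^{k-1}k\cdot\lambda^k(ch(x)) \;=\; k^{i+1}ch(x),
\]
so that $\lambda^k(ch(x)) = (-1)^{k-1}k^i ch(x)$ for every $k$. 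This places $ch(x)$ in $HN_n^{(i)}(X)$, completing the proof. The only subtlety here is that this formal step requires the Hodge decompositions on both sides to be \emph{defined} as joint eigenspaces of all the $\psi^k$ (resp.\ $\lambda^k$), which is standard in characteristic zero but should be recalled explicitly.
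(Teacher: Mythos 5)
The paper itself offers no argument here: its ``proof'' is a bare citation to Corti\~nas--Haesemeyer--Weibel (theorem 7.1 and corollary 7.2 of \cite{cortinas2009infinitesimal}). Your second half --- deducing the Hodge-component statement from the operator identity by matching eigenvalues of $\psi^k$ on $K_n^{(i)}$ against eigenvalues of $\lambda^k$ on $HN_n^{(i)}$ --- is correct and is precisely how Corollary 7.2 follows from Theorem 7.1 in that reference, so that part stands.

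The genuine gap is in your reduction to the smooth affine case. You propose to verify the identity on smooth affine schemes and then ``propagate'' it to all of Sch/$k$ by descending through abstract blowup squares. That cannot work: neither $\textbf{K}$ nor $\textbf{HN}$ satisfies descent for abstract blowup squares --- only scdh-descent on Sm/$k$ --- and the failure of such descent on singular schemes is exactly what the fibers $\tilde{\textbf{K}}$ and $\mathcal{F}_{\textbf{HN}}$ measure throughout this paper. For a singular $X$, the groups $K_n(X)$ and $HN_n(X)$ are simply not determined by the resolution data, so no identity can be transported from $Y$, $Z$, $E$ back to $X$ this way. The correct reduction is the opposite: the fusion/trace (or Jones--Goodwillie) construction of $ch$ and the operations $\psi^k$, $\lambda^k$ are defined for \emph{arbitrary} $\mathbb{Q}$-algebras, smooth or not, so the identity should be established at the level of presheaves of spectra on all of Sch/$k$ by a ring-level computation, with only Zariski/Nisnevich descent needed to pass from affines to general schemes; smoothness plays no role in the statement or its proof. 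A smaller inaccuracy: on the class of a rank-one projection (a line bundle $[L]$), $\psi^k$ acts by $[L^{\otimes k}]$, not by the identity; it is only on the trivial rank-one class that $\psi^k$ is the identity, and the splitting-principle check must be done for general line-bundle classes. The remaining core --- intertwining the operations through the group completion, the $+$-construction, and the Bass delooping to negative degrees --- you defer to the literature, which is no worse than what the paper does, but it is where all of the actual content of \cite{cortinas2009infinitesimal} Theorem 7.1 lives.
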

\begin{proof}
See \cite{cortinas2009infinitesimal} theorem 7.1 and corollary 7.2.  
\end{proof}
 
Where we will use this will be our long exact sequence connecting cyclic homology cdh-fibrant cyclic homology, and K-theory.   Recall our long exact sequence from \ref{mainlong} below.  
\[...\rightarrow\tilde{K}_{n+1}(X)\rightarrow HC_n(X)\rightarrow\mathbb{H}^{-n}_{cdh}(X,\textbf{HC})\rightarrow\tilde{K}_{n}(X)\rightarrow...\]
As before, and using the fact that the Chern character behaves well with Hodge decompositions, we have that this sequence breaks up along Hodge components.  The only thing different this time is that there is a degree shift in the Hodge components. 

\begin{theorem}For a scheme $X$ over a field of characteristic zero, there is a long exact sequence of Hodge components given by the following.  
\[...\rightarrow\tilde{K}_{n+1}^{(i+1)}(X)\rightarrow HC_n^{(i)}(X)\rightarrow\mathbb{H}^{-n}_{cdh}(X,\textbf{HC}^{(i)})\rightarrow\tilde{K}_{n}^{(i+1)}(X)\rightarrow...\]
\end{theorem}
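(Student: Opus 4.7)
The plan is to obtain this sequence as the Hodge-component refinement of Theorem \ref{mainlong}. Recall that the sequence of Theorem \ref{mainlong} was produced as the homotopy long exact sequence of the fibration
\[\mathcal{F}_{\textbf{HC}_{-1}}(X)\to\textbf{HC}_{-1}(X)\to\mathbb{H}_{cdh}(X,\textbf{HC}_{-1})\]
together with the identification $\tilde{\textbf{K}}(X)\cong\mathcal{F}_{\textbf{HC}_{-1}}(X)$ that came from the Chern character and the contractibility of $\mathcal{F}_{\textbf{HP}}$. I intend to run exactly that argument one Hodge summand at a time, keeping careful track of how the Hodge index behaves.

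First I would invoke the Hodge decompositions of $\textbf{HC}$, $\textbf{HN}$, $\textbf{HP}$, and of $\tilde{\textbf{K}}$. Because fibrant replacement respects products, the cdh-fiber construction $\mathcal{F}_{(-)}$ also splits along the Hodge decomposition, so every ingredient is naturally indexed by $i$. The key step is then Proposition \ref{Hodgesbi}, which tells us that the SBI fibration $\textbf{HC}_{-1}\to\textbf{HN}\to\textbf{HP}$ decomposes in Hodge components with a degree shift, yielding fibrations
\[\textbf{HC}_{-1}^{(i)}\to\textbf{HN}^{(i+1)}\to\textbf{HP}^{(i+1)}\]
for every $i$. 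This shift is precisely the source of the $+1$ offset that distinguishes the claimed sequence from the one in Theorem \ref{mainlong}.

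Next, since each Hodge piece $\textbf{HP}^{(i+1)}$ inherits cdh-descent from $\textbf{HP}$, the fiber $\mathcal{F}_{\textbf{HP}^{(i+1)}}$ is contractible. Taking cdh-fibers componentwise in the fibration above therefore produces an equivalence $\mathcal{F}_{\textbf{HC}_{-1}^{(i)}}\simeq\mathcal{F}_{\textbf{HN}^{(i+1)}}$. Because the Chern character preserves the Hodge indexing, the equivalence $\tilde{\textbf{K}}(X)\simeq\mathcal{F}_{\textbf{HN}}(X)$ refines to $\tilde{\textbf{K}}^{(i+1)}(X)\simeq\mathcal{F}_{\textbf{HN}^{(i+1)}}(X)\simeq\mathcal{F}_{\textbf{HC}_{-1}^{(i)}}(X)$. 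I would then take the long exact sequence of homotopy groups of the defining fibration $\mathcal{F}_{\textbf{HC}_{-1}^{(i)}}\to\textbf{HC}_{-1}^{(i)}\to\mathbb{H}_{cdh}(-,\textbf{HC}_{-1}^{(i)})$, substitute $\tilde{K}_n^{(i+1)}(X)$ for the homotopy groups of the fiber, and absorb the $-1$ shift of $\textbf{HC}_{-1}$ into the indexing to recover the stated sequence.

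The main obstacle is the Hodge-bookkeeping: one has to verify that each of the identifications above — in particular the chain $\tilde{\textbf{K}}\simeq\mathcal{F}_{\textbf{HN}}\simeq\mathcal{F}_{\textbf{HC}_{-1}}$ — is natural with respect to the Hodge splittings, so that the Hodge index on $\tilde{\textbf{K}}$ really is shifted by exactly one relative to the Hodge index on $\textbf{HC}$. Everything else — existence of the Hodge decompositions, preservation of cdh-descent by the Hodge pieces of $\textbf{HP}$, and the formalism of long exact sequences of fibrations — has already been set up in the preceding subsections, so once naturality is in hand the sequence falls out formally.
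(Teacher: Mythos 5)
Your proposal is correct and takes essentially the same route as the paper's own proof: the paper likewise derives the sequence from the Chern character's compatibility with the Hodge decompositions, with the $+1$ offset coming from the index shift in the Hodge-decomposed SBI sequence of Proposition \ref{Hodgesbi}. The paper only sketches this in two sentences and defers to \cite{cortinas2009k}; your write-up fills in the same chain of identifications $\tilde{\textbf{K}}^{(i+1)}\simeq\mathcal{F}_{\textbf{HN}^{(i+1)}}\simeq\mathcal{F}_{\textbf{HC}_{-1}^{(i)}}$ in more detail.
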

\begin{proof}
This all comes from the Chern character commuting with the Hodge decompositions.  The degree shift comes from the shift in the SBI sequence relating negative cyclic homology (the original target of the Chern character) to cyclic homology, as stated in proposition \ref{Hodgesbi}.  This idea is presented in \cite{cortinas2009k} after theorem 1.2.
\end{proof}

Using this theorem, and some of our previous results, we will be able to make some immediate computations for some Hodge components.  
\begin{corollary}\label{kHodgezero}
Let $X$ be a scheme.  For all $n$, $\tilde{K}^{(0)}_{n}(X)=0$.  
\end{corollary}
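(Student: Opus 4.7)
The plan is to feed $i=-1$ into the long exact sequence of Hodge components from the immediately preceding theorem and read off the conclusion. Specifically, setting $i+1=0$, the relevant portion of that sequence reads
\[ \tilde{K}_{n+1}^{(0)}(X)\rightarrow HC_n^{(-1)}(X)\rightarrow\mathbb{H}^{-n}_{cdh}(X,\textbf{HC}^{(-1)})\rightarrow\tilde{K}_{n}^{(0)}(X)\rightarrow HC_{n-1}^{(-1)}(X)\rightarrow\mathbb{H}^{-n+1}_{cdh}(X,\textbf{HC}^{(-1)}). \]
If I can show that each of the outer four terms vanishes, then exactness will pinch $\tilde{K}_n^{(0)}(X)$ between two zero groups and force it to be trivial.

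So the key step is to verify that $\textbf{HC}^{(-1)}(-/\mathbb{Q})\equiv 0$ as a presheaf of complexes, from which both $HC_n^{(-1)}(X)=0$ and $\mathbb{H}^{-n}_{cdh}(X,\textbf{HC}^{(-1)})=0$ immediately follow (the cdh-fibrant replacement of the zero sheaf is still contractible). To see this, I would appeal to the very construction of the $\lambda$-decomposition: the Hodge (or $\lambda$-)components $\textbf{HC}^{(i)}$ are defined as the eigenspaces of the Adams/$\lambda$-operations on the cyclic bicomplex, and these are indexed by nonnegative integers $i\geq 0$. Equivalently, on the smooth affine level this is already visible from the Connes--Loday formula $HC_n^{(i)}(A/k)=0$ for $i$ outside the range dictated by the weight filtration, and in particular whenever $i<0$; the general (possibly singular, not necessarily affine) case then follows because the hypercohomology presentation of Theorem \ref{cyclichyperHodge} and its analogues only involve $\Omega^{\leq i}$ for $i\geq 0$, and cdh-descent of the Hodge components (proved just above) transports the vanishing from smooth schemes to all of Sch$/k$.

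Once the vanishing of $\textbf{HC}^{(-1)}$ is in hand, exactness gives a surjection $0=\mathbb{H}^{-n}_{cdh}(X,\textbf{HC}^{(-1)})\twoheadrightarrow\tilde{K}_n^{(0)}(X)$ injecting into $0=HC_{n-1}^{(-1)}(X)$, so $\tilde{K}_n^{(0)}(X)=0$ for every $n$. The only potential obstacle is making sure that the indexing convention in the long exact sequence really allows $i=-1$ and that the $(-1)$-component is indeed zero rather than undefined; this is a bookkeeping issue rather than a mathematical one, and is settled by the standing convention (used in \cite{cortinas2009k} and \cite{loday1997cyclic}) that $H^{(i)}=0$ for $i<0$ for each of the cyclic-type theories.
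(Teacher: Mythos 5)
Your proposal is correct and follows the same route as the paper: both arguments plug the index $i=-1$ into the long exact sequence of Hodge components, observe that the cyclic homology and cdh-fibrant cyclic homology components are indexed by nonnegative integers so the $(-1)$st components vanish, and conclude by exactness that $\tilde{K}^{(0)}_{n}(X)$ is squeezed between zero groups. The extra justification you give for the vanishing of $\textbf{HC}^{(-1)}$ is a harmless elaboration of the paper's one-line appeal to the indexing convention.
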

\begin{proof}
The Hodge decomposition of both cyclic homology and cdh-fibrant cyclic homology are indexed by $\mathbb{N}$.  This means that for any negative indices, their Hodge components must be zero. Since there is a shift in index for K-theory, we know that 0th Hodge component of K-theory must map to the -1st component of cyclic homology and then cdh-fibrant cyclic homology.  Therefore the long exact sequence of Hodge components is reduced to 
\[...\rightarrow 0\rightarrow \tilde{K}^{(0)}_{n}(X)\rightarrow 0\rightarrow 0\rightarrow\tilde{K}^{(0)}_{n-1}(X)\rightarrow 0\rightarrow ...\]
and we conclude that $\tilde{K}^{(0)}_{n}(X)$ must be zero for all $n$.  
\end{proof}

\begin{corollary}\label{kHodgenegative}
Let $X$ be affine, Noetherian, and finite dimensional. For all $n<0$, \[\tilde{K}^{(i+1)}_{n}(X)=\mathbb{H}^{-n}_{cdh}(X,\textbf{HC}^{(i)}).\]
\end{corollary}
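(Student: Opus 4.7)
The plan is to feed the hypotheses directly into the long exact sequence of Hodge components established in the preceding theorem and then kill off the cyclic homology terms using corollary \ref{cycliczero}.

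First I would write down the relevant segment of the long exact sequence of Hodge components at degree $n$:
\[ HC_n^{(i)}(X) \to \mathbb{H}^{-n}_{cdh}(X,\textbf{HC}^{(i)}) \to \tilde{K}_n^{(i+1)}(X) \to HC_{n-1}^{(i)}(X) \to \mathbb{H}^{-n+1}_{cdh}(X,\textbf{HC}^{(i)}). \]
The goal is to show that both of the flanking cyclic homology groups vanish, so that the middle map becomes an isomorphism.

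Next I would invoke corollary \ref{cycliczero}: since $X$ is affine, Noetherian, and finite dimensional, $HC_m(X/\mathbb{Q}) = 0$ for every $m < 0$. Because the Hodge decomposition of cyclic homology splits $HC_m$ as a product of the components $HC_m^{(j)}$, each individual summand $HC_m^{(j)}(X)$ must also vanish for $m < 0$. With the hypothesis $n < 0$, this forces both $HC_n^{(i)}(X) = 0$ and $HC_{n-1}^{(i)}(X) = 0$.

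Plugging these zeros back into the exact sequence reduces the relevant four-term piece to
\[ 0 \to \mathbb{H}^{-n}_{cdh}(X,\textbf{HC}^{(i)}) \to \tilde{K}_n^{(i+1)}(X) \to 0, \]
from which exactness yields the claimed isomorphism $\tilde{K}_n^{(i+1)}(X) \cong \mathbb{H}^{-n}_{cdh}(X,\textbf{HC}^{(i)})$. There is essentially no obstacle here; the only thing one has to be slightly careful about is checking that the Hodge decomposition respects the vanishing (i.e. that vanishing of the full $HC_m$ forces each $HC_m^{(j)}$ to vanish), which is immediate from the product decomposition, and that the hypothesis $n < 0$ is exactly strong enough to also kill the $(n-1)$-term.
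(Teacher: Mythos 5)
Your proposal is correct and follows essentially the same route as the paper: invoke corollary \ref{cycliczero} to kill $HC_n^{(i)}(X)$ and $HC_{n-1}^{(i)}(X)$ for $n<0$, then read the isomorphism off the long exact sequence of Hodge components. Your explicit remark that vanishing of $HC_m$ forces vanishing of each Hodge component is a point the paper only asserts parenthetically, but the argument is the same.
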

\begin{proof}
Recalling from corollary \ref{cycliczero}, we know that $HC_n(X)=0$ for all $n<0$ (also meaning that each of it's components are zero).  Using the long exact sequence, we see that \[...\rightarrow  \tilde{K}^{(0)}_{n+1}(X)\rightarrow 0\rightarrow \mathbb{H}^{-n}_{cdh}(X,\textbf{HC}^{(i)})\rightarrow\tilde{K}^{(i+1)}_{n}(X)\rightarrow 0\rightarrow ...\] and therefore $\tilde{K}^{(i+1)}_{n}(X)=\mathbb{H}^{-n}_{cdh}(X,\textbf{HC}^{(i)})$ for all $n<0$.
\end{proof}

As of right now, this is the extent of our general knowledge about the Hodge components of K-theory.  This is precisely where our new research fits in.  Since much more is known about the Hodge decomposition of cyclic homology, and we have these nice theorems relating the decompositions of cyclic homology to that of K-theory, we will use this to build our new results.

\newpage
\section{New ideas I: Secondary results}
Finally we arrive at our new results.  In the first section we develop a method for computing the various versions of cyclic homology by using their descent properties.  Later, we will apply this method to particular examples such as the cyclic homology of hypersurfaces with isolated singularities. 

\subsection{Variants of cyclic homology on singular schemes}
Our goal will be to use descent properties of the various versions of cyclic homology to do computations on singular varieties.  Based on it's strong descent properties, periodic cyclic homology will be the easiest to compute on singular varieties.  
Let $X$ be a singular variety over a field $k$ of characteristic zero.  As we have mentioned many times before, we are guaranteed the existence of the following abstract blowup square corresponding to the resolution of $X$.  We have, 
\[\begin{CD}
E @>>> Y\\
@VVV @VVV\\
Z @>>> X
\end{CD} \] 
where $Y$ is a smooth variety, $Z$ is something like the singular locus (which may be singular but it is of smaller dimension) and $E$ is a union of smooth hyperplanes with normal crossings (at the worst, and it is also of smaller dimension).  We recall our the remark given at the end of section 2, where because we can iterate this process of resolving singularities for these smaller dimensional pieces, we can basically assume that each of $Z$ and $E$ are smooth.  

In this scenario, based on the descent properties of periodic cyclic homology, and assuming that each of the 3 varieties $E$, $Y$, and $Z$ are smooth varieties, we have the following result.
\begin{theorem}\label{6term}
Let $X$ be a singular variety over a field $k$ of characteristic zero.  Then there exists the following 6 term long exact sequence of de Rham cohomology groups and periodic cyclic homology groups.   

\[\begin{CD}
HP_0(X/k) @>>> H_{DR}^{ev}(Y/k)\oplus H_{DR}^{ev}(Z/k) @>>> H_{DR}^{ev}(E/k) \\
@AAA @. @VVV\\
H_{DR}^{odd}(E/k) @<<< H_{DR}^{odd}(Y/k)\oplus H_{DR}^{odd}(Z/k) @<<< HP_1(X/k) 
\end{CD} \]

\vspace{5 mm}

\noindent (Recall, $H_{DR}^{ev}(A/k)=\prod_{i\geq0}H_{DR}^{2i}(A/k)$ and $H_{DR}^{odd}(A/k)=\prod_{i\geq0}H_{DR}^{2i+1}(A/k)$.)
\end{theorem}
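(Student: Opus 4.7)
The plan is to deduce the six-term sequence from the cdh-descent property of periodic cyclic homology combined with its two-periodicity. First, since $\textbf{HP}(-/k)$ satisfies cdh-descent on $\text{Sch}/k$ (a result cited earlier in the paper), we can apply the Mayer-Vietoris sequence from Corollary \ref{mvsequence} with $\mathcal{E} = \textbf{HP}(-/k)$ to the abstract blowup square
\[\begin{CD}
E @>>> Y\\
@VVV @VVV\\
Z @>>> X
\end{CD}\]
obtaining a long exact sequence
\[\cdots \to \mathbb{H}^{n-1}_{cdh}(E,\textbf{HP}(-/k)) \to \mathbb{H}^n_{cdh}(X,\textbf{HP}(-/k)) \to \mathbb{H}^n_{cdh}(Y,\textbf{HP}(-/k)) \oplus \mathbb{H}^n_{cdh}(Z,\textbf{HP}(-/k)) \to \mathbb{H}^n_{cdh}(E,\textbf{HP}(-/k)) \to \cdots\]

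Next, I would translate these cdh-hypercohomology groups back into ordinary periodic cyclic homology. Because $\textbf{HP}(-/k)$ satisfies cdh-descent, the natural map $\textbf{HP}(W/k) \to \mathbb{H}_{cdh}(W,\textbf{HP}(-/k))$ is a weak equivalence for every $W \in \text{Sch}/k$, so $\mathbb{H}^{-n}_{cdh}(W,\textbf{HP}(-/k)) = HP_n(W/k)$ for every $n$ and every $W$. This turns the Mayer-Vietoris sequence into a long exact sequence purely in terms of periodic cyclic homology of $X$, $Y$, $Z$, and $E$.

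The third step is to invoke the standing assumption that $Y$, $Z$, $E$ are smooth (as justified by the iteration argument in the remark following Corollary \ref{notsmoothsmooth}), so Proposition \ref{hpderham} applies and gives
\[HP_0(W/k) = H_{DR}^{ev}(W/k), \qquad HP_1(W/k) = H_{DR}^{odd}(W/k)\]
for $W \in \{Y,Z,E\}$. Substituting these identifications into the long exact sequence replaces every $HP_n$ of a smooth scheme with the appropriate even or odd de Rham cohomology.

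Finally I would collapse the long exact sequence to six terms using two-periodicity. Since $HP$ is two-periodic (by the earlier proposition stating $HP_{2i} = HP_0$ and $HP_{2i+1} = HP_1$ for every $i$), every group in the long exact sequence depends only on the parity of the index. Consequently all even-indexed slots coincide and all odd-indexed slots coincide, so the long exact sequence cyclically closes up into the required hexagon. The only real obstacle is justifying the assumption that $Z$ and $E$ can be taken smooth; this is not automatic from a single abstract blowup square, but is handled by iterating the resolution on the lower-dimensional pieces as indicated in the earlier remark, so that the Mayer-Vietoris argument above can be applied unchanged to each piece and assembled into the final six-term sequence.
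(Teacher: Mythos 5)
Your proposal is correct and follows essentially the same route as the paper: apply cdh-descent of $\textbf{HP}(-/k)$ to the abstract blowup square to get the Mayer--Vietoris long exact sequence, identify the smooth terms with de Rham cohomology, and fold the sequence into a hexagon via two-periodicity, with the smoothness of $Z$ and $E$ justified by the same iteration remark. The only quibble is that for the de Rham identification on $Y$, $Z$, $E$ (which need not be affine) the correct citation is the scheme-level statement \ref{hpderham2} rather than the affine Proposition \ref{hpderham}.
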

\begin{proof}
Based on the previous remark, we will assume that each variety besides $X$ in the blowup square, $E$, $Y$, and $Z$, are all smooth.  Since $\textbf{HP}(-/k)$ satisfies cdh-descent, by applying the sheaf $\textbf{HP}(-/k)$ to the square above we have that 
\[\begin{CD}
\textbf{HP}(E/k) @<<< \textbf{HP}(Y/k)\\
@AAA @AAA\\
\textbf{HP}(Z/k) @<<< \textbf{HP}(X/k)
\end{CD} \]
is a homotopy Cartesian square.  Therefore, when passing to the homotopy groups, we have the inherited long exact sequence for fibrations or rather the Mayer-Vietoris sequence from cdh-cohomology.  
\[...\rightarrow HP_n(X/k)\rightarrow HP_n(Y/k)\oplus HP_n(Z/k)\rightarrow HP_n(E/k)\rightarrow HP_{n-1}(X/k)\rightarrow...\]
Recalling that periodic cyclic homology is actually periodic of period 2, we are able to reduce this long exact sequence to a 6 term long exact sequence that circles around on itself.  Finally, since each of these varieties, except for $X$, is smooth over a field of characteristic zero, we can use theorem \ref{hpderham2} to write them in terms of de Rham cohomology, which proves the result.  
\end{proof}

This shows that we can completely determine the periodic cyclic homology of a singular variety if we know the de Rham cohomology of it's resolution and that of the centers and exceptional fiber of the resolution.  

This by itself is a very nice result but, although difficult to find explicitly stated in the literature, is probably very well known.  Where we will use this is to determine the negative cyclic homology of a singular variety when the cyclic homology is already known.  

Suppose we have a ring $A$, or an affine scheme $X=$ Spec$(A)$, where we already know the cyclic homology of that ring.  We might want to figure out the negative cyclic homology and the periodic cyclic homology as well.  We already have a way of relating cyclic homology to it's negative and periodic versions, this is the SBI sequence given again by the following.  
\[
...\overset{S}{\rightarrow} HC_{n-1}(A/k)\overset{B}{\rightarrow} HN_{n}(A/k) \overset{I}{\rightarrow} HP_n(A/k) \overset{S}{\rightarrow} HC_{n-2}(A/k) \overset{B}{\rightarrow} ...
\]
The problem is, this sequence by itself is not enough to determine the negative and periodic groups.  However, using our results from above for periodic cyclic homology, we know two out of three of the groups of groups in this sequence.  So, by combining this SBI sequence with our findings for periodic cyclic homology, we can try to determine the unknowns in the long exact sequence.   We have the following result, which is the first of our secondary main theorems.   

\begin{corollary}
Let $X$ be a singular variety over a field of characteristic zero, and let $Y$, $Z$, and $E$ be the resolution, center, and exceptional fiber of the resolution respectively (each assumed to be smooth varieties).  Then there is a diagram which is a combination of two long exact sequences.  A rough approximation of this diagram is as follows (where $n$ is even).  The solid arrows are the long exact SBI sequence, and the dashed arrows are the six term exact sequence from above.  
\end{corollary}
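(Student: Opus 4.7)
The plan is purely assemblatory. Both ingredients are already at our disposal: the SBI sequence, valid for every scheme $X$ and arising from the short exact sequence $0 \to CCN \to CCP \to CC[0,2] \to 0$ of bicomplexes; and the six-term exact sequence of Theorem \ref{6term}, which rests on the cdh-descent of $\textbf{HP}(-/k)$ together with the $2$-periodicity of periodic cyclic homology. What the corollary actually asserts is simply that these two sequences can be drawn together in a single picture, sharing the common vertices $HP_n(X/k)$ and $HP_{n-1}(X/k)$ which are precisely the terms computed by Theorem \ref{6term}.

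First I would lay out a stretch of the SBI sequence horizontally, for $n$ even:
\[\ldots \xrightarrow{S} HC_{n-1}(X/k) \xrightarrow{B} HN_n(X/k) \xrightarrow{I} HP_n(X/k) \xrightarrow{S} HC_{n-2}(X/k) \xrightarrow{B} HN_{n-1}(X/k) \xrightarrow{I} HP_{n-1}(X/k) \xrightarrow{S} \ldots\]
At the node $HP_n(X/k)$ (even parity, hence equal to $HP_0(X/k)$ by $2$-periodicity) I would splice in the upper half of the six-term cycle from Theorem \ref{6term}: the outgoing arrow to $H_{DR}^{ev}(Y/k)\oplus H_{DR}^{ev}(Z/k)$, followed by the arrow to $H_{DR}^{ev}(E/k)$, with the incoming arrow from $H_{DR}^{odd}(E/k)$ supplied by the connecting map of the cycle. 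At the node $HP_{n-1}(X/k) = HP_1(X/k)$ I would splice in the lower half, with outgoing arrow to $H_{DR}^{odd}(Y/k)\oplus H_{DR}^{odd}(Z/k)$ and incoming arrow from $H_{DR}^{ev}(E/k)$. These are the dashed arrows that the corollary calls for, and by $2$-periodicity only these two instances of the six-term sequence appear in the entire hybrid.

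No further homological algebra is required: the two constituent sequences are independently exact, so the assembled diagram commits us to no new exactness claim to verify. The main obstacle is therefore typographical rather than mathematical — one must arrange the solid SBI row together with the two dashed six-term wraparounds so that the picture remains legible on the page. Once drawn, the diagram serves as the intended computational engine: when the cyclic homology $HC_*(X/k)$ is already known from another source, chasing the hybrid allows one to read off $HN_*(X/k)$ in terms of the de Rham cohomology of the smooth pieces $Y$, $Z$, and $E$ of the resolution.
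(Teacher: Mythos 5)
Your proposal is correct and is essentially what the paper does: the corollary carries no separate proof beyond the displayed diagram, which is exactly the splicing of the SBI sequence with the six-term sequence of Theorem \ref{6term} at the shared vertices $HP_0(X/k)$ and $HP_1(X/k)$, with no new exactness to verify. Your observation that $2$-periodicity means only two instances of the six-term cycle ever appear matches the paper's picture.
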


\noindent\begin{tikzpicture}[scale=.82, every node/.style={scale=.82} ]
\matrix (m) [matrix of math nodes, row sep=3em,
column sep=1em, text height=2ex, text depth=0.2ex]
{ &  &   & & & H_{DR}^{odd}(E/k) &\\
  &  &   & & & H_{DR}^{odd}(Y/k)\oplus H_{DR}^{odd}(Z/k) &\\
...& HN_{n+2}(X/k) & HP_0(X/k) & HC_{n}(X/k) & HN_{n+1}(X/k) & HP_1(X/k)& ...\\
&  & H_{DR}^{ev}(Y/k)\oplus H_{DR}^{ev}(Z/k)  & & & &  \\ 
&  & H_{DR}^{ev}(E/k)  & & & &... \\ };
\path[->]
(m-3-1) edge (m-3-2);
\path[->]
(m-3-2) edge (m-3-3);
\path[->]
(m-3-3) edge (m-3-4);
\path[->]
(m-3-4) edge (m-3-5);
\path[->]
(m-3-5) edge (m-3-6);
\path[->]
(m-3-6) edge (m-3-7);
\path[<-][densely dashed]
(m-5-3) edge (m-4-3);
\path[<-][densely dashed]
(m-4-3) edge (m-3-3);
\path[<-][densely dashed]
(m-1-6) edge (m-2-6);
\path[<-][densely dashed]
(m-2-6) edge (m-3-6);
\path[<-][densely dashed]
(m-3-1) edge [bend right=20] (m-5-3);
\path[<-][densely dashed]
(m-3-3) edge [bend left=20] (m-1-6);
\path[<-][densely dashed]
(m-3-6) edge [bend right=20] (m-5-7);
\end{tikzpicture}

Summarizing our findings up to this point, we have determined the following.  For a singular variety $X$ over a field of characteristic zero, we can always determine the periodic cyclic homology of $X$ by looking at the de Rham cohomology of the various spaces associated to it's resolution.  Furthermore, when the cyclic homology of this variety is already known, we can use this as well as our computations for periodic cyclic homology in the SBI sequence to determine the negative cyclic homology.  

\begin{note}
In general, there may be other ways to determine the two variations of cyclic homology when the cyclic homology is known.  Since the complexes defining the three theories are not that different, it would seem that whatever algebraic methods used to determine the original cyclic homology could be used to determine it's variants.  Such an example of this is given by \cite{emmanouil1995cyclic} where the author computed periodic cyclic homology in terms of infinitesimal cohomology. 

However, any method to extend these computations would have to be tailored to fit the specific scenario, and there may be unforeseen nuances that would make extending these computations difficult.  This method bypasses all of this, and leaves the nuance at the level of resolution of singularities.  Furthermore, as long as we are working with varieties over a field of characteristic zero, Hironaka's theorem guarantees that we can use this framework to compute the periodic cyclic homology (and then if the cyclic homology is known, to then compute the negative cyclic homology).  
\end{note}

\subsection{Cyclic homology of hypersurfaces with isolated singularities}

Using the framework developed in the previous section, we will attempt to apply our results to cases where the cyclic homology of a singular variety is known.  Our first application will be to extend the following theorem by Michler \cite{michler1997cyclic} to include computations about the negative cyclic homology and periodic cyclic homology.  
\begin{theorem}[Michler]
Let $k$ be an algebraically closed field of characteristic zero, and let $A$ be a reduced affine hypersurface over $k$ with only isolated singularities in affine $N$-space.  Then the Hodge components of cyclic homology are given by:
For $n>N$
\[ 
HC_n^{(i)}(A/k) \cong 
\begin{cases}
 T(\Omega_{A/k}^{N-1})\oplus H_{DR}^{N-1}(A/k) & \text{if }2i-n=N-1 \\
 H_{DR}^{2i-n}(A/k) & \text{otherwise}
\end{cases}\]
Where $T(\Omega_{A/k}^{N-1})$ is the torsion submodule of the $(N-1)$st exterior power of the K$\ddot{a}$hler differentials.  

For $n\leq N$
\[ 
HC_n^{(i)}(A/k) \cong 
\begin{cases}
\Omega_{A/k}^{n}/d\Omega_{A/k}^{n-1}& \text{if }i=n, \\
 H_{DR}^{2i-n}(A/k) & \text{for }n/2\leq i<n, \\
 0 & \text{otherwise}
\end{cases}.\]
\end{theorem}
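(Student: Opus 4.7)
The plan is to reduce the computation of $HC_n^{(i)}(A/k)$ to that of Hochschild homology $HH_n^{(i)}(A/k)$ via the Hodge-decomposed Connes (SBI) exact sequence, and then to compute $HH_n^{(i)}(A/k)$ using the specific structure of a hypersurface singularity. This shifts the main difficulty onto understanding the torsion in the modules $\Omega_{A/k}^p$ of K\"ahler differentials, which is the only place singularity information can enter the computation.

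First I would analyze $\Omega_{A/k}^p$ for $A = k[x_1,\ldots,x_N]/(f)$. Since $f$ has only isolated singularities, the modules $\Omega_{A/k}^p$ are locally free of the expected rank on the smooth locus, and all torsion is concentrated at the finitely many singular points. A Jacobian/Koszul analysis based on the fact that $\dim A = N-1$ and that the Jacobian ideal $(\partial f/\partial x_1,\ldots,\partial f/\partial x_N)$ cuts out the singular locus forces torsion to occur only for $p = N-1$, giving the module $T(\Omega_{A/k}^{N-1})$ supported on the singular points. Using the Hochschild--Kostant--Rosenberg theorem for the smooth locus together with a localization argument at the singular points, I would then identify the Hodge components $HH_n^{(i)}(A/k)$: off the exceptional locus they match the HKR prediction (essentially $\Omega_{A/k}^i$ modulo boundaries), while for large $n$ and along the line $2i-n = N-1$ an additional $T(\Omega_{A/k}^{N-1})$ summand appears.

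Next I would feed these Hodge components of Hochschild homology into the Hodge-decomposed Connes sequence
\[\cdots \to HH_n^{(i)}(A/k) \xrightarrow{I} HC_n^{(i)}(A/k) \xrightarrow{S} HC_{n-2}^{(i-1)}(A/k) \xrightarrow{B} HH_{n-1}^{(i)}(A/k) \to \cdots\]
and iterate. Because $S$ preserves the quantity $2i-n$, the nonzero Hodge weights at level $n$ are constrained to $n/2 \leq i \leq n$, and the torsion originating in $\Omega_{A/k}^{N-1}$ accumulates precisely along the diagonal $2i-n = N-1$. Comparison with the smooth-locus computation identifies the non-torsion pieces with de Rham cohomology $H_{DR}^{2i-n}(A/k)$, and the top component $i=n$ collapses to $\Omega_{A/k}^n/d\Omega_{A/k}^{n-1}$ exactly as in the smooth case. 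The boundary case $n \leq N$ is simpler, since the SBI iteration terminates before the torsion has a chance to contribute, yielding the familiar smooth-like formulas.

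The main obstacle is the careful bookkeeping of torsion as one iterates the SBI sequence. One must verify that (i) the torsion $T(\Omega_{A/k}^{N-1})$ stabilizes to appear in $HC_n^{(i)}(A/k)$ precisely when $2i-n = N-1$ and $n > N$, (ii) the connecting maps $S$ and $B$ on Hodge components reproduce the smooth-case behavior off this diagonal, and (iii) no torsion leaks into other Hodge components through the connecting maps. A secondary subtlety is ensuring that the de Rham cohomology of the singular algebra $A$ appearing here is unambiguous; this can be handled using Hartshorne's formal completion construction or, equivalently in this setting, infinitesimal cohomology, and one must also use the hypothesis that $k$ is algebraically closed to guarantee that the singular locus decomposes as a finite disjoint union of closed points where the local analysis can be carried out.
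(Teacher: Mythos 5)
First, a point of order: the paper does not prove this statement at all --- it is quoted verbatim from Michler's paper \cite{michler1997cyclic} and used as a black box, so there is no in-paper proof to compare against. Judged on its own terms, your outline does capture the broad shape of Michler's actual argument (compute the Hodge components of Hochschild homology of the hypersurface, then run the Hodge-decomposed $I$--$S$--$B$ sequence relating $HH_n^{(i)}$, $HC_n^{(i)}$ and $HC_{n-2}^{(i-1)}$ inductively), so the strategy is sound.

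The genuine gap is in the step where you claim to obtain $HH_n^{(i)}(A/k)$ from ``the Hochschild--Kostant--Rosenberg theorem for the smooth locus together with a localization argument at the singular points.'' Localization does commute with Hochschild homology, so this reduces the torsion contributions to the local rings at the finitely many singular points; but it gives you no control over $HH_n(A_P)$ for a singular local hypersurface ring, and in particular no access to the stable range $n>N$, which is exactly where the term $T(\Omega_{A/k}^{N-1})$ lives. The actual computation requires the explicit $2$-periodic free resolution of $A$ over $A\otimes_k A$ available for hypersurfaces (the Koszul/Tate resolution used by Guccione--Guccione and the BACH group); the eventual $2$-periodicity of that resolution is the structural reason the torsion accumulates along the diagonal $2i-n=N-1$, and nothing in an HKR-plus-localization argument produces it. A second, smaller inaccuracy: it is not true that torsion in $\Omega_{A/k}^{p}$ occurs only for $p=N-1$; the module $\Omega_{A/k}^{N}$ is nonzero and entirely torsion (it is a finite-length module cut out by the Jacobian ideal), and one of Michler's key lemmas is precisely the identification $T(\Omega_{A/k}^{N-1})\cong\Omega_{A/k}^{N}$ for isolated singularities, together with the vanishing $T(\Omega_{A/k}^{p})=0$ for $p\leq N-2$. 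These are theorems requiring the Jacobian/Koszul analysis you allude to, not consequences of general position; without them the bookkeeping in your SBI iteration cannot be carried out.
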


This theorem references the Hodge decomposition of cyclic homology.  As mentioned before, this will be the setting in which we state our results.  We will now try to extend these computations of the Hodge components to the other two variants of cyclic homology

\begin{theorem}
Let $X$ be a singular variety over a field $k$ of characteristic zero.  Let $Y$, $Z$, and $E$ be the resolution, center, and exceptional fiber of the resolution respectively (all assumed to be smooth varieties).  Then for each $i$, there exists the following long exact sequence of de Rham cohomology groups and Hodge components of periodic cyclic homology groups.   
\[\rightarrow H_{DR}^{2i-n-1}(E/k) \rightarrow HP^{(i)}_{n}(X/k)\rightarrow H_{DR}^{2i-n}(Y/k)\oplus H_{DR}^{2i-n}(Z/k) \rightarrow H_{DR}^{2i-n}(E/k) \rightarrow HP^{(i)}_{n-1}(X/k) \rightarrow 
 \]
\end{theorem}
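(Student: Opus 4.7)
The plan is to prove this theorem in direct parallel to Theorem \ref{6term}, but at the level of a single Hodge component rather than the total periodic cyclic homology. Given the abstract blowup square
\[\begin{CD}
E @>>> Y\\
@VVV @VVV\\
Z @>>> X
\end{CD}\]
associated with the resolution of $X$, I would apply the presheaf of spectra $\textbf{HP}^{(i)}(-/k)$ to this square. Since we already established that the Hodge components $\textbf{HP}^{(i)}(-/k)$ inherit cdh-descent from $\textbf{HP}(-/k)$ (this was the content of the theorem on descent properties of Hodge components), applying $\textbf{HP}^{(i)}(-/k)$ to our abstract blowup square yields a homotopy Cartesian square of spectra.

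From there, the standard argument for producing a Mayer--Vietoris sequence from a homotopy Cartesian square of spectra (identical to the argument used inside the Mayer--Vietoris theorem for cdh-cohomology) immediately gives a long exact sequence of homotopy groups
\[\cdots \to HP^{(i)}_{n}(X/k) \to HP^{(i)}_n(Y/k)\oplus HP^{(i)}_n(Z/k) \to HP^{(i)}_n(E/k) \to HP^{(i)}_{n-1}(X/k) \to \cdots .\]
To finish, I would invoke Theorem \ref{hpderham2} (which says that on smooth schemes over a field of characteristic zero, $HP^{(i)}_n(-/k) \cong H^{2i-n}_{DR}(-/k)$) to rewrite each of the three terms involving the smooth varieties $Y$, $Z$, and $E$. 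This produces exactly the sequence in the statement.

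The only real subtlety, and the point where this theorem differs qualitatively from Theorem \ref{6term}, is that the Hodge components are \emph{not} 2-periodic: although $HP_n$ itself is periodic of period 2, the grading $HP^{(i)}_n$ depends on the parity of $2i-n$, so incrementing $n$ shifts the de Rham index. This is why the resulting sequence remains an infinite long exact sequence instead of collapsing to a six-term sequence as in Theorem \ref{6term}. The other mild caveat, as in the proof of Theorem \ref{6term}, is the assumption that $Y$, $Z$, and $E$ are smooth; if they are not, one iterates resolution on the lower-dimensional pieces and assembles a hybrid Mayer--Vietoris diagram, but under the standing assumption of the statement this issue does not arise. No new machinery beyond cdh-descent of $\textbf{HP}^{(i)}(-/k)$ and the smooth identification with de Rham cohomology is required.
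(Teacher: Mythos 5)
Your proposal is correct and follows essentially the same route as the paper, which simply notes that the argument is the Hodge-component analogue of the proof of Theorem \ref{6term}: cdh-descent of $\textbf{HP}^{(i)}(-/k)$ applied to the abstract blowup square, the resulting Mayer--Vietoris sequence, and the identification $HP^{(i)}_n \cong H^{2i-n}_{DR}$ on the smooth pieces. Your remark that the loss of 2-periodicity at the level of individual Hodge components is what keeps the sequence from collapsing to six terms is a correct and worthwhile observation.
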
 
\begin{proof}
The proof of this is basically the same as the proof of theorem \ref{6term}.  
\end{proof}

Combining this result, the SBI-sequence, and the theorem of Michler, we arrive at our first application.  
\begin{theorem}
Let $k$ be an algebraically closed field of characteristic zero, and let $X$ be a reduced affine hypersurface over $k$ with only isolated singularities in affine $N$-space.  Let $Y$, $Z$, and $E$ be the resolution, center, and exceptional fiber of the resolution respectively (all assumed to be smooth).  Then the Hodge components of negative cyclic homology can be computed using a combination of two long exact sequences, namely the Mayer-Vietoris sequence for cdh-cohomology and the SBI-sequence.  This theorem will be restated with diagrams in appendix \ref{cmich}.  
\end{theorem}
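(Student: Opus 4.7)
The plan is to combine three ingredients already assembled in the excerpt: Michler's formula, which identifies $HC^{(i)}_n(X/k)$ explicitly; the Hodge-decomposed Mayer--Vietoris sequence for cdh-fibrant cyclic homology applied to periodic cyclic homology, which expresses $HP^{(i)}_n(X/k)$ in terms of de Rham cohomology of the smooth pieces $Y$, $Z$, $E$; and the SBI sequence decomposed along Hodge components (proposition \ref{Hodgesbi}), which relates $HC^{(i)}$, $HN^{(i+1)}$, and $HP^{(i+1)}$. The target $HN^{(i+1)}_n(X/k)$ then sits as the unknown term in a long exact sequence where every other term has been identified.

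First I would invoke the theorem stated immediately above the target (the Hodge-component version of theorem \ref{6term}) to obtain, for each fixed Hodge index $i$, the long exact sequence
\[
\cdots \to H_{DR}^{2i-n-1}(E/k) \to HP^{(i)}_n(X/k) \to H_{DR}^{2i-n}(Y/k)\oplus H_{DR}^{2i-n}(Z/k) \to H_{DR}^{2i-n}(E/k) \to \cdots,
\]
which is legitimate because $Y$, $Z$, $E$ are smooth, so theorem \ref{hpderham2} rewrites each $HP^{(i)}_n(Y/k)$, $HP^{(i)}_n(Z/k)$, $HP^{(i)}_n(E/k)$ as a single de Rham cohomology group of the indicated degree. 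This sequence determines $HP^{(i)}_n(X/k)$ purely from resolution data.

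Next I would paste this output into the Hodge-component SBI sequence
\[
\cdots \xrightarrow{S} HC^{(i)}_{n-1}(X/k) \xrightarrow{B} HN^{(i+1)}_n(X/k) \xrightarrow{I} HP^{(i+1)}_n(X/k) \xrightarrow{S} HC^{(i)}_{n-2}(X/k) \xrightarrow{B} \cdots.
\]
Michler's theorem supplies the $HC^{(i)}_{*}(X/k)$ terms (splitting into the two ranges $n>N$ and $n\leq N$, with the torsion correction $T(\Omega_{A/k}^{N-1})$ appearing only when $2i-n=N-1$), and the previous step supplies the $HP^{(i+1)}_{*}(X/k)$ terms. Assembling these two long exact sequences in parallel produces the hybrid diagram promised in the theorem, in exact analogy with the diagram displayed after the first secondary corollary; the full picture will be drawn in appendix \ref{cmich}.

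The main obstacle is that the conclusion is genuinely a diagrammatic recipe rather than a closed formula: one does not extract $HN^{(i+1)}_n(X/k)$ as a direct sum, but only as an extension whose outer terms are the kernel and cokernel of the SBI connecting maps, which in turn are described by another long exact sequence. Verifying that the information suffices amounts to checking, Hodge-index by Hodge-index and in the ranges dictated by Michler, that the two exact sequences can be spliced so that each occurrence of $HN^{(i+1)}_n(X/k)$ is surrounded by explicitly identified groups. The remaining subtlety is the dependence on the resolution data $(Y,Z,E)$: absent an explicit resolution the de Rham groups remain formal, but under the standing hypothesis that $Y$, $Z$, $E$ are smooth the bookkeeping goes through without further input, so the theorem reduces to displaying this combined diagram.
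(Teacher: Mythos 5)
Your proposal is correct and follows essentially the same route as the paper: apply the Hodge-component Mayer--Vietoris sequence for $\textbf{HP}$ to identify $HP^{(i)}_n(X/k)$ from the de Rham cohomology of $Y$, $Z$, $E$, then splice this into the Hodge-decomposed SBI sequence where Michler's theorem supplies the $HC^{(i)}_*(X/k)$ terms, leaving $HN^{(i+1)}_n(X/k)$ as the only unknown. Your closing caveat --- that the conclusion is a diagrammatic recipe rather than a closed formula --- matches the paper's own framing of the result.
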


\begin{proof}
Essentially all we are doing is splitting the result of the previous section into Hodge components.  Using the previous theorem, and the fact that Hodge decompositions behave well with these long exact sequences, we have an explicit hybrid of two long exact sequences for each Hodge component.  Using this combination, there are two unknowns, the negative cyclic homology and the cyclic homology.  Applying the results of Michler, we now know the cyclic homology in this scenario.  Hence the negative cyclic homology is also computable. 
\end{proof}

Using our previous result with corollary \ref{cycliczero} will allow us to write a more concise result for negative cyclic homology in negative degrees.  

\begin{corollary}In the scenario of the previous theorem, for $n<0$, the Hodge components of the negative cyclic homology of $X$ can be computed using the following long exact sequence.  
\[\rightarrow H_{DR}^{2i-n-1}(E/k) \rightarrow HN^{(i)}_{n}(X/k)\rightarrow H_{DR}^{2i-n}(Y/k)\oplus H_{DR}^{2i-n}(Z/k) \rightarrow H_{DR}^{2i-n}(E/k) \rightarrow HN^{(i)}_{n-1}(X/k) \rightarrow 
 \]
\end{corollary}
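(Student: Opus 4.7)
The plan is to reduce the stated sequence to the periodic cyclic homology sequence from the previous theorem by showing that, in the range $n<0$, negative cyclic homology and periodic cyclic homology coincide Hodge component by Hodge component.

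First, I would invoke corollary \ref{cycliczero}: since $X$ is an affine hypersurface in $\mathbb{A}^N$, it is Noetherian and finite dimensional, so $HC_n(X/k)=0$ for all $n<0$. The Hodge decomposition writes $HC_n(X/k)=\prod_i HC_n^{(i)}(X/k)$, so the vanishing passes to each factor, giving $HC_n^{(i)}(X/k)=0$ for every $i$ and every $n<0$.

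Second, I would apply the Hodge-graded SBI sequence from proposition \ref{Hodgesbi}. After reindexing $i\mapsto i-1$, the relevant four-term segment is
\[HC^{(i-1)}_{n-1}(X/k)\to HN^{(i)}_{n}(X/k)\to HP^{(i)}_n(X/k)\to HC^{(i-1)}_{n-2}(X/k).\]
For $n<0$ we have both $n-1<0$ and $n-2<0$, so the flanking cyclic homology groups vanish by the first step. Exactness then forces $HN^{(i)}_n(X/k)\xrightarrow{\ \sim\ } HP^{(i)}_n(X/k)$ to be an isomorphism for every $i$ and every $n<0$.

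Third, I would take the long exact sequence for $HP^{(i)}$ from the previous theorem,
\[\to H_{DR}^{2i-n-1}(E/k)\to HP^{(i)}_{n}(X/k)\to H_{DR}^{2i-n}(Y/k)\oplus H_{DR}^{2i-n}(Z/k)\to H_{DR}^{2i-n}(E/k)\to HP^{(i)}_{n-1}(X/k)\to,\]
and substitute $HN^{(i)}_n(X/k)$ for $HP^{(i)}_n(X/k)$ and $HN^{(i)}_{n-1}(X/k)$ for $HP^{(i)}_{n-1}(X/k)$. The substitution for $n-1$ is legal because $n<0$ forces $n-1<0$ as well. The result is exactly the displayed sequence in the statement. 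The only points that need verification are bookkeeping items: that the Hodge decomposition really commutes with the product (so that vanishing is componentwise), and that the index shift in the SBI sequence is consistent with the indexing in the previous theorem. Both follow immediately from the material already developed, so the corollary is essentially a direct combination of \ref{cycliczero}, \ref{Hodgesbi}, and the Hodge-componentwise periodic-cyclic sequence.
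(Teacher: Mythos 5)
Your proposal is correct and follows exactly the route the paper intends: the paper's own (very terse) justification is precisely that the corollary combines the vanishing of $HC_n^{(i)}(X/k)$ for $n<0$ from \ref{cycliczero} with the Hodge-graded SBI sequence to identify $HN^{(i)}_n$ with $HP^{(i)}_n$, and then substitutes into the periodic cyclic homology sequence of the preceding theorem. Your write-up just makes explicit the index bookkeeping that the paper leaves implicit.
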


The previous corollary does not actually use the results of Michler, we are really just using the fact that the cyclic homology is zero for negative degrees from \ref{cycliczero}.  The reason that it is stated as a corollary is that it borrows the same notation from the previous theorem and is really actually a corollary to \ref{cycliczero}.  

\newpage
\section{New ideas II: Main results}
We will now look at a different type of result.  For our previous original results, we have shown that we are able to compute the variations on the cyclic homology groups from a known cyclic homology computation.  One of our problems was that we were unable to write things concisely in terms of a known explicit resolution of singularities. While in these previous cases we have that the various components of the resolution are known to exist, they are not known.  This led to having several hybrid long exact sequence diagrams (appearing in the appendices) with various de Rham cohomology groups in terms of these nebulous pieces of resolutions.   

Now, for these new results we have the opposite information, and hence opposite problems.  This time, we know the explicit resolutions, but we have either only partial or no information about the cyclic homology.  

\subsection{The K-theory of filtered deformations of cones over smooth varieties}
Before we state and prove our main theorem, we will have to develop some more background information for our specific scenario. 

Most of our main results have grown out of the study of the following lemma.  
\begin{lemma}\label{localblowup}
Let $A$ be a local ring with maximal ideal $\mathfrak{m}$, and let $X=$ Spec$(A)$.  For the Blowup of $X$ along some ideal $I\subset A$, $Y=$ Proj$(\oplus_{i=0}^{\infty} I^i)$, with exceptional fiber \\ $E=$ Proj$(\oplus_{i=0}^{\infty} I^i/I^{i+1})$, if $E$ is smooth then $Y$ is also smooth and therefore a resolution of $X$.  
\end{lemma}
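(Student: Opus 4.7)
The plan is to verify smoothness of $Y$ pointwise, exploiting the fundamental fact that the exceptional divisor of a blowup is always an effective Cartier divisor. First I would recall the standard affine cover: if $I = (f_0,\ldots,f_r)$, then $Y$ is covered by charts $U_i = \text{Spec}(A[I/f_i])$, and on each chart the scheme-theoretic preimage $E \cap U_i$ is cut out by the single equation $f_i = 0$. Thus $E \hookrightarrow Y$ is an effective Cartier divisor locally defined by a non-zero-divisor, which follows from the standard fact that the Rees algebra is $I$-torsion-free on these charts.

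Next I would apply the elementary local criterion for regularity along a Cartier divisor: if $D \subset Y$ is cut out at $p$ by a non-zero-divisor $f$ and $\mathcal{O}_{D,p}$ is regular of dimension $d$, then $\mathcal{O}_{Y,p}$ is regular of dimension $d+1$. The argument is short --- lift a regular system of parameters $\bar g_1,\ldots,\bar g_d$ for the maximal ideal of $\mathcal{O}_{D,p}$ to elements $g_1,\ldots,g_d \in \mathcal{O}_{Y,p}$; then $g_1,\ldots,g_d,f$ generate $\mathfrak{m}_{\mathcal{O}_{Y,p}}$ (since the quotient by $(f)$ is $\mathfrak{m}_{\mathcal{O}_{D,p}}$), and Krull's Hauptidealsatz gives $\dim \mathcal{O}_{Y,p} = \dim \mathcal{O}_{D,p} + 1$, so this is a regular system of parameters. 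Applied at each point of $E$, which is regular by hypothesis, this gives smoothness of $Y$ in an open neighborhood of $E$ (over a field of characteristic zero regularity and smoothness coincide).

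The remaining locus $Y \setminus E$ is isomorphic to $X \setminus V(I)$ via the blowup morphism, so it is smooth in the context of interest, namely when $A$ is the local ring of a variety whose only singularity is contained in $V(I)$ (typically the closed point when $I = \mathfrak{m}$). Since a blowup along a nonzero ideal is always proper and birational, a smooth $Y$ is automatically a resolution of $X$. The main obstacle is essentially bookkeeping: one must verify carefully that $f_i$ is a non-zero-divisor on each affine chart (a brief remark about $I$-torsion in the Rees algebra), and one must invoke the characteristic zero hypothesis implicit in the surrounding paper to promote regularity to smoothness over the base field.
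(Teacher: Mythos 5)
Your core computation is essentially the paper's: both arguments reduce to the fact that near a point of the blowup the ideal $I$ becomes principal, generated by a non\-zero\-divisor $f_i$ that cuts out $E$, and then ascend regularity from $\mathcal{O}_{E,p}=\mathcal{O}_{Y,p}/(f_i)$ to $\mathcal{O}_{Y,p}$ (you do this by lifting a regular system of parameters and invoking Krull; the paper does it via the global-dimension statement ``$\mathrm{gl.dim}\,R=\mathrm{gl.dim}\,R/(f)+1$ for $f$ a non-zero-divisor'' --- these are interchangeable). Your chart-by-chart formulation via $\mathrm{Spec}(A[I/f_i])$ is if anything cleaner than the paper's localization of the Rees algebra at a relevant homogeneous prime.

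There is, however, a genuine gap in how you dispose of $Y\setminus E$. You write that $Y\setminus E\cong X\setminus V(I)$ ``is smooth in the context of interest, namely when $A$ is the local ring of a variety whose only singularity is contained in $V(I)$.'' That is an extra hypothesis not present in the lemma, so as written you have proved a weaker statement. The hypothesis you actually have --- and the reason the lemma assumes $A$ is \emph{local} --- is enough on its own: $Y\to\mathrm{Spec}(A)$ is projective, hence closed, so every closed point of $Y$ maps to a closed point of $\mathrm{Spec}(A)$, i.e.\ to $\mathfrak{m}$; and since $I\subseteq\mathfrak{m}$, every closed point of $Y$ therefore lies on $E$. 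Consequently $Y\setminus E$ contains no closed point of $Y$ at all, every point of the Noetherian scheme $Y$ specializes to a closed point lying on $E$, and regularity there propagates to all of $Y$ because a localization of a regular local ring is regular. (In particular the lemma \emph{implies}, rather than assumes, that $X\setminus V(I)$ is regular.) This is exactly the step where the paper uses maximality of the relevant prime $P$ to force $\mathfrak{m}\subset P$; your proof needs the same observation in place of the appeal to context.
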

\begin{proof}This result is stated informally in \cite{o1997smoothness} at the beginning of section 2. Their argument is completely incomprehensible, therefore we have decided to include a cleaner version of their proof.  

Let $I=\langle f_1,...,f_r\rangle$, and assume that $E=$ Proj$(\oplus_{i=0}^{\infty} I^i/I^{i+1})$ is smooth. In order to show that $Y$ is smooth, we need to show that at every (closed) point $P$, the local ring at $P$ of the scheme $Y$ is a regular local ring. Recall that the Proj construction takes a graded ring $(\oplus_{i=0}^{\infty} R_i)$, and defines a space using the collection of all relevant homogeneous prime ideals.  A relevant ideal is defined by the property that it does not contain the irrelevant ideal, meaning that $J\not\supset\oplus_{i>0} R_i$ (the irrelevant ideal of $\oplus_{i=0}^{\infty} R_i)$. 

Let $P$ be a closed point on $Y$, hence $P$ is a maximal relevant ideal.  First we note by the maximality of $P$ that $\mathfrak{m}$ (the maximal ideal of $A$ that is now included in the blowup algebra) is contained in (the zero-ith graded piece of) $P$.  Recall that we use that notation $A[It]$ for $(\oplus_{i=0}^{\infty} I^i)$ so that the indeterminate $t$ can keep track of the grading.  Since $P$ is a relevant ideal we know that for some $i\in\{1,...r\}$, $f_it\notin P$, otherwise, $P$ would contain the irrelevant ideal, and hence would not be a point on $Y$.  We do know however that $P$ contains the remaining $f_jt$, otherwise it would contradict the maximality of $P$.  

Now, if we look at the localization of $A[It]$ at $P$, we have that $IA[It]_P=f_iA[It]_P$.  The reason for this is a two way containment argument.  First, since $f_i\in I$, we know that $IA[It]_P\supset f_iA[It]_P$.  Next we want to show that for any element in $IA[It]_P$, it can also be written as an element in $f_i A[It]_P$, hence the other containment.  We have that a typical element in $IA[It]_P$ looks like a linear combination of the generating set of elements.  By looking at one summand from such a linear combination, it will make the calculation easier, and we can recombine later to recreate a typical element.  Without loss of generality, take $f_j$ to be some element in the generating set of $I$.  We want to rewrite $f_j\frac{a}{b}$, where $a\in A[It]$ and $b\in A[It]\setminus P$, as an element from $f_iA[It]_P$ which would look like $f_i\frac{c}{d}$ where $c\in A[It]$ and $d\in A[It]\setminus P$.  Setting $c=f_jta$ and $d=f_itb$ (recall we can do this because $f_it\notin P$) we have that,
\[f_j\frac{a}{b}=f_j\frac{(f_it)a}{(f_it)b}=f_i\frac{f_jta}{f_itb}=f_i\frac{c}{d}.\]
So, we see that (after a proper argument involving a linear combination of these terms) any element in $IA[It]_P$ is also an element of $f_iA[It]_P$.  Finally we conclude that $IA[It]_P\subset f_iA[It]_P$, and by two way containment, $IA[It]_P=f_iA[It]_P$.  

An important fact that we will need is that $f_i$ is a nonzero divisor on $A[It]_P$.  This is simply because the localization map $A[It]\rightarrow A[It]_P$ that sends $x\rightarrow x/1$ preserves the property of being a zero divisor.  So, since $f_i$ is not a zero divisor in $A[It]$, $f_i/1=f_i$ is not a zero divisor in $A[It]_P$.  

Recall that the associated graded ring, $\oplus_{i=0}^{\infty} I^i/I^{i+1}$ can be rewritten as a quotient of $A[It]$, $\oplus_{i=0}^{\infty} I^i/I^{i+1}=A[It]/IA[It]$.  Using this fact and the correspondence theorem for ideals and ideals in quotient rings, we know that since $IA[It]\subset P$, that $P/IA[IT]$ is a relevant homogeneous ideal in $\oplus_{i=0}^{\infty} I^i/I^{i+1}=A[It]/IA[It]$. 

Localizing at $P/IA[IT]$, we know that $(A[It]/IA[It])_{P/IA[It]}$.  By properties of localization (the fact that it commutes with quotients), and by the fact that $IA[It]_P=f_iA[It]_P$, we know that \[(A[It]/IA[It])_{P/IA[It]}=A[It]_P/IA[It]_P=A[It]_P/f_iA[It]_P.\]  Furthermore, since $E=$ Proj$(A[It]/IA[It])$ is assumed to be smooth, we know that $A[It]_P/f_iA[It]_P$ is a regular local ring.  By corollary 4.4.13 of \cite{weibel1995introduction}, we know that the global dimension of $A[It]_P$ is equal to the global dimension of $A[It]_P/f_iA[It]_P$ plus one.  Since $A[It]_P/f_iA[It]_P$ is regular, we know that it has finite global dimension, so therefore $A[It]_P$ also has finite global dimension.  We conclude that $A[It]_P$ is a regular local ring.  

Finally, since $P$ was taken as an arbitrary closed point on $Y$ and the local ring at $P$ of the scheme $Y$ is a regular local ring, we know that $Y$ is smooth, which completes the proof.  
\end{proof}

An immediate corollary to this result that we can adapt from our cdh-cohomology framework is the following.  
\begin{theorem}\label{thcole}
Let $E=$ Proj$(A)$ be a smooth projective scheme over some field $k$ of characteristic zero, and let $R$ be a local ring such that $R$ is a filtered deformation of $A$ along some ideal filtration for some ideal $I\subset R$ such that $R/I$ is smooth and Spec$(R)$ is a scheme essentially of finite type over $k$.  Then for each $n<0$ and $i\geq0$, there is a long exact sequence of Hodge components of K-theory and cyclic homology given by 
\[...\rightarrow HC_{n+1}^{(i)}(E)\rightarrow\tilde{K}^{(i+1)}_{n}(R)\rightarrow HC_{n}^{(i)}(\text{Proj}(R[It])) \rightarrow HC_{n}^{(i)}(E)\rightarrow\tilde{K}^{(i+1)}_{n-1}(R)\rightarrow... \] 
\end{theorem}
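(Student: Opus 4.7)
The plan is to realize the claimed long exact sequence as the $n<0$ piece of the cdh Mayer--Vietoris sequence attached to the canonical abstract blowup of $\text{Spec}(R)$ along the ideal $I$, after decomposing into Hodge components. Concretely, consider the blowup $Y=\text{Proj}(R[It])\to X=\text{Spec}(R)$ with center $Z=\text{Spec}(R/I)$ and exceptional fiber $\text{Proj}(\text{gr}_I(R))$; because $R$ is a filtered deformation of $A$ along $I$, one has $\text{gr}_I(R)\cong A$, so the exceptional fiber is exactly $E$. By hypothesis $E$ and $Z$ are smooth, and $X$ is affine, Noetherian, and finite-dimensional (since $R$ is essentially of finite type over $k$). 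The essential geometric input is lemma \ref{localblowup}: because $R$ is local and $E$ is smooth, the total blowup $Y$ is also smooth, so three of the four corners of the abstract blowup square lie in $\text{Sm}/k$.

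Next I apply the cdh Mayer--Vietoris sequence of corollary \ref{mvsequence} to the presheaf $\textbf{HC}^{(i)}$, the $i$th Hodge component of cyclic homology. Since $\textbf{HC}^{(i)}$ satisfies scdh-descent, proposition \ref{scdhtocdh} identifies $\mathbb{H}^{-n}_{cdh}(W,\textbf{HC}^{(i)})\cong HC_n^{(i)}(W/k)$ on every smooth $W$; inserting this at the three smooth corners produces
\[\cdots\to HC_{n+1}^{(i)}(E)\to \mathbb{H}^{-n}_{cdh}(X,\textbf{HC}^{(i)})\to HC_n^{(i)}(Y)\oplus HC_n^{(i)}(Z)\to HC_n^{(i)}(E)\to \mathbb{H}^{-n+1}_{cdh}(X,\textbf{HC}^{(i)})\to\cdots.\]
To finish, I specialize to $n<0$. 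Corollary \ref{kHodgenegative} identifies $\mathbb{H}^{-n}_{cdh}(X,\textbf{HC}^{(i)})=\tilde{K}^{(i+1)}_n(R)$ for every negative index, which handles both the $n$ and $n-1$ spots (since $n<0$ forces $n-1<0$). Splitting corollary \ref{cycliczero} into Hodge components gives $HC_n^{(i)}(Z)=0$ for $n<0$ because $Z$ is smooth affine, so the direct-sum term collapses to $HC_n^{(i)}(Y)=HC_n^{(i)}(\text{Proj}(R[It]))$. Making these substitutions reproduces the stated long exact sequence.

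The main obstacle is not content but bookkeeping: one must trust that the Hodge decomposition passes cleanly through cdh Mayer--Vietoris, through the scdh-descent identification on smooth schemes, and through the vanishing of negative-degree cyclic homology on affines; each of these compatibilities was recorded earlier in the paper. The only nontrivial geometric ingredient beyond those formal compatibilities is lemma \ref{localblowup}, where locality of $R$ is exactly what upgrades smoothness of the exceptional fiber $E$ to smoothness of the entire blowup $Y$, placing us in the regime where the Mayer--Vietoris sequence reduces cleanly to cyclic homology of smooth schemes together with the obstruction $\tilde{K}^{(i+1)}_n(R)$.
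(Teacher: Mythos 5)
Your proposal is correct and follows essentially the same route as the paper's own proof: the same abstract blowup square with exceptional fiber $E=\text{Proj}(gr_I(R))$, the same appeal to lemma \ref{localblowup} to get smoothness of $\text{Proj}(R[It])$ from locality of $R$ and smoothness of $E$, the same cdh Mayer--Vietoris sequence in Hodge components, and the same two specializations for $n<0$ (vanishing of $HC_n^{(i)}(\text{Spec}(R/I))$ via \ref{cycliczero} and the identification $\mathbb{H}^{-n}_{cdh}(\text{Spec}(R),\textbf{HC}^{(i)})=\tilde{K}^{(i+1)}_n(R)$ via \ref{kHodgenegative}). No gaps; nothing further is needed.
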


\begin{proof}
First, we recall that a filtered deformation is defined by the use of the $gr$ construction, meaning that under the assumptions,  $A=gr_I(R)$.  Substituting this into a blowup square, we have that the blowup of Spec$(R)$ along the subscheme defined by $I$ looks like the following.  
\[\begin{CD}
\text{Proj}(gr_I(R))  @>>> \text{Proj}(R[It])\\
@VVV @VVV\\
\text{Spec}(R/I)@>>> \text{Spec}(R)
\end{CD}
\hspace{.5 in}=\hspace{.5 in}
\begin{CD}
E=\text{Proj}(A)  @>>> \text{Proj}(R[It])\\
@VVV @VVV\\
\text{Spec}(R/I)@>>> \text{Spec}(R)
\end{CD}\]
Since $R$ is a local ring, and the exceptional fiber $E$ is assumed to be smooth, we have by lemma \ref{localblowup} that Proj$(R[It])$ is smooth, and therefore this is a resolution of Spec$(R)$.  Furthermore, by another assumption, the center of the blowup or the subscheme defined by Spec$(R/I)$ is also smooth.  

So, we have an abstract blowup square where each of the schemes in the square, except for Spec$(R)$ are smooth.  Using the smoothness of each of these schemes we have that for each $i\geq 0$ the Mayer-Vietoris sequence for cdh-fibrant cyclic homology, 
\[...\rightarrow\mathbb{H}_{cdh}^{-n}(\text{Spec}(R),\textbf{HC}^{(i)})\rightarrow\mathbb{H}_{cdh}^{-n}(\text{Proj}(R[It]),\textbf{HC}^{(i)})\oplus\mathbb{H}_{cdh}^{-n}(\text{Spec}(R/I),\textbf{HC}^{(i)})\hspace{1 in}\]\[\hspace{3in} \rightarrow\mathbb{H}_{cdh}^{-n}(E,\textbf{HC}^{(i)})\rightarrow\mathbb{H}_{cdh}^{-n+1}(\text{Spec}(R),\textbf{HC}^{(i)})\rightarrow... .\]
reduces to 
\[...\rightarrow HC_{n+1}^{(i)}(E)\rightarrow\mathbb{H}_{cdh}^{-n}(\text{Spec}(R), \textbf{HC}^{(i)})\rightarrow HC_{n}^{(i)}(\text{Proj}(R[It]))\oplus HC_{n}^{(i)}(\text{Spec}(R/I))\hspace{.8 in}\]\[\hspace{3.25 in} \rightarrow HC_{n}^{(i)}(E)\rightarrow\mathbb{H}_{cdh}^{-n+1}(\text{Spec}(R), \textbf{HC}^{(i)})\rightarrow... .\]

Under the assumption that $n<0$ we know two things.  The first comes from \ref{cycliczero}, which tells us that in the sequence above, since Spec$(R/I)$ is affine, Noetherian, and finite dimensional, that $HC_{n}^{(i)}(\text{Spec}(R/I))=0$.  The second comes from \ref{kHodgenegative}.  This tells that, again since Spec$(R)$ is now affine, Noetherian, and finite dimensional, that 
\[\tilde{K}^{(i+1)}_{n}(R)=\tilde{K}^{(i+1)}_{n}(\text{Spec}(R))=\mathbb{H}^{-n}_{cdh}(X,\textbf{HC}^{(i)})\]

So, putting this together, for $n<0$, we have that following exact sequence of K-groups and cyclic homology groups.  
\[...\rightarrow HC_{n+1}^{(i)}(E)\rightarrow\tilde{K}^{(i+1)}_{n}(R)\rightarrow HC_{n}^{(i)}(\text{Proj}(R[It])) \rightarrow HC_{n}^{(i)}(E)\rightarrow\tilde{K}^{(i+1)}_{n-1}(R)\rightarrow... .\]
thus completing the proof of the theorem.
\end{proof}

This is already a nice result, but again, we want to be able to easily compute the pieces of the cyclic homology in the resulting long exact sequence.  Later we will try to rephrase a similar long exact sequence from the main theorem so that we do just that: make the cyclic homology easier to compute. 

As a preview, we will rewrite the cyclic homology groups in terms of the hypercohomology groups of the truncated de Rham complex.  From \ref{cyclichyperHodge}, we know that since $E$ and Proj$(R[It])$ are smooth schemes we have that the Hodge components of their cyclic homology coincide with hypercohomology.   We have then that \[HC_n^{(i)}(E)=\mathbb{H}^{2i-n}(E,\Omega_{-/\mathbb{Q}}^{\leq i}),\hspace{.25 in}\text{and}\hspace{.25 in}HC_n^{(i)}(\text{Proj}(R[It]))=\mathbb{H}^{2i-n}(\text{Proj}(R[It]),\Omega_{-/\mathbb{Q}}^{\leq i}).\]
Replacing those cyclic homology groups in the previous long exact sequence with these hypercohomology groups yields a slightly more computable long exact sequence. 
\begin{corollary}
In the scenario of the previous theorem, the resulting long exact sequence can be rewritten in the following way.  
\[...\rightarrow \mathbb{H}^{2i-n-1}(E,\Omega_{-/\mathbb{Q}}^{\leq i})\rightarrow\tilde{K}^{(i+1)}_{n}(R)\rightarrow \mathbb{H}^{2i-n}(\text{Proj}(R[It]) \rightarrow \mathbb{H}^{2i-n}(E,\Omega_{-/\mathbb{Q}}^{\leq i})\rightarrow\tilde{K}^{(i+1)}_{n-1}(R)\rightarrow... \] 
\end{corollary}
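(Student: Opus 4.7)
The plan is to perform a direct substitution into the long exact sequence established in Theorem \ref{thcole}, using the hypercohomology description of the Hodge components of cyclic homology on smooth schemes. Since the target sequence agrees with the source sequence term-for-term in all K-theoretic slots, the only work is to rewrite the two cyclic homology terms $HC_n^{(i)}(E)$ and $HC_n^{(i)}(\text{Proj}(R[It]))$ as hypercohomology groups of the truncated de Rham complex.

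First, I would confirm that both schemes whose cyclic homology appears in the long exact sequence are smooth. The smoothness of $E = \text{Proj}(A)$ is a hypothesis of Theorem \ref{thcole}, and the smoothness of $\text{Proj}(R[It])$ is precisely the content of Lemma \ref{localblowup}: since $R$ is local and the exceptional fiber $E$ is smooth, the blowup is smooth as well. (This is the same verification that was already used in the proof of Theorem \ref{thcole}, so it can simply be invoked.) Both schemes are then smooth and essentially of finite type over a field of characteristic zero, which is the setting in which Theorem \ref{cyclichyperHodge} applies.

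Second, I would invoke Theorem \ref{cyclichyperHodge} with $k = \mathbb{Q}$, giving the natural identification
\[
HC_n^{(i)}(X/\mathbb{Q}) \;\cong\; \mathbb{H}^{2i-n}\bigl(X,\Omega_{-/\mathbb{Q}}^{\leq i}\bigr)
\]
for every smooth $\mathbb{Q}$-scheme $X$. Applying this at $X = E$ and at $X = \text{Proj}(R[It])$ transforms each cyclic homology term of Theorem \ref{thcole} into the corresponding hypercohomology term in the statement, yielding the displayed sequence verbatim.

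There is essentially no obstacle; the substitution is formal. The one point one might worry about is that the comparison in Theorem \ref{cyclichyperHodge} must respect the connecting maps of the Mayer--Vietoris sequence that produced the long exact sequence of Theorem \ref{thcole}. This is automatic: the identification is induced by a natural quasi-isomorphism between the Hodge summand of the cyclic complex and the truncated de Rham complex, and the connecting maps in Theorem \ref{thcole} arise from a functorial cdh-fibrant replacement applied to the abstract blowup square. Naturality of both constructions ensures that the comparison is compatible with all maps in the sequence, so no additional verification is required.
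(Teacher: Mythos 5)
Your proposal is correct and follows exactly the paper's argument: the paper likewise observes that $E$ and $\text{Proj}(R[It])$ are smooth (via the hypotheses and Lemma \ref{localblowup}), invokes Theorem \ref{cyclichyperHodge} to identify $HC_n^{(i)}$ of each with $\mathbb{H}^{2i-n}(-,\Omega_{-/\mathbb{Q}}^{\leq i})$, and substitutes into the sequence of Theorem \ref{thcole}. Your added remark on naturality of the comparison with respect to the connecting maps is a point the paper leaves implicit, but it does not change the route.
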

\begin{proof}
This proof was stated before the corollary.  
\end{proof}
With this result, we are already able to see some concrete applications.  We will state one possible application, but before we do so, we will require the following lemma.    
\begin{lemma}\label{grlocalization}
Let $R$ be a Noetherian ring equipped with the filtration by an ideal $I\subset R$.  Let $P$ be a prime ideal in $R$ that contains $I$, and let $S=R/I\setminus P/I$.  Then $gr_I(R)_{St^0}\cong gr_{I_{P}}(R_P)$.  
\end{lemma}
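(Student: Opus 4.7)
The plan is to compute both sides piece by piece in each graded degree and match them up, using that in a Noetherian ring, localization commutes with taking quotients and with forming powers of a finitely generated ideal.

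First, I would identify $(R/I)_S$ explicitly. Since $S$ is the image in $R/I$ of the complement of $P$ in $R$, the standard correspondence between ideals in $R_P$ and ideals in $R_P/IR_P$ (together with compatibility of localization with quotients by ideals) gives $(R/I)_S \cong R_P/IR_P = R_P/I_P$. This identifies the degree-zero piece of the localized graded ring on the left with the degree-zero piece of the right-hand side.

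Second, localizing the graded ring $gr_I(R) = \bigoplus_{n \geq 0} I^n/I^{n+1}$ at the homogeneous multiplicative set $St^0$, which sits entirely in degree zero, produces a graded ring whose $n$th graded component is
\[
\left(I^n/I^{n+1}\right) \otimes_{R/I} (R/I)_S \;=\; \left(I^n/I^{n+1}\right) \otimes_{R/I} R_P/I_P.
\]
Since $I$ annihilates $I^n/I^{n+1}$, this $R/I$-module tensor product agrees with $\bigl(I^n/I^{n+1}\bigr) \otimes_R R_P$, which by flatness of localization equals $(I^n/I^{n+1})_P = (I^n)_P/(I^{n+1})_P$.

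Third, the key algebraic input is that for the finitely generated ideal $I$ (guaranteed by the Noetherian hypothesis), one has $(I^n)_P = (IR_P)^n = (I_P)^n$, since localization commutes with finite sums and finite products of ideals. Therefore the $n$th graded piece of $gr_I(R)_{St^0}$ is canonically isomorphic to $(I_P)^n/(I_P)^{n+1}$, which is precisely the $n$th graded piece of $gr_{I_P}(R_P)$.

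Finally, I would verify that the assembled map $gr_I(R)_{St^0} \to gr_{I_P}(R_P)$ is a ring isomorphism, not merely an isomorphism of graded abelian groups. This is essentially automatic: the multiplication on both sides is induced from the multiplications in $R$ and in $R_P$ respectively, and the localization map $R \to R_P$ is a ring homomorphism, so the identifications above are compatible with the bilinear pairings $I^m/I^{m+1} \times I^n/I^{n+1} \to I^{m+n}/I^{m+n+1}$. The main obstacle, if any, is just bookkeeping: making sure that the flatness isomorphism, the annihilation by $I$, and the identity $(I^n)_P = (I_P)^n$ are all used in a way that respects the graded multiplication; but no deep calculation is needed.
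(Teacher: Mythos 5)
Your proof is correct, and it takes a genuinely different route from the one in the paper. The paper works at the level of the whole ring via the Rees algebra presentation $gr_I(R)=R[It]/IR[It]$: it first checks that localization commutes with forming the Rees algebra ($R_P[I_Pt]=R[It]_{Tt^0}$ for $T=R\setminus P$), then that localization commutes with the quotient by $IR[It]$, and tracks the image of $Tt^0$ down to $St^0$. You instead bypass the Rees algebra entirely and compare the two sides one graded degree at a time, using that localizing at a homogeneous degree-zero multiplicative set is computed degreewise, that $I$ annihilates $I^n/I^{n+1}$ so the $R/I$-localization agrees with the $R$-localization at $R\setminus P$, that localization is flat (so quotients of powers localize to quotients of localized powers), and that $(I^n)_P=(I_P)^n$. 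Both arguments are elementary; yours is arguably more self-contained and makes the isomorphism visible on each graded piece, while the paper's Rees-algebra route has the advantage of meshing with the $R[It]$ bookkeeping used in the surrounding blowup lemmas (\ref{localblowup}, \ref{globalblowup}), where the same localization-commutes-with-Rees-algebra fact is reused. One small remark: the identity $(I^n)_P=(I_P)^n$ holds for arbitrary ideals since localization commutes with products of ideals, so the appeal to finite generation (and hence to the Noetherian hypothesis) at that step is not actually needed.
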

\begin{proof}
This is stated similar to how we have written it at the beginning of section 1.1 of \cite{giorgi2006irreducible} (although they state it incorrectly, or at least it is unclear exactly what they mean).  There is more discussion and there are some more of the details worked out in an alternative (but equivalent) form on pages 53, 54 of \cite{herrmann1988equimultiplicity}.  The proof of this is not really stated in any of the references, so we will give a rough proof here.  

For this proof, we will use the fact that the associated graded ring is a quotient of the Rees algebra $R[It]$.  By doing this we will inherit several localization facts from the formation of the Rees algebra.  

First, localization commutes with the formation of the Rees algebra in the following way.  For a multiplicatively closed subset $T\subset R$, we have that $Tt^0\subset R[It]$ is a multiplicatively closed subset of the (zero-ith graded piece of the) Rees algebra.  So, the localizing by $T$ and then forming the Rees algebra is the same as forming the Rees algebra and then localizing by $Tt^0$, $R_T[I_Tt]=R[It]_{Tt^0}$. Applying this to our scenario, we are localizing by a prime $P$, hence our multiplicatively closed subset is $T=R\setminus P$.  We have then that $R_P[I_Pt]=R_T[I_Tt]=R[It]_{Tt^0}$.  

Recalling the quotient construction of the associated graded ring now, we have that \[R_P[I_Pt]/I_PR_P[I_pt]=R[It]_{Tt^0}/(IR[It])_{Tt^0}.\]  In addition to commuting with the formation of the Rees algebra, localization commutes with quotients in the following way.  Let $U$ be the image of $T$ in the quotient map $R[It]\rightarrow R[It]/IR[It]$, then $R[It]_T/(IR[It])_T=(R[It]/IR[It])_U$.  In our case, since the map from the Rees algebra to the associated graded ring is a graded morphism, the image of $Tt^0$ in $R[It]/IR[It]$ is $U=(T/I)t^0$.  We see that $T/I=R/I\setminus P/I=S$, so, from the assumptions of the lemma, we have that 
\[gr_{I_{P}}(R_P)=R_P[I_Pt]/I_PR_P[I_pt]=(R[It]/IR[It])_{St^0}=gr_I(R)_{St^0}.\]
\end{proof}
\noindent This lemma will also be useful later on.

Immediately from this lemma, we have the following application of \ref{thcole}.  
\begin{theorem}[Application]\label{app1}
Let $A=k[x_1,...,x_n]/J$ be a graded ring with $J$ a proper homogeneous ideal of \\ $k[x_1,...,x_n]$, and $E=$ Proj$(A)$ is smooth projective scheme.  Let \\ $R=(k[x_1,...,x_n]/I)_{\langle x_1,...,x_n \rangle}$ such that $I_{min}\cong J$.  Then $R$ is a filtered deformation of $A$ along the ideal filtration given by it's maximal ideal $\mathfrak{m}_R$, and therefore (because it also satisfies the other assumptions) it fits into the long exact sequence from theorem \ref{thcole}. 
\end{theorem}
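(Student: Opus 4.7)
The plan is to verify each hypothesis of Theorem \ref{thcole} in turn and then directly invoke it. The conditions to check are: (i) $R$ is a local ring essentially of finite type over $k$; (ii) $R$ is a filtered deformation of $A$ along some ideal filtration $I' \subset R$; (iii) $R/I'$ is smooth; and (iv) $E = \mathrm{Proj}(A)$ is smooth projective over a field of characteristic zero. Condition (iv) is given as part of the hypotheses, so all the work lies in verifying (i)--(iii) with $I' = \mathfrak{m}_R$.

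First I would note that since $J = I_{\min}$ is a \emph{proper} homogeneous ideal of $k[x_1,\dots,x_n]$, no element of $I$ can have a nonzero constant term: if some $f \in I$ did, then $(f)_{\min}$ would be a nonzero element of $k$, forcing $I_{\min} = k[x_1,\dots,x_n]$, contradicting properness. Hence $I \subseteq \mathfrak{n}$, where $\mathfrak{n} := \langle x_1,\dots,x_n\rangle$. The localization $R = (k[x_1,\dots,x_n]/I)_{\mathfrak{n}}$ is therefore local with maximal ideal $\mathfrak{m}_R$ equal to the extension of $\mathfrak{n}/I$, and residue field $k$. This disposes of (i); moreover, $R/\mathfrak{m}_R = k$ is smooth, giving (iii) as soon as we establish (ii).

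The heart of the proof is verifying (ii), namely $\mathrm{gr}_{\mathfrak{m}_R}(R) \cong A$. Write $B = k[x_1,\dots,x_n]$. Lemma \ref{associatedgraded} applies to $B/I$ with the ideal $\mathfrak{n}/I$ (the generators of $I$ all have zero constant term by the observation above) and gives $\mathrm{gr}_{\mathfrak{n}/I}(B/I) \cong B/I_{\min} = A$. Then Lemma \ref{grlocalization}, applied to $B/I$ with filtering ideal $\mathfrak{n}/I$ and prime $\mathfrak{n}/I$ containing itself, yields $\mathrm{gr}_{\mathfrak{m}_R}(R) \cong \mathrm{gr}_{\mathfrak{n}/I}(B/I)_{U t^0}$, where $U = (B/I)/(\mathfrak{n}/I) \setminus \{0\} = k^{\times}$. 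Since $k^{\times}$ consists of units in the zero-th graded piece $k \subset \mathrm{gr}_{\mathfrak{n}/I}(B/I)$, this localization is trivial, and we conclude $\mathrm{gr}_{\mathfrak{m}_R}(R) \cong A$.

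The only delicate point worth double-checking is the triviality of localization at $U t^0$: one verifies that multiplication by a nonzero scalar is already invertible on every graded piece of $\mathrm{gr}_{\mathfrak{n}/I}(B/I)$, so inverting those scalars changes nothing. With every hypothesis of Theorem \ref{thcole} verified, the long exact sequence follows immediately by invoking that theorem with the ideal $I' = \mathfrak{m}_R$.
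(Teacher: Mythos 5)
Your proposal is correct and follows essentially the same route as the paper's proof: reduce to showing $gr_{\mathfrak{m}_R}(R)\cong A$ via Lemma \ref{associatedgraded} applied to the unlocalized ring, then pass through the localization using Lemma \ref{grlocalization} and the triviality of inverting $k^{\times}$, and finally note that $R/\mathfrak{m}_R=k$ is smooth and $R$ is essentially of finite type. Your explicit check that properness of $J=I_{min}$ forces $I\subseteq\langle x_1,\dots,x_n\rangle$ (so that Lemma \ref{associatedgraded} actually applies) is a hypothesis the paper leaves implicit, and is a welcome addition.
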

\begin{proof}
We want to show that $R$ is a filtered deformation of $A$ along the filtration given by it's maximal ideal.  By the definition of filtered deformation, we simply need to show that $gr_{\mathfrak{m}_R}(R)=A$.  Let $R'$ be the unlocalized version of $R$, i.e. $R'=k[x_1,...,x_n]/I$.  We know by lemma \ref{associatedgraded}, that for the ideal filtration $\mathfrak{m}=\langle x_1,...,x_n\rangle$ on $R'$, that $gr_{\mathfrak{m}}(R')=k[x_1,...,x_n]/I_{min}=k[x_1,...,x_n]/J$.  We also know that for $S=R/\mathfrak{m}_R\setminus\mathfrak{m}_R/\mathfrak{m}_R=k^*$ (the units in $k$), by the previous lemma (\ref{grlocalization}), we have that $gr_{\mathfrak{m}_{\mathfrak{m}}}(R'_{\mathfrak{m}})=gr_{\mathfrak{m}}(R')_{S}=gr_{\mathfrak{m}}(R')$.  Therefore, putting everything together, we have that, 
\[gr_{\mathfrak{m}_R}(R)=gr_{\mathfrak{m}_{\mathfrak{m}}}(R'_{\mathfrak{m}})=gr_{\mathfrak{m}}(R').\]
So, we have shown that $gr_{\mathfrak{m}_R}(R)=A$ and therefore $R$ is a filtered deformation of $A$.  

As far as the other assumptions of the previous theorem, they follow almost by definition.  We have that Spec$(R/\mathfrak{m}_R)=$ Spec$(k)=\{0\}$ which is just a point so it is obviously smooth.  And we know that by definition, since $R$ is a localization of a finitely generated $k$-algebra, that Spec$(R)$ is a scheme essentially of finite type over $k$, and this completes the proof for each of the assumptions of the previous theorem.  

So, we have shown that each of the objects above satisfy the assumptions of the theorem, and therefore they are just one example of applications of that theorem, and they fit into the long exact sequence from the conclusion of theorem \ref{thcole}.  
\end{proof}
While this is not necessarily the most interesting geometric example, it is still a computation where we can gain concrete information.  
\begin{warning}
Completions, and quotients of formal power series rings are rings that cannot be used in the application of the previous theorem. This is because, while they are nice local rings, they do not form schemes of essentially finite type over a field.  
\end{warning}

All of our results so far have dealt with localizations, and local data.  From this local data, and in particular their resolutions, we will try to gain some information about global data and resolution of singularities.  We begin with the following lemma.  
\begin{lemma}\label{globalblowup}
Let $R=k[x_1,..,x_n]/I$ such that $X=$ Spec$(R)$ has only one isolated singularity (say at the origin $\mathfrak{m}=\langle x_1,...,x_n\rangle$), and let $Y=$ Proj$(R[\mathfrak{m}t])$ be the blowup at the origin.  Now, let $R_{\mathfrak{m}}$ be the localization of $R$ at $\mathfrak{m}$, and let $Y'=$ Proj$(R_{\mathfrak{m}}[\mathfrak{m}_\mathfrak{m}t])$ be the blowup of Spec$(R_{\mathfrak{m}})$ at the point $\mathfrak{m}_{\mathfrak{m}}$.  If the blowup of the local scheme, $Y'$, is smooth, then so is $Y$, and hence $Y$ is a resolution of $X$.  
\end{lemma}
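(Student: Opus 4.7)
The plan is to split $Y$ into two open pieces and verify regularity of local rings on each piece separately. The complement of the exceptional fiber is handled by the generic smoothness of $X$, and the exceptional fiber is handled by the hypothesis on $Y'$ together with the fact that blowing up commutes with localization.

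First, I would observe that by hypothesis $X$ has a single singular point at $\mathfrak{m}$, so the open subscheme $U = X \setminus \{\mathfrak{m}\} = \operatorname{Spec}(R) \setminus V(\mathfrak{m})$ is smooth over $k$. Let $\pi \colon Y \to X$ be the blowup map. Since the ideal $\mathfrak{m}$ is already invertible away from $V(\mathfrak{m})$, the restriction $\pi^{-1}(U) \to U$ is an isomorphism. Hence every closed point of $Y$ not lying over $\mathfrak{m}$ has a regular local ring, inherited from $U$.

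Next, I would deal with closed points $P \in Y$ lying over $\mathfrak{m}$. The key input is that the Rees algebra construction commutes with localization (a fact already used in the proof of Lemma \ref{grlocalization}): for the multiplicatively closed set $T = R \setminus \mathfrak{m}$, one has $R_\mathfrak{m}[\mathfrak{m}_\mathfrak{m} t] = R[\mathfrak{m}t]_{T t^0}$ as graded rings. Taking Proj and using that Proj commutes with flat base change of the base ring, this gives a canonical identification
\[
Y' = \operatorname{Proj}(R_\mathfrak{m}[\mathfrak{m}_\mathfrak{m} t]) \;\cong\; \operatorname{Proj}(R[\mathfrak{m}t]) \times_{\operatorname{Spec}(R)} \operatorname{Spec}(R_\mathfrak{m}) = \pi^{-1}(\operatorname{Spec}(R_\mathfrak{m})).
\]
Under this identification, any closed point $P$ of $Y$ over $\mathfrak{m}$ corresponds to a point $P'$ of $Y'$, and the local rings match: $\mathcal{O}_{Y,P} = \mathcal{O}_{Y',P'}$. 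Since $Y'$ is assumed smooth, $\mathcal{O}_{Y',P'}$ is a regular local ring, and therefore so is $\mathcal{O}_{Y,P}$.

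Combining the two cases, every closed point of $Y$ has a regular local ring, so $Y$ is smooth over $k$. Finally, to conclude that $Y$ is a resolution of $X$, I would note that $\pi$ is proper (as the blowup of an ideal in a finitely generated algebra) and birational (since $\pi$ is an isomorphism over the dense open set $U$), which together with smoothness of $Y$ gives a resolution of singularities of $X$. The only non-obvious ingredient is the compatibility of blowup with localization, which is precisely what was already invoked in Lemma \ref{grlocalization}, so the proof reduces to gluing that local statement with the trivial smoothness statement on the complement of the singular locus.
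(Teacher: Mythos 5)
Your proposal is correct and follows essentially the same route as the paper's proof: reduce to the fiber over $\mathfrak{m}$ using that the blowup is an isomorphism over the smooth locus $X\setminus\{\mathfrak{m}\}$, then identify the local rings of $Y$ at points over $\mathfrak{m}$ with local rings of $Y'$ via the compatibility of the Rees algebra with localization. The only cosmetic difference is that you package the second step as a base-change identification $Y'\cong\pi^{-1}(\operatorname{Spec}(R_\mathfrak{m}))$, whereas the paper verifies the equality $R[\mathfrak{m}t]_P=R_{\mathfrak{m}}[\mathfrak{m}_{\mathfrak{m}}t]_{P_{\mathfrak{m}}}$ directly at each prime $P$ in the fiber.
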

While this result is probably known, it has so far been impossible to find it explicitly stated in the literature. While geometrically this makes sense, we will give a more algebraic proof.  
\begin{proof}
One useful property of a blowup is that $\pi:Y\rightarrow X$ is an isomorphism in the complement of the center of the blowup.  Since $X$ is assumed to have only one isolated singularity at the point $\mathfrak{m}$, in order to determine whether or not the blowup $Y$ is smooth, we simply need to check points on the fiber over $\mathfrak{m}$.  

Another useful property of blowups is that they commute with localization.  By this we mean the following.  For a multiplicatively closed set $S\in R$, $St^0$ is a multiplicatively closed set in $R[\mathfrak{m}t]$.  Hence, for $S=R\setminus\mathfrak{m}$, \[R_{\mathfrak{m}}[\mathfrak{m}_\mathfrak{m}t]=R_{S}[\mathfrak{m}_St]=R[\mathfrak{m}t]_{St^0}.\]  
Therefore, we have that Proj$(R_{\mathfrak{m}}[\mathfrak{m}_\mathfrak{m}t])=$ Proj$(R[\mathfrak{m}t]_{St^0})$, and the blowup commutes with localization.  

With these properties in mind, we can now complete the proof.  Let $\pi:Y\rightarrow X$ be the blowup of $X$ with the map $\pi$ induced by the inclusion of $\varphi:R\rightarrow R[\mathfrak{m}t]$ where $r\mapsto rt^0$.  We can characterize the points in the fiber over $\mathfrak{m}$ in the following way.  For the inclusion map $\varphi$ from above and for a prime ideal $P\subset R[\mathfrak{m}t]$, $\pi(P)=\mathfrak{m}$ (as points) if $\varphi^{-1}(P)=\mathfrak{m}$.  Furthermore, the only way that $\varphi^{-1}(P)=\mathfrak{m}$ can happen is if $P\cap R[\mathfrak{m}t]^0=\mathfrak{m}$ (where $R[\mathfrak{m}t]^0$ denotes the zero-ith graded piece of $R[\mathfrak{m}t]$).  

We want to show that $Y$ is nonsingular at $P$.  In order to do this, we simply need to show that the stalk of the structure sheaf at $P$ (which is the localization of $R[\mathfrak{m}t]$ at $P$, $R[\mathfrak{m}t]_P$) is a regular local ring.  This localization is related to the localization from the assumptions in the following way.  For $S=R\setminus\mathfrak{m}$ as above, since $St^0\subset R[\mathfrak{m}t]\setminus P$, we have that 
\begin{align*}
R[\mathfrak{m}t]_P&=(R[\mathfrak{m}t]_{St^0})_{P_{S}}\\
&=R_{S}[\mathfrak{m}_St]_{P_S}\\
&=R_{\mathfrak{m}}[\mathfrak{m}_\mathfrak{m}t]_{P_{\mathfrak{m}}}.
\end{align*}
In particular, since $Y'=$ Proj$(R_{\mathfrak{m}}[\mathfrak{m}_\mathfrak{m}t])$ is assumed to smooth, we know that $R_{\mathfrak{m}}[\mathfrak{m}_\mathfrak{m}t]_{P_{\mathfrak{m}}}$ is a regular local ring.  Finally by the previous chain of equalities, we know that $R[\mathfrak{m}t]_P$ is a regular local ring.  Since $P$ was taken as arbitrary such that $\pi(P)=\mathfrak{m}$ we know that every point in the fiber over $\mathfrak{m}$ is a smooth point, and we conclude that $Y$ is smooth.  
\end{proof}

\subsection{Main theorem and it's proof}
We can now state and prove the main theorem.  After which, we will state some corollaries that rework the result of the main theorem into slightly more usable pieces.   

\newpage 

\begin{theorem}[Main theorem]\label{maintheorem}
Let $A=k[x_1,...,x_n]/J$ be a graded ring where $J$ is a proper homogeneous ideal of $k[x_1,...,x_n]$ such that $E=$ Proj$(A)$ is a smooth projective scheme over some field $k$ of characteristic zero. Let $R=k[x_1,...,x_n]/I$ such that $I_{min}\cong J$ and Spec$(R)$ has only one isolated singularity at the origin.  Then $R$ is a filtered deformation of $A$ along the ideal filtration given by the ideal $\mathfrak{m}=\langle x_1,...,x_n\rangle$, and for each $n<0$ and $i\geq0$, there is a long exact sequence of Hodge components of K-theory and cyclic homology given by
\[...\rightarrow HC_{n+1}^{(i)}(E)\rightarrow\tilde{K}^{(i+1)}_{n}(R)\rightarrow HC_{n}^{(i)}(\text{Proj}(R[\mathfrak{m}t])) \rightarrow HC_{n}^{(i)}(E)\rightarrow\tilde{K}^{(i+1)}_{n-1}(R)\rightarrow... .\]
\end{theorem}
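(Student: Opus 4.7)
The plan is to reduce the statement to an application of Theorem \ref{thcole} by exhibiting $Y = \text{Proj}(R[\mathfrak{m}t])$ as a genuine resolution of $X = \text{Spec}(R)$ and then running the same cdh Mayer--Vietoris argument. First, verifying that $R$ is a filtered deformation of $A$ is immediate from Corollary \ref{associatedgradedcorollary}: the hypothesis $I_{min} \cong J$ together with the fact that each generator of $I$ has zero constant term gives $gr_{\mathfrak{m}}(R) = k[x_1,\dots,x_n]/I_{min} = A$, as required.

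Next I would set up the abstract blowup square
\[\begin{CD}
E = \text{Proj}(A) @>>> Y = \text{Proj}(R[\mathfrak{m}t])\\
@VVV @VVV\\
Z = \text{Spec}(R/\mathfrak{m}) = \text{Spec}(k) @>>> X = \text{Spec}(R)
\end{CD}\]
and show that $Y$ is smooth. The exceptional fiber is $\text{Proj}(gr_{\mathfrak{m}}(R)) = \text{Proj}(A) = E$, which is smooth by hypothesis. Because $\pi: Y\to X$ is an isomorphism off of $\mathfrak{m}$ and $\mathfrak{m}$ is the unique singular point of $X$, smoothness of $Y$ need only be checked on the fiber over $\mathfrak{m}$. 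This is the content of Lemma \ref{globalblowup}, which reduces it to smoothness of the local blowup $\text{Proj}(R_{\mathfrak{m}}[\mathfrak{m}_{\mathfrak{m}}t])$; Lemma \ref{grlocalization} identifies its exceptional fiber as $\text{Proj}(gr_{\mathfrak{m}_{\mathfrak{m}}}(R_{\mathfrak{m}})) = \text{Proj}(gr_{\mathfrak{m}}(R)) = E$, and then Lemma \ref{localblowup} upgrades smoothness of $E$ to smoothness of the local blowup. The center $Z = \text{Spec}(k)$ is clearly smooth as well.

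With the blowup square in hand and each of $E$, $Y$, $Z$ smooth, I would apply the cdh Mayer--Vietoris sequence (Corollary \ref{mvsequence}) to the Hodge-decomposed sheaf $\textbf{HC}^{(i)}$, yielding
\[\cdots \to \mathbb{H}^{-n}_{cdh}(X,\textbf{HC}^{(i)}) \to HC_n^{(i)}(Y) \oplus HC_n^{(i)}(\text{Spec}(k)) \to HC_n^{(i)}(E) \to \mathbb{H}^{-n+1}_{cdh}(X,\textbf{HC}^{(i)}) \to \cdots\]
Specializing to $n < 0$, Corollary \ref{cycliczero} forces $HC_n^{(i)}(\text{Spec}(k)) = 0$, so the $Z$-term drops out, and Corollary \ref{kHodgenegative} lets me replace the cdh-cohomology groups of $X$ by $\tilde{K}^{(i+1)}_n(R)$. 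This gives precisely the stated long exact sequence.

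The only real obstacle is establishing smoothness of the global blowup $Y$; everything else is bookkeeping with tools already developed. The subtlety is that the main theorem drops the local hypothesis of Theorem \ref{thcole}, so one cannot apply Lemma \ref{localblowup} directly and must instead route through localization at $\mathfrak{m}$ and the gluing argument of Lemma \ref{globalblowup}. The hypothesis that the origin is the \emph{only} singularity is essential here: if there were additional singular points away from $\mathfrak{m}$, the blowup along $\mathfrak{m}$ would fail to resolve them, the relevant square would no longer be an abstract blowup square with smooth corners, and the Mayer--Vietoris reduction would break down.
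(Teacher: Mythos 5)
Your proposal is correct and follows essentially the same route as the paper's own proof: filtered deformation via Lemma \ref{associatedgraded}, smoothness of the global blowup via Lemma \ref{globalblowup} reducing to the local case handled by Lemmas \ref{grlocalization} and \ref{localblowup}, and then the cdh Mayer--Vietoris sequence combined with Corollaries \ref{cycliczero} and \ref{kHodgenegative}. The only cosmetic difference is that you invoke Lemma \ref{grlocalization} directly to identify the local exceptional fiber, where the paper cites Theorem \ref{app1} (which packages the same computation).
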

\begin{proof}[Proof of the main theorem]This proof is essentially a combination of a couple of proofs that have come before.  Namely the proof of \ref{thcole}, and \ref{app1}.  

As has been referenced several times in the last section, we immediately know that $R$ is a filtered deformation of $A$.  This because by lemma \ref{associatedgraded}, $gr_{\mathfrak{m}}(R)=A$.    

For the remaining conclusions of the theorem, we will use our cdh-cohomology framework.  As we have done before, we need to show that each scheme, except for the bottom right corner, of the following abstract blowup square is a smooth scheme.  We have that, 
\[\begin{CD}
E=\text{Proj}(A)  @>>> \text{Proj}(R[\mathfrak{m}t])\\
@VVV @VVV\\
\{0\}@>>> \text{Spec}(R)
\end{CD}\]
is an abstract blowup square incorporating the data of the blowup of Spec$(R)$ at the origin. Immediately we see that $\{0\}$ is just a point, so it is smooth scheme.  Next, by the fact that $gr_{\mathfrak{m}}(R)=A$, and the assumption that $E$= Proj$(A)$ smooth, we know that, the top left, $E=$ Proj$(A)$= Proj$(gr_{\mathfrak{m}}(R))$ is smooth.  All that remains is Proj$(R[\mathfrak{m}t])$.  

By lemma \ref{globalblowup}, since Spec$(R)$ has only one isolated singularity at the origin, in order to show that this is in fact a resolution, we can look at the local data. Consider the blowup of $R_{\mathfrak{m}}$ at it's maximal ideal $\mathfrak{m}_{\mathfrak{m}}$.  By lemma \ref{localblowup}, since $R_{\mathfrak{m}}$ is a local ring, if the exceptional fiber of the blowup of Spec$(R_{\mathfrak{m}})$ at $\mathfrak{m}_{\mathfrak{m}}$ is smooth, we know that the blowup must also be smooth.  By theorem \ref{app1}, we know that the exceptional fiber of this local blowup is $E=$ Proj$(A)$, which is assumed to be smooth.  Therefore, (by \ref{localblowup}) we know that the local blowup, Proj$(R_{\mathfrak{m}}[\mathfrak{m}_{\mathfrak{m}}t])$ is also smooth.  Looking back now at the global situation, by lemma \ref{globalblowup}, since the local blowup is smooth, we know that the global blowup of Spec$(R)$ at the origin, Proj$(R[\mathfrak{m}t])$, must also be smooth. 

This shows that every scheme in the previous blowup square, except for Spec$(R)$, is smooth.  Using the smoothness of each of these schemes we have that for each $i\geq 0$ the Mayer-Vietoris sequence for cdh-fibrant cyclic homology, 
\[...\rightarrow\mathbb{H}_{cdh}^{-n}(\text{Spec}(R),\textbf{HC}^{(i)})\rightarrow\mathbb{H}_{cdh}^{-n}(\text{Proj}(R[\mathfrak{m}t]),\textbf{HC}^{(i)})\oplus\mathbb{H}_{cdh}^{-n}(\{0\},\textbf{HC}^{(i)})\hspace{1 in}\]\[\hspace{3in} \rightarrow\mathbb{H}_{cdh}^{-n}(E,\textbf{HC}^{(i)})\rightarrow\mathbb{H}_{cdh}^{-n+1}(\text{Spec}(R),\textbf{HC}^{(i)})\rightarrow... .\]
reduces to 
\[...\rightarrow HC_{n+1}^{(i)}(E)\rightarrow\mathbb{H}_{cdh}^{-n}(\text{Spec}(R), \textbf{HC}^{(i)})\rightarrow HC_{n}^{(i)}(\text{Proj}(R[\mathfrak{m}t]))\oplus HC_{n}^{(i)}(\{0\})\hspace{.8 in}\]\[\hspace{3.25 in} \rightarrow HC_{n}^{(i)}(E)\rightarrow\mathbb{H}_{cdh}^{-n+1}(\text{Spec}(R), \textbf{HC}^{(i)})\rightarrow... .\]

Under the assumption that $n<0$ we know two things. From \ref{cycliczero}, we know that $HC_{n}^{(i)}(\{0\})=0$, and from \ref{kHodgenegative} we know that \[\tilde{K}^{(i+1)}_{n}(R)=\tilde{K}^{(i+1)}_{n}(\text{Spec}(R))=\mathbb{H}^{-n}_{cdh}(X,\textbf{HC}^{(i)}).\]

So, putting this together, for $n<0$, we have the following exact sequence of K-groups and cyclic homology groups.  
\[...\rightarrow HC_{n+1}^{(i)}(E)\rightarrow\tilde{K}^{(i+1)}_{n}(R)\rightarrow HC_{n}^{(i)}(\text{Proj}(R[\mathfrak{m}t])) \rightarrow HC_{n}^{(i)}(E)\rightarrow\tilde{K}^{(i+1)}_{n-1}(R)\rightarrow... .\] which completes the proof.  
\end{proof}

While this is a nice result, as we stated, it is still not the easiest to work with.  Using our computation of cyclic homology for smooth schemes, we might be able to make things easier to compute.  First, we recall that cyclic homology for smooth schemes can be computed in terms of the hypercohomology of the truncated de Rham complex.  We have 
\[HC_n^{(i)}(S)=\mathbb{H}^{2i-n}(S,\Omega_{-/\mathbb{Q}}^{\leq i}),\] 
where $S$ is any smooth scheme.  So, according to the result of the main theorem, we can rewrite the long exact sequence in the following way.  

\begin{corollary}Under the assumptions of the main theorem, there is a long exact sequence of K-theory and hypercohomology groups given by the following.  
\[...\rightarrow \mathbb{H}^{2i-n-1}(E,\Omega_{-/\mathbb{Q}}^{\leq i})\rightarrow \tilde{K}_{n}^{(i+1)}(R)\rightarrow \mathbb{H}^{2i-n}(\text{Proj}(R[\mathfrak{m}t])),\Omega_{-/\mathbb{Q}}^{\leq i})\rightarrow\hspace{1.5 in}\]\[\hspace{1.5 in} \mathbb{H}^{2i-n-1}(E,\Omega_{-/\mathbb{Q}}^{\leq i})\rightarrow \tilde{K}_{n-1}^{(i+1)}(R)\rightarrow \mathbb{H}^{2i-n+1}(\text{Proj}(R[\mathfrak{m}t])),\Omega_{-/\mathbb{Q}}^{\leq i})\rightarrow ...\] 
Under the additional assumption that $k=Q$, we can rewrite the resulting long exact sequence with de Rham cohomology groups and hypercohomology groups in the following way. 
\[...\rightarrow F^{i-n-1}H_{DR}^{2i-n-1}(E/\mathbb{Q})\rightarrow \tilde{K}_{n}^{(i+1)}(R)\rightarrow \mathbb{H}^{2i-n}(\text{Proj}(R[\mathfrak{m}t])),\Omega_{-/\mathbb{Q}}^{\leq i})\rightarrow\hspace{1.5 in}\]\[\hspace{1.5 in}  F^{i-n}H_{DR}^{2i-n}(E/\mathbb{Q})\rightarrow \tilde{K}_{n-1}^{(i+1)}(R)\rightarrow \mathbb{H}^{2i-n+1}(\text{Proj}(R[\mathfrak{m}t])),\Omega_{-/\mathbb{Q}}^{\leq i})\rightarrow ...\] 
where $F^p$ denotes the $p$th level of the classical Hodge filtration on de Rham cohomology.  
\end{corollary}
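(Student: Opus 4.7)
The plan is to take the long exact sequence produced by the main theorem and substitute in the known identifications for the cyclic homology of the smooth schemes that appear. In the proof of the main theorem, both the exceptional fiber $E$ and the blowup $\text{Proj}(R[\mathfrak{m}t])$ were shown to be smooth over $k$, so theorem \ref{cyclichyperHodge} applies to each, giving
\[
HC_n^{(i)}(E/\mathbb{Q}) \cong \mathbb{H}^{2i-n}(E,\Omega_{-/\mathbb{Q}}^{\leq i})
\]
and likewise for $\text{Proj}(R[\mathfrak{m}t])$. Substituting these isomorphisms term-by-term into the long exact sequence of theorem \ref{maintheorem}, with the $\tilde K_n^{(i+1)}(R)$ positions left untouched, produces the first displayed sequence of the corollary.

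For the second displayed sequence, I would impose the additional hypothesis $k=\mathbb{Q}$ so that $E=\text{Proj}(A)$ is a smooth projective scheme over $\mathbb{Q}$ rather than merely over some characteristic zero field. Under this assumption theorem \ref{projderham} upgrades the identification at $E$ to
\[
HC_n^{(i)}(E/\mathbb{Q}) \cong F^{i-n}H_{DR}^{2i-n}(E/\mathbb{Q}),
\]
the $(i-n)$th level of the classical Hodge filtration. The blowup $\text{Proj}(R[\mathfrak{m}t])$ is smooth but not in general projective, so I would not touch its contribution and keep it expressed as the hypercohomology of the truncated de Rham complex via theorem \ref{cyclichyperHodge}. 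Substituting only at the $E$-positions then gives the second sequence.

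There is no genuine obstacle beyond checking naturality: since theorems \ref{cyclichyperHodge} and \ref{projderham} are proved by producing natural quasi-isomorphisms of the underlying sheaves of complexes, the connecting maps of the long exact sequence from the main theorem transport verbatim under the substitution. The only bookkeeping point worth emphasizing in the write-up is that the assumption $k=\mathbb{Q}$ is imposed \emph{only} to apply the Hodge filtration description to $E$; the first sequence is valid under the original hypotheses of the main theorem without any additional assumption on $k$.
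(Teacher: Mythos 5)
Your proposal is correct and follows essentially the same route as the paper: the first sequence is obtained by substituting Theorem \ref{cyclichyperHodge} into the long exact sequence of the main theorem at the two smooth schemes $E$ and $\text{Proj}(R[\mathfrak{m}t])$, and the second by upgrading only the $E$-terms via Theorem \ref{projderham} under the extra hypothesis $k=\mathbb{Q}$. The only cosmetic difference is the stated reason the blowup term cannot be similarly upgraded (you say it is not projective; the paper says it is not a projective \emph{variety over} $\mathbb{Q}$, i.e.\ it is projective over $\mathrm{Spec}(R)$ rather than over the base field), but both point to the same failure of the hypotheses of \ref{projderham}.
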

\begin{proof}
The proof of the first part of this corollary was essentially before the statement of the corollary.  The result that we refer to is more precisely stated in \ref{cyclichyperHodge}.  

Assuming that $k=\mathbb{Q}$, in order to refine this further, we rely on the result of \ref{projderham}.  Since $E=$ Proj$(A)$ is a smooth projective variety over $\mathbb{Q}$, the Hodge components of it's cyclic homology coincides with the the classical Hodge filtration on de Rham cohomology.  Unfortunately we cannot refine the other piece any further.  This is because, while $\text{Proj}(R[\mathfrak{m}t])$ is a nice smooth projective scheme, it is not a variety over $k=\mathbb{Q}$.  

So we conclude, that under the additional assumption that $k=\mathbb{Q}$, that for for each $n<0$ and $i\geq0$, there is a long exact sequence of Hodge components of K-theory and de Rham cohomology given by the following,  
\[...\rightarrow F^{i-n-1}H_{DR}^{2i-n-1}(E/\mathbb{Q})\rightarrow \tilde{K}_{n}^{(i+1)}(R)\rightarrow \mathbb{H}^{2i-n}(\text{Proj}(R[\mathfrak{m}t])),\Omega_{-/\mathbb{Q}}^{\leq i})\rightarrow\hspace{1.5 in}\]\[\hspace{1.5 in}  F^{i-n}H_{DR}^{2i-n}(E/\mathbb{Q})\rightarrow \tilde{K}_{n-1}^{(i+1)}(R)\rightarrow \mathbb{H}^{2i-n+1}(\text{Proj}(R[\mathfrak{m}t])),\Omega_{-/\mathbb{Q}}^{\leq i})\rightarrow ...\]  
where $F^p$ denotes the $p$th level of the classical Hodge filtration on de Rham cohomology. 
\end{proof}

While the previous corollary makes things slightly more easy to compute, all of our results are still just long exact sequences.  The following corollary to the main theorem makes it possible for us show that these K-groups vanish for low enough degrees.   For the following we will denote the blowup $\text{Proj}(R[\mathfrak{m}t])$ by $Y$. 
  
\begin{corollary}
In the scenario of the main theorem, $\tilde{K}_n(R)$ is computable for $n<-d$, where $d$ is the dimension of Spec$(R)$.  For $n<-d$, we have that \[\tilde{K}_n(R)\cong 0,\] In addition to this we have that  \[
\tilde{K}_{-d}(R)\rightarrow H^{d}(Y,\mathcal{O}_Y)\] is surjective.  
\end{corollary}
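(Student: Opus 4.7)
The plan is to feed the long exact sequence from the main theorem into Proposition \ref{cyclicnegd} Hodge-component-wise, and then control the boundary case using the $i=0$ component and Theorem \ref{cyclichyperHodge}.

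First I would record the dimensions of the three relevant schemes: $\dim\operatorname{Spec}(R)=d$, so $\dim Y=d$ (the blowup does not change dimension) and $\dim E=d-1$ (since $E=\operatorname{Proj}(A)$ is the projectivization of the affine cone $\operatorname{Spec}(A)$). Because the Hodge decomposition writes $HC_n(X)$ as a (finite) direct sum $\bigoplus_i HC_n^{(i)}(X)$, the vanishing and identification in Proposition \ref{cyclicnegd} pass to each Hodge piece: $HC_n^{(i)}(E)=0$ for $n<-(d-1)$ and $HC_n^{(i)}(Y)=0$ for $n<-d$, while $HC_{-d}^{(i)}(Y)$ sits inside $H^d(Y,\mathcal{O}_Y)$.

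For the vanishing statement, fix $n<-d$ and $i\geq 0$. In the main theorem's long exact sequence
\[
HC_{n+1}^{(i)}(E)\longrightarrow \tilde{K}_n^{(i+1)}(R)\longrightarrow HC_n^{(i)}(Y),
\]
the right-hand group vanishes because $n<-d=-\dim Y$, and the left-hand group vanishes because $n+1\leq -d<-(d-1)=-\dim E$. Hence $\tilde{K}_n^{(i+1)}(R)=0$ for every $i\geq 0$. Combining this with Corollary \ref{kHodgezero}, which gives $\tilde{K}_n^{(0)}(R)=0$, and summing over the Hodge decomposition yields $\tilde{K}_n(R)=0$ for $n<-d$.

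For the surjectivity statement, take $n=-d$ and $i=0$. The relevant four-term piece of the sequence is
\[
HC_{-d+1}^{(0)}(E)\longrightarrow \tilde{K}_{-d}^{(1)}(R)\longrightarrow HC_{-d}^{(0)}(Y)\longrightarrow HC_{-d}^{(0)}(E).
\]
The rightmost term vanishes since $-d<-(d-1)=-\dim E$, so $\tilde{K}_{-d}^{(1)}(R)\twoheadrightarrow HC_{-d}^{(0)}(Y)$. Since $Y$ is smooth, Theorem \ref{cyclichyperHodge} identifies $HC_{-d}^{(0)}(Y)=\mathbb{H}^d(Y,\Omega^{\leq 0}_{-/\mathbb{Q}})=\mathbb{H}^d(Y,\mathcal{O}_Y)=H^d(Y,\mathcal{O}_Y)$. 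Composing with the canonical map $\tilde{K}_{-d}^{(1)}(R)\hookrightarrow \tilde{K}_{-d}(R)$ (as a Hodge summand) gives the desired surjection $\tilde{K}_{-d}(R)\twoheadrightarrow H^d(Y,\mathcal{O}_Y)$.

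The only real subtlety, and therefore the main thing to be careful about, is legitimizing the passage from Proposition \ref{cyclicnegd} (stated for total cyclic homology) to Hodge components: this requires noting that the Hodge decomposition is a true summand decomposition in each degree, so that vanishing and the identification $HC_{-\dim X}(X)\cong H^{\dim X}(X,\mathcal{O}_X)$ are detected on the $i=0$ piece. Once that bookkeeping is in place, the rest is a clean diagram chase through the long exact sequence of the main theorem.
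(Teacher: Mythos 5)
Your argument is correct and rests on the same two ingredients as the paper's proof: the long exact sequence of the main theorem and the dimension bound of Proposition \ref{cyclicnegd}. The only real difference is organizational. The paper simply sums the sequence over all Hodge components first and applies Proposition \ref{cyclicnegd} to the total groups $HC_n(E)$, $HC_n(Y)$, which makes both the vanishing for $n<-d$ and the surjectivity at $n=-d$ immediate from exactness, with no componentwise bookkeeping. You instead run the argument per Hodge component, which forces you to justify that the vanishing of \ref{cyclicnegd} passes to each piece and that $HC_{-d}(Y)\cong H^d(Y,\mathcal{O}_Y)$ is concentrated in the $i=0$ component (via Theorem \ref{cyclichyperHodge}); both points are true, and your degree count $\mathbb{H}^{2i+d}(Y,\Omega^{\leq i})=0$ for $i>0$ implicitly covers the second, but the paper's route avoids them entirely. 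One phrasing to tighten: ``composing'' the surjection $\tilde{K}_{-d}^{(1)}(R)\twoheadrightarrow H^d(Y,\mathcal{O}_Y)$ with the inclusion $\tilde{K}_{-d}^{(1)}(R)\hookrightarrow\tilde{K}_{-d}(R)$ does not literally produce a map out of $\tilde{K}_{-d}(R)$; what you mean is that the total map restricted to the $(1)$-summand already hits all of the target, hence the total map is surjective.
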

\begin{proof}
The proof of this relies on the computation of cyclic homology in low degrees.   First, we know that if $d$ is the dimension of Spec$(R)$, then the blowup $Y$ is also dimension $d$.  Furthermore, the dimension of the exceptional fiber is smaller than $d$.  Since it is composed of hypersurfaces, it should be dimension $d-1$.  By proposition \ref{cyclicnegd}, we know that the cyclic homology vanishes for low degrees.  This tells us is that in the resulting long exact sequence from the main theorem (taken all together without the hodge decomposition), 
\[...\rightarrow HC_{n+1}(E)\rightarrow\tilde{K}_{n}(R)\rightarrow HC_{n}(Y) \rightarrow HC_{n}(E)\rightarrow\tilde{K}_{n-1}(R)\rightarrow...\]
for $n<-d$ each of the cyclic homology terms vanishes, meaning that the $\tilde{K}$ groups must also vanish.  Furthermore when $n=-d$, again from \ref{cyclicnegd}, we have that 

\[...\rightarrow H^{d-1}(E,\mathcal{O}_E)\rightarrow\tilde{K}_{-d}(R)\rightarrow H^{d}(Y,\mathcal{O}_Y) \rightarrow 0\rightarrow0 \rightarrow...\]

From exactness, we conclude that $\tilde{K}_{-d}(R)\rightarrow H^{d}(Y,\mathcal{O}_Y)$ is surjective.
\end{proof}
We should note that this corollary, while obtained using slightly different means, is similar to the main theorem of the paper \cite{cortinas2005cyclic}.  Obviously, their result is a much stronger statement since it is a statement about the complete K-theory groups, not just the fiber.  Since the two results agree, we could say that the previous corollary is a partial clarification of what happens on the fiber $\tilde{K}_n$.   

So, in summary, this last result gives explicit computations in low degrees, namely that the $\tilde{K}_{n}(R)$ are all 0.  Furthermore, the fact that $\tilde{K}_{-d}(R)\rightarrow H^{d}(Y,\mathcal{O}_Y)$ is surjective gives us a condition for the nonvanishing of at least one $\tilde{K}_n$.  

This is important for a couple of reasons.  First, if our ultimate goal is to be able to detect and classify singularities, having a condition that forces $\tilde{K}_{-d}(R)$ to be nontrivial will show that the affine scheme Spec$(R)$ is definitely not smooth.   Of course there are simpler ways of determining if a scheme has singularities, but it is possible that this criteria will become more useful in other ways.  

Second, and finally, if our ultimate goal is to compute the $K_{n}(R)$ (notice the missing $\sim$), this is one more piece of the puzzle.  Furthermore, under additional assumptions (like assuming our deformations are graded rings under non-standard gradings and quasi-homogeneous coordinates systems) it may be possible to use something like the ``standard trick" from the paper \cite{cortinas2009k} to compute $K_{n}(R)$ directly from $\tilde{K}_{n}(R)$ without knowledge of $KH_n(R)$.  

Along with attempting to refine our cyclic homology computations from the main theorem, this would of course be the main goal of any further research in computing the K-theory of filtered deformations of graded polynomial algebras.

%
%
%
%
%

\newpage

\appendix
\section{Diagrams for the the extensions of the theorem of Michler}
\subsection{Variations of cyclic homology}\label{cmich}
\begin{theorem}
Let $k$ be an algebraically closed field of characteristic zero, and let $X$ be a reduced affine hypersurface over $k$ with only isolated singularities in affine $N$-space.  Let $Y$, $Z$, and $E$ be the resolution, center, and exceptional fiber of the resolution respectively (all assumed to be smooth).  Then the Hodge components of negative cyclic homology can be computed using the following combination of two long exact sequences.
\end{theorem}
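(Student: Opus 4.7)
The plan is to splice together two Hodge-decomposed long exact sequences and feed Michler's computation into the result. The first sequence is the Hodge-component version of the SBI sequence (Proposition \ref{Hodgesbi}), which for each $i$ reads
\[
\cdots \to HC^{(i)}_{n-1}(X/k) \to HN^{(i+1)}_{n}(X/k) \to HP^{(i+1)}_{n}(X/k) \to HC^{(i)}_{n-2}(X/k) \to \cdots
\]
The second is the Hodge-component version of the Mayer--Vietoris sequence for periodic cyclic homology (the Hodge-refined analogue of Theorem \ref{6term}), which, since $\textbf{HP}^{(i)}(-/k)$ satisfies cdh-descent and coincides with de Rham cohomology on smooth schemes by Theorem \ref{hpderham2}, takes the form
\[
\cdots \to H_{DR}^{2i-n-1}(E/k) \to HP^{(i)}_{n}(X/k) \to H_{DR}^{2i-n}(Y/k)\oplus H_{DR}^{2i-n}(Z/k) \to H_{DR}^{2i-n}(E/k) \to \cdots
\]

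First, I would argue that under the hypotheses $Y$, $Z$, and $E$ are smooth, so the Mayer--Vietoris sequence above computes every $HP^{(i)}_{n}(X/k)$ from the (in principle known) de Rham cohomology of the resolution data. Next, I would invoke Michler's theorem to fill in the column $HC^{(i)}_{*}(X/k)$ in the SBI sequence; these are given explicitly in terms of torsion of K\"ahler differentials and de Rham cohomology of $X$ itself. At this point two of the three columns of the Hodge-decomposed SBI sequence are known, and the third column $HN^{(i+1)}_{n}(X/k)$ is pinned down up to extension by exactness. I would then display this as a hybrid diagram identical in shape to the one appearing after Theorem \ref{mainhybrid}, with the SBI sequence drawn horizontally through $HC^{(i)}$, $HN^{(i+1)}$, $HP^{(i+1)}$, $HC^{(i)}$ and the periodic-cyclic MV sequence attached vertically at each $HP^{(i+1)}$ entry via the $H_{DR}$ groups of $Y$, $Z$, $E$.

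The bulk of the proof is a formal splicing argument: once Proposition \ref{Hodgesbi} gives the Hodge-refined SBI, and once one checks that the cdh-Mayer--Vietoris sequence respects the Hodge decomposition (which follows from the fact that $\textbf{HP}(-/k)$ admits a Hodge decomposition compatible with cdh-fibrant replacement, by the argument of Theorem 2.4 of \cite{cortinas2008k}), nothing further is required beyond substituting Michler's formulas into the resulting two-dimensional diagram. The genuinely new content is therefore notational and diagrammatic rather than substantive.

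The main obstacle, which is really an inherent limitation rather than a technical difficulty, is that this procedure only determines $HN^{(i+1)}_{n}(X/k)$ as an extension sandwiched between a subquotient of $HP^{(i+1)}_{n}(X/k)$ and a subquotient of $HC^{(i)}_{n-1}(X/k)$; without further knowledge of the connecting maps or the resolution $Y$ of $X$, one cannot in general close the computation into a clean formula. This is precisely the ``nebulous'' behaviour alluded to in the introduction, and is why the theorem is phrased as ``computable using a combination of two long exact sequences'' rather than as an explicit identification, and why the associated diagrams are relegated to the appendix.
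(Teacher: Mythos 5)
Your proposal is correct and follows essentially the same route as the paper: splice the Hodge-decomposed SBI sequence with the Hodge-decomposed Mayer--Vietoris sequence for periodic cyclic homology, compute the $HP^{(i)}$ column from the de Rham cohomology of $Y$, $Z$, $E$, and substitute Michler's formulas for the $HC^{(i)}$ column. Your closing caveat — that exactness only pins down $HN^{(i+1)}_n(X/k)$ up to an extension problem involving unknown connecting maps — is a point the paper's own proof passes over in silence, and is worth keeping.
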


For $n>N$ and $2i-n=N-1$,  
 
\noindent\begin{tikzpicture}[scale=.78, every node/.style={scale=.78} ]
\matrix (m) [matrix of math nodes, row sep=3em,
column sep=1em, text height=2ex, text depth=0.2ex]
{ &     & & & H_{DR}^{2i-n+1}(E/k) &\\
  &     & & & H_{DR}^{2i-n+1}(Y/k)\oplus H_{DR}^{2i-n+1}(Z/k) &\\
...&   HP_{n+2}^{(i+1)}(X/k) & T(\Omega_{X/k}^{N-1})\oplus H_{DR}^{N-1}(X/k) & HN_{n+1}^{(i+1)}(X/k) & HP_{n+1}^{(i+1)}(X/k)& ...\\
&   H_{DR}^{2i-n}(Y/k)\oplus H_{DR}^{2i-n}(Z/k)  & & & &  \\ 
&   H_{DR}^{2i-n}(E/k)  & & & &... \\ };
\path[->]
(m-3-1) edge (m-3-2);
\path[->]
(m-3-2) edge (m-3-3);
\path[->]
(m-3-3) edge (m-3-4);
\path[->]
(m-3-4) edge (m-3-5);
\path[->]
(m-3-5) edge (m-3-6);
\path[<-][densely dashed]
(m-5-2) edge (m-4-2);
\path[<-][densely dashed]
(m-4-2) edge (m-3-2);
\path[<-][densely dashed]
(m-1-5) edge (m-2-5);
\path[<-][densely dashed]
(m-2-5) edge (m-3-5);
\path[<-][densely dashed]
(m-3-1) edge [bend right=30] (m-5-2);
\path[<-][densely dashed]
(m-3-2) edge [bend left=20] (m-1-5);
\path[<-][densely dashed]
(m-3-5) edge [bend right=20] (m-5-6);
\end{tikzpicture}

\vspace{5 mm}

For $n>N$ and $2i-n\neq N-1$,

\noindent\begin{tikzpicture}[scale=.87, every node/.style={scale=.87} ]
\matrix (m) [matrix of math nodes, row sep=3em,
column sep=1em, text height=2ex, text depth=0.2ex]
{ &     & & & H_{DR}^{2i-n+1}(E/k) &\\
  &     & & & H_{DR}^{2i-n+1}(Y/k)\oplus H_{DR}^{2i-n+1}(Z/k) &\\
...&   HP_{n+2}^{(i+1)}(X/k) & H_{DR}^{2i-n}(X/k) & HN_{n+1}^{(i+1)}(X/k) & HP_{n+1}^{(i+1)}(X/k)& ...\\
&   H_{DR}^{2i-n}(Y/k)\oplus H_{DR}^{2i-n}(Z/k)  & & & &  \\ 
&   H_{DR}^{2i-n}(E/k)  & & & &... \\ };
\path[->]
(m-3-1) edge (m-3-2);
\path[->]
(m-3-2) edge (m-3-3);
\path[->]
(m-3-3) edge (m-3-4);
\path[->]
(m-3-4) edge (m-3-5);
\path[->]
(m-3-5) edge (m-3-6);
\path[<-][densely dashed]
(m-5-2) edge (m-4-2);
\path[<-][densely dashed]
(m-4-2) edge (m-3-2);
\path[<-][densely dashed]
(m-1-5) edge (m-2-5);
\path[<-][densely dashed]
(m-2-5) edge (m-3-5);
\path[<-][densely dashed]
(m-3-1) edge [bend right=30] (m-5-2);
\path[<-][densely dashed]
(m-3-2) edge [bend left=20] (m-1-5);
\path[<-][densely dashed]
(m-3-5) edge [bend right=20] (m-5-6);
\end{tikzpicture}

\newpage

For $n\leq N$ and $i=n$,

\noindent\begin{tikzpicture}[scale=.87, every node/.style={scale=.87} ]
\matrix (m) [matrix of math nodes, row sep=3em,
column sep=1em, text height=2ex, text depth=0.2ex]
{ &     & & & H_{DR}^{2i-n+1}(E/k) &\\
  &     & & & H_{DR}^{2i-n+1}(Y/k)\oplus H_{DR}^{2i-n+1}(Z/k) &\\
... & HP_{n+2}^{(i+1)}(X/k) &\Omega_{X/k}^{n}/d\Omega_{X/k}^{n-1} & HN_{n+1}^{(i+1)}(X/k) & HP_{n+1}^{(i+1)}(X/k)& ...\\
&   H_{DR}^{2i-n}(Y/k)\oplus H_{DR}^{2i-n}(Z/k)  & & & &  \\ 
&   H_{DR}^{2i-n}(E/k)  & & & &... \\ };
\path[->]
(m-3-1) edge (m-3-2);
\path[->]
(m-3-2) edge (m-3-3);
\path[->]
(m-3-3) edge (m-3-4);
\path[->]
(m-3-4) edge (m-3-5);
\path[->]
(m-3-5) edge (m-3-6);
\path[<-][densely dashed]
(m-5-2) edge (m-4-2);
\path[<-][densely dashed]
(m-4-2) edge (m-3-2);
\path[<-][densely dashed]
(m-1-5) edge (m-2-5);
\path[<-][densely dashed]
(m-2-5) edge (m-3-5);
\path[<-][densely dashed]
(m-3-1) edge [bend right=30] (m-5-2);
\path[<-][densely dashed]
(m-3-2) edge [bend left=20] (m-1-5);
\path[<-][densely dashed]
(m-3-5) edge [bend right=20] (m-5-6);
\end{tikzpicture}

\vspace{5 mm}

For $n\leq N$ and $n/2\leq i<n$,

\noindent\begin{tikzpicture}[scale=.87, every node/.style={scale=.87} ]
\matrix (m) [matrix of math nodes, row sep=3em,
column sep=1em, text height=2ex, text depth=0.2ex]
{ &     & & & H_{DR}^{2i-n+1}(E/k) &\\
  &     & & & H_{DR}^{2i-n+1}(Y/k)\oplus H_{DR}^{2i-n+1}(Z/k) &\\
... & HP_{n+2}^{(i+1)}(X/k) & H_{DR}^{2i-n}(X/k) & HN_{n+1}^{(i+1)}(X/k) & HP_{n+1}^{(i+1)}(X/k)& ...\\
&   H_{DR}^{2i-n}(Y/k)\oplus H_{DR}^{2i-n}(Z/k)  & & & &  \\ 
&   H_{DR}^{2i-n}(E/k)  & & & &... \\ };
\path[->]
(m-3-1) edge (m-3-2);
\path[->]
(m-3-2) edge (m-3-3);
\path[->]
(m-3-3) edge (m-3-4);
\path[->]
(m-3-4) edge (m-3-5);
\path[->]
(m-3-5) edge (m-3-6);
\path[<-][densely dashed]
(m-5-2) edge (m-4-2);
\path[<-][densely dashed]
(m-4-2) edge (m-3-2);
\path[<-][densely dashed]
(m-1-5) edge (m-2-5);
\path[<-][densely dashed]
(m-2-5) edge (m-3-5);
\path[<-][densely dashed]
(m-3-1) edge [bend right=30] (m-5-2);
\path[<-][densely dashed]
(m-3-2) edge [bend left=20] (m-1-5);
\path[<-][densely dashed]
(m-3-5) edge [bend right=20] (m-5-6);
\end{tikzpicture}

\vspace{5 mm}

For $n\leq N$ and $i<n/2$, since the Hodge component of cyclic homology is 0, we reduce to the following.  
\[
 \rightarrow H_{DR}^{2i-n-1}(E/k)\rightarrow HN^{(i)}_{n}(X/k) \rightarrow H_{DR}^{2i-n}(Y/k)\oplus H_{DR}^{2i-n}(Z/k) \rightarrow H_{DR}^{2i-n}(E/k) \rightarrow HN^{(i)}_{n-1}(X/k)\rightarrow  
\]

\newpage
\nocite{*}
\bibliographystyle{plain}
\bibliography{thesis}

\begin{thebibliography}{10}

\bibitem{aguilar2002algebraic}
Marcelo Aguilar, Samuel Gitler, and Carlos Prieto.
\newblock {\em Algebraic topology from a homotopical viewpoint}.
\newblock Springer Verlag, 2002.

\bibitem{atiyah1969introduction}
Michael~Francis Atiyah and Ian~Grant Macdonald.
\newblock {\em Introduction to commutative algebra}, volume~2.
\newblock Addison-Wesley Reading, 1969.

\bibitem{beltrametti2009lectures}
Mauro~C Beltrametti.
\newblock {\em Lectures on curves, surfaces and projective varieties: a
  classical view of algebraic geometry}.
\newblock European Mathematical Society, 2009.

\bibitem{connes2012cyclic}
Alain Connes and Caterina Consani.
\newblock Cyclic homology, {S}erre's local factors and the
  $\lambda$-operations.
\newblock {\em arXiv preprint arXiv:1211.4239}, 2012.

\bibitem{cortinas2006obstruction}
Guillermo Corti{\~n}as.
\newblock The obstruction to excision in {K}-theory and in cyclic homology.
\newblock {\em Inventiones mathematicae}, 164(1):143--173, 2006.

\bibitem{cortinas2005cyclic}
Guillermo Corti{\~n}as, Christian Haesemeyer, Marco Schlichting, and Charles~A
  Weibel.
\newblock Cyclic homology, cdh-cohomology and negative {K}-theory.
\newblock {\em arXiv preprint math/0502255}, 2005.

\bibitem{cortinas2009k}
Guillermo Corti{\~n}as, Christian Haesemeyer, Mark~E Walker, and Charles~A
  Weibel.
\newblock {K}-theory of cones of smooth varieties.
\newblock {\em arXiv preprint arXiv:0905.4642}, 2009.

\bibitem{cortinas2008k}
Guillermo Corti{\~n}as, Christian Haesemeyer, and C~Weibel.
\newblock K-regularity, cdh-fibrant {H}ochschild homology, and a conjecture of
  {V}orst.
\newblock {\em Journal of the American Mathematical Society}, 21(2):547--561,
  2008.

\bibitem{cortinas2009infinitesimal}
Guillermo Corti{\~n}as, Christian Haesemeyer, and Charles~A Weibel.
\newblock Infinitesimal cohomology and the {C}hern character to negative cyclic
  homology.
\newblock {\em Mathematische Annalen}, 344(4):891--922, 2009.

\bibitem{cortinas2009relative}
Guillermo Corti{\~n}as and Charles Weibel.
\newblock Relative chern characters for nilpotent ideals.
\newblock In {\em Algebraic Topology}, pages 61--82. Springer, 2009.

\bibitem{cutkosky2004resolution}
Steven~Dale Cutkosky.
\newblock {\em Resolution of singularities}, volume~63.
\newblock Cambridge Univ Press.

\bibitem{deligne1971theorie}
Pierre Deligne.
\newblock Th{\'e}orie de {H}odge, {II} et {III}.
\newblock {\em Publications Math{\'e}matiques de l'IHES}, 40(1):5--57, 1971.

\bibitem{eisenbud1995commutative}
David Eisenbud.
\newblock {\em Commutative {A}lgebra: with a view toward algebraic geometry},
  volume 150.
\newblock Springer, 1995.

\bibitem{eisenbud2000geometry}
David Eisenbud and Joe Harris.
\newblock {\em The geometry of schemes}, volume 197.
\newblock Springer, 2000.

\bibitem{emmanouil1995cyclic}
Ioannis Emmanouil.
\newblock The cyclic homology of affine algebras.
\newblock {\em Inventiones mathematicae}, 121(1):1--19, 1995.

\bibitem{feigin1985additive}
Boris~L Feigin and Boris~L Tsygan.
\newblock Additive {K}-theory and crystalline cohomology.
\newblock {\em Functional Analysis and its Applications}, 19(2):124--132, 1985.

\bibitem{fox1993introduction}
Thomas~F Fox.
\newblock An introduction to algebraic deformation theory.
\newblock {\em Journal of pure and applied algebra}, 84(1):17--41, 1993.

\bibitem{geller1989cyclic}
S~Geller, L~Reid, and C~Weibel.
\newblock The cyclic homology and {K}-theory of curves.
\newblock {\em J. reine angew. Math}, 393:39--90, 1989.

\bibitem{geller1994Hodge}
Susan~C Geller and Charles~A Weibel.
\newblock Hodge decompositions of {L}oday symbols in {K}-theory and cyclic
  homology.
\newblock {\em K-theory}, 8(6):587--632, 1994.

\bibitem{gerstenhaber1964deformation}
Murray Gerstenhaber.
\newblock On the deformation of rings and algebras.
\newblock {\em The Annals of Mathematics}, 79(1):59--103, 1964.

\bibitem{gerstenhaber1966deformation2}
Murray Gerstenhaber.
\newblock On the deformation of rings and algebras: {II}.
\newblock {\em The Annals of Mathematics}, 84(1):1--19, 1966.

\bibitem{gerstenhaber1987Hodge}
Murray Gerstenhaber and Samuel~D Schack.
\newblock A {H}odge-type decomposition for commutative algebra cohomology.
\newblock {\em Journal of Pure and Applied Algebra}, 48(1):229--247, 1987.

\bibitem{giorgi2006irreducible}
Erika Giorgi.
\newblock On the irreducible components of the form ring and an application to
  intersection cycles.
\newblock {\em Communications in Algebra{\textregistered}}, 34(8):2755--2767,
  2006.

\bibitem{goodwillie1986relative}
Thomas~G Goodwillie.
\newblock Relative algebraic {K}-theory and cyclic homology.
\newblock {\em The Annals of Mathematics}, 124(2):347--402, 1986.

\bibitem{grothendieck1966rham}
Alexander Grothendieck.
\newblock On the de {R}ham cohomology of algebraic varieties.
\newblock {\em Publications Math{\'e}matiques de l'Institut des Hautes
  {\'E}tudes Scientifiques}, 29(1):95--103, 1966.

\bibitem{haesemeyer2004descent}
Christian Haesemeyer.
\newblock Descent properties of homotopy {K}-theory.
\newblock {\em Duke Mathematical Journal}, 125(3):589--619, 2004.

\bibitem{hartshorne1975rham}
Robin Hartshorne.
\newblock On the de {R}ham cohomology of algebraic varieties.
\newblock {\em Publications Math{\'e}matiques de l'IH{\'E}S}, 45(1):6--99,
  1975.

\bibitem{hartshorne1977algebraic}
Robin Hartshorne.
\newblock {\em Algebraic geometry}, volume~52.
\newblock springer Verlag, 1977.

\bibitem{hauser2003hironaka}
Herwig Hauser.
\newblock The {H}ironaka theorem on resolution of singularities.
\newblock {\em Bull. Amer. Math. Soc}, 40(3):323--403, 2003.

\bibitem{hauserblowupsresolution}
Herwig Hauser.
\newblock Blowups and resolution.
\newblock
  http://homepage.univie.ac.at/herwig.hauser/Publications/blowups-and-resolution-apr-2013.pdf,
  2013.

\bibitem{herrmann1988equimultiplicity}
Manfred Herrmann, Shin Ikeda, Ulrich Orbanz, and Boudewijin Moonen.
\newblock {\em Equimultiplicity and blowing up: an algebraic study}.
\newblock Springer-Verlag Berlin, 1988.

\bibitem{hironaka1964resolution}
Heisuke Hironaka.
\newblock Resolution of singularities of an algebraic variety over a field of
  characteristic zero: {II}.
\newblock {\em The Annals of Mathematics}, 79(2):205--326, 1964.

\bibitem{kollar2009lectures}
J{\'a}nos Koll{\'a}r.
\newblock {\em Lectures on {R}esolution of {S}ingularities (AM-166)}.
\newblock Number 166. Princeton University Press, 2009.

\bibitem{loday1997cyclic}
Jean-Louis Loday.
\newblock {\em Cyclic homology}, volume 301.
\newblock Springer, 1997.

\bibitem{mclane1994sheaves}
S~McLane and I~Moerdijk.
\newblock {\em Sheaves in geometry and logic: {A} first introduction to topos
  theory}.
\newblock Springer, 1994.

\bibitem{michler1997cyclic}
Ruth~I Michler.
\newblock Cyclic homology of affine hypersurfaces with isolated singularities.
\newblock {\em Journal of Pure and Applied Algebra}, 120(3):291--299, 1997.

\bibitem{o1997smoothness}
L~O'Carroll and G~Valla.
\newblock On the smoothness of blow ups.
\newblock {\em Communications in Algebra}, 25(6):1861--1872, 1997.

\bibitem{thomason1985algebraic}
Robert~W Thomason.
\newblock Algebraic {K}-theory and {\'e}tale cohomology.
\newblock In {\em Annales scientifiques de l'{\'E}cole Normale Sup{\'e}rieure},
  volume~18, pages 437--552. Soci{\'e}t{\'e} math{\'e}matique de France, 1985.

\bibitem{thomason2007higher}
Robert~W Thomason and Thomas Trobaugh.
\newblock Higher algebraic {K}-theory of schemes and of derived categories.
\newblock In {\em The Grothendieck Festschrift Volume III}, pages 247--435.
  Springer, 2007.

\bibitem{voevodsky2000homotopy}
Vladimir Voevodsky.
\newblock Homotopy theory of simplicial sheaves in completely decomposable
  topologies, 2000.

\bibitem{voevodsky2008unstable}
Vladimir Voevodsky.
\newblock Unstable motivic homotopy categories in {N}isnevich and
  cdh-topologies.
\newblock {\em arXiv preprint arXiv:0805.4576}, 2008.

\bibitem{weibel1996cyclic}
Charles Weibel.
\newblock Cyclic homology for schemes.
\newblock {\em Proceedings of the American Mathematical Society},
  124(6):1655--1662, 1996.

\bibitem{weibel1997Hodge}
Charles Weibel.
\newblock The {H}odge filtration and cyclic homology.
\newblock {\em K-theory}, 12(2):145--164, 1997.

\bibitem{weibel2013k}
Charles Weibel.
\newblock The {K}-book: {A}n introduction to algebraic {K}-theory.
\newblock {\em Book-in-progress, available at its author's webpage: http://www.
  math. rutgers. edu/weibel/Kbook. html}, 2013.

\bibitem{weibel1995introduction}
Charles~A Weibel.
\newblock {\em An introduction to homological algebra}, volume~38.
\newblock Cambridge university press, 1995.

\bibitem{weibel1991etale}
Charles~A Weibel and Susan~C Geller.
\newblock Etale descent for {H}ochschild and cyclic homology.
\newblock {\em Commentarii Mathematici Helvetici}, 66(1):368--388, 1991.

\end{thebibliography}

\end{document}